\pdfoutput=1
\documentclass[11pt]{amsart}
\usepackage[T1]{fontenc}
\usepackage[utf8]{inputenc}
\usepackage{amssymb,amsmath,amsthm,mathrsfs,array,graphicx,bbm,enumerate,wasysym,dsfont}
\usepackage[usenames, dvipsnames]{color}
\usepackage[all]{xy}
\usepackage{bbm}
\usepackage{fullpage}
\usepackage{float}
\usepackage{verbatim}
\usepackage{hyperref}

\newtheorem{thm}{Theorem}[section]
\newtheorem{lemma}[thm]{Lemma}
\newtheorem{rem}[thm]{Remark}
\newtheorem{defn}[thm]{Definition}
\newtheorem{prop}[thm]{Proposition}
\newtheorem{cor}[thm]{Corollary}

\numberwithin{equation}{section}

\newcommand{\Z}{\mathbb Z}
\newcommand{\R}{\mathbb R}
\newcommand{\C}{\mathbb C}

\newcommand{\N}{\mathbb N}
\newcommand{\D}{\mathbb D}
\newcommand{\E}{\mathbb E}

\renewcommand{\P}{\mathbb P}
\renewcommand{\1}{\mathbf 1}
\newcommand{\A}{\mathds A}
\newcommand{\ABP}[2]{\protect\rotatebox[origin=c]{180}{$#1\A$}}
\newcommand{\AB}{{\mathpalette\ABP\relax}}
\newcommand{\B}{\mathcal B}

\newcommand{\M}{\operatorname{M}}
\renewcommand{\L}{\mathcal L}

\renewcommand{\AA}{\mathcal A}
\renewcommand{\epsilon}{\varepsilon}

\newcommand{\eps}{\epsilon}

\definecolor{crimsonglory}{rgb}{0.75, 0.0, 0.2}

\renewcommand{\d}{{d}}

\newenvironment{proofMassive}{\paragraph{\textit{Proof of Lemma \ref{LemMassive}.}}}
{\hfill$\square$}

\begin{document}
\title{The first passage sets of the 2D Gaussian free field: convergence and isomorphisms.}

\author{Juhan Aru \and Titus Lupu \and Avelio Sep\'ulveda}

\address {
	Department of Mathematics,
	ETH Z\"urich,
	Rämistr. 101,
	8092 Z\"urich,
	Switzerland}
\email
{juhan.aru@math.ethz.ch}

\address{CNRS and LPSM, UMR 8001, Sorbonne Université, 4 place Jussieu, 75252 Paris cedex 05, France}
\email
{titus.lupu@upmc.fr}

\address{Univ Lyon, Université Claude Bernard Lyon 1, CNRS UMR 5208, Institut Camille Jordan, 69622 Villeurbanne, France}
\email
{sepulveda@math.lyon-1.fr}
\begin{abstract}
In a previous article, we introduced the first passage set (FPS) of constant level $-a$ of the two-dimensional continuum Gaussian free field (GFF) on finitely connected domains. Informally, it is the set of points in the domain that can be connected to the boundary by a path along which the GFF is greater than or equal to $-a$. This description can be taken as a definition of the FPS for the metric graph GFF, and it justifies the analogy with the first hitting time of $-a$ by a one-dimensional Brownian motion. In the current article, we prove that the metric graph FPS converges towards the continuum FPS in the Hausdorff metric. This allows us to show that the FPS of the continuum GFF can be represented as a union of clusters of Brownian excursions and Brownian loops, and to prove that Brownian loop soup clusters admit a non-trivial Minkowski content in the gauge $r\mapsto |\log r|^{1/2}r^2$. We also show that certain natural interfaces of the metric graph GFF converge to SLE$_4$ processes.
\end{abstract}

\subjclass[2010]{60G15; 60G60; 60J65; 60J67; 81T40} 
\keywords{conformal loop ensemble; Gaussian free field; isomorphism theorems; local set; loop-soup; metric graph; Schramm-Loewner evolution}

\maketitle

\section{Introduction}

In this article, we continue the study of the first passage sets (FPS) of the 2D continuum Gaussian free field (GFF), initiated in \cite{ALS1}. Here, we cover different aspects of it: the approximation by metric graphs and the construction as clusters of two-dimensional Brownian loops and excursions.

The continuum (massless) Gaussian free field, known as bosonic massless free field in Euclidean quantum field theory \cite{Simon1974EQFT,GawedzkiCFT}, is a canonical model of a Gaussian field satisfying a spatial Markov property. In dimension $d\geq 2$, it is a generalized function, not defined pointwise. In dimension $d=2$, it is conformally invariant in law.

A key notion in the study of the GFF is that of local sets
\cite{SchSh2,WWln2,Se}, along which the GFF admits a Markovian decomposition. For the 2D GFF important examples are level lines
\cite{SchSh2,She05,Dub,WaWu}, flow lines
\cite{MS1,MS2,MS3,MS4}, and two-valued local sets \cite{ASW,AS2}. These are examples of thin local sets, that is to say they are not "charged" by the GFF and only the induced boundary values matter for the Markovian decomposition.

In \cite{ALS1}, we introduced a family of different non-thin local sets: the first passage sets (FPS). Although the 2D continuum GFF is not defined pointwise, one can imagine an FPS of level $-a$ as all the points in $\overline D$ that can be reached  from $\partial D$ by a continuous path along which the GFF has values
$\geq -a$. In some sense, an FPS is analogous to the first passage time of a Brownian motion, analogy we develop in \cite{ALS1}. Although an FPS has a.s. zero Lebesgue measure, the restriction of the GFF to it is in general non-trivial. It is actually a positive measure, a Minkowski content measure in the gauge $r\mapsto |\log r|^{1/2}r^2$. In this case, the behavior of the GFF on this local set is entirely determined by the geometry of the set itself. Observe that this differs from the one-dimensional case of Brownian first passage bridges.

In this article, we  make the above heuristic description of the FPS exact by approximating the continuum GFF by metric graph GFF-s. A metric graph is obtained by taking a discrete electrical network and replacing each edge by a continuous line segment of length proportional to the resistance (inverse of the conductance) of the edge. On the metric graph, one can define a Gaussian free field by interpolating  discrete GFF on vertices by conditional independent Brownian bridges inside the edges \cite{Lupu2016Iso}. Such a field is pointwise defined, continuous, and still satisfies a domain Markov property, even when cutting the domain inside the edges. For a metric graph GFF, the first passage set of level $-a$ is exactly defined by the heuristic description given in the previous paragraph: it is the set of points on the metric graph that are joined to the boundary by some path, on which the metric graph GFF does not go below the level $-a$ \cite{LupuWerner2016Levy}.  

The main result of this paper is Proposition \ref{Convergence}. It states that \textit{when one approximates a continuum domain by a metric graph, then the FPS of a metric graph GFF converges in law to the FPS of a continuum GFF, for the Hausdorff distance}. This result holds for finitely-connected domains, and for piece-wise constant boundary conditions.

In fact, Proposition  \ref{Convergence} shows that the coupling between the GFF and the FPS converges. The proof relies on the characterization of the FPS in continuum as the unique local set such that the GFF restricted to it is a positive measure, and outside is a conditional independent GFF with boundary values equal to $-a$ \cite{ALS1}. It is accompagned by a convergence result on the clusters of the metric graph loop soup that contain at least one boundary-to-boundary excursion (Proposition \ref{PropConvClustExc}).

Together, these convergence results have numerous interesting implications, whose study takes up most of this paper. Let us first mention a family of convergence results: certain natural interfaces in the metric graph GFFs on a 2D lattice converge to level lines of the continuum GFF (Proposition \ref{ConvSLE}). These results are reminiscent on Schramm-Sheffield's convergence of the zero level line of 2D discrete GFF to SLE$_{4}$ \cite{SchSh,SchSh2}.

Let us remark that Proposition \ref{ConvSLE} does not cover the results of \cite{SchSh2}, as the interfaces we deal with do not appear at the level of the discrete GFF. Yet, the discrete interfaces we consider are as natural, and the proofs for the convergence are way simpler. In particular, we show that if we consider metric graph GFFs on a lattice approximation of $\D$, with boundary conditions $-\lambda$ on the left half-circle and $\lambda$ on the right half-circle, then the left boundary of the FPS of level $-\lambda$ converges to the Schramm-Sheffield level line, and thus to a SLE$_{4}$ curve w.r.t. the Hausdorff distance (Corollary \ref{convSLE2}).

Several other central consequences of the FPS convergence have to do with isomorphism theorems. In general, the isomorphism theorems relate the square of a GFF, discrete or continuum if the dimension is less or equal to $3$, to occupation times of Markovian trajectories, Markov jump processes in discrete case, Brownian motions in continuum. Originally formulated by Dynkin \cite{Dynkin1983MarkovFieldTh,Dynkin1984Isomorphism,
	Dynkin1984IsomorphismPresentation}, there are multiple versions of them \cite{Eisenbaum1995Iso, EKMRS2000RK, Sznitman2012Isomorphism, LeJan2007Unpub, LeJan2011Loops}, see also
\cite{MarcusRosen2006MarkovGaussianLocTime,Sznitman2012LectureIso} for reviews.
For instance, in Le Jan's isomorphism \cite{LeJan2007Unpub, LeJan2011Loops}, the whole square of a discrete GFF is given by the occupation field of a Markov jump process loop-soup.
The introduction of metric graphs as in \cite{Lupu2016Iso} provides "polarized" versions of isomorphism theorems, where one has the additional property that the GFF has constant sign on each Markovian trajectory. More precisely, one considers a metric graph loop-soup and an independent Poisson point process of boundary-to-boundary metric graph excursions. Among all the clusters formed by these trajectories, one takes those that contain at least an excursion, that is to say are connected to the boundary. Then, the closed union of such clusters is distributed as a metric graph FPS (Proposition \ref{CorFPCluster}).

As consequence of the convergence results, this representation of the FPS transfers to the continuum. In other words, \textit{the continuum FPS can be represented as a union of clusters of two-dimensional Brownian loops (out of a critical Brownian loop-soup as in \cite{LW2004BMLoopSoup}, of central charge $c=1$) and Brownian boundary-to-boundary excursions (Proposition \ref{CorEqFPS}).}
This description can be viewed as a non-perturbative version of Symanzik's loop expansion in Euclidean QFT
\cite{Symanzik66Scalar,Symanzik1969QFT} (see also \cite{BFS82Loop}).

In Proposition \ref{PropFPSplusWick}, we combine our description of the FPS by loops and excursions with 
the renormalized Le Jan's isomorphism \cite{LeJan2010LoopsRenorm}, formulated in terms of renormalized (Wick) square of the GFF, and the renormalized centered occupation field of loops and excursions. In this way, we get the square and the interfaces of the GFF on the same picture. In the simply-connected case with zero boundary conditions, one can further ask these interfaces to correspond to the Miller-Sheffield coupling of CLE$_4$ and the GFF \cite{MS}, \cite{ASW}. This implies that conditional on its outer boundary, the law of a Brownian loop cluster of central charge $c=1$ is that of a first passage set of level $-2\lambda$ (Corollary \ref{CorClusterLoopSoup}), extending the results of \cite{QW2015}.

A natural question which arose in view of this new isomorphism was how to take the "square root" of the Wick square of the continuum GFF in order to access the value of the GFF on the FPS. This was solved in \cite{ALS1}, going through the Liouville quantum gravity (Gaussian multiplicative chaos). The "square root" turned to be a Minkowski content measure of the FPS in the
gauge $r\mapsto |\log r|^{1/2}r^2$. This also gives, via the isomorphism, the right gauge for measuring the size of clusters in a critical Brownian loop-soup ($c=1$). For subcritical Brownian loop-soups ($c<1$), the gauge is still unknown.

We draw additional consequences from the isomorphism theorems in Section \ref{SubSecConseq1} - we show local finiteness of the FPS, prove that its a.s. Hausdorff dimension is 2 and that it satisfies a Harris-FKG inequality. In Section \ref{SubSecConseq2}, we also study more general families of level lines, and show for example that the multiple commuting SLE$_{4}$ \cite{Dub07MultipleSLE} are envelopes of Brownian loops and excursions (Corollary \ref{CorLvlLineCluster} and Remark \ref{RemLvlLine}). Previously, similar results were known only for single SLE$_{\kappa}(\rho)$ processes 
\cite{WernerWu2013FromCLEtoSLE}
and the conformal loop ensembles CLE$_{\kappa}$ 
(loop-soup construction \cite{SheffieldWerner2012CLE}). Finally, in Corollary
\ref{CorCoupling} how to construct explicit coupling of Gaussian free fields with different boundary conditions such that some level lines coincide with positive probability.

In a follow-up paper \cite{ALS4}, we will use the techniques developed here to define an excursion decomposition of the GFF.

The rest of this paper is structured as follows.

In Section \ref{SecMetricG}, we recall the construction of the GFF on the metric graph and the related isomorphims. We also recall the definition of the first passage set on metric graph and its construction out of metric graph loops and excursions.

Section \ref{sectC} is devoted to preliminaries on the continuum GFF, its first passage sets, and Le Jan's isomorphism representing the Wick's square of the GFF as centered occupation field of a Brownian loop-soup. In particular, we extend Le Jan's isomorphism to the GFF with positive variable boundary conditions by introducing boundary to boundary excursions.

In Section \ref{SecConv}, we first introduce the notions of convergence of domains, fields, compact sets, trajectories we use. Then, we show the convergence of metric graph FPS to continuum FPS; and the convergence of metric graph loops and excursions to clusters of 2D Brownian loops and excursions.

Finally, in Section \ref{SecConsec} we derive several consequences of the convergence results, including the identification of  the continuum FPS with clusters of Brownian loops and excursions in Section \ref{SubSecConseq1} and the convergence of certain natural FPS interfaces to the SLE$_4$ curves in Section \ref{SubSecConseq2}.


\section{Preliminaries on the metric graph}
\label{SecMetricG}

In this section, we first give the definition of the metric graph and define the GFF on top of it - basically it corresponds to taking a discrete GFF on its vertices, and extending it using conditionally independent Brownian bridges of length equal to the resistance on all edges. Next, we browse through the measures on loops and excursions on the metric GFF; and the isomorphism theorems. In Section \ref{dFPS}, we define the first passage set (FPS) of the metric graph introduced in \cite{LupuWerner2016Levy} and bring out its representation using Brownian loops and excursions. 

The results in this section are either already in the literature or are slight extensions of already existing results. For example, we extend the isomorphism theorems on the metric graph to non-constant boundary conditions.

\subsection{The Gaussian free field on metric graphs}
\label{SubSecGFFMetric}

We start from a finite connected undirected graph $\mathcal{G}=(V,E)$  with no multiple edges or self-loops. We interpret it is as an electrical network by equipping each edge $e=\lbrace x,y\rbrace\in E$ with a conductance $C(e)=C(x,y)>0$ . 
If $x,y\in V$, $x\sim y$ denotes that $x$ and $y$ are connected by an edge.  
A special subset of vertices $\partial\mathcal{G}\subset V$  will be considered as the boundary of the network.
We assume that $\partial\mathcal{G}$ and $V\backslash\partial\mathcal{G}$ are non-empty. 
For $x\in V\backslash\partial\mathcal{G}$, we denote
\begin{displaymath}
C_{\rm tot}(x):=\sum_{\substack{y\in V\\y\sim x}}C(x,y).
\end{displaymath}

Let $\Delta^{\mathcal{G}}$ be the discrete Laplacian:
\begin{displaymath}
(\Delta^{\mathcal{G}}f)(x):=
\sum_{y\sim x}C(x,y)(f(y)-f(x)).
\end{displaymath}
Let $\mathcal{E}_{\mathcal{G}}$   be the Dirichlet energy:
\begin{displaymath}
\mathcal{E}_{\mathcal{G}}(f,f):=
-\sum_{x\in V}\sum_{y\sim x}f(x)(\Delta^{\mathcal{G}} f)(x)
=\sum_{\lbrace x,y\rbrace\in E}
C(x,y)(f(y)-f(x))^{2}.
\end{displaymath}

Let $\phi$  be the discrete Gaussian free field (GFF) on 
$\mathcal{G}$, associated to the Dirichlet energy $\mathcal{E}_{\mathcal{G}}$, with boundary condition $0$. That is to say, if we defined the Green's function
$G_{\mathcal{G}}$  as the inverse of $-\Delta^{\mathcal G}$, with $0$ boundary conditions on $\partial \mathcal G$, we have that 
$\phi$ is the only centred Gaussian process such that for any $f,g:V\mapsto \R$
\begin{align*}
\E\left[(\phi,f_{1})(\phi,f_{2}) \right]= \sum_{x,y \in V\backslash \partial \mathcal G} f_{1}(x) G_{\mathcal G}(x,y) f_{2}(y).
\end{align*}

We would sometimes be interested in a GFF with non-0 boundary conditions. For that we call $u: V\mapsto \mathbb \R$  a boundary condition if it is harmonic function in $V\backslash \partial \mathcal G$ and when the context is clear we identify it with its restriction to $\partial \mathcal G$. Now note that
$\phi+u$ is then the GFF with boundary condition $u$. Its expectation is $u$ and its covariance is given by the Green's function.

Given an electrical network $\mathcal{G}$, we can associate to it
a \textit{metric graph}, also called \textit{cable graph} or 
\textit{cable system}, denoted $\widetilde{\mathcal{G}}$ .
Topologically, it is a simplicial complex of degree $1$, where each edge is replaced by a continuous line segment. We also endow each such segment with a metric such that its length is equal to the resistance $C(x,y)^{-1}$, $x$ and $y$ being the endpoints. 
One should think of it as replacing a ``discrete'' resistor by a ``continuous'' electrical wire, where the resistance is proportional to the length.

Given a discrete GFF $\phi$ with boundary condition $0$, we interpolate it to a function on 
$\widetilde{\mathcal{G}}$  by adding on each 
edge-line a conditionally independent standard Brownian bridge. If the line joins the vertices $x$ and $y$, the endvalues of the bridge
would be $\phi(x)$ and $\phi(y)$, and its length $C(x,y)^{-1}$. By doing that we get a continuous function $\tilde{\phi}$ on 
$\widetilde{\mathcal{G}}$ (Figure \ref{FigMetricGFF}). This is precisely the metric graph GFF with $0$ boundary conditions. Consider the linear interpolation of $u$ inside the edges, still denoted by $u$. $\tilde{\phi}+u$ is the metric graph GFF with boundary conditions $u$. The restriction of $\tilde{\phi}+u$ to the vertices is the discrete GFF $\phi+u$.

\begin{figure}[ht!]
	\centering
	\includegraphics[width=3.2in]{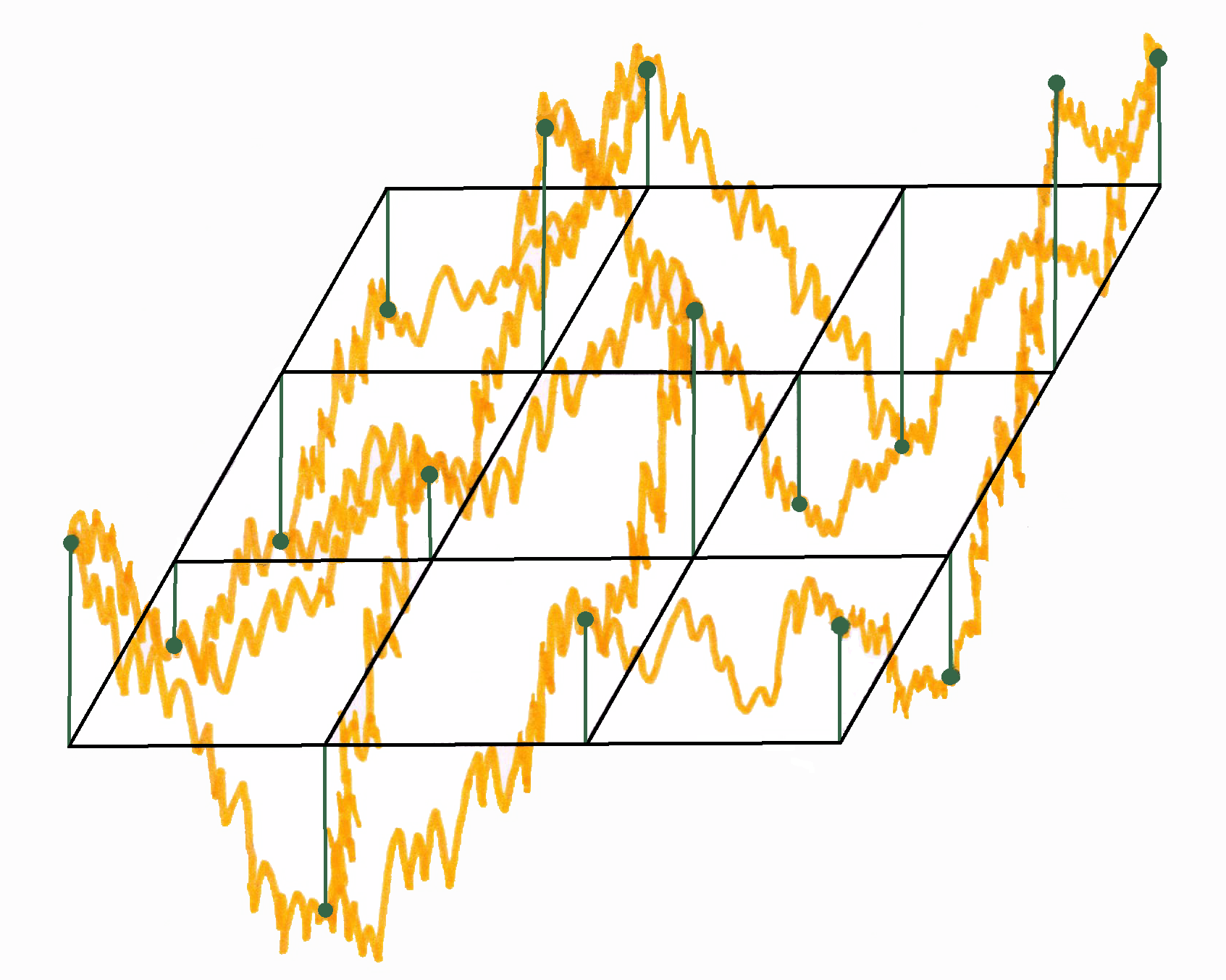}
	\caption{$\tilde{\phi}$ on a square lattice. Green dots represent the values of the discrete GFF. Orange curves are the Brownian bridges interpolating between these values.}
	\label{FigMetricGFF}
\end{figure}

The metric graph GFF satisfies the strong Markov property on 
$\widetilde{\mathcal{G}}$. More precisely, assume that $A$ is a random compact subset of $\mathcal{G}$.
We say that is \textit{optional} for $\tilde{\phi}$ if for every 
$O$ deterministic open subset of $\mathcal{G}$,
the event $A\subseteq O$
is measurable with respect the restriction of $\tilde{\phi}$ to 
$O$.
For simplicity we will also assume that a.s., $A$ has finitely many connected components. Then 
$\widetilde{\mathcal{G}}\backslash A$ has finitely many connected components too, and the closure of each connected component is a metric graph, even if an edge of $\mathcal{G}$ is split among several connected components or partially covered by $A$.
\begin{prop}[Strong Markov property, \cite{Lupu2016Iso}]
	\label{PropStrongMarkov}
	Let $A$ be a random compact subset of 
	$\widetilde{\mathcal{G}}$, with finitely many connected components and
	optional for the metric graph GFF $\tilde{\phi}$. Then
	we have a Markov decomposition
	\begin{displaymath}
	\tilde{\phi}=\tilde{\phi}_{A}+\tilde{\phi}^{A},
	\end{displaymath}
	where, conditionally on $A$, $\tilde{\phi}^{A}$ is a
	zero boundary metric graph GFF on 
	$\widetilde{\mathcal{G}}\backslash A$ independent of
	$\tilde{\phi}_{A}$ (and by convention zero on $A$), and 
	$\tilde{\phi}_{A}$ is on $A$ the restriction of $\tilde{\phi}$ to $A$ and on $\widetilde{\mathcal{G}}\backslash A$ equals a harmonic function 
	$\tilde h_{A}$, whose boundary values are given by 
	$\tilde{\phi}$ on $\partial\mathcal{G}\cup A$.
\end{prop}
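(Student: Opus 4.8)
The plan is to reduce the strong Markov property to the ordinary (deterministic-set) Markov property for the metric graph GFF, which in turn follows from the discrete Markov property plus the independence structure of the interpolating Brownian bridges. First I would record the deterministic version: if $U\subseteq\widetilde{\mathcal G}$ is a fixed open set whose complement $K=\widetilde{\mathcal G}\setminus U$ has finitely many components, then cutting $\widetilde{\mathcal G}$ along $\partial U$ produces a new metric graph (subdividing the finitely many edges that $\partial U$ meets by adding new boundary vertices at the cut points), on which $\tilde\phi$ is again a metric graph GFF; conditionally on the values of $\tilde\phi$ on $K\cup\partial\mathcal G$, the field inside $U$ is a metric graph GFF with those boundary values, which decomposes as a harmonic extension $\tilde h$ of the boundary data plus an independent zero-boundary metric graph GFF $\tilde\phi^{U}$ on $U$. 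This is essentially the content of the construction in \cite{Lupu2016Iso} and uses only that the discrete GFF has the spatial Markov property and that a Brownian bridge, cut at an interior point, splits into two independent Brownian bridges whose common endpoint value is Gaussian with the appropriate conditional law.

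Next I would upgrade to random $A$ via the standard discretization-of-the-optional-set argument. Since $A$ is optional and has finitely many components a.s., I would approximate $A$ from outside by a decreasing sequence of sets $A_n$ that are finite unions of dyadic "boxes" (small metric-graph neighborhoods coming from a fine mesh of $\widetilde{\mathcal G}$), chosen so that $\{A\subseteq O\}$-measurability guarantees each $A_n$ is measurable with respect to $\tilde\phi$ restricted to a deterministic open set, and $\bigcap_n A_n=A$, with $A_n$ itself being (a.s.) a random variable taking values in a \emph{countable} collection $\mathcal D$ of deterministic compact sets. On the event $\{A_n=D\}$ for each $D\in\mathcal D$ we may apply the deterministic Markov property relative to $U=\widetilde{\mathcal G}\setminus D$; summing over $D$ gives a Markov decomposition $\tilde\phi=\tilde\phi_{A_n}+\tilde\phi^{A_n}$ where $\tilde\phi_{A_n}$ is $\sigma(\tilde\phi|_{A_n})$-measurable and $\tilde\phi^{A_n}$ is, conditionally on $A_n$, an independent zero-boundary GFF on $\widetilde{\mathcal G}\setminus A_n$. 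Then I would pass to the limit: the harmonic functions $\tilde h_{A_n}$ converge (by continuity of the field and stability of harmonic extension under the shrinking domains, using finiteness of components so no pathological limits occur) to the harmonic function $\tilde h_A$ with boundary data $\tilde\phi$ on $\partial\mathcal G\cup A$, and hence $\tilde\phi_{A_n}\to\tilde\phi_A$, $\tilde\phi^{A_n}\to\tilde\phi^A$ almost surely (or in law along the coupling); the conditional independence and the conditional law of $\tilde\phi^A$ are preserved in the limit because they hold along the sequence and the relevant $\sigma$-algebras converge.

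I expect the main obstacle to be the limiting step, specifically justifying that $\tilde\phi^{A_n}$ genuinely converges to a zero-boundary metric graph GFF on the \emph{limiting} domain $\widetilde{\mathcal G}\setminus A$ and that conditional independence survives. The subtlety is that $\widetilde{\mathcal G}\setminus A_n$ is a shrinking sequence of domains whose limit's geometry (which edges get cut, and where) is only determined in the limit; one must check that the Green's functions / harmonic measures on $\widetilde{\mathcal G}\setminus A_n$ converge to those on $\widetilde{\mathcal G}\setminus A$, which is where the hypothesis that $A$ (hence each complement) has finitely many connected components is used, ruling out accumulation of infinitely many cut points. The continuity of $\tilde\phi$ on the compact metric graph, together with uniform control of Brownian-bridge fluctuations over the finitely many affected edge-segments, should make all of these convergences routine once set up; the only genuine care is bookkeeping the decomposition on the countably many events $\{A_n=D\}$ and verifying measurability of $\tilde\phi_A$ with respect to $\tilde\phi|_A$ in the limit.
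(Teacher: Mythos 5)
The paper does not prove this proposition: it is quoted directly from \cite{Lupu2016Iso}, so there is no in-paper argument to compare against. Your plan — establish the deterministic-set Markov property from the discrete spatial Markov property together with the splitting of Brownian bridges at cut points, then upgrade to an optional $A$ by outer approximation with sets taking countably many values and pass to the limit — is the standard route and is essentially the argument of the cited reference; the limiting step you flag is indeed where the work lies, but on a metric graph it is genuinely routine (harmonic functions are affine on edges and determined by finitely many boundary values, and the finite-component hypothesis keeps the cut domains finite metric graphs), so the outline is sound.
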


\subsection{Measures on loops and excursions} \label{Loops Metric}

Next, we introduce the measures on loops and boundary-to-boundary excursions which appear in isomorphism theorems in discrete and metric graph settings.

Consider the nearest neighbour Markov jump process on $\mathcal{G}$, with jump rates given by the conductances, and let 
$p^{\mathcal{G}}_{t}(x,y)$ and 
$\mathbb{P}^{\mathcal{G},x,y}_{t}$ be the associated 
transition probabilities and bridge probability measures respectively.
Let $T_{\partial\mathcal{G}}$ be the first time the jump process hits the boundary $\mathcal{G}$. 
The \textit{loop measure} on $\mathcal{G}$ is defined to be
\begin{equation}
\label{DefMeasLoopDiscr}
\mu^{\mathcal{G}}_{\rm loop}(\cdot):=
\sum_{x\in V}\int_{0}^{+\infty}
\mathbb{P}^{\mathcal{G},x,x}_{t}(\cdot,T_{\partial\mathcal{G}}>t)
p^{\mathcal{G}}_{t}(x,x)\dfrac{dt}{t}.
\end{equation}
$\mu^{\mathcal{G}}_{\rm loop}$ is a measure on nearest neighbour paths in $V\backslash \partial\mathcal{G}$, parametrized
by continuous time, which at the end return to the starting point. Note that it associates an infinite mass to trivial loops, which only stay at one given vertex. This measure was introduced by 
Le Jan in \cite{LeJan2007Unpub, LeJan2010LoopsRenorm, LeJan2011Loops}. If one restricts the measure to non-trivial loops and forgets the time-parametrisation, one gets the measure on random walk loops which appears in \cite{LawlerFerreras2007RWLoopSoup,LawlerLimic2010RW}.

$\Gamma$ will denote the family of all finite paths parametrized by discrete time, which start and end in $\partial\mathcal{G}$, only visit $\partial\mathcal{G}$
at the start and at the end, and also visit 
$V\backslash\partial\mathcal{G}$. We see a path in $\Gamma$
as the skeleton of an excursion from $\partial\mathcal{G}$ to itself.
We introduce a measure $\nu^{\mathcal{G}}_{\rm exc}$ on $\Gamma$ as follows. The mass given to an admissible path 
$(x_{0},x_{1},\dots,x_{n})$ is
\begin{displaymath}
\prod_{i=1}^{n}C(x_{i-1},x_{i})
\prod_{i=1}^{n-1}C_{\rm tot}(x_{i})^{-1}.
\end{displaymath}
Note that this measure is invariant under time-reversal.
For $x,y\in\partial\mathcal{G}$,
$\Gamma_{x,y}$ will denote the subset of $\Gamma$ made of paths
that start at $x$ and end at $y$. We defined the kernel
$H_{\mathcal{G}}(x,y)$  on 
$\partial\mathcal{G}\times \partial\mathcal{G}$ as
\begin{displaymath}
H_{\mathcal{G}}(x,y):=\nu^{\mathcal{G}}_{\rm exc}(\Gamma_{x,y}).
\end{displaymath}
It is symmetric.
$H_{\mathcal{G}}$ is often referred to as the \textit{discrete boundary Poisson kernel}, and this is the terminology we will use.
$\mathbb{P}_{\rm exc}^{\mathcal{G},x,y}$ will denote the probability measure on excursions from $x$ to $y$ parametrized by continuous time.
The discrete-time skeleton of the excursion is distributed
according to the probability measure
$\1_{\Gamma_{x,y}}H_{\mathcal{G}}(x,y)^{-1}\nu^{\mathcal{G}}_{\rm exc}$. 
The excursions under
$\mathbb{P}_{x,y}^{\rm exc}$ spend zero time at $x$ and $y$, i.e. they immediately jump away from $x$ and jump to $y$ at the last moment.
Conditionally on the skeleton $(x_{0},x_{1},\dots,x_{n})$, the holding time at $x_{i}$, $1\leq i\leq n-1$, is distributed as an exponential
r.v. with mean $C_{\rm tot}(x_{i})^{-1}$, and all the holding times are conditionally independent. To a non negative boundary condition 
$u$ on $\partial \mathcal{G}$ we will associate the measure
\begin{equation}
\label{DefMeasExcDisc}
\mu^{\mathcal{G},u}_{\rm exc}:=
\dfrac{1}{2}\sum_{(x,y)\in\partial\mathcal{G}\times \partial\mathcal{G}}
u(x)u(y)H_{\mathcal{G}}(x,y)\mathbb{P}_{\rm exc}^{\mathcal{G},x,y}.
\end{equation}

Consider now the metric graph setting. We will consider on 
$\widetilde{\mathcal{G}}$ a diffusion we introduce now. For generalities on diffusion processes on metric graphs, see
\cite{BaxterChacon1984DiffusionsNetworks,EnriquezKifer2001BMGraphs}.
$(\widetilde{X}_{t})_{t\geq 0}$ will be a Feller process on
$\widetilde{\mathcal{G}}$. The domain of its infinitesimal generator
$\Delta^{\widetilde{\mathcal{G}}}$ will contain all continuous functions which are 
$\mathcal{C}^{2}$ inside each edge and such that the second derivatives have limits at the vertices and which are the same for every adjacent edge. On such a function $f$, $\Delta^{\widetilde{\mathcal{G}}}$ will
act as $\Delta^{\widetilde{\mathcal{G}}}f=f''/4$, i.e. one takes the second derivative inside each edge. $\widetilde{X}$ behaves inside an edge like a one-dimensional Brownian motion. With our normalization of 
$\Delta^{\widetilde{\mathcal{G}}}$, it is not a standard Brownian motion, but with variance multiplied by $1/2$. When $\widetilde{X}$
hits an edge of degree $1$, it behaves like a reflected Brownian motion near this edge. When it hits an edge of degree $2$, it behaves just like a Brownian motion, as we can always consider that the two lines associated to the two adjacent edges form a single line.
When $\widetilde{X}$ hits a vertex of degree at least three, then it performs Brownian excursions inside each adjacent edge, until hitting an
neighbouring vertex. Each adjacent edge will be visited infinitely many times immediately when starting from a vertex, and there is no notion of first visited edge. The rates of small excursions will be the same for each adjacent edge. See 
\cite{Lupu2016Iso,EnriquezKifer2001BMGraphs} for details.

Just as a one-dimensional Brownian motion, 
$(\widetilde{X}_{t})_{t\geq 0}$ has local times.
Denote $\tilde{m}$ the measure on $\widetilde{\mathcal{G}}$ such that
its restriction to each edge-line is the Lebesgue measure.
There is a family of local times
$(L^{x}_{t}(\widetilde{X}))_{x\in\widetilde{\mathcal{G}},t\geq 0}$,
adapted to the filtration of $(\widetilde{X}_{t})_{t\geq 0}$ and jointly continuous in $(x,t)$, such that for any $f$ measurable bounded function on $\widetilde{\mathcal{G}}$,
\begin{displaymath}
\int_{0}^{t}f(\widetilde{X}_{s}) ds =
\int_{\widetilde{\mathcal{G}}}f(x)
L^{x}_{t}(\widetilde{X}) d\tilde{m}(x).
\end{displaymath}
On should note that in particular the local times are space-continuous at the vertices. See \cite{Lupu2016Iso}. Consider the continuous
additive functional (CAF)
\begin{equation}
\label{EqCAF}
(t,(\widetilde{X})_{0\leq s\leq t})
\mapsto \sum_{x\in V}L^{x}_{t}(\widetilde{X}).
\end{equation}
It is constant outside the times $\widetilde{X}$ spends at vertices. By performing a time change by the inverse of the CAF \eqref{EqCAF}, one gets a continuous-time paths on the discrete network
$\mathcal{G}$ which jumps to the nearest neighbours. It actually has the same law as the Markov jump process on $\mathcal{G}$ with the rates of jumps given by the conductances. See \cite{Lupu2016Iso}.

The process $(\widetilde{X}_{t})_{t\geq 0}$ has transition densities and bridge probability measures, which we will denote
$p^{\widetilde{\mathcal{G}}}_{t}(x,y)$ and 
$\mathbb{P}^{\widetilde{\mathcal{G}},x,y}_{t}$ respectively.
$\widetilde{T}_{\partial \mathcal{G}}$ will denote the first time
$(\widetilde{X}_{t})_{t\geq 0}$ hits the boundary 
$\partial \mathcal{G}$. The loop measure on the metric graph
$\widetilde{\mathcal{G}}$ is defined to be 
\begin{displaymath}\label{Def Loop Met}
\mu^{\widetilde{\mathcal{G}}}_{\rm loop}(\cdot):=
\int_{\widetilde{\mathcal{G}}}\int_{0}^{+\infty}
\mathbb{P}^{\widetilde{\mathcal{G}},x,x}_{t}
(\cdot,\widetilde{T}_{\partial\mathcal{G}}>t)
p^{\widetilde{\mathcal{G}}}_{t}(x,x)\dfrac{dt}{t}d\tilde{m}(x).
\end{displaymath}
It has infinite total mass.
This definition is the exact analogue of the definition
\eqref{DefMeasLoopDiscr} of the measure on loops on discrete network
$\mathcal{G}$. Under the measure 
$\mu^{\widetilde{\mathcal{G}}}_{\rm loop}$, the loops do not hit the boundary $\widetilde{\mathcal{G}}$. One can almost recover
$\mu^{\mathcal{G}}_{\rm loop}$ from
$\mu^{\widetilde{\mathcal{G}}}_{\rm loop}$. Just as the process 
$(\widetilde{X}_{t})_{t\geq 0}$ itself, the loops under
$\mu^{\widetilde{\mathcal{G}}}_{\rm loop}$ admit a continuous family of local times. One can consider the CAF \eqref{EqCAF} applied to a metric graph loop $\tilde{\gamma}$ that visits at least one vertex.
By performing the time-change by the inverse of this CAF, one gets a nearest neighbour loop on the discrete network $\mathcal{G}$.
The image by this map of the measure 
$\mu^{\widetilde{\mathcal{G}}}_{\rm loop}$, restricted to the loops that visit at least one vertex,
is $\mu^{\mathcal{G}}_{\rm loop}$, up to a change of root 
(i.e. starting and endpoint) of the discrete loop. So, if one rather considers the unrooted loops and the measures projected on the quotients, then one obtains $\mu^{\mathcal{G}}_{\rm loop}$ as the image of $\mu^{\widetilde{\mathcal{G}}}_{\rm loop}$ by a change of time.
Moreover, the holding times at vertices of discrete network loops are equal to the increments of local times at vertices of metric graph loops between two consecutive edge traversals.
Note that 
$\mu^{\widetilde{\mathcal{G}}}_{\rm loop}$ also puts mass on the loops that do not visit any vertex. These loops do not matter for
$\mu^{\mathcal{G}}_{\rm loop}$.
See \cite{FitzsimmonsRosen2012LoopsIsomorphism} for generalities on the covariance of the measure on loops by time change by an inverse of a CAF.

On the metric graph one also has the analogue of the measure
$\mu^{\mathcal{G},u}_{\rm exc}$ on excursions from boundary to boundary defined by \eqref{DefMeasExcDisc}. Let $x\in \partial\mathcal{G}$ and let $k$ be the degree of $x$. Let $\varepsilon>0$ be smaller than the smallest length of an edge adjacent to $x$. $x_{1,\varepsilon},\dots,x_{k,\varepsilon}$ will denote
the points inside each of the adjacent edge to $x$ which are located at distance $\varepsilon$ from $x$. The measure on excursions from $x$ to the boundary is obtained as the limit
\begin{displaymath}
\mu^{\widetilde{\mathcal{G}},x}_{\rm exc}(F(\tilde{\gamma}))=
\lim_{\varepsilon\rightarrow 0}
\varepsilon^{-1}\sum_{i=1}^{k}
\mathbb{E}_{x_{i,\varepsilon}}
\left[F((\widetilde{X}_{t})
_{0\leq t\leq \widetilde{T}_{\partial\mathcal{G}}})\right],
\end{displaymath}
where $F$ is any measurable bounded functional on paths. If
$y\in\partial\mathcal{G}$ is another boundary point, possibly the same,
$\mu^{\widetilde{\mathcal{G}},x,y}_{\rm exc}$ will denote the restriction of $\mu^{\widetilde{\mathcal{G}},x}_{\rm exc}$ to
excursions that end at $y$. 
$\mu^{\widetilde{\mathcal{G}},y,x}_{\rm exc}$ is the image of
$\mu^{\widetilde{\mathcal{G}},x,y}_{\rm exc}$ by time-reversal.
If $y\neq x$, $\mu^{\widetilde{\mathcal{G}},x,y}_{\rm exc}$ has a finite mass, which equals $H_{\mathcal{G}}(x,y)$. To the contrary, the mass of $\mu^{\widetilde{\mathcal{G}},x,x}_{\rm exc}$
is infinite. However, the restriction of 
$\mu^{\widetilde{\mathcal{G}},x,x}_{\rm exc}$ to excursions that visit 
$V\backslash\partial\mathcal{G}$ has a finite mass equal to $H_{\mathcal{G}}(x,x)$. 

Given $u$ a non-negative boundary condition on 
$\partial \mathcal{G}$, we define the following measure on excursions from boundary to boundary on the metric graph:
\begin{displaymath}
\mu^{\widetilde{\mathcal{G}},u}_{\rm exc}=
\dfrac{1}{2}\sum_{(x,y)\in\partial\mathcal{G}\times\partial\mathcal{G}}
u(x)u(y)\mu^{\widetilde{\mathcal{G}},x,y}_{\rm exc}.
\end{displaymath}
If one restricts $\mu^{\widetilde{\mathcal{G}},u}_{\rm exc}$ to excursions that visit $V\backslash\partial\mathcal{G}$ and performs on these excursions the time-change by the inverse of the CAF
\eqref{EqCAF}, one gets a measure on discrete-space continuous-time
boundary-to-boundary excursions which is exactly 
$\mu^{\mathcal{G},u}_{\rm exc}$.
Particular cases of above metric graph excursion measures were used in
\cite{Lupu2015ConvCLE}.

Next we state a Markov property for the metric graph excursion measure
$\mu^{\widetilde{\mathcal{G}},x}_{\rm exc}$. Let $K$ be a compact connected subset of $\widetilde{\mathcal{G}}$. The boundary $\partial K$ of $K$ will be by definition the union of the topological boundary of 
$K$ as a subset of $\widetilde{\mathcal{G}}$ and 
$\partial\mathcal{G}\cap K$. $K$ is a metric graph itself. Its set of vertices is $(V\cap K)\cup\partial K$. If an edge of $\mathcal{G}$ is entirely contained inside $K$, it will be an edge of $K$ and it will have the same conductance. $K$ can also contain one or two disjoint subsegments of an edge of $\mathcal{G}$. Each subsegment is a (different) edge for $K$, and the corresponding conductances are given by the inverses of the lengths of subsegments. So $K$ is naturally endowed with a boundary Poisson kernel
$(H_{K}(x,y))_{x,y\in \partial K}$ 
and boundary-to-boundary excursion measures
$(\mu^{K,x,y}_{\rm exc})_{x,y\in \partial K}$. Note that these objects depend only on $K$ and $\partial K$, and not on how $K$ is embedded in
$\widetilde{\mathcal{G}}$. 
\begin{prop}
	\label{PropMarkovExcMG}
	Let $x\in\partial\mathcal{G}$, and $K$ a compact connected subset of the metric graph $\widetilde{\mathcal{G}}$ which contains $x$ and such that
	$\widetilde{\mathcal{G}}\backslash K\neq\emptyset$. Denote by 
	$\gamma_{1}\circ\gamma_{2}$ the concatenation of paths
	$\gamma_{1}$ and $\gamma_{2}$, where $\gamma_{1}$ comes first.
	For any $F$ bounded measurable functional on paths, we have
	\begin{displaymath}
	\mu^{\widetilde{\mathcal{G}},x}_{\rm exc}(F(\gamma),
	\gamma~\text{visits}~\widetilde{\mathcal{G}}\backslash K)=
	\sum_{y\in\partial K\backslash\partial\mathcal{G}}
	\mu^{K,x,y}_{\rm exc}\otimes\mathbb{E}_{y}
	\left[F(\gamma_{1}\circ(\widetilde{X}_{t})_{0\leq t\leq 
		\widetilde{T}_{\partial\mathcal{G}}})\right],
	\end{displaymath}
	where $\mathbb{E}_{y}$ stands for the metric graph Brownian motion
	$\widetilde{X}$ inside $\widetilde{\mathcal{G}}$, started from $y$.
\end{prop}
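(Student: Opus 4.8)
The plan is to derive this Markov property from the strong Markov property of the metric graph Brownian motion $\widetilde{X}$, applied at the first hitting time of $\partial K$, together with the $\varepsilon\to 0$ definition of the excursion measures. Recall that for any bounded measurable functional $G$ on paths,
\[
\mu^{\widetilde{\mathcal{G}},x}_{\rm exc}(G)=\lim_{\varepsilon\to 0}\varepsilon^{-1}\sum_{i=1}^{k}\mathbb{E}_{x_{i,\varepsilon}}\big[G\big((\widetilde{X}_{t})_{0\le t\le\widetilde{T}_{\partial\mathcal{G}}}\big)\big],
\]
and apply this with $G(\gamma)=F(\gamma)\mathbf{1}\{\gamma\text{ visits }\widetilde{\mathcal{G}}\setminus K\}$. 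Take $\varepsilon$ small enough that all the points $x_{1,\varepsilon},\dots,x_{k,\varepsilon}$ lie in the interior of $K$ (possible since $K$ contains a neighbourhood of $x$), and set $\sigma:=\inf\{t\ge 0:\widetilde{X}_{t}\in\partial K\}$, a stopping time because $\partial K$ is closed. Since $\widetilde{X}_{t}$ stays in the interior of $K$ for $t<\sigma$ and $\partial\mathcal{G}\cap K\subseteq\partial K$, under each $\mathbb{P}_{x_{i,\varepsilon}}$ one has $0<\sigma\le\widetilde{T}_{\partial\mathcal{G}}$, with equality in the last inequality exactly when $\widetilde{X}_{\sigma}\in\partial\mathcal{G}$.

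The crux is the a.s.\ identity of events
\[
\{\gamma\text{ visits }\widetilde{\mathcal{G}}\setminus K\text{ before }\widetilde{T}_{\partial\mathcal{G}}\}=\{\widetilde{X}_{\sigma}\in\partial K\setminus\partial\mathcal{G}\}=\bigsqcup_{y\in\partial K\setminus\partial\mathcal{G}}\{\widetilde{X}_{\sigma}=y\},
\]
valid both under the measures $\mathbb{P}_{x_{i,\varepsilon}}$ and under $\mathbb{P}_{y}$ for $y\in\partial K\setminus\partial\mathcal{G}$. The inclusion ``$\subseteq$'' is immediate: to leave the closed set $K$ a continuous path must hit $\partial K$, and were it to hit $\partial K$ first at a point of $\partial\mathcal{G}$ it would have been killed there without ever having left $K$. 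For ``$\supseteq$'' I would invoke the local description of $\widetilde{X}$: a point $y\in\partial K\setminus\partial\mathcal{G}$ is a topological boundary point of $K$ not lying in $\partial\mathcal{G}$, so there are points of $\widetilde{\mathcal{G}}\setminus K$ at arbitrarily small distance from $y$ along some direction incident to $y$; since from $y$ the process immediately enters every incident edge and visits all sufficiently small distances along each of them, it a.s.\ visits $\widetilde{\mathcal{G}}\setminus K$ before any fixed positive time, in particular before $\widetilde{T}_{\partial\mathcal{G}}>0$. I would write this out in the two cases where $y$ is interior to an edge and where $y$ is a vertex of $\widetilde{\mathcal{G}}$, but both are of this nature.

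Granting the event identity, I apply the strong Markov property of $\widetilde{X}$ at $\sigma$. On $\{\widetilde{X}_{\sigma}=y\}$ with $y\in\partial K\setminus\partial\mathcal{G}$ we have $\sigma<\widetilde{T}_{\partial\mathcal{G}}$, the path factors as $(\widetilde{X}_{t})_{0\le t\le\widetilde{T}_{\partial\mathcal{G}}}=(\widetilde{X}_{t})_{0\le t\le\sigma}\circ(\widetilde{X}_{\sigma+t})_{0\le t\le\widetilde{T}_{\partial\mathcal{G}}-\sigma}$, and the indicator $\mathbf{1}\{\gamma\text{ visits }\widetilde{\mathcal{G}}\setminus K\}$ equals $1$ a.s.\ there by the identity applied under $\mathbb{P}_{y}$. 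Conditioning on $\mathcal{F}_{\sigma}$ therefore gives
\[
\mathbb{E}_{x_{i,\varepsilon}}\big[F\big((\widetilde{X}_{t})_{0\le t\le\widetilde{T}_{\partial\mathcal{G}}}\big);\gamma\text{ visits }\widetilde{\mathcal{G}}\setminus K\big]=\sum_{y\in\partial K\setminus\partial\mathcal{G}}\mathbb{E}_{x_{i,\varepsilon}}\big[\Phi_{y}\big((\widetilde{X}_{t})_{0\le t\le\sigma}\big);\widetilde{X}_{\sigma}=y\big],
\]
where $\Phi_{y}(\gamma_{1}):=\mathbb{E}_{y}\big[F\big(\gamma_{1}\circ(\widetilde{X}_{t})_{0\le t\le\widetilde{T}_{\partial\mathcal{G}}}\big)\big]$ is bounded and measurable. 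Multiplying by $\varepsilon^{-1}$, summing over $i$ and letting $\varepsilon\to 0$, I now use locality of the metric graph Brownian motion: the stopped path $(\widetilde{X}_{t})_{0\le t\le\sigma}$ started from $x_{i,\varepsilon}$ has the same law as the metric graph Brownian motion intrinsic to the metric graph $K$, started from $x_{i,\varepsilon}$ and stopped on hitting $\partial K$; hence, by the very definition of the boundary-to-boundary excursion measures attached to $K$,
\[
\lim_{\varepsilon\to 0}\varepsilon^{-1}\sum_{i=1}^{k}\mathbb{E}_{x_{i,\varepsilon}}\big[\Phi_{y}\big((\widetilde{X}_{t})_{0\le t\le\sigma}\big);\widetilde{X}_{\sigma}=y\big]=\mu^{K,x,y}_{\rm exc}(\Phi_{y}).
\]
Since $\partial K\setminus\partial\mathcal{G}$ is finite, summing over $y$ yields $\mu^{\widetilde{\mathcal{G}},x}_{\rm exc}(F(\gamma);\gamma\text{ visits }\widetilde{\mathcal{G}}\setminus K)=\sum_{y\in\partial K\setminus\partial\mathcal{G}}\mu^{K,x,y}_{\rm exc}(\Phi_{y})$, which is the asserted identity once $\Phi_{y}$ is unfolded.

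The step I expect to be the main obstacle is the geometric event identity of the second paragraph: one must argue carefully, using the fine behaviour of $\widetilde{X}$ near vertices and near points interior to an edge, that reaching $\partial K\setminus\partial\mathcal{G}$ forces an immediate excursion into $\widetilde{\mathcal{G}}\setminus K$. The remaining ingredients --- the strong Markov decomposition, its interchange with the $\varepsilon\to 0$ limit, and the identification of the $K$-stopped path with the metric graph Brownian motion intrinsic to $K$ --- are routine, though the last point should be recorded explicitly, since it is precisely what makes $\mu^{K,x,y}_{\rm exc}$ appear rather than some ad hoc object.
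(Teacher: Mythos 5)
The paper states Proposition \ref{PropMarkovExcMG} without proof, so there is no argument of the authors to compare yours against. Your route --- apply the $\varepsilon\to 0$ definition of $\mu^{\widetilde{\mathcal{G}},x}_{\rm exc}$, use the strong Markov property at the first hitting time $\sigma$ of $\partial K$, identify the event ``$\gamma$ visits $\widetilde{\mathcal{G}}\setminus K$'' with $\{\widetilde{X}_{\sigma}\in\partial K\setminus\partial\mathcal{G}\}$ up to null sets, and invoke locality of the diffusion to recognize the law of the stopped path as that of the Brownian motion intrinsic to the metric graph $K$ --- is the standard and, as far as I can see, correct one. Your treatment of the delicate point (that from $y\in\partial K\setminus\partial\mathcal{G}$ the process a.s.\ enters $\widetilde{\mathcal{G}}\setminus K$ immediately, both in the edge-interior and vertex cases) is the right argument, and the finiteness of $\partial K\setminus\partial\mathcal{G}$, which you use to sum over $y$, is implicit in the paper's convention that $K$ is itself a finite metric graph with vertex set $(V\cap K)\cup\partial K$.

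One point deserves flagging. Your parenthetical ``possible since $K$ contains a neighbourhood of $x$'' is not a consequence of the stated hypotheses: a compact connected $K$ containing $x$ need not contain an initial segment of every edge adjacent to $x$ (take $K$ a closed subsegment $[x,z]$ of a single adjacent edge). In that situation the proposition as literally stated is false: excursions that start into an edge $e$ with $e\cap K=\{x\}$ lie in $\widetilde{\mathcal{G}}\setminus K$ from time $0^{+}$ and contribute to the left-hand side, but are invisible to the right-hand side, which only sees paths entering $K$ through $x$ and exiting through some $y\in\partial K\setminus\partial\mathcal{G}$. So what you have written down is an additional hypothesis needed for the statement to hold (it is satisfied in every application the paper makes of the proposition, where $K$ contains a full neighbourhood of the relevant boundary points), not something you may assert from the hypotheses as given. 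You should state it as an assumption rather than derive it. With that caveat made explicit, the proof is complete.
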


\subsection{Isomorphism theorems}

The continuous time random walk \textit{loop-soup} 
$\mathcal{L}^\mathcal{G}_{\alpha}$ is a Poisson point process (PPP) of intensity $\alpha \mu^{\mathcal{G}}_{\rm loop}$, $\alpha >0$.
We view it as a random countable collection of loops. We will also consider PPP-s of boundary-to-boundary excursions
$\Xi_{u}^{\mathcal{G}}$, of intensity $\mu^{\mathcal{G},u}_{\rm exc}$,
where $u:\partial\mathcal{G}\rightarrow\mathbb{R}_{+}$ is a non-negative boundary condition.

The \textit{occupation field} of a path $(\gamma(t))_{0\leq t\leq t_{\gamma}}$ in $\mathcal{G}$, parametrized by continuous time, is
\begin{displaymath}
L^{x}(\gamma)=\int_{0}^{t_{\gamma}}\1_{\gamma(t)=x} dt.
\end{displaymath}
The occupation field of a loop-soup $\mathcal{L}^\mathcal{G}_{\alpha}$ is
\begin{displaymath}
L^{x}(\mathcal{L}^\mathcal{G}_{\alpha})=
\sum_{\gamma\in \mathcal{L}^\mathcal{G}_{\alpha}}L^{x}(\gamma).
\end{displaymath}
Same definition for the occupation field of $\Xi_{u}^{\mathcal{G}}$. At the intensity parameter $\alpha=1/2$, these occupation fields are relate to the square of GFF:

\begin{prop}
	\label{PropIsoDiscr}
	Let $u:\partial\mathcal{G}\rightarrow\mathbb{R}_{+}$ be a non-negative boundary condition. Take $\mathcal{L}^\mathcal{G}_{1/2}$ and 
	$\Xi_{u}^{\mathcal{G}}$ independent. Then, the sum of occupation fields
	\begin{displaymath}
	\left(L^{x}(\mathcal{L}^\mathcal{G}_{1/2})
	+L^{x}(\Xi_{u}^{\mathcal{G}})\right)
	_{x\in V\backslash\partial\mathcal{G}}
	\end{displaymath}
	is distributed like
	\begin{displaymath}
	\left(\dfrac{1}{2}(\phi+u)^{2}(x)\right)
	_{x\in V\backslash\partial\mathcal{G}},
	\end{displaymath}
	where $\phi+u$ is the GFF with boundary condition $u$.
\end{prop}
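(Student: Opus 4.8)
The plan is to check that the two random fields have the same Laplace transform. Both $\big(L^{x}(\mathcal{L}^{\mathcal{G}}_{1/2})+L^{x}(\Xi^{\mathcal{G}}_{u})\big)_{x\in V\backslash\partial\mathcal{G}}$ and $\big(\tfrac12(\phi+u)^{2}(x)\big)_{x\in V\backslash\partial\mathcal{G}}$ are non-negative random vectors indexed by the finite set $V\backslash\partial\mathcal{G}$, hence are determined by $\chi\mapsto\E\big[\exp(-\langle\chi,\cdot\rangle)\big]$ as $\chi$ ranges over non-negative functions on $V\backslash\partial\mathcal{G}$, where $\langle\chi,f\rangle:=\sum_{x\in V\backslash\partial\mathcal{G}}\chi(x)f(x)$. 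Fix such a $\chi$, write $G$ for $G_{\mathcal{G}}$ viewed as a matrix on $V\backslash\partial\mathcal{G}$, let $M_{\chi}$ be multiplication by $\chi$, and keep the letter $u$ for the restriction of the harmonic function to $V\backslash\partial\mathcal{G}$. The loop-soup factor is Le Jan's isomorphism: the standard formula for the Laplace functional of a Poissonian occupation field \cite{LeJan2011Loops,Lupu2016Iso} gives $\E\big[\exp(-\langle\chi,L(\mathcal{L}^{\mathcal{G}}_{1/2})\rangle)\big]=\det(I+GM_{\chi})^{-1/2}$, which is exactly the Laplace transform of $\tfrac12\phi^{2}$ for the zero-boundary GFF. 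Since $\mathcal{L}^{\mathcal{G}}_{1/2}$ and $\Xi^{\mathcal{G}}_{u}$ are independent and $\mu^{\mathcal{G},u}_{\rm exc}$ has finite total mass $\tfrac12\sum_{x,y\in\partial\mathcal{G}}u(x)u(y)H_{\mathcal{G}}(x,y)$, the exponential formula for the Poisson point process $\Xi^{\mathcal{G}}_{u}$ yields
\[
\E\big[\exp(-\langle\chi,L(\mathcal{L}^{\mathcal{G}}_{1/2})+L(\Xi^{\mathcal{G}}_{u})\rangle)\big]=\det(I+GM_{\chi})^{-1/2}\exp\big(-\mu^{\mathcal{G},u}_{\rm exc}(1-e^{-\langle\chi,L\rangle})\big).
\]

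The heart of the matter is to evaluate $\mu^{\mathcal{G},u}_{\rm exc}(1-e^{-\langle\chi,L\rangle})=\tfrac12\sum_{x,y\in\partial\mathcal{G}}u(x)u(y)\,H_{\mathcal{G}}(x,y)\big(1-\E^{\mathcal{G},x,y}_{\rm exc}[e^{-\langle\chi,L\rangle}]\big)$. By a Feynman--Kac computation — conditionally on the skeleton, each interior sojourn is exponential of mean $C_{\rm tot}(v)^{-1}$ and thus contributes a factor $C_{\rm tot}(v)/(C_{\rm tot}(v)+\chi(v))$ — one gets $H_{\mathcal{G}}(x,y)\E^{\mathcal{G},x,y}_{\rm exc}[e^{-\langle\chi,L\rangle}]=H^{\chi}_{\mathcal{G}}(x,y)$, the boundary Poisson kernel of the same network with the interior walk additionally killed at rate $\chi$. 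Enumerating excursion skeletons (using that every path in $\Gamma$ leaves $\partial\mathcal{G}$ on its first step and returns only on its last) gives $H_{\mathcal{G}}=BGB^{T}$ and $H^{\chi}_{\mathcal{G}}=BG^{\chi}B^{T}$, where $B$ is the $\partial\mathcal{G}\times(V\backslash\partial\mathcal{G})$ matrix $B(x,v)=C(x,v)$ and $G^{\chi}:=(G^{-1}+M_{\chi})^{-1}$; in particular $u=GB^{T}(u|_{\partial\mathcal{G}})$, i.e.\ $B^{T}(u|_{\partial\mathcal{G}})=G^{-1}u$. Hence $\mu^{\mathcal{G},u}_{\rm exc}(1-e^{-\langle\chi,L\rangle})=\tfrac12\,(u|_{\partial\mathcal{G}})^{T}B(G-G^{\chi})B^{T}(u|_{\partial\mathcal{G}})$, and the resolvent identity $G-G^{\chi}=GM_{\chi}(I+GM_{\chi})^{-1}G$ collapses this to $\tfrac12\langle u,M_{\chi}(I+GM_{\chi})^{-1}u\rangle$. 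Plugging back, the left-hand Laplace transform equals $\det(I+GM_{\chi})^{-1/2}\exp\big(-\tfrac12\langle u,M_{\chi}(I+GM_{\chi})^{-1}u\rangle\big)$.

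Finally I would check that the right-hand side gives the same expression. Writing $(\phi+u)^{2}=\phi^{2}+2u\phi+u^{2}$ and computing the Gaussian integral for the zero-boundary GFF $\phi$ (covariance $G$) by completing the square in the exponent $-\tfrac12\langle\chi,\phi^{2}\rangle-\langle M_{\chi}u,\phi\rangle-\tfrac12\langle\chi,u^{2}\rangle$, one obtains $\E\big[\exp(-\tfrac12\langle\chi,(\phi+u)^{2}\rangle)\big]=\det(I+GM_{\chi})^{-1/2}\exp\big(-\tfrac12\langle u,M_{\chi}(I+GM_{\chi})^{-1}u\rangle\big)$, which matches the display above; since $\chi\geq0$ was arbitrary, the two fields have the same law.

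I expect the only genuinely delicate points to be the bookkeeping in the Feynman--Kac step and keeping the $\partial\mathcal{G}$-versus-$(V\backslash\partial\mathcal{G})$ restrictions consistent while manipulating $B$, $G$ and $G^{\chi}$; once $H_{\mathcal{G}}=BGB^{T}$ and $H^{\chi}_{\mathcal{G}}=BG^{\chi}B^{T}$ are established, the rest is routine linear algebra. A more probabilistic alternative would be to deduce the statement from Le Jan's isomorphism combined with Dynkin's isomorphism applied to the excursions, but the Laplace-transform computation seems the most economical.
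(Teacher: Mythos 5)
Your argument is correct, and it takes a genuinely different route from the paper. The paper does not compute anything: it invokes Le Jan's isomorphism for $u\equiv 0$, combines it with the generalized second Ray--Knight theorem for constant $u>0$ (collapsing $\partial\mathcal{G}$ to a single vertex so that boundary-to-boundary excursions become excursions from that vertex), and then reduces non-constant $u>0$ to the constant case via the conductance change $\widehat{C}(x,y)=C(x,y)u(x)u(y)$ --- a discrete Doob-type transform whose validity rests on the harmonicity of $u$ (checked through an energy identity) and an accompanying time change $dt=u(x)^{-2}ds$ of the trajectories; the case where $u$ vanishes somewhere is then handled by a limit. You instead verify the identity of Laplace transforms directly: the loop-soup factor $\det(I+GM_{\chi})^{-1/2}$ is Le Jan's formula, and your treatment of the excursion factor --- Feynman--Kac on the skeleton to identify $H_{\mathcal{G}}(x,y)\,\E^{\mathcal{G},x,y}_{\rm exc}[e^{-\langle\chi,L\rangle}]$ with the killed kernel $H^{\chi}_{\mathcal{G}}(x,y)$, the factorizations $H_{\mathcal{G}}=BGB^{T}$, $H^{\chi}_{\mathcal{G}}=BG^{\chi}B^{T}$, the harmonicity relation $B^{T}(u|_{\partial\mathcal{G}})=G^{-1}u$, and the resolvent identity --- checks out, and it matches the completed-square Gaussian computation on the other side. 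Each step I verified is sound (in particular the path-weight bookkeeping: the skeleton weight $\prod C(x_{i-1},x_i)\prod C_{\rm tot}(x_i)^{-1}$ times the holding-time factors $\prod C_{\rm tot}(x_i)/(C_{\rm tot}(x_i)+\chi(x_i))$ is exactly the $\chi$-killed excursion weight, and $BGB^{T}$ counts precisely the paths of $\Gamma$ that visit the interior, consistent with the paper's definition of $\Gamma$). What each approach buys: the paper's proof is short and leverages known theorems, but hides the mechanism in the Ray--Knight theorem and needs the limiting argument when $u$ is not strictly positive; yours is longer but fully explicit, treats constant, non-constant and vanishing $u$ uniformly, and is the computation one would in any case need to justify the analogous continuum identity used later in the proof of Proposition \ref{PropIsoURenorm} (where the paper indeed says the relation ``holds at discrete level \ldots as a consequence of Proposition \ref{PropIsoDiscr}'').
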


\begin{proof}
	If $u\equiv 0$, there are no excursions we are in the setting of Le Jan's isomorphism for loop-soups (\cite{LeJan2007Unpub,LeJan2011Loops}). If $u$ is constant and strictly positive, then the proposition follows by combining Le Jan's isomorphism and the generalized second Ray-Knight theorem (\cite{MarcusRosen2006MarkovGaussianLocTime,Sznitman2012LectureIso}). Indeed, then one can consider the whole boundary 
	$\partial\mathcal{G}$ as a single vertex, and the boundary to boundary excursions as excursions outside this vertex.
	
	The case of $u$ non-constant can be reduced to the previous one. We first assume that $u$ is strictly positive on $\partial\mathcal{G}$.
	The general case can be obtained by taking the limit. 
	We define new conductances on the edges:
	\begin{displaymath}
	\widehat{C}(x,y):=C(x,y)u(x)u(y),
	\end{displaymath}
	where $x$ and $y$ are neighbours in $\mathcal{G}$. Let
	$\hat{\phi}$ be the 0 boundary GFF associated to the new conductances
	$\widehat{C}$.
	We claim that
	\begin{displaymath}
	(\hat{\phi}(x))_{x\in V}
	\stackrel{(d)}{=}
	(u(x)^{-1}\phi(x))_{x\in V}.
	\end{displaymath}
	To check the identity in law one has to check the identity of energy functions:
	\begin{eqnarray*}
		&&\mathcal{E}_{\mathcal{G}}(uf,uf)=
		-\sum_{x\in V}\sum_{y\sim x}
		u(x)f(x)C(x,y)(u(y)f(y)-u(x)f(x))\\
		&&=
		-\sum_{x\in V}\sum_{y\sim x}
		u(x)f(x)C(x,y)u(y)(f(y)-f(x))
		+\sum_{x\in V}\sum_{y\sim x}
		C(x,y)(u(y)-u(x))u(x)f(x)^{2}\\
		&&=-\sum_{x\in V}\sum_{y\sim x}
		f(x)\widehat{C}(x,y)(f(y)-f(x))+
		\sum_{x\in\partial\mathcal{G}}\sum_{y\sim x}
		C(x,y)(u(y)-u(x))u(x)f(x)^{2}\\
		&&=\widehat{\mathcal{E}}(f,f)+0.
	\end{eqnarray*}
	From the second to the third line we used that $u$ is harmonic.
	
	Now, we can apply the case of constant boundary conditions to
	$\frac{1}{2}(\hat{\phi}+1)^{2}$. We get that it is distributed like the occupation field of a loop-soup of parameter $\alpha=1/2$ and	
	an independent Poissonian family of excursions 	from $\hat{x}$ to $\hat{x}$, both associated to the jump rates $\widehat{C}(x,y)$. If on these paths we perform the time change
	\begin{equation}
	\label{EqTimeChange}
	dt=u(x)^{-2} ds,
	\end{equation}	
	we get $\mathcal{L}^\mathcal{G}_{1/2}$ and 	$\Xi_{u}^{\mathcal{G}}$.
	The time change \eqref{EqTimeChange} multiplies the occupation field by $u^{2}$, which exactly transforms $(\hat{\phi}+1)^{2}$ into
	$(\phi+u)^{2}$.
\end{proof}

Note that the coupling
$(L(\mathcal{L}^{\mathcal{G}}_{1/2}),
L(\mathcal{L}^{\mathcal{G}}_{1/2})+
L(\Xi^{\mathcal{G}}_{u}))$ is not the same as
$(\frac{1}{2}\phi^{2}, \frac{1}{2}(\phi+u)^{2})$.

On a metric graph, the isomorphism given by Proposition
\ref{PropIsoDiscr} still holds. But in this setting one has a stronger version of it, which takes in account the sign of the GFF.
Consider a PPP of loops (loop-soup)
$\mathcal{L}^{\widetilde{\mathcal{G}}}_{1/2}$ on the metric graph 
$\widetilde{\mathcal{G}}$, of intensity
$\frac{1}{2}\mu^{\widetilde{\mathcal{G}}}_{\rm loop}$, 
and an independent PPP of metric graph excursions
from boundary to boundary, $\Xi_{u}^{\widetilde{\mathcal{G}}}$, of intensity 
$\mu^{\widetilde{\mathcal{G}},u}_{\rm exc}$. 
For $x\in \widetilde{\mathcal{G}}$, 
$L^{x}(\mathcal{L}^{\widetilde{\mathcal{G}}}_{1/2})$ is defined as the sum over the loops of the local time at $x$ accumulated by the loops. The occupation field 
$L^{x}(\Xi_{u}^{\widetilde{\mathcal{G}}})$ is defined similarly.
$L^{x}(\Xi_{u}^{\widetilde{\mathcal{G}}})$ is a locally finite sum,
except at the boundary points $\partial\mathcal{G}$, but there it converges to 
$\frac{1}{2} u^{2}$. Indeed, for this limit only matter the excursions that do not visit $V\backslash\partial\mathcal{G}$, but then we are in the case of excursions of a one-dimensional Brownian motion.
To the contrary, 
$L^{x}(\mathcal{L}^{\widetilde{\mathcal{G}}}_{1/2})$ 
is a.s. an infinite sum at a fixed point 
$x\in \widetilde{\mathcal{G}}\backslash\partial\mathcal{G}$.
However $x\mapsto 
L^{x}(\mathcal{L}^{\widetilde{\mathcal{G}}}_{1/2})$ admits a
continuous version (\cite{Lupu2016Iso}), and we will only consider it.
We will also consider the clusters formed by 
$\mathcal{L}^{\widetilde{\mathcal{G}}}_{1/2}
\cup \Xi_{u}^{\widetilde{\mathcal{G}}}$.
Two trajectories (loops or excursions) belong to the same \textit{cluster} if there is a finite chain of trajectories which connects the two, such that any two consecutive elements of the chain intersect each other.
The zero set of 
$L^{x}(\mathcal{L}^{\widetilde{\mathcal{G}}}_{1/2})+
L^{x}(\Xi_{u}^{\widetilde{\mathcal{G}}})$, which is non-empty with positive probability, is exactly the set of points not visited by any loop or excursion. The connected components of the positive set of
$L^{x}(\mathcal{L}^{\widetilde{\mathcal{G}}}_{1/2})+
L^{x}(\Xi_{u}^{\widetilde{\mathcal{G}}})$ are exactly the clusters of
$\mathcal{L}^{\widetilde{\mathcal{G}}}_{1/2}
\cup \Xi_{u}^{\widetilde{\mathcal{G}}}$, i.e. all the trajectories inside such a connected component belong to the same cluster.
In \cite{Lupu2016Iso}, it is proved only for clusters of loops, but one can easily generalize it to the case with excursions.
Also note that on the metric graph with positive probability the clusters of loops and excursions are strictly larger than the ones on the discrete network, i.e. they connect more vertices. We state next isomorphism without proof as it can be deduced from Proposition \ref{PropIsoDiscr} following the method of \cite{Lupu2016Iso}.

\begin{prop}
	\label{PropIsoCable}
	Let $u$ be a non-negative boundary condition and 
	$\mathcal{L}^{\widetilde{\mathcal{G}}}_{1/2}$ and
	$\Xi_{u}^{\widetilde{\mathcal{G}}}$ be as previously. Let $\sigma(x)$ be a random sign function with values in $\lbrace -1,1\rbrace$, defined on the set
	\begin{displaymath}
	\lbrace x\in\widetilde{\mathcal{G}}\vert 
	L^{x}(\mathcal{L}^{\widetilde{\mathcal{G}}}_{1/2})+
	L^{x}(\Xi_{u}^{\widetilde{\mathcal{G}}})>0\rbrace,
	\end{displaymath}
	such that
	\begin{itemize}
		\item $\sigma(x)$ is constant on the connected components of its domain,
		\item conditionally on 
		$(\mathcal{L}^{\widetilde{\mathcal{G}}}_{1/2},
		\Xi_{u}^{\widetilde{\mathcal{G}}})$, the value of $\sigma(x)$ is independent of the values of $\sigma$ on other connected components, 
		\item $\sigma(x)$ equals $1$ if the cluster of $x$ contains at least one excursion,
		\item if the cluster of $x$ contains no excursion (or equivalently does not intersect $\partial\mathcal{G}$), then conditionally on 
		$(\mathcal{L}^{\widetilde{\mathcal{G}}}_{1/2},
		\Xi_{u}^{\widetilde{\mathcal{G}}})$,
		$\sigma(x)$ equals $-1$ or $1$ with probability $1/2$ each.
	\end{itemize}
	The definition of $\sigma$ will be extended to $\widetilde{\mathcal{G}}$
	by letting $\sigma$ to equal $0$ on 
	$\lbrace x\in\widetilde{\mathcal{G}}\vert 
	L^{x}(\mathcal{L}^{\widetilde{\mathcal{G}}}_{1/2})+
	L^{x}(\Xi_{u}^{\widetilde{\mathcal{G}}})=0\rbrace$.
	Then the field
	\begin{displaymath}
	\left(\sigma(x)\sqrt{2}
	\left(L^{x}(\mathcal{L}^\mathcal{G}_{1/2})
	+L^{x}(\Xi_{u}^{\mathcal{G}})\right)^{1/2}\right)
	_{x\in \widetilde{\mathcal{G}}}
	\end{displaymath}
	is distributed like $\tilde{\phi}+u$, the metric graph GFF with boundary condition $u$.
\end{prop}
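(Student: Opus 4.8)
The plan is to follow the strategy of \cite{Lupu2016Iso}, taking the discrete isomorphism of Proposition~\ref{PropIsoDiscr} as input and transporting it to the whole metric graph by combining the strong Markov property (Proposition~\ref{PropStrongMarkov}) with a one-dimensional Ray--Knight type computation carried out edge by edge. First I would reduce to a convenient boundary condition: since both sides depend monotonically on $u$, it suffices to treat $u>0$ on $\partial\mathcal{G}$ (approximating a general non-negative $u$ from above); and then, exactly as in the proof of Proposition~\ref{PropIsoDiscr}, the change of conductances $\widehat C(x,y)=C(x,y)u(x)u(y)$ reduces the statement to the case of constant boundary conditions, where $\partial\mathcal{G}$ may be collapsed to a single marked vertex and the boundary-to-boundary excursions become loops through that vertex. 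In this reduced setting, Proposition~\ref{PropIsoDiscr} together with the time-change correspondence between $\mu^{\widetilde{\mathcal{G}}}_{\rm loop}$, $\mu^{\widetilde{\mathcal{G}},u}_{\rm exc}$ and their discrete counterparts recalled in Section~\ref{Loops Metric} already gives that $\big(L^{x}(\mathcal{L}^{\widetilde{\mathcal{G}}}_{1/2})+L^{x}(\Xi^{\widetilde{\mathcal{G}}}_{u})\big)_{x\in V}$ has the law of $\big(\tfrac12(\phi+u)^{2}(x)\big)_{x\in V}$.

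The core step is to promote this identity from $V$ to all of $\widetilde{\mathcal{G}}$, with the sign. I would condition on the trace of the loop/excursion configuration on $\mathcal{G}$: its vertex occupation field, the holding times at vertices, and the discrete skeletons of the trajectories. Conditionally on this data, the configuration inside an edge $e=\{x,y\}$ of length $\ell_{e}=C(x,y)^{-1}$ is an independent object, made of the pieces of the macroscopic trajectories that cross $e$ together with the small loops contained in the open edge (sampled from the one-dimensional loop measure of intensity $1/2$). By the second Ray--Knight theorem in its excursion-theoretic form (see \cite{MarcusRosen2006MarkovGaussianLocTime,Sznitman2012LectureIso,Lupu2016Iso}), the resulting occupation field on $[0,\ell_{e}]$, given endpoint values $L^{x}=\tfrac12 a^{2}$ and $L^{y}=\tfrac12 b^{2}$, is distributed as $\tfrac12$ times the square of a Brownian bridge on $[0,\ell_{e}]$ with endpoint moduli $a,b$ \emph{whenever the two endpoints are joined by a macroscopic trajectory inside $e$}, and otherwise splits into two independent pieces separated by a zero, one attached to each endpoint, each governed by an appropriate squared-Bessel bridge law. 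On the GFF side, the strong Markov property gives that, conditionally on its values at the vertices, $\tilde\phi+u$ restricted to $e$ is a Brownian bridge, so $(\tilde\phi+u)^{2}$ on $e$ is the square of a Brownian bridge; once one records whether or not this square vanishes on $e$, this matches the previous description term by term. Gluing over all edges, using that both $L(\mathcal{L}^{\widetilde{\mathcal{G}}}_{1/2})+L(\Xi^{\widetilde{\mathcal{G}}}_{u})$ and $(\tilde\phi+u)^{2}$ are continuous and satisfy the same spatial Markov property, yields that $\big(2(L^{x}(\mathcal{L}^{\widetilde{\mathcal{G}}}_{1/2})+L^{x}(\Xi^{\widetilde{\mathcal{G}}}_{u}))\big)_{x\in\widetilde{\mathcal{G}}}$ and $\big((\tilde\phi+u)^{2}(x)\big)_{x\in\widetilde{\mathcal{G}}}$ agree in law as continuous functions, and in particular their zero sets agree in law.

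It then remains to check that the prescribed random sign $\sigma$ reproduces $\operatorname{sign}(\tilde\phi+u)$. The sign of $\tilde\phi+u$ is locally constant on $\{\tilde\phi+u\neq 0\}$, whose connected components are, as recalled just before the statement (a mild extension of \cite{Lupu2016Iso}), exactly the clusters of $\mathcal{L}^{\widetilde{\mathcal{G}}}_{1/2}\cup\Xi^{\widetilde{\mathcal{G}}}_{u}$; hence $\sigma$ is constant on clusters. A cluster containing an excursion reaches $\partial\mathcal{G}$ along a path on which $\tilde\phi+u$ never vanishes, so, $u$ being positive there, $\tilde\phi+u$ is positive on it and $\sigma\equiv 1$ on such clusters. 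For a cluster not meeting $\partial\mathcal{G}$ one applies Proposition~\ref{PropStrongMarkov} to the complement of a neighbourhood of the closure of that cluster: conditionally on everything else, $\tilde\phi+u$ on the corresponding component of $\widetilde{\mathcal{G}}$ is an independent zero-boundary metric graph GFF conditioned on its absolute value, so its global sign on the component is uniform in $\{-1,1\}$ and independent across components — exactly the recipe defining $\sigma$. The conditional independence across clusters follows from the same Markov property together with the independence of the loop/excursion configurations in disjoint clusters given the occupation field.

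The main obstacle I expect is precisely the single-edge identification, and within it the emergence of a \emph{free} sign at every zero of the occupation field: one must show that when the occupation field on an interval vanishes the excursion structure on the two sides genuinely decouples, so that the Gaussian reconstructed from its square may independently pick a sign on each side. This is the one-dimensional heart of the ``polarized'' isomorphism and the one place where an actual computation (rather than bookkeeping) is required; given it, the discrete input, the Markov gluing, the conditional independence across clusters, and the value $+1$ on boundary-touching clusters are routine.
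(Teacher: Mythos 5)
Your proposal is correct and follows essentially the route the paper intends: the paper in fact states this proposition without proof, saying only that it ``can be deduced from Proposition~\ref{PropIsoDiscr} following the method of \cite{Lupu2016Iso}'', which is exactly the reduction to the discrete isomorphism (via the conductance change $\widehat C(x,y)=C(x,y)u(x)u(y)$ and collapsing the boundary) followed by the edge-by-edge Ray--Knight identification and sign reconstruction that you describe. The single-edge step you flag as the technical heart is indeed the content of \cite{Lupu2016Iso}, to which the paper defers.
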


\subsection{First passage sets of the GFF on a metric graph}\label{dFPS}

There is a natural notion of \textit{first passage sets} for the metric graph GFF $\tilde{\phi}+u$, which are analogues of first passage bridges for the one-dimensional Brownian motion. 
Let $a\in\mathbb{R}$. Define
\begin{displaymath}
\widetilde{\A}_{-a}^{u}=
\widetilde{\A}_{-a}^{u}(\tilde{\phi}):=
\lbrace x\in\widetilde{\mathcal{G}}\vert
\exists \gamma~\text{continuous path from}~x~\text{to}~
\partial\mathcal{G}~\text{such that}~\tilde{\phi}\geq -a~\text{on}~\gamma\rbrace.
\end{displaymath}
We report to Figure \ref{FigLvlLM} for a picture of a first passage set on metric graph.
$\widetilde{\A}_{-a}^{u}$ is a compact optional set and 
$\tilde{\phi}+u$ equals $-a$ on 
$\partial \widetilde{\A}_{-a}^{u}\backslash\partial\mathcal{G}$. Moreover, each connected component of 
$\widetilde{\A}_{-a}^{u}$ intersects $\partial\mathcal{G}$. 
$\partial \widetilde{\A}_{-a}^{u}$ is the first passage set of 
level $-a$. These first passage sets were introduced in
\cite{LupuWerner2016Levy}.
From Proposition \ref{PropIsoCable} we obtain a representation of the FPS using Brownian loops and excursions:

\begin{prop}
	\label{CorFPCluster}
	If $a=0$ and the boundary condition $u$ is non-negative, then in the coupling of
	Proposition \ref{PropIsoCable}, $\widetilde{\A}_{0}^{u}$
	is the union of topological closures of clusters of loops and excursions that contain at least an excursions
	(i.e. are connected to $\partial\mathcal{G}$), plus
	$\partial\mathcal{G}$.
\end{prop}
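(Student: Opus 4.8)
The plan is to identify $\widetilde{\A}_0^u$ with the closed union of clusters containing an excursion, by matching it with the zero/positive set structure carried by the sign field $\sigma$ from Proposition \ref{PropIsoCable}. Write $\mathcal{U}$ for the closed union of all clusters of $\mathcal{L}^{\widetilde{\mathcal{G}}}_{1/2}\cup\Xi_u^{\widetilde{\mathcal{G}}}$ that contain at least one excursion, together with $\partial\mathcal{G}$. Since $a=0$, the definition of $\widetilde{\A}_0^u$ is the set of points joined to $\partial\mathcal{G}$ by a path on which $\tilde\phi+u\geq 0$ (note $\tilde\phi+u$ and $\tilde\phi$ differ by the harmonic extension of $u\geq 0$, but what matters is the field in the coupling, which is $\sigma(x)\sqrt{2}(L^x(\mathcal{L}_{1/2})+L^x(\Xi_u))^{1/2}$). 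First I would record the consequence of Proposition \ref{PropIsoCable}: the field is $\geq 0$ precisely on the set where either $\sigma(x)\in\{0,1\}$, and it is strictly positive exactly on the union of clusters with $\sigma\equiv 1$, while on clusters with $\sigma\equiv-1$ the field is strictly negative, and on the zero set of the occupation field the field vanishes. Crucially, by the third bullet of Proposition \ref{PropIsoCable}, $\sigma\equiv 1$ on every cluster that contains an excursion; so each such cluster lies in $\{\tilde\phi+u\geq 0\}$ (in fact $>0$ on its interior points of the occupation support).

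Next I would prove the inclusion $\mathcal{U}\subseteq\widetilde{\A}_0^u$. Take $x$ in a cluster $\mathfrak{C}$ containing an excursion $e$; that excursion touches $\partial\mathcal{G}$. Since $\mathfrak{C}$ is a connected subset of $\widetilde{\mathcal{G}}$ on which the field is $\geq 0$ (the support of the occupation field of the trajectories in $\mathfrak{C}$ is connected and carries sign $+1$; here one uses the fact quoted in the excerpt that connected components of the positivity set of the occupation field coincide with clusters), there is a continuous path inside $\mathfrak{C}$ from $x$ to $e$ and hence to $\partial\mathcal{G}$ along which the field stays $\geq 0$. Thus $x\in\widetilde{\A}_0^u$; taking closures and adding $\partial\mathcal{G}$ gives $\mathcal{U}\subseteq\widetilde{\A}_0^u$ (using that $\widetilde{\A}_0^u$ is closed).

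For the reverse inclusion $\widetilde{\A}_0^u\subseteq\mathcal{U}$, I would argue that any point of $\widetilde{\A}_0^u\setminus\partial\mathcal{G}$ lies on a cluster meeting $\partial\mathcal{G}$. Let $x\in\widetilde{\A}_0^u$ and $\gamma$ a path from $x$ to $\partial\mathcal{G}$ with $\tilde\phi+u\geq 0$ on $\gamma$. Along $\gamma$ the field is nonnegative, so $\gamma$ avoids every cluster carrying sign $-1$ (where the field is $<0$); moreover $\gamma$ avoids every cluster carrying $+1$ that is \emph{not} connected to $\partial\mathcal{G}$ only if such clusters exist — but here is the key point: the set where the field is $\geq 0$ decomposes into $(i)$ the zero set of the occupation field and $(ii)$ clusters with $\sigma\equiv 1$. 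On the zero set the occupation field is $0$, and a fundamental property of the metric-graph loop soup (and the excursion PPP) is that this zero set is totally disconnected from $\partial\mathcal{G}$ in the sense relevant here — more precisely, the excursions of $\Xi_u^{\widetilde{\mathcal{G}}}$ emanate from $\partial\mathcal{G}$ and immediately accumulate there, so every neighbourhood of a boundary point is visited; hence a path leaving $\partial\mathcal{G}$ and staying in $\{\text{field}\geq0\}$ must, upon leaving the boundary, enter a cluster with $\sigma\equiv 1$, and by continuity stay in the closure of that single cluster until it reaches $x$. Since the cluster is reached from $\partial\mathcal{G}$, it contains an excursion (the reverse implication in bullet three: a $+1$ cluster that touches $\partial\mathcal{G}$ contains an excursion, because on the metric graph only excursions, not loops, touch $\partial\mathcal{G}$). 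Hence $x\in\mathcal{U}$.

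The main obstacle is the boundary analysis in the last paragraph: making rigorous that a nonnegative path starting at $\partial\mathcal{G}$ must immediately enter a single $+1$-cluster and cannot ``escape along the zero set.'' This requires the local picture near $\partial\mathcal{G}$ — that excursions in $\Xi_u^{\widetilde{\mathcal{G}}}$ pile up at each boundary vertex and that the occupation field is strictly positive in a (random) one-sided neighbourhood of $\partial\mathcal{G}$ inside each adjacent edge, mirroring the one-dimensional Brownian excursion computation already invoked in the excerpt to show $L^x(\Xi_u^{\widetilde{\mathcal{G}}})\to\frac12 u^2$ at $\partial\mathcal{G}$ — together with the connectedness statement ``connected components of the positivity set = clusters'' which is quoted from \cite{Lupu2016Iso}. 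Everything else is a routine matching of definitions, plus the observation that $\widetilde{\A}_0^u$ is closed and that loops never touch $\partial\mathcal{G}$ so boundary-touching clusters necessarily contain an excursion.
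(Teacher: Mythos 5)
The paper itself gives no written proof of Proposition \ref{CorFPCluster} (it is presented as an immediate consequence of Proposition \ref{PropIsoCable}), so your write-up is already more explicit than the source. Your forward inclusion $\mathcal{U}\subseteq\widetilde{\A}_{0}^{u}$ is correct. The reverse inclusion, however, has a genuine gap at its central step: the claim that a nonnegative path, once it has entered a $+1$ cluster from the boundary, must ``by continuity stay in the closure of that single cluster until it reaches $x$.'' Continuity gives no such thing. The set $\{\tilde\phi+u\geq 0\}$ is the union of the closed zero set of the occupation field and the open $+1$ clusters, and a totally disconnected closed set can perfectly well be traversed by a connected path stitched together from infinitely many components of its complement (think of $[0,1]$ as the Cantor set union its complementary intervals). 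So a priori your path could exit the closure of the first cluster through an interior zero of the field, travel along the zero set, and enter a different $+1$ cluster containing no excursion. Your closing paragraph locates the obstacle at the boundary (excursions piling up at $\partial\mathcal{G}$, positivity of the occupation field near boundary vertices), but that only controls the final instant of the path; it does not rule out the interior hopping just described.

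What closes the gap is a sample-path property in the interior: almost surely, every $z\in\widetilde{\mathcal{G}}\setminus\partial\mathcal{G}$ with $(\tilde\phi+u)(z)=0$ is an accumulation point of $\{\tilde\phi+u<0\}$. Equivalently, a.s.\ there is no nondegenerate interval on which $\tilde\phi+u\geq 0$ with an interior zero; this follows because inside each edge the field is a Brownian bridge, and for every rational subinterval the minimum of the field over that subinterval has a continuous law, hence is a.s.\ not exactly $0$ when the field is nonnegative there. Granting this, at any interior zero $z$ visited by your path, either negative values accumulate at $z$ from both sides of the edge, in which case the local component of $z$ in $\{\tilde\phi+u\geq 0\}$ is $\{z\}$ and a nonconstant nonnegative path cannot pass through $z$ at all, or they accumulate from exactly one side, in which case $z$ is an endpoint of exactly one positive excursion interval and the path is locally trapped in the closure of that single component of $\{\tilde\phi+u>0\}$. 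A connectedness argument in the time parameter then shows the path stays in the closure of one cluster up to its first hitting time of $\partial\mathcal{G}$; since excursions only join boundary points where $u>0$, the occupation field at the boundary point so reached equals $\tfrac12 u^{2}>0$ (the case $u=0$ at the endpoint is excluded by the same sample-path fact, as the field then immediately goes negative on each adjacent edge), so that boundary point lies in the cluster and the cluster contains the excursions emanating from it. With this lemma inserted, the rest of your argument goes through.
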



\section{Continuum preliminaries}\label{sectC}

In this section, we discuss about the continuum counterpart of the objects defined in the last section. First, we recall the notion of the continuum two-dimensional GFF. Then, we discuss Brownian loop and excursion measures. Further, we will give an isomorphism relating Brownian loops and excursions to the Wick square of the GFF. Finally, we will recall some properties of the first passage set of the continuum GFF that appear in \cite{ALS1}.

We denote by $D\subseteq \mathbb{C}$ an open bounded planar domain with a non-empty and non-polar boundary. By conformal invariance, we can always assume that $D$ is a subset of the unit disk $\D$.
The most general case that we work with are domains $D$ such that the complement of $D$ has at most finitely many connected component and no complement being a singleton. Recall that by the Riemann mapping for multiple-connected domains \cite{Ko2}, such domains $D$ are known to be conformally equivalent to a circle domain (i.e. to $\D \backslash K$, where $K$ is a finite union of closed disjoint disks, disjoint also from $\partial \D$). 

\subsection{The continuum GFF and its local sets}

The (zero boundary) Gaussian Free Field (GFF) in a domain $D$ 
\cite{SGFF} can be viewed as a centered Gaussian process $\Phi$ (we also sometimes write  $\Phi^D$ when we the domain needs to be specified) 
indexed by the set of continuous functions with compact support in $D$, with covariance given by 
$$ \E [(\Phi,f_{1}) (\Phi,f_{2})]  =  
\iint_{D\times D} f_{1}(z) G_D(z,w) f_{2}(w) \d z \d w, $$ 
where $G_D$ is the Green's function of Laplacian (with Dirichlet boundary conditions) in $D$, normalized such that $G_D(z,w)\sim (2\pi)^{-1} \log(1/|z-w|)$ as $z \to w$. The corresponding Dirichlet form is
$\int_{D} (\nabla f)^{2}$.

For this choice of normalization of $G$ (and therefore of the GFF), we set  $$\lambda=\sqrt{\pi/8}.$$ 
In the literature, the constant $2\lambda$ is called the \textit{height gap} \cite{SchSh,SchSh2}. Sometimes, other normalizations are used in the literature: if ${G_D (z,w) \sim c \log(1/|z-w|)}$ as $z \to w$, then $\lambda$ should 
be taken to be $(\pi/2)\times \sqrt {c}$. 

The covariance kernel of the GFF blows up on the diagonal, which makes it impossible to view $\Phi$ as a random function defined pointwise. It can, however, be shown that the GFF has a version that lives in some space of generalized functions (Sobolev space $H^{-1}$), which justifies the notation $(\Phi,f)$ for $\Phi$ acting on functions $f$ (see for example \cite{Dub}).

In this paper, $\Phi$ always denotes the zero boundary GFF. We also consider GFF-s with non-zero Dirichlet boundary conditions - they are given by $\Phi + u$ where $u$ is some bounded harmonic function whose boundary values are piecewise constant\footnote{Here and elsewhere this means piecewise constant that changes only finitely many times}.

\subsection{Local sets: definitions and basic properties}
Let us now introduce more thoroughly the local sets of the GFF. We only discuss items that are directly used in the current paper. For a more general discussion of local sets, thin local sets (not necessarily of bounded type), we refer to 
\cite{SchSh2,WWln2,Se}.

Even though, it is not possible to make sense of $(\Phi,f)$ when $f=\1_A$ is the indicator function of an arbitrary random set $A$, local sets form a class of random sets where this is (in a sense) possible:

\begin{defn}[Local sets]
	Consider a random triple $(\Phi, A,\Phi_A)$, where $\Phi$ is a  GFF in $D$, $A$ is a random closed subset of $\overline D$ and $\Phi_A$ a random distribution that can be viewed as a harmonic function when restricted to
	$D \backslash A$.
	We say that $A$ is a local set for $\Phi$ if conditionally on $(A,\Phi_A)$, $\Phi^A:=\Phi - \Phi_A$ is a  GFF in $D \backslash A$. 
	\end {defn}
	Throughout this paper, we use the notation $h_A: D\rightarrow\R$ for the function  that is equal to $\Phi_A$ on $D\backslash A$ and $0$ on $A$.

	Let us list a few properties of local sets (see for instance \cite {SchSh2,Aru,AS} for derivations and further properties): 
	\begin{lemma}\label{BPLS}    $\ $
		\begin {enumerate}
		\item Any local set can be coupled in a unique way with a given GFF: Let $(\Phi,A,\Phi_A,\widehat \Phi_A)$ be a coupling, where $(\Phi,A,\Phi_A)$ and $(\Phi,A,\widehat \Phi_A)$ satisfy the conditions of this definition. Then, a.s. $\Phi_A=\widehat \Phi_A$. Thus, being a local set is a property of the coupling $(\Phi,A)$, as  $\Phi_A$ is a measurable function of $(\Phi,A)$. 
		\item (Proposition 1.3.29 of \cite{Aru}) If $A$ and $B$ are local sets coupled with the same GFF $\Phi$, and $(A, \Phi_A)$ and $(B, \Phi_B)$ are conditionally independent given $\Phi$, then $A \cup B$ is also a local set coupled with $\Phi$. Additionally, $B\backslash A$ is a local set of $\Phi^A$ with $(\Phi^A)_{B\backslash A} = \Phi_{B\cup A}-\Phi_{A}$. 
	\end{enumerate}
\end{lemma}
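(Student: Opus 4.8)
\emph{Uniqueness (item 1).} The plan is to exhibit $\Phi_A$ as a concrete measurable functional of the pair $(\Phi,A)$; any two candidates $\Phi_A,\widehat\Phi_A$ satisfying the definition must then both coincide with this functional, hence agree a.s., and in particular being a local set depends only on the coupling $(\Phi,A)$. The defining property says that, conditionally on $(A,\Phi_A)$, the field $\Phi^A=\Phi-\Phi_A$ is a zero boundary GFF on $D\backslash A$; so $\Phi|_{D\backslash A}=h_A+\Phi^A$ is exactly the orthogonal splitting into the harmonic part and the zero boundary part, i.e.\ $h_A$ is the harmonic extension to $D\backslash A$ of the boundary values taken by $\Phi$ on $\partial A\cup\partial D$. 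One makes this rigorous by a regularisation/limiting procedure (testing $\Phi$ against approximations of the harmonic measure of $D\backslash A$, or against mollifiers supported in shrinking neighbourhoods of $\partial A$) and checks that the resulting object is $\sigma(\Phi,A)$-measurable; the one genuinely delicate point is the measurability of these boundary values on the rough random domain $D\backslash A$, which is the content of the corresponding statements in \cite{SchSh2,Aru}.

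\emph{Union of local sets (item 2).} Here I would argue by a two-step ``reveal $A$, then $B$'' procedure. Condition on $(A,\Phi_A)$: by locality of $A$, $\Phi^A:=\Phi-\Phi_A$ is a zero boundary GFF on $D\backslash A$. The key claim is that, conditionally on $(A,\Phi_A)$, the set $B\backslash A$ together with $\Phi^A$ is a local set of this GFF on $D\backslash A$; write $\Psi:=(\Phi^A)_{B\backslash A}$ for the associated harmonic part. Granting this, set $\Phi_{A\cup B}:=\Phi_A+\Psi$. It is harmonic on $D\backslash(A\cup B)$, and
\[
\Phi-\Phi_{A\cup B}=\Phi^A-\Psi=(\Phi^A)^{B\backslash A}
\]
is, conditionally on $\sigma(A,\Phi_A,B,\Psi)$ (hence also conditionally on the smaller $\sigma$-field $\sigma(A\cup B,\Phi_{A\cup B})$), a zero boundary GFF on $(D\backslash A)\backslash(B\backslash A)=D\backslash(A\cup B)$, independent of that conditioning. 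Thus $A\cup B$ is a local set with harmonic part $\Phi_A+(\Phi^A)_{B\backslash A}$; by item~1 this is the canonical $\Phi_{A\cup B}$, independently of the order in which $A$ and $B$ were revealed, and rearranging yields the stated identity $(\Phi^A)_{B\backslash A}=\Phi_{A\cup B}-\Phi_A$.

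The main obstacle is the key claim, namely that conditioning on $(A,\Phi_A)$ does not destroy the locality of $B$. This is precisely where the hypothesis that $(A,\Phi_A)$ and $(B,\Phi_B)$ are conditionally independent given $\Phi$ enters: since $\Phi=\Phi_A+\Phi^A$ with $\Phi_A$ now known, one must transport the defining coupling of $B$ with $\Phi$ into a local coupling of $B\backslash A$ with $\Phi^A$, carrying the conditional independence through the extra conditioning on $(A,\Phi_A)$. Additional care is needed because $\Phi^A$ lives on the random domain $D\backslash A$, so ``local set of $\Phi^A$'' is a statement in a random domain, and because one has to keep track of the topological boundary when passing from $B$ to $B\backslash A$. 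These points are carried out in \cite[Proposition~1.3.29]{Aru} (see also \cite{SchSh2,AS}).
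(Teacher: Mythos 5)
The paper itself offers no proof of this lemma: item (1) is quoted from \cite{SchSh2} and item (2) is attributed verbatim to Proposition 1.3.29 of \cite{Aru}, so the only meaningful comparison is against the standard arguments in those references. For item (1) your sketch matches them: one recovers $h_A$ on $D\backslash A$ as an a.s.\ limit of conditional expectations of $(\Phi,f)$ given $A$ and the restriction of $\Phi$ to shrinking neighbourhoods of $A$ (a backward-martingale/regularisation argument), which exhibits $\Phi_A=\Phi-\Phi^A$ as a measurable functional of $(\Phi,A)$ and hence forces $\Phi_A=\widehat\Phi_A$ a.s. That is the right argument, with the genuinely delicate measurability point correctly located and correctly attributed.

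For item (2), however, your ``reveal $A$, then $B$'' scheme does not yet constitute a proof, because the ``key claim'' --- that conditionally on $(A,\Phi_A)$ the set $B\backslash A$ is a local set of the GFF $\Phi^A$ on the random domain $D\backslash A$ --- is essentially the second assertion of the lemma itself, so the reduction makes no progress: everything that is hard has been packed into that one sentence. Moreover, the hypothesis you have available is conditional independence of $(A,\Phi_A)$ and $(B,\Phi_B)$ \emph{given $\Phi$}, and it is not straightforward to convert this into a statement about the conditional law of $B$ given $(A,\Phi_A)$; indeed the cited proofs proceed in the opposite logical order. They first prove that $A\cup B$ is local by using one of the equivalent characterisations of locality in terms of a \emph{deterministic} open set $U$: the event $\{(A\cup B)\cap U=\emptyset\}$ factors as $\{A\cap U=\emptyset\}\cap\{B\cap U=\emptyset\}$, and the conditional independence given $\Phi$ is exactly what lets one verify the characterisation for the union. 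Only afterwards does one define $(\Phi^A)_{B\backslash A}:=\Phi_{A\cup B}-\Phi_A$, check harmonicity off $B\backslash A$ and the Markov property of $\Phi^{A\cup B}$, and invoke the uniqueness of item (1) \emph{inside $D\backslash A$} to identify it; your key claim is then a corollary, not an input. Your tower-property step (passing from conditioning on $\sigma(A,\Phi_A,B,\Psi)$ to the smaller $\sigma$-field $\sigma(A\cup B,\Phi_{A\cup B})$) is fine, since the conditional law in question depends only on the set $A\cup B$; the gap is entirely in the unproven key claim.
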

\subsection{First passage sets of the 2D continuum GFF}

The aim of this section is to recall the definition first passage sets introduced in \cite{ALS1} of the 2D continuum GFF, and state the properties that will be used in this paper.

The set-up is as follows: $D$ is a finitely-connected domain where no component is a single point and $u$ is a bounded harmonic function with piecewise constant boundary conditions.
\begin{defn}[First passage set]\label{Def ES}
	Let $a\in \R$ and $\Phi$ be a GFF in the multiple-connected domain $D$. We define the first passage set of $\Phi$ of level $-a$ and boundary condition $u$ as the local set of $\Phi$ such that $\partial D \subseteq \A^u_{-a}$, with the following properties:
	\begin{enumerate}
		\item Inside each connected component $O$ of $D\backslash \A_{-a}^u$, the harmonic function $h_{\A_{-a}^u}+u$ is equal to $-a$ on $\partial \A_{-a}^u \backslash \partial D$ and equal to $u$ on $\partial D \backslash \A_{-a}^u$ in such a way that $h_{\A_{-a}^u}+u \leq -a$. 		
		\item $\Phi_{\A^u_{-a}}-h_{\A_{-a}^u}\geq 0$, i.e., for any smooth positive test function $f$ we have 
		$(\Phi_{\A^u_{-a}}-h_{\A_{-a}^u},f) \geq 0$.
		\item For any connected component $O$ of $D\backslash \A_{-a}^u$, $\eps > 0$ and $z \in \partial O$, and for all sufficiently small open ball $U_z$ around $z$,  we have that a.s. \[h_{\A_{-a}^u}(z)+u(z) \geq \min\left \{ -a,\inf_{w \in U_z\cap \overline O} u(w)\right \} - \eps.\] 
	\end{enumerate}	
\end{defn}

Notice that if $u\geq -a$, then the conditions (1) and (2) correspond to
\begin{enumerate}
	\item [(1)'] $h_{\A_{-a}^u}+u=-a$ in $D\backslash \A_{-a}^u$.
	\item [(2)'] $\Phi_{\A_{-a}^u}+u+a\geq 0$.	
\end{enumerate}

Moreover, in this case the technical condition (3) is not necessary. This condition roughly says that nothing odd can happen at boundary values that we have not determined: those on the intersection $\partial \A_{-a}^u$ and $\partial D$. This condition enters in the case $u < -a$ as we want to take the limit of the FPS on metric graphs and it comes out that it is easier not to prescribe the value of the harmonic function at the intersection of $\partial D$ and $\partial\A_{-a}^u$.

\begin{rem}\label{bw}
	One could similarly define excursions sets in the other direction, i.e. stopping the sets from above. We denote these sets by $\AB_b$. In this case the definition goes the same way except that (2) should now be replaced by $\Phi_{\AB_{b}^u}\leq h_{\AB_b^u}$,and (3) by
	\[h_{\A_{b}^u}(z)+u(z) \leq \max\left \{ b,\inf_{w \in U_z\cap \overline O} u(w)\right \}+ \eps.\] 
\end{rem}

We now present the key result in the study of FPS.
\begin{thm}[Theorem 4.3 and Proposition 4.5 of \cite{ALS1}]\label{Thm::FPS}Let $D$ be a finitely connected domain, $\Phi$ a GFF in $D$ and $u$ be a bounded harmonic function that has piecewise constant boundary values. Then for all $a\geq 0$,  the first passage set of $\Phi$ of level -a and boundary condition $u$, $\A_{-a}^u$, exist and satisfy the following property:
	\begin{enumerate}
		\item Uniqueness: if $A'$ is another local set coupled with $\Phi$ and satisfying \ref{Def ES}, then a.s. $A'=\A_{-a}^u$.
		\item Measurability: $\A_{-a}^u$ is a measurable function of $\Phi$.
		\item Monotonicity: If $a\leq a'$ and $u\leq u'$, then $\A_{-a}^u\subseteq \A_{-a'}^{u'}$ .
	\end{enumerate}
\end{thm}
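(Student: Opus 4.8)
The plan is to realise $\A_{-a}^u$ as an increasing limit of two-valued local sets and then pin it down. For $n$ large enough that the two-valued local set $\A_{-a,n}^{u}$ with boundary condition $u$ and allowed values $\{-a,n\}$ exists \cite{ASW,AS2}, these sets are nested in $n$ by the known monotonicity of two-valued sets, and I would set $\A_{-a}^{u}:=\overline{\bigcup_{n}\A_{-a,n}^{u}}$. Each $\A_{-a,n}^{u}$ is a measurable function of $\Phi$ and they are increasing, so $\Phi_{\A_{-a,n}^{u}}=\E[\Phi\mid\sigma(\A_{-a,n}^{u},\Phi_{\A_{-a,n}^{u}})]$ is a closed martingale in $n$ and converges in $H^{-1}$; calling the limit $\Phi_{\A_{-a}^u}$, the standard stability of local sets under increasing limits together with Lemma \ref{BPLS} shows that $(\Phi,\A_{-a}^u,\Phi_{\A_{-a}^u})$ is a local set coupling with $\partial D\subseteq\A_{-a}^u$. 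Property (1) is obtained by noting that the connected components of $D\setminus\A_{-a}^u$ are exactly the components of $D\setminus\A_{-a,n}^u$ carrying boundary value $-a$ — the ``$n$-components'' being exhausted as $n\to\infty$ — so when $u\geq -a$ one gets $h_{\A_{-a}^u}+u=-a$ there. For property (2) one uses that a two-valued set is thin, so $\Phi_{\A_{-a,n}^u}$ is the function $h_{\A_{-a,n}^u}$, and that $h_{\A_{-a,n}^u}+u\in\{-a,n\}$ almost everywhere, hence $\Phi_{\A_{-a,n}^u}+u+a\geq 0$; passing to the limit, and using that a weak limit of nonnegative distributions is a positive measure, gives $\Phi_{\A_{-a}^u}+u+a\geq 0$, which together with property (1) is property (2) — and this is precisely the step where the singular (Minkowski content) part of $\Phi_{\A_{-a}^u}$ is created. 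When $u\not\geq -a$ one cannot shift the field, and the behaviour near the part of $\partial D$ with $u<-a$ must be analysed by a localization argument near the boundary, which is exactly what the technical condition (3) is designed to accommodate.

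\textbf{Uniqueness and measurability.} Let $A'$ be any local set coupled with $\Phi$ and satisfying Definition \ref{Def ES}; the goal is $A'=\A_{-a}^u$ almost surely. The strategy I would follow is to truncate $A'$ at each level $n$: using the positive measure $\Phi_{A'}-h_{A'}\geq 0$ (which, being a positive distribution supported on a set of measure zero, is locally finite, so ``every point of $A'$ sits below some finite level''), one defines a canonical subset $A'_n\subseteq A'$ and checks, from conditions (1)--(3), that $A'_n$ satisfies the defining properties of the two-valued local set $\A_{-a,n}^u$. By the uniqueness of two-valued sets \cite{ASW,AS2} this forces $A'_n=\A_{-a,n}^u$, and then $A'=\overline{\bigcup_n A'_n}=\overline{\bigcup_n\A_{-a,n}^u}=\A_{-a}^u$; the strong Markov property of the GFF is used throughout to restrict these comparisons to individual connected components, where the conditioned field is a shifted zero-boundary GFF. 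Once uniqueness is known, measurability is immediate: the explicitly constructed $\A_{-a}^u=\overline{\bigcup_n\A_{-a,n}^u}$ is a measurable function of $\Phi$, and any local set satisfying the definition equals it.

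\textbf{Monotonicity and main obstacle.} For $a\leq a'$ and $u\leq u'$ one invokes the monotonicity of two-valued local sets in their levels and boundary data to get $\A_{-a,n}^{u}\subseteq\A_{-a',n}^{u'}$ for every $n$, and takes the increasing union over $n$ to conclude $\A_{-a}^{u}\subseteq\A_{-a'}^{u'}$. The genuinely delicate point is uniqueness: producing the correct truncation $A'_n$ and verifying it is a two-valued set requires using condition (2) in an essential way to turn the abstract positive measure carried by $A'$ into the two-valued boundary data of $\A_{-a,n}^u$, all while $A'$ is a priori only known to be \emph{some} local set with the stated properties and not yet a measurable function of $\Phi$; reconciling the thin two-valued approximants with the non-thin limit is where the real work lies. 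The boundary behaviour encoded by condition (3) in the regime $u<-a$ is a persistent technical nuisance in both the existence and the uniqueness arguments, but not a conceptual obstruction.
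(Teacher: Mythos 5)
This theorem is not proved in the present paper at all: it is imported verbatim from \cite{ALS1} (Theorem 4.3 and Proposition 4.5 there), so your proposal has to be measured against that proof. Your existence construction is essentially the one used in \cite{ALS1}: $\A_{-a}^{u}$ is built as the closure of the increasing union of two-valued sets $\A_{-a,b}^{u}$ as $b\to\infty$, the distributions $\Phi_{\A_{-a,b}^{u}}$ converge as a closed $H^{-1}$-valued martingale, and property (2) of Definition \ref{Def ES} is inherited from $h_{\A_{-a,b}^{u}}+u\in\{-a,b\}$. Two caveats you should make explicit: the pointwise martingale $h_{\A_{-a,b}^{u}}(z)+u(z)$ is \emph{not} uniformly integrable (it equals $b$ with probability of order $1/b$), and this failure of $L^{1}$-convergence is exactly what creates the singular part of $\Phi_{\A_{-a}^{u}}$ — so the convergence has to be run for the distributions, not the harmonic functions; and one must check that the increasing limit is still a local set with $D\setminus\A_{-a}^{u}\neq\emptyset$, which requires an a priori estimate. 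The converse inclusion in your identification of the complementary components (that every component of $D\setminus\A_{-a}^{u}$ is a $(-a)$-component at some finite stage) also needs an argument.

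The genuine gap is in uniqueness. You propose to extract from an arbitrary local set $A'$ satisfying Definition \ref{Def ES} a ``canonical truncation'' $A'_{n}\subseteq A'$ to be matched with $\A_{-a,n}^{u}$, but $A'$ carries no exhaustion by levels: the only data are the set, the harmonic function $h_{A'}$ and the positive measure $\Phi_{A'}-h_{A'}$, and there is no measurable recipe producing from these the nested family of thin two-valued sets. This is precisely the step you defer, and it is where the proposal breaks, not merely where ``the real work lies''. The argument of \cite{ALS1} avoids it entirely: using $\E[\Phi_{A}]=0$ one shows that for any local set $A$ with $h_{A}+u\leq-a$ and $\Phi_{A}-h_{A}\geq0$ the first moment $\E[(\Phi_{A}-h_{A},f)]$ is bounded above by an explicit quantity depending only on $u,a,f$, with equality forcing $h_{A}+u=-a$ and $A$ Lebesgue-null; taking two candidate sets conditionally independent given $\Phi$, their union is again such a local set whose positive measure dominates each one, and saturation of the first-moment bound forces the measures, hence the harmonic functions, hence the sets to coincide. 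Monotonicity is likewise obtained there by showing that the union $\A_{-a}^{u}\cup\A_{-a'}^{u'}$ satisfies the axioms of $\A_{-a'}^{u'}$ and invoking uniqueness, rather than via a monotonicity of generalized two-valued sets in the boundary data $u$, which is not available off the shelf for $u\neq u'$. Finally, the regime where $u<-a$ on part of $\partial D$ — which is included in the statement and is the entire reason for condition (3) — is only acknowledged, not handled; in \cite{ALS1} it is treated by first sampling $\AB_{-a}^{u}$ and working component by component.
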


\subsection{Brownian loop and excursion measures}

Next, we discuss Brownian loop and excursion measures in the continuum. Consider a non-standard Brownian motion $(B_{t})_{t\geq 0}$ on 
$\mathbb{C}$, such that its infinitesimal generator is the Laplacian $\Delta$, so that $\E\left[\|B_t\|^2\right]=4t$ . The reason we use a non-standard Brownian motion comes from the fact that the isomorphisms with the continuum GFF have nicer forms. We will denote
$\mathbb{P}_{t}^{z,w}$ the bridge probability measures
corresponding to $(B_{t})_{t\geq 0}$. 
Given $D$ an open subset of $\mathbb{C}$, we will denote
\begin{displaymath}
T_{\partial D}:=
\inf\lbrace t\geq 0\vert B_{t}\not\in D\rbrace
\in (0,+\infty].
\end{displaymath}
The \textit{Brownian loop measure} on $D$ is defined as
\begin{displaymath}
\mu^{D}_{\rm loop}(\cdot)=
\int_{D}\int_{0}^{+\infty}
\mathbb{P}_{t}^{z,z}(\cdot, T_{\partial D}>t)\dfrac{1}{4\pi t}
\dfrac{dt}{t}dz,
\end{displaymath}
where $dx$ denotes the Lebesgue measure on $\mathbb{C}$.
This is a measure on rooted loops, but it is natural to consider unrooted loops, where one ``forgets'' the position of the start and endpoint.
This Brownian loop measure was introduced in
\cite{LW2004BMLoopSoup}, see also \cite{LawC}, 
Section 5.6 \footnote{In \cite{LawC}, 
	Section 5.6 the authors rather consider the loop measure associated to a standard Brownian motion. This is just a matter of a change of time $ds = dt/\sqrt{2}$.}
From the definition follows that the Brownian loop measure satisfies a \textit{restriction property}: if
$D'\subset D$,
\begin{displaymath}
d\mu^{D'}_{\rm loop}(\gamma)=
\1_{\gamma~\text{contained in}~D'}
d\mu^{D}_{\rm loop}(\gamma).
\end{displaymath}
It also satisfies a conformal invariance property. The image of
$\mu^{D}_{\rm loop}$ by a conformal transformation of $D$ is
$\mu^{D}_{\rm loop}$ up to a change of root and time reparametrization.
In particular, the measure on the range of the loop is conformal invariant. For $\mu^{\mathbb{C}}_{\rm loop}$, there is also invariance by polar inversions (up to change of root and reparametrization). 
A \textit{Brownian loop-soup} in $D$ with intensity parameter
$\alpha>0$ is a Poisson point process of intensity measure
$\alpha\mu^{D}_{\rm loop}$, which we will denote by
$\mathcal{L}^{D}_{\alpha}$.\\

Now we get to the excursion measure. 
$H_{D}(dx,dy)$ will denote the \textit{boundary Poisson kernel} on
$\partial D\times\partial D$.
In the case of domains with $\mathcal{C}^{1}$ boundary, the boundary Poisson kernel is defined as 
\begin{displaymath}
H_{D}(x,y)=\partial_{n_{x}}\partial_{n_{y}} G_{D}(x,y),~
x,y\in\partial D,
\end{displaymath}
For general case, we use the conformal invariance of the measure
$H_{D}(x,y) dx dy$, where $dx$ and $dy$ are arc lengths, and can define $H_{D}(dx,dy)$ as a measure. See \cite{ALS1} for details.
Given $x\neq y\in\partial D$, 
$\mathbb{P}_{\rm exc}^{D,x,y}$ will denote the probability measure on the boundary-to-boundary Brownian excursion in $D$ from $x$ to $y$, associated to the non-standard Brownian motion of generator $\Delta$.
Let $u$ be a non-negative bounded Borel-measurable function on
$\partial D$. We define the boundary-to-boundary excursion measure associated to $u$ as 
\begin{displaymath}
\mu^{D,u}_{\rm exc}=
\dfrac{1}{2}\iint_{\partial D\times\partial D}
u(x)u(y)\mathbb{P}_{\rm exc}^{D,x,y} H_{D}(dx,dy).
\end{displaymath}
These excursion measure are analogous to the one on metric graphs defined in Section \ref{Def Loop Met}.
In the particular case of $D$ simply connected and $u$ positive constant on a boundary arc and zero elsewhere, the measure 
$\mu^{D,u}_{\rm exc}$ appears in the construction of restriction measures (\cite{LawlerSchrammWerner2003ConformalRestr} and \cite{Werner2005ConfRestr}, Section 4.3). Next, we state without proof some fundamental properties of these excursion measures that follow just from properties of boundary Poisson kernel and 2D Brownian motion.
\begin{prop}
	\label{PropExcMeas}
	Let $D$ be a domain as above and $u$ a bounded non-negative condition. The boundary-to-boundary excursion measure $\mu^{D,u}_{\rm exc}$ satisfies the following properties:
	\begin{enumerate}
		\item Conformal invariance: [Proposition 5.27 of \cite{LawC}] Let $D'$ be a domain conformally equivalent to $D$ and $f$ a conformal transformation from $D$ to $D'$. Then
		$\mu^{D',u}_{\rm exc}$ is the image of $\mu^{D,u}_{\rm exc}$ by $f$, up to a change of time $ds=\vert f'(\gamma(t))\vert^{-2} dt$.
		\item Markov property: Let $\B$ be a compact subsets of $\partial D$ and assume that $u$ is supported on $\B$. Let $K$ be a compact subset of $\overline{D}$, at positive distance from  $\mathcal{B}$. We assume that $K$ has finitely many connected components.
		For any $F$ bounded measurable functional on paths, we have
		\begin{displaymath}
		\mu^{D,u}_{\rm exc}(F(\gamma),
		\gamma~\text{visits}~K)=
		\int \1_{\gamma_{1}(0)\in\B}
		\mathbb{E}_{\gamma_{1}(t_{\gamma_{1}})}
		\left[u(B_{T_{\partial D}})F(\gamma_{1}\circ(B_{t})
		_{0\leq t\leq T_{\partial D}})\right]
		d\mu^{D\backslash K,
			\1_{K}+u \1_{\B}}_{\rm exc}(\gamma_{1}),
		\end{displaymath}
		where $\gamma_{1}(t_{\gamma_{1}})$ is the endpoint of the path $\gamma_{1}$ and
		$\circ$ denotes the concatenation of paths.
	\end{enumerate}
\end{prop}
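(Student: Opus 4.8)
The plan is to treat the two parts separately: part (1) is a change of variables, while part (2) rests on a first–entrance decomposition of Brownian paths carried out at the level of the excursion measure.

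\emph{Part (1): conformal invariance.} I would plug the definition $\mu^{D,u}_{\rm exc}=\tfrac12\iint_{\partial D\times\partial D}u(x)u(y)\,\mathbb{P}^{D,x,y}_{\rm exc}\,H_{D}(dx,dy)$ into the push-forward by $f$ and use two inputs. First, the measure $H_{D}(dx,dy)$ on $\partial D\times\partial D$ is mapped by $(f,f)$ onto $H_{D'}(dx,dy)$: for $\mathcal{C}^{1}$ boundaries this is $H_{D}(x,y)=|f'(x)|\,|f'(y)|\,H_{D'}(f(x),f(y))$ combined with $|dx'|=|f'(x)|\,|dx|$, and in general it is exactly how $H_{D}(dx,dy)$ was defined (conformal covariance of $H_{D}(x,y)\,dx\,dy$). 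Second, by Proposition~5.27 of \cite{LawC}, the image of $\mathbb{P}^{D,x,y}_{\rm exc}$ under $f$ is $\mathbb{P}^{D',f(x),f(y)}_{\rm exc}$ up to the time change $ds=|f'(\gamma(t))|^{-2}\,dt$. Substituting $x'=f(x)$, $y'=f(y)$, transporting $u$ to $u\circ f^{-1}$ on $\partial D'$, and keeping all integrands non-negative so that the blow-up of $H_D$ on the diagonal is harmless, one reads off the claim. This part is routine.

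\emph{Part (2): the Markov property.} The idea is to cut every excursion that meets $K$ at its first hitting time $\tau_{K}$ of $K$: the initial piece $\gamma_{1}=\gamma|_{[0,\tau_{K}]}$ stays in $D\setminus K$, runs from its boundary starting point to $z=\gamma(\tau_{K})\in\partial K$, and by the strong Markov property of Brownian motion the remainder is a Brownian motion started at $z$ and killed on $\partial D$. The technical heart is to perform this at the level of the \emph{infinite} excursion measure out of a single boundary point $x$, which I will write $\mu^{D,x}_{\rm exc}$ (mass of paths ending near $y$ governed by $H_{D}(x,\cdot)$, conditionally on the endpoint equal to $\mathbb{P}^{D,x,y}_{\rm exc}$), so that $\mu^{D,u}_{\rm exc}=\tfrac12\int_{\partial D}u(x)\,\mu^{D,x}_{\rm exc}\big(u(\text{endpoint})\,\cdot\big)\,|dx|$. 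The key identity to establish is
\begin{displaymath}
\mu^{D,x}_{\rm exc}\big(G(\gamma|_{[0,\tau_{K}]}),\ \gamma\text{ hits }K\big)=\int_{\partial K}\mathbb{P}^{D\setminus K,x,z}_{\rm exc}(G)\,H_{D\setminus K}(x,dz),
\end{displaymath}
i.e. the excursion measure out of $x$, stopped at the first hit of $K$ and restricted to paths that do hit $K$, is the excursion measure out of $x$ in $D\setminus K$ restricted to paths whose other endpoint lies on $\partial K$. I would prove this by approximation: replace $\mu^{D,x}_{\rm exc}$ by $\varepsilon^{-1}$ times the law of a Brownian motion started at the inner point $x_{\varepsilon}$ at distance $\varepsilon$ from $x$, for which the statement is the ordinary strong Markov property together with the fact that this Brownian motion first meets $\partial(D\setminus K)$ along $\partial K$ exactly when it reaches $K$ before $\partial D$; then let $\varepsilon\to0$, using that the renormalized laws converge to $\mu^{D,x}_{\rm exc}$ and to $\mu^{D\setminus K,x}_{\rm exc}$ respectively and that $K$ is at positive distance from $x$.

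Granting this, I would finish by applying the displayed identity with $G=F(\gamma_{1}\circ\,\cdot)$ weighted by the endpoint value of the Brownian remainder: on the event that $\gamma$ hits $K$, the map $\gamma\mapsto(\gamma_{1},\text{Brownian remainder})$ turns $\mu^{D,x}_{\rm exc}(u(\text{endpoint})\,\cdot)$ into $\int_{\partial K}H_{D\setminus K}(x,dz)\,\mathbb{P}^{D\setminus K,x,z}_{\rm exc}\otimes\mathbb{E}_{z}\big[u(B_{T_{\partial D}})F(\gamma_{1}\circ(B_{t})_{0\le t\le T_{\partial D}})\big]$, the factor $u(B_{T_{\partial D}})$ coming from the exit point of the remainder. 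Multiplying by $u(x)$ and integrating over $x$ in the support $\B$ of $u$, the resulting law of $\gamma_{1}$ is carried by excursions of $D\setminus K$ with one endpoint in $\B$ (weight $u$) and the other on $\partial K$ (weight $1$), which is precisely the part with $\gamma_{1}(0)\in\B$ of $\mu^{D\setminus K,\1_{K}+u\1_{\B}}_{\rm exc}$ (excursions ending on $\partial D$ being continued trivially and thus simply carrying their boundary weight $u$). Here one must track the symmetrisation factor $\tfrac12$ and the fact that reversal of an excursion swaps "first hit of $K$" with "last hit of $K$", so that summing over both orientations in the definition of $\mu^{D,u}_{\rm exc}$ reproduces the first-hit decomposition with no factor lost. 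I expect the main obstacle to be exactly the boxed strong Markov property for the bare excursion measure at $\tau_{K}$: the $\varepsilon\to0$ passage has to be localised because $\mu^{D,x}_{\rm exc}$ has infinite total mass near $x$, and pinning down the boundary-condition function $\1_{K}+u\1_{\B}$ and the symmetrisation constant is the other place where care is needed.
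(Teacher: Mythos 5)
The paper states Proposition \ref{PropExcMeas} \emph{without proof}, remarking only that both properties ``follow just from properties of boundary Poisson kernel and 2D Brownian motion'' and pointing to the metric-graph analogue (Proposition \ref{PropMarkovExcMG}); so there is no in-paper argument to compare against. Your proposal is a correct filling-in of exactly the intended standard argument: part (1) from the conformal covariance of $H_D(x,y)\,dx\,dy$ (which is precisely how $H_D(dx,dy)$ is defined here for rough boundaries) combined with the conformal invariance of the excursion bridges up to time change; part (2) from a first-entrance decomposition at $\tau_K$, with the strong Markov property transferred to the infinite one-endpoint measure $\mu^{D,x}_{\rm exc}$ via the $\varepsilon^{-1}\mathbb{P}_{x_\varepsilon}$ approximation, for which the identity holds exactly before the limit since ``stopped at the first hit of $K$ on the event of hitting $K$ before $\partial D$'' is the same path as ``killed on $\partial(D\setminus K)$ and exiting through $\partial K$''. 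The boxed identity is the right key lemma, and the bookkeeping of the factor $\tfrac12$ is handled correctly: the indicator $\1_{\gamma_1(0)\in\B}$ breaks the symmetrisation of $\mu^{D\setminus K,\cdot}_{\rm exc}$ over ordered endpoint pairs, matching the fact that on the left-hand side each oriented excursion is decomposed at its own first hit of $K$.

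One point to tighten. As literally written, $\1_{\gamma_1(0)\in\B}\,d\mu^{D\backslash K,\1_{K}+u\1_{\B}}_{\rm exc}$ also charges excursions of $D\setminus K$ from $\B$ to $\B$, which never visit $K$ and so have no counterpart on the left-hand side; your parenthetical that such excursions are ``continued trivially and thus simply carry their boundary weight $u$'' is not quite right, since they would then acquire weight $u(\gamma_1(0))u(\gamma_1(t_{\gamma_1}))^2$ rather than $u(\gamma_1(0))u(\gamma_1(t_{\gamma_1}))$. The intended reading — consistent with the metric-graph statement, where the sum runs over $y\in\partial K\backslash\partial\mathcal{G}$ — is that $\gamma_1$ ends on $\partial K\cap D$, and your first-entrance decomposition produces exactly that restricted measure. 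So this is an imprecision in the statement (and in your gloss of it), not a gap in your argument.
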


The Markov property above is analogous to the Markov property on metric graphs given by Proposition \ref{PropMarkovExcMG}. 

Given $\mathcal{B}_{1}$ and $\mathcal{B}_{2}$ two disjoint compact subsets of $\partial D$, we will denote 
$\M(\mathcal{B}_{1},\mathcal{B}_{2})$ the conformal modulus, which is the inverse of extremal length \cite{Ahlfors2010ConfInv, ALS1}. If 
$\mathcal{B}_{1}$ and $\mathcal{B}_{2}$ form a partition of connected components of $\partial D$, then
\begin{displaymath}
\M(\mathcal{B}_{1},\mathcal{B}_{2})=
\iint_{\mathcal{B}_{1}\times\mathcal{B}_{2}}
H_{D}(dx,dy)
=\mu^{D,1}_{\rm exc}
(\text{excursions having one end in}~\mathcal{B}_{1}
~\text{and the other in}~\mathcal{B}_{2}).
\end{displaymath} 
In general,
\begin{displaymath}
M(\B_{1},\B_{2})\geq
\iint_{\B_{1}\times\B_{2}}
H_{D}(dx,dy)
.
\end{displaymath}

\subsection{Wick square of the continuum GFF and isomorphisms}

The isomorphism theorems on discrete or metric graph (Propositions
\ref{PropIsoDiscr}, \ref{PropIsoCable}) involve the square of a GFF. However for the continuum GFF in dimension 2, which is a generalized function, the square is not defined. Instead, one can define a renormalized square, the \textit{Wick square} 
(\cite{Simon1974EQFT, Janson1997GaussHilbSpaces}).

Let $D$ be as in the previous subsection an open connected bounded domain, delimited by finitely many simple curves. First, we consider
$\Phi$ the GFF with $0$ boundary condition. 
$\Phi_{\varepsilon}$ will denote regularizations of $\Phi$ by convolution with a kernel. The Wick square $:\Phi^{2}:$ is the limit as
$\varepsilon\to 0$ of
\begin{displaymath}
\Phi^{2}_{\varepsilon}-\mathbb{E}[\Phi^{2}_{\varepsilon}].
\end{displaymath}
If $f$ is a continuous bounded test function,
\begin{displaymath}
(\Phi^{2}_{\varepsilon},f)-
\mathbb{E}[(\Phi^{2}_{\varepsilon},f)]
\end{displaymath}
converges in $\mathbb{L}^{2}$, and at the limit,
\begin{displaymath}
\mathbb{E}[(:\Phi^{2}:,f)^{2}]=
\iint_{D\times D} f(z) 2G_{D}(z,w)^{2} f(w)
dz dw.
\end{displaymath}
$:\Phi^{2}:$ is a random generalized function, measurable with respect to
$\Phi$, which lives in the Sobolev space $H^{-1}(D)$, that is to say in the completion of the space of continuous compactly supported functions in $D$ for the norm
\begin{displaymath}
\Vert f\Vert_{H^{-1}}^{2}=
\iint_{D\times D} f(z) G_{D}(z,w) f(w)
dz dw.
\end{displaymath}
See \cite{Dub}, Section 4.2. Indeed,
\begin{displaymath}
\mathbb{E}[\Vert :\Phi^{2}:\Vert_{H^{-1}}^{2}]=
\iint_{D\times D} f(z) 2G_{D}(z,w)^{3} f(w)
dz dw<+\infty.
\end{displaymath}

In \cite{LeJan2010LoopsRenorm, LeJan2011Loops}, Le Jan considers 
(following \cite{LW2004BMLoopSoup, LawlerSchrammWerner2003ConformalRestr, SheffieldWerner2012CLE}) Brownian loop-soups in $D$, $\mathcal{L}^{D}_{\alpha}$, which are Poisson point processes with intensity 
$\alpha\mu^{D}_{\rm loop}$. One sees $\mathcal{L}^{D}_{\alpha}$ as a random countable collection of Brownian loops. He considers the 
\textit{centred occupation field} of
$\mathcal{L}^{D}_{\alpha}$. The occupation field of
$\mathcal{L}^{D}_{\alpha}$ with ultra-violet cut-off is
\begin{displaymath}
(L_{\varepsilon}(\mathcal{L}^{D}_{\alpha}),f)=
\sum_{\substack{\gamma\in \mathcal{L}^{D}_{\alpha}\\
		T_{\gamma}>\varepsilon}}
\int_{0}^{T_{\gamma}} f(\gamma(t)) dt,
\end{displaymath}
where $f$ is a test function and
$T_{\gamma}$ is the life-time of a loop $\gamma$. The measure
$L_{\varepsilon}(\mathcal{L}^{D}_{\alpha})$ diverges as
$\varepsilon\to 0$, i.e. in the limit we get something which is even not locally finite. The centred occupation field is
\begin{displaymath}
L_{\rm ctr}(\mathcal{L}^{D}_{\alpha})=
\lim_{\varepsilon\to 0} L_{\varepsilon}(\mathcal{L}^{D}_{\alpha})-
\mathbb{E}[L_{\varepsilon}(\mathcal{L}^{D}_{\alpha})].
\end{displaymath}
The convergence above, evaluated against a bounded test function, is
in $\mathbb{L}^{2}$. For $\alpha=1/2$, Le Jan shows the following isomorphism:

\begin{prop}[Renormalized Le Jan's isomorphism, 
	\cite{LeJan2010LoopsRenorm, LeJan2011Loops}]
	\label{PropIsoLeJanRenorm}
	The centred occupation field
	$L_{\rm ctr}(\mathcal{L}^{D}_{1/2})$ has the same law as
	half the Wick square,
	$\frac{1}{2}:\Phi^{2}:$, where
	$\Phi$ is the GFF in $D$ with zero boundary condition.
\end{prop}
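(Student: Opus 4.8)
The plan is to prove the isomorphism by first establishing convergence of the corresponding discrete/metric-graph statement (Proposition \ref{PropIsoDiscr}, or rather its renormalized version) and then identifying the two limiting objects. Concretely, I would approximate the continuum domain $D$ by metric graphs $\widetilde{\mathcal G}_n$ (e.g. rescaled lattices), and on each $\widetilde{\mathcal G}_n$ consider the loop-soup $\mathcal L^{\widetilde{\mathcal G}_n}_{1/2}$ together with its occupation field. On the discrete side, Proposition \ref{PropIsoDiscr} tells us $L(\mathcal L^{\mathcal G_n}_{1/2})$ is distributed as $\frac12\phi_n^2$. Subtracting the (diverging) expectation and taking a suitable scaling limit, the centred occupation field of the loop-soup should converge in law, as a random element of $H^{-1}(D)$ tested against smooth functions, to $L_{\rm ctr}(\mathcal L^D_{1/2})$; and $\frac12(\phi_n^2 - \E[\phi_n^2])$ should converge to $\frac12 :\Phi^2:$. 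Matching these two limits gives the identity in law.

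The key steps, in order, would be: (i) set up the regularization and the cut-off carefully so that the ultraviolet cut-off on the loop length and the convolution regularization of the GFF are compatible, and compute the relevant first and second moments (these are Gaussian-type computations using the Green's function, already essentially recorded in the excerpt via the formulas for $\E[(:\Phi^2:,f)^2]$ and the $H^{-1}$ norm of $:\Phi^2:$); (ii) prove $\mathbb L^2$ convergence of $(L_\varepsilon(\mathcal L^D_{1/2}),f) - \E[\,\cdot\,]$ to a limit, which is essentially the content cited from Le Jan, so one may largely quote \cite{LeJan2010LoopsRenorm, LeJan2011Loops}; (iii) identify moments. In fact, since the statement is attributed directly to Le Jan, the cleanest exposition is simply to recall that $L_\varepsilon(\mathcal L^D_{1/2})$ has the law of the occupation field of a loop-soup, invoke Le Jan's computation that all joint moments (or the Laplace transform, where it converges) of the centred occupation field match those of $\frac12:\Phi^2:$, and conclude by uniqueness of the law from moments / characteristic functionals on $H^{-1}$.

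Alternatively — and this is the route I would actually favour given the machinery of this paper — one can avoid re-doing Le Jan's analytic work and instead deduce the continuum statement from the metric-graph isomorphism plus a convergence argument: Proposition \ref{PropIsoCable} (or \ref{PropIsoDiscr}) combined with the convergence of the metric-graph GFF to the continuum GFF and the convergence of the metric-graph loop-soup occupation field. One verifies that $\frac12(\tilde\phi_n+u)^2$ with the mean subtracted converges to $\frac12:\Phi^2:$ (for $u\equiv 0$), using that convolution-regularized metric-graph GFFs converge to convolution-regularized continuum GFFs and controlling the renormalization constants; simultaneously the centred loop-soup occupation fields converge because the metric-graph loop measures converge to the Brownian loop measure. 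The equality of the two discrete fields then passes to the limit.

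The main obstacle I expect is the exchange of limits and the control of the renormalizing constants: the occupation field and the square of the GFF both diverge, and one must show that the divergent parts cancel uniformly enough along the approximation that the centred quantities converge \emph{jointly} (in a coupling) rather than merely marginally, and in a topology (here, tested against smooth functions, with an $H^{-1}$-type second moment bound) strong enough to identify the limit. Equivalently, on the purely continuum side, the delicate point in Le Jan's argument is the $\mathbb L^2$ convergence of $L_\varepsilon(\mathcal L^D_{1/2}) - \E[L_\varepsilon(\mathcal L^D_{1/2})]$, which requires estimating $\mathrm{Var}\big((L_\varepsilon,f)-(L_{\varepsilon'},f)\big)$ via the loop-soup's second moment formula and showing it is Cauchy as $\varepsilon,\varepsilon'\to 0$; this uses the integrability of $G_D(z,w)^2$ and $G_D(z,w)^3$ against bounded test functions in two dimensions. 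Since the statement is quoted from the literature, for the purposes of this paper it suffices to record these moment formulas and cite \cite{LeJan2010LoopsRenorm, LeJan2011Loops} for the convergence, which is the approach I would take in writing it up.
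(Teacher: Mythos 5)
The paper gives no proof of this proposition: it is stated as a direct quotation of Le Jan's result and the reader is referred to \cite{LeJan2010LoopsRenorm, LeJan2011Loops}, which is exactly the conclusion you reach at the end of your proposal. Your two sketched routes (Le Jan's $\mathbb{L}^2$/moment argument via integrability of powers of $G_D$, and the metric-graph approximation — the latter being essentially what the paper does for the related Proposition \ref{PropFPSplusWick}) are both accurate and consistent with the paper's machinery, so there is nothing to correct.
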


We consider now a bounded non-negative boundary condition $u$ and also denote by $u$ its harmonic extension to $D$.
Consider the GFF with boundary condition $u$,
$\Phi + u$. One can define its Wick square as
\begin{displaymath}
:(\Phi+u)^{2}:=
:\Phi^{2}:+ 2 u\Phi.
\end{displaymath}

Let $\Xi^{D}_{u}$ be a Poisson point process of boundary-to-boundary excursions with intensity $\mu^{D, u}_{\rm exc}$. The occupation
field $L(\Xi^{D}_{u})$ is well defined an it is a measure. One can still introduce the centered occupation field as
\begin{displaymath}
L_{\rm ctr}(\Xi^{D}_{u})=
L(\Xi^{D}_{u})-\mathbb{E}[L(\Xi^{D}_{u})].
\end{displaymath}
Below we extend the renormalized Le Jan's isomorphism to the case of non-negative boundary conditions. For an analogous statement in dimension 3 see \cite{Sznitam2013BMI3D}.

\begin{prop}
	\label{PropIsoURenorm}
	Let $u: \partial D\rightarrow\mathbb{R}_{+}$ be a bounded piecewise continuous (with left and right limits ant discontinuity points) non-negative boundary condition. Consider two independent Poisson point processes
	$\mathcal{L}^{D}_{1/2}$ and $\Xi^{D}_{u}$ of loops and boundary-to-boundary excursions. The field
	\begin{displaymath}
	L_{\rm ctr}(\mathcal{L}^{D}_{1/2})+
	L(\Xi^{D}_{u})
	\end{displaymath}
	has same law as
	\begin{displaymath}
	\frac{1}{2}:\Phi^{2}:+ u\Phi+
	\frac{1}{2} u^{2}.
	\end{displaymath}
	In particular, the field
	$L_{\rm ctr}(\mathcal{L}^{D}_{1/2})+L_{\rm ctr}(\Xi^{D}_{u})$
	has same law as $\frac{1}{2}:(\Phi+u)^{2}:$.
\end{prop}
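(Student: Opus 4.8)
The plan is to reduce the statement to the zero-boundary Le Jan isomorphism (Proposition \ref{PropIsoLeJanRenorm}) by the same conductance-reweighting and time-change trick used in the proof of Proposition \ref{PropIsoDiscr}, but carried out in the continuum. First I would treat the case where $u$ is bounded away from $0$ on the part of $\partial D$ where it is supported, and then recover the general non-negative case by a limiting argument (approximating $u$ from above by $u+\delta$ on the relevant boundary arcs, or by monotone approximations, and using $\mathbb{L}^2$-continuity of all the objects involved). In the strictly positive case, I would use conformal covariance together with the identity $:(\Phi+u)^2:\,=\,:\Phi^2:+2u\Phi$ to compare the field $\frac12:\Phi^2:+u\Phi+\frac12 u^2$ against $\frac12:(\Phi/u\text{-type reweighted field})^2:$; concretely, changing the reference metric by the conformal weight determined by $u$ turns boundary-to-boundary excursions weighted by $u(x)u(y)H_D(dx,dy)$ into excursions in a new ``domain'' with $u\equiv 1$ on the corresponding boundary, and the Le Jan isomorphism with constant boundary condition $1$ applies there after combining with the second Ray--Knight theorem as in the discrete case. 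The time change $dt = u^{-2}\,ds$ then multiplies the occupation fields by $u^2$, sending $:(\widehat\Phi+1)^2:$ to $:(\Phi+u)^2:$ and $L_{\rm ctr}(\widehat{\mathcal L}_{1/2})+L(\widehat\Xi_1)$ to $L_{\rm ctr}(\mathcal L^D_{1/2})+L(\Xi^D_u)$.

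Alternatively — and this is probably the cleaner route to write down — I would prove the statement directly by a first and second moment (or full Laplace/characteristic functional) computation. The key steps: (i) compute $\mathbb E[(L(\Xi^D_u),f)]$ and show it equals $\frac12(u^2,f) + \int G_D(\cdot,\cdot)$-type cross terms that match $\mathbb E[(u\Phi+\frac12 u^2 + \frac12:\Phi^2:,f)]$ once the loop contribution is centered; here one uses that an excursion from $x$ to $y$ under $\mathbb P^{D,x,y}_{\rm exc}$ has occupation density proportional to the $h$-transformed Brownian bridge density, so that $\mathbb E[L(\Xi^D_u)]$ has an explicit kernel expressible through $G_D$ and the Poisson kernel $H_D$; (ii) by independence of $\mathcal L^D_{1/2}$ and $\Xi^D_u$, the characteristic functional of the sum factors, and one checks that the product of the two functionals equals the known characteristic functional of $\frac12:\Phi^2:+u\Phi+\frac12 u^2$. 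For the loop part this is exactly Proposition \ref{PropIsoLeJanRenorm}; for the excursion part one needs the Laplace transform of $L(\Xi^D_u)$, which is $\exp\!\big(\mu^{D,u}_{\rm exc}(e^{(L(\gamma),f)}-1)\big)$ and can be evaluated against the Feynman--Kac / Girsanov representation of the excursion measure. The Markov property of $\mu^{D,u}_{\rm exc}$ in Proposition \ref{PropExcMeas}(2) and conformal invariance are the structural inputs that make these computations tractable and let one reduce to the metric-graph or one-dimensional situations where the formulas are classical.

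The last sentence of the proposition, that $L_{\rm ctr}(\mathcal L^D_{1/2})+L_{\rm ctr}(\Xi^D_u)$ has the law of $\frac12:(\Phi+u)^2:$, then follows immediately by subtracting the (deterministic, once one conditions suitably, or in any case explicitly computable) mean $\mathbb E[L(\Xi^D_u)]$ from both sides of the first identity, together with the defining relation $:(\Phi+u)^2:\,=\,:\Phi^2:+2u\Phi$ and the fact that $\mathbb E[u\Phi]=0$; so no extra work is needed there beyond bookkeeping of the recentering.

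\textbf{Main obstacle.} I expect the main difficulty to be the analytic control at the boundary: justifying that $L(\Xi^D_u)$ is a well-defined measure that is finite away from $\partial D$ but whose behaviour near $\partial D$ produces exactly the additional deterministic term $\frac12 u^2$ (the excursions not entering the bulk contribute the one-dimensional Brownian-excursion-occupation mass, which must be shown to converge to $\frac12 u^2$ in $H^{-1}$), and controlling the $\varepsilon\to0$ renormalization uniformly so that the centering of the excursion occupation field and of the loop occupation field are compatible. The passage from strictly positive $u$ to general non-negative $u$ by approximation also requires care because the excursion measure degenerates where $u\to 0$; one needs $\mathbb{L}^2$-convergence of $L(\Xi^D_{u_n})$ which can be obtained from a second-moment bound on $\mu^{D,u}_{\rm exc}$ expressed through $H_D$ and $G_D$. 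Conformal invariance (Proposition \ref{PropExcMeas}(1)) should be used to transfer the estimates from a smooth reference domain, where $H_D$ is given by the explicit normal-derivative formula, to the general finitely-connected case.
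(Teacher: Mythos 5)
Your second (preferred) route is essentially the paper's strategy: by independence the Laplace functional of $L_{\rm ctr}(\mathcal{L}^{D}_{1/2})+L(\Xi^{D}_{u})$ factors, the loop factor is handled by Proposition \ref{PropIsoLeJanRenorm}, and everything reduces to identifying $\mathbb{E}\big[e^{-(L(\Xi^{D}_{u}),\chi)}\big]$. Where you diverge is in how that excursion factor is evaluated. The paper's key structural input is Lemma \ref{LemMassive}: weighting the GFF by $e^{-\frac12(:\Phi^2:,\chi)}$ produces a \emph{massive} GFF with Dirichlet form $\int|\nabla f|^2+\int\chi f^2$, so the ratio $\mathbb{E}[e^{-\frac12(:\Phi^2:,\chi)}e^{-(\Phi,u\chi)}]/\mathbb{E}[e^{-\frac12(:\Phi^2:,\chi)}]$ is an explicit Gaussian exponential moment $\exp(\frac12\iint u\chi\, G_\chi\, u\chi)$ in terms of the massive Green's function $G_\chi$; the required identity $\mathbb{E}[e^{-(L(\Xi^{D}_{u}),\chi)}]=\exp(\frac12\iint u\chi\, G_\chi\, u\chi-\frac12\int u^2\chi)$ is then \emph{not} proved by a continuum Feynman--Kac computation but is imported from the discrete isomorphism (Proposition \ref{PropIsoDiscr}) and passed to the limit via convergence of excursion measures (Lemma \ref{Basic convergences 2}) and of massive Green's functions (Remark \ref{Rem:: masive}). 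Your direct evaluation of $\mu^{D,u}_{\rm exc}(1-e^{-(L(\gamma),\chi)})$ is viable but hides the real work in an unstated resolvent identity relating $H_D-H_D^\chi$ to $\frac12\int u^2\chi-\frac12\iint u\chi G_\chi u\chi$; the paper's detour through the metric graph buys exactly this identity for free. Your final recentering step is correct: taking expectations in the first identity gives $\mathbb{E}[L(\Xi^D_u)]=\frac12 u^2$, which is what the subtraction needs.

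One caution about your first route: the discrete conductance-reweighting $\widehat C(x,y)=C(x,y)u(x)u(y)$ does \emph{not} transfer to the continuum as a conformal change of metric. In 2D the Dirichlet energy is conformally invariant, so reweighting by $u$ corresponds to the divergence-form operator $-\nabla\cdot(u^2\nabla)$, not to the Laplacian in a new metric; moreover $u$ is merely piecewise continuous on the boundary and its harmonic extension vanishes where $u=0$, so the reweighted field is not a GFF in any domain to which Proposition \ref{PropIsoLeJanRenorm} applies. That route would need substantially more justification than the phrase ``conformal covariance'' suggests, and it is good that you fall back on the Laplace-functional argument.
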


\begin{proof}
	We need to show that for every non-negative continuous compactly supported function $\chi$ on $D$,
	\begin{displaymath}
	\mathbb{E}\left [e^{-(L_{\rm ctr}(\mathcal{L}^{D}_{1/2})+L(\Xi^{D}_{u}),\chi)}\right ]=
	\mathbb{E}\left [e^{-\frac{1}{2}(:\Phi^{2}:,\chi)}
	e^{-(\Phi,u\chi)}\right ]
	e^{-\frac{1}{2}(u^{2},\chi)}.
	\end{displaymath}
	By Le Jan's isomorphism (Proposition \ref{PropIsoLeJanRenorm}), we know that
	\begin{displaymath}
	\mathbb{E}\left [e^{-(L_{\rm ctr}(\mathcal{L}^{D}_{1/2}),\chi)}\right ]=
	\mathbb{E}\left [e^{-\frac{1}{2}(:\Phi^{2}:,\chi)}\right ].
	\end{displaymath}
	For the finiteness and the expression of 
	$\mathbb{E}\left [e^{-\frac{1}{2}(:\Phi^{2}:,\chi)}\right ]$, 
	see Sections 10.1 and 10.2 in \cite{LeJan2011Loops}.
	Since $\Xi^{D}_{u}$ is independent from $\mathcal{L}^{D}_{1/2}$, 
	we need to show that
	\begin{displaymath}
	\mathbb{E}\left [e^{-(L(\Xi^{D}_{u}),\chi)}\right ]=
	e^{-\frac{1}{2}(u^{2},\chi)}
	\dfrac{\mathbb{E}\left [e^{-\frac{1}{2}(:\Phi^{2}:,\chi)}
		e^{-(\Phi,u\chi)}\right ]}
	{\mathbb{E}\left [e^{-\frac{1}{2}(:\Phi^{2}:,\chi)}\right ]}.
	\end{displaymath}
	We will use the following lemma, whose proof we postpone:
	
	\begin{lemma}
		\label{LemMassive}
		The massless GFF $\Phi$, weighted by (change of measure)
		\begin{displaymath}
		\dfrac{e^{-\frac{1}{2}(:\Phi^{2}:,\chi)}}
		{\mathbb{E}[e^{-\frac{1}{2}(:\Phi^{2}:,\chi)}]},
		\end{displaymath}
		has the law of a massive GFF with 0 boundary conditions, corresponding to the Dirichlet form
		\begin{displaymath}
		\int_{D}(\nabla f)^{2}+\int_{D}\chi f^{2}.
		\end{displaymath}
	\end{lemma}
	
	From above lemma follows that
	\begin{displaymath}
	\dfrac{\mathbb{E}\left [e^{-\frac{1}{2}(:\Phi^{2}:,\chi)}
		e^{-(\Phi,u\chi)}\right ]}
	{\mathbb{E}\left [e^{-\frac{1}{2}(:\Phi^{2}:,\chi)}\right ]}=
	\exp\left(\dfrac{1}{2}\iint_{D\times D}
	u(z)\chi(z)G_{\chi}(z,w)u(w)\chi(w) dz dw\right),
	\end{displaymath}
	where $G_{\chi}$ is the Green's function of
	$-\Delta + \chi$.
	Indeed, it is an exponential moment of a massive GFF. Thus, we have to show that
	\begin{displaymath}
	\mathbb{E}\left [e^{-(L(\Xi^{D}_{u}),\chi)}\right ]=
	\exp\left(\dfrac{1}{2}\iint_{D\times D}
	u(z)\chi(z)G_{\chi}(z,w)u(w)\chi(w) dz dw
	-\dfrac{1}{2}\int_{D}u(z)^{2}\chi(z)dz\right).
	\end{displaymath}
	The above relation holds at discrete level, on a lattice approximation
	of domain $D$. It is a consequence of the isomorphism of Proposition
	\ref{PropIsoDiscr}. On a continuum domain one gets this relation by convergence of excursion measures (Lemma \ref{Basic convergences 2}) and massive Green's functions (see Remark \ref{Rem:: masive}).
\end{proof}

\begin{proofMassive}
	It is enough to show that for every constant $\varepsilon>0$,
	\begin{displaymath}
	\dfrac{e^{-\frac{1}{2}(:\Phi^{2}:,\varepsilon+\chi)}}
	{\mathbb{E}[e^{-\frac{1}{2}(:\Phi^{2}:,\varepsilon+\chi)}]}
	\end{displaymath}
	is the density of a massive GFF corresponding to the Dirichlet form
	\begin{displaymath}
	\int_{D}(\nabla f)^{2}+\int_{D}(\varepsilon+\chi)f^{2}.
	\end{displaymath}
	Then, by letting $\varepsilon$ tend to 0, we get our lemma.
	For $\varepsilon>0$ fixed, we can follow step by step
	the proof of the very similar Lemma 3.7 in \cite{LacoinRhodesVargas2014Hyperb} and use as there the decomposition of
	$\Phi^{(0)}$ according the eigenfunctions (with 0 boundary condition) of the Laplace-Beltrami operator associated to the metric
	$(\varepsilon + \chi(z))^{\frac{1}{2}}\vert dz\vert$ 
	(the area element being $(\varepsilon + \chi(x)) dx$).
\end{proofMassive}

Let us note that as for the isomorphism \ref{PropIsoDiscr},
the coupling
$(2L_{\rm ctr}(\mathcal{L}^{D}_{1/2}), 
2L_{\rm ctr}(\mathcal{L}^{D}_{1/2})+2L(\Xi^{D}_{u}))$ is not the same as
$(:\Phi^{2}:, 
:\Phi^{2}:+2 u\Phi+
u^{2})$.


\section{Convergence of FPS and clusters}
\label{SecConv}

In this section, we show that the metric graph FPS converges to the continuum FPS topology. We also prove that the clusters of metric graph Brownian loop soups and boundary-to-boundary excursions converge to their continuum counterpart. Both results are about convergence in probability, with respect to Hausdorff topology on closed subsets. We start the section with detailing the set-up and recalling some basic convergence results. Thereafter, we prove the convergence of the metric graph FPS towards its continuum counterpart.

\subsection{Set-up and basic convergence results}
In this section, we set up the framework for our convergence statements. We also review some convergence results for random closed sets, random fields and path measures. Most of the content is standard, but slightly reworded and reinterpreted. For simplicity, we restrict ourselves to $\widetilde \Z^{2}_n$, the metric graph induced by vertices $(2^{-n}\Z)^{2}$ and with unit conductances on every edge. However, one should be able to extend all the convergence results to isoradial graphs without too much effort. We always consider our metric graph $\widetilde \Z^{2}_n$ to be naturally embedded in $\C$, and when we mention distances and diameters for sets living on metric graphs, we always mean the Euclidean distance inherited from $\C$.

\subsubsection{Topologies and convergences on sets and functions}

We mostly work with finitely-connected bounded domains $D$. For us, a domain is by definition open and connected. We approximate these domains with metric graph domains obtained as intersections of $\widetilde \Z^{2}_n$ with domains of $\C$, i.e. by $\widetilde D_n := \widetilde \Z^{2}_n \cap D_n$, where $D_n \to D$ in an appropriate sense detailed below. We say that such an approximation $\widetilde D_n$ satisfies the condition \hypertarget{logof}{$\logof$} if
\begin{enumerate}
	\item[$\logof$]There exists $C, C' > 0$ such that 
	$D_n \subseteq [-C,C]^2$, and the amounts of connected components of $\C\backslash D_n$ is less or equal to $C'$.  
\end{enumerate} 

At times, we also need to work in the setting where both $D$ and $D_n$ are non-connected open sets (e.g. the complement of a CLE$_4$ carpet). The same condition makes sense in this case too.

We use the following topologies for open and closed sets:
\begin{itemize}
	\item For domains $D^z$ with a marked point $z \in D^z$, approximated by marked open domains $(D_n, z_n)$ we say that $(D_n, z_n)$ converges to $(D^z,z)$ in the sense of Carathéodory if 
	\begin{enumerate}
		\item $z_n \to z$,
		\item $D^z\subseteq \cup_{N\in\N}\cap_{n\geq N} D_n$,
		\item for any $x \in \partial D$ there are $x_n \in \partial D_n$ with $x_n \to x$. 
	\end{enumerate} Notice that in this wording we have not assumed simply-connectedness, as the Carathéodory topology generalizes nicely to multiply-connected setting (e.g. see \cite{Co}). 
	\item For closed sets, we work with the Hausdorff distance: the distance between two closed sets $A, B$ is the infimum over $\eps > 0$ such that $A \subset B + B(0,\eps)$ and $B \subset A + B(0,\eps)$, where $B(0,\eps)$ is the unit disk of radius $\eps$ and we consider the Minkowski sum. It is known that the set of closed subsets of $[-C,C]^2$ is compact for the Hausdorff topology. 
	\item For open sets $D$ that may not be connected, it is convenient to consider the Hausdorff distance on their complements with respect to $[-C,C]^2$, i.e. Hausdorff distance for $[-C,C]^2\backslash D$. Notice that if $(D^z,z)$ is any pointed connected component of $D$, then convergence of $D_n \to D$ in the sense that their complements converge, implies the Carathéodory convergence of $(D_n,z_n) \to (D^z,z)$ for any $z_n \to z$; see for example Theorem 1 of \cite{Co}.
\end{itemize}

We are also interested in the convergence of functions on 
$\widetilde D_n = \widetilde \Z^2_n \cap D_n$ 
to (generalized) functions on $D \subset [-C,C]^2$. In fact, it is more convenient to look at functions, whose domain of definitions is extended to the whole of $[-C,C]^2$. Thus, we extend a function $\tilde f$ defined on $\widetilde D_n$ to the whole of $[-C,C]^2$ by taking the harmonic extension of $\tilde f$, with zero boundary values on $\partial [-C,C]^2$. In particular, this extended function $\widehat f$ is then well-defined inside the square faces delimited by $\widetilde D_n$. 

Observe that in the case of the metric graph GFF $\tilde \phi$, such an extension $\widehat \phi$ is still a Gaussian process. We use these extensions everywhere when talking about the convergences of functions and often omit the word `extension' for readability. If we want to be explicit, we use the decoration $\widehat{\ }$ as above. In particular $\widehat G_{\widetilde D_n}$ will denote the Green's function of the metric graph GFF defined on $\widetilde D_n$ and extended to $[-C,C]^2$.

Both harmonic functions and GFF-s can be considered on any open set. If $\Phi$ is a GFF in $D$, then we can write $\Phi=\sum_{D^z} \Phi^{D^z}$ where the sum runs on the connected components $D^z$ of $D$ and where $\Phi^{D^z}$ is a GFF in $D^z$ independent of all the others. We consider the following topologies for the spaces of functions:

\begin{itemize}
	\item For the convergence of the extensions of bounded functions we use the uniform norm on compact subsets of $[-C,C]^2 \backslash \partial D$. We avoid $\partial D$ because we want to allow for a finite number of jumps on $\partial D$.
	\item The GFF-s on metric graphs and on domains are always considered as elements of the Sobolev space $H^{-1-\epsilon}([-C,C]^2)$. For background on Sobolev spaces we refer the reader to \cite{Adams}.
\end{itemize}

We will shortly see that these convergences are well-behaved in the sense that natural approximations of continuum objects converge. A key ingredient is the weak Beurling estimate (see for e.g. Proposition 2.11 of \cite{ChSm} for the discrete case and Proposition 3.73 of \cite{LawC} for the continuum case):

\begin{lemma}[Beurling estimate]\label{Brl} There exists $\beta>0$ such that for all $K\subseteq \widetilde \Z^{2}_n$  with $C$ connected components all of them with size at least $\delta$, and for all $z\in \widetilde \Z^{2}_n\backslash K$ and $\epsilon \leq \delta/2$
	\begin{align*}
	\P^x(\widetilde X \text{ hits $B(z,\epsilon)$ before hitting } K)\leq  \text{const}(C,\delta)\left(\frac{d(z,K)}{\epsilon}\right)^\beta,
	\end{align*}
	where $\widetilde X$ is a metric graph Brownian motion started at $z$. The same estimate holds in the continuum, i.e. if we replace 
	$\widetilde\Z^{2}_n$ by $\C$ and consider the two-dimensional Brownian motion.
\end{lemma}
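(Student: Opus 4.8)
This is the classical weak Beurling estimate for a single connected obstacle, so the plan is to reduce to that case and then quote the references cited in the statement. First I would use monotonicity in $K$: enlarging $K$ can only make $\widetilde X$ more likely to meet it first, so it suffices to prove the bound with $K$ replaced by the single component $K_0$ realising $d(z,K_0)=d(z,K)=:r$. By hypothesis $\mathrm{diam}(K_0)\ge\delta\ge 2\epsilon$, so in fact only $r$ and $\epsilon$ will matter (the dependence on $C$ in the final constant can then be dropped; I keep it only to match the form of the cited statements). Moreover the event is trivial unless $r\le\epsilon$: otherwise $K$ misses $B(z,\epsilon)$, so $\widetilde X$ started at $z$ must leave $B(z,\epsilon)$ before it can meet $K$, the probability is $1$, and $1\le(r/\epsilon)^\beta$. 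So from now on assume $r\le\epsilon$.

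Next I would exhibit the crossing configuration. Since $K_0$ contains a point $p$ with $|p-z|=r$ and, because $\mathrm{diam}(K_0)\ge 2\epsilon$, also a point outside $B(z,\epsilon)$, connectedness of $K_0$ forces the component of $K_0\cap\overline{B(z,\epsilon)}$ through $p$ to reach $\partial B(z,\epsilon)$; call this continuum $K_1\subseteq\overline{B(z,\epsilon)}$. After the rescaling $x\mapsto(x-z)/\epsilon$, $K_1$ becomes a connected set joining the circle of radius $r/\epsilon$ about the origin to the unit circle — exactly the configuration of Proposition 3.73 of \cite{LawC} in the continuum and of Proposition 2.11 of \cite{ChSm} for the simple random walk, each of which bounds by $\mathrm{const}\,(r/\epsilon)^\beta$ the probability of leaving the unit disk before hitting $K_1$. (Equivalently, one iterates over the $\sim\log_2(\epsilon/r)$ dyadic $z$-centred annuli $A(2^jr,2^{j+1}r)$ the one-scale fact that a Brownian motion started on the inner circle of an annulus reaches the outer circle without meeting a connected set crossing that annulus with probability at most $1-p_0$ for a universal $p_0>0$; the strong Markov property then produces the power $\beta=\log_2\tfrac{1}{1-p_0}>0$, and one may of course lower $\beta$ afterwards, so assume $\beta\le 1$.)

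Finally I would pass from the lattice walk to the metric graph Brownian motion $\widetilde X$. Its trace on $(2^{-n}\Z)^2$ is, up to a time change, the simple random walk, so \cite{ChSm} applies to every annulus crossing at scales $\gtrsim 2^{-n}$, at which the excursions of $\widetilde X$ inside single edges of length $2^{-n}$ are harmless. The only extra regime is $r\le 2^{-n}$: there, since every component of $K$ has macroscopic diameter $\delta\gg 2^{-n}$, near $z$ the set $K$ fills a whole adjacent edge or a whole vertex star, so a one-dimensional gambler's-ruin estimate for $\widetilde X$ inside the edge containing $z$ contributes an extra factor $\lesssim r/2^{-n}$ for escaping that edge; multiplying by the lattice bound $(2^{-n}/\epsilon)^{\beta}$ from scale $2^{-n}$ up to $\epsilon$ and using $\beta\le 1$ recovers $\mathrm{const}\,(r/\epsilon)^\beta$. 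The continuum statement of the lemma is Proposition 3.73 of \cite{LawC} verbatim.

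The main obstacle is not a deep one — the substance lies in the quoted references. The only points requiring genuine (but routine) care are the passage from their single-obstacle formulation to the uniform-in-$\epsilon$, several-component statement above, which the monotonicity reduction together with the crossing continuum $K_1$ dispatches, and the verification that the metric graph Brownian motion inherits the discrete estimate, the one slightly delicate range being scales below the lattice mesh, which the one-dimensional ruin estimate handles.
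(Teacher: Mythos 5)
The paper offers no proof of this lemma at all: it is stated as a known result with the two references you quote, so there is nothing to compare against except those citations, and your reduction to them is the standard and correct one (monotonicity to a single component, the crossing continuum in $B(z,\epsilon)$, dyadic-annulus iteration, and the identification of the trace of $\widetilde X$ on vertices with the simple random walk). You have also, correctly, read the statement in its intended form --- started at $z$, exiting $B(z,\epsilon)$ before hitting $K$ --- which is what the paper actually uses in Proposition \ref{Convergence} and Lemma \ref{Basic convergences 2}, despite the typo ($\P^x$ versus ``started at $z$'', and ``hits'' versus ``exits'' $B(z,\epsilon)$). The only imprecision is in the sub-mesh regime $r\le 2^{-n}$: it is not true that $K$ must ``fill a whole adjacent edge or vertex star'' near $z$ (it may clip only a sub-segment of one edge), and when $z$ sits close to a vertex the pure gambler's ruin on the edge containing $z$ gives a probability near $1$ rather than $r/2^{-n}$; one recovers the factor $r/2^{-n}$ from the excursion decomposition of $\widetilde X$ at that vertex (the excursion into the blocked edge reaches $K$ at distance $\sim r$ at rate $\sim 1/r$, while excursions into free edges reach a neighbouring vertex at rate $\sim 2^{n}$). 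This is a routine repair and does not affect the conclusion.
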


The following lemma is basically contained in \cite{ChSm} Proposition 3.3 and Corollary 3.11. Although the statements there include more stringent conditions, in particular, the boundaries are assumed to be Jordan curves and domains simply-connected, one can verify that this is not really used in the proofs. For similar statements one can also see Proposition 3.5 and Lemma A.1 in \cite{BL}.

\begin{lemma}\label{Basic convergences}
	Suppose $D$ and $(D_n)_{n\in \N}$ are open sets that satisfy condition 
	\hyperlink{logof}{$\logof$}, and $D_n\to D$ in the sense that their complements converge in the Hausdorff topology. Then, we have the following convergences:
	\begin{enumerate}
		\item  Let $H$ be a bounded function on $[-C,C]^2$ with at most a finite number of discontinuity points on $\partial D$ and continuous elsewhere, and let $u$ be the unique harmonic function on $[-C,C]^2 \backslash \partial D$ that takes the values of $H$ on $\partial D$ and the value $0$ on $\partial [-C,C]^2$. Let $\widehat u_n$ be the extension of the metric graph harmonic function defined on $\widetilde D_n$ by the restriction of $H$ to $\partial \widetilde D_n$. Then $\widehat u_n$ converge to $u$ in the sense above.
		\item For any continuous bounded $f$ defined on $Q_C = [-C,C]^2$ we have that, 
		$$\lim_{n\to +\infty}
		\iint_{Q_C \times Q_c}f(z)\widehat G_{\widetilde D_n}(z,w)
		f(w)dz dw 
		= \iint_{Q_C \times Q_C} f(z)G_{D}(z,w)f(w)dz dw,$$ 
		where $\widehat G_{\widetilde D_n}$ is the harmonic extension of the metric graph Green's function on $\widetilde D_n$.
	\end{enumerate}
	Similarly, for any connected component $D^z$ of $D$ containing $z$, if $(D_n, z_n)$ converge towards $(D,z)$ in the Carathéodory sense, then the statements also hold.
\end{lemma}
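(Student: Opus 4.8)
\textit{Proof plan for Lemma \ref{Basic convergences}.}

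The plan is to write both quantities probabilistically, couple the metric graph Brownian motion on $\widetilde D_n$ with the planar Brownian motion $B$ (of generator $\Delta$) through an invariance principle, and then use the Hausdorff convergence of the complements together with the weak Beurling estimate of Lemma \ref{Brl} to transfer the boundary data to the limit. Write $\widetilde X^{(n)}$ for the metric graph Brownian motion on $\widetilde D_n$, extended to a planar Brownian motion inside the square faces delimited by $\widetilde D_n$ and on $Q_C\setminus \widetilde D_n$, and let $\tau_n$ be its first exit time from $D_n$ (or, if earlier, its hitting time of $\partial Q_C$), with the convention $H\equiv 0$ on $\partial Q_C$. By the strong Markov property of $\widetilde X^{(n)}$ one has $\widehat u_n(z)=\E^z\big[H(\widetilde X^{(n)}_{\tau_n})\big]$, and, since $\int \widehat G_{\widetilde D_n}(z,w)f(w)\,dw=\E^z\big[\int_0^{\tau_n}f(\widetilde X^{(n)}_t)\,dt\big]$, one has $\iint_{Q_C\times Q_C} f(z)\widehat G_{\widetilde D_n}(z,w)f(w)\,dz\,dw=\int_{Q_C}f(z)\,\E^z\big[\int_0^{\tau_n}f(\widetilde X^{(n)}_t)\,dt\big]\,dz$, and likewise in the continuum with $B$, its exit time $T$ from $D$, and $G_D$. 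So in both parts it suffices to control $\widetilde X^{(n)}$, its exit location, and $\tau_n$ as $n\to\infty$. The invariance principle itself comes from the time-change description of $\widetilde X^{(n)}$ recalled in Section \ref{Loops Metric} combined with Donsker's theorem for the nearest-neighbour random walk on $(2^{-n}\Z)^2$: it yields, for each fixed starting point $z$ and each $t_0$, a coupling with $\sup_{t\le t_0}|\widetilde X^{(n)}_t-B_t|\to 0$ in probability.

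For part (1), fix $z$ in a compact subset of $Q_C\setminus\partial D$ and use the coupling above. Because the complements of $D_n$ converge to that of $D$ in the Hausdorff metric and all boundary components have diameter bounded below, Lemma \ref{Brl} provides a uniform-in-$n$ bound that prevents $\widetilde X^{(n)}$ from slipping past $\partial D_n$ or lingering near it without exiting; fed into the coupling this forces $\tau_n\to T$ and $\widetilde X^{(n)}_{\tau_n}\to B_T$ in probability. Since a point of $\partial D$ has zero harmonic measure from $z$ (again by Lemma \ref{Brl}), $B_T$ is a.s. not one of the finitely many discontinuity points of $H$, hence $H(\widetilde X^{(n)}_{\tau_n})\to H(B_T)$ in probability, and boundedness of $H$ gives $\widehat u_n(z)\to u(z)$. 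Finally, interior regularity of (discrete, hence metric graph) harmonic functions with bounded boundary data — itself a consequence of Beurling-type estimates — provides a modulus of continuity for the $\widehat u_n$ that is uniform in $n$ on each compact subset of $Q_C\setminus\partial D$, so the pointwise convergence upgrades to uniform convergence on such compacts.

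For part (2), with the same coupling $\tau_n\to T$ in probability (the two inclusions expressing Hausdorff closeness of the complements, each fed into Lemma \ref{Brl}, give the two one-sided bounds), so for bounded continuous $f$ and each fixed $z$, $\int_0^{\tau_n}f(\widetilde X^{(n)}_t)\,dt\to\int_0^{T}f(B_t)\,dt$ in probability. Since $D_n\subseteq Q_C$, the exit times $\tau_n$ have moments bounded uniformly in $n$, so these occupation integrals are uniformly integrable and $\E^z\big[\int_0^{\tau_n}f\big]\to\E^z\big[\int_0^{T}f\big]$ for every $z\in Q_C\setminus\partial D$. Integrating against $f$ in $z$ and applying dominated convergence — legitimate because $\int \widehat G_{\widetilde D_n}(z,w)\,dw\le \int G_{Q_C}(z,w)\,dw$ is bounded uniformly in $n$ and $z$ by monotonicity of Green's functions under domain inclusion — yields the claimed convergence of the bilinear forms.

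It then remains to reconcile this with the literature and to treat the Carathéodory statement. The restriction of the metric graph harmonic function to the vertices is exactly the discrete harmonic function with the induced boundary values, and the Brownian-bridge interpolation inside the edges (and the harmonic interpolation inside the $O(2^{-n})$-size faces) changes it by a quantity tending to $0$ uniformly; the same applies to $\widehat G_{\widetilde D_n}$ versus the discrete Green's function. Hence parts (1) and (2) also follow from the discrete statements of \cite{ChSm} (Proposition 3.3, Corollary 3.11) and \cite{BL} (Proposition 3.5, Lemma A.1): the Jordan and simple-connectivity hypotheses there are not used in an essential way, the only input on the boundary being the uniform regularity encoded by Lemma \ref{Brl}, valid for any family satisfying \hyperlink{logof}{$\logof$}. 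For a pointed connected component $(D^z,z)$ of $D$, Carathéodory convergence $(D_n,z_n)\to(D^z,z)$ follows from Hausdorff convergence of the complements by \cite[Theorem 1]{Co}, and since all the arguments above are run through the Brownian motion started at $z$ — which never sees the other components — the same conclusions hold. The main obstacle throughout is the lack of regularity of the possibly multiply-connected, non-Jordan boundaries, and it is precisely the weak Beurling estimate that supplies the quantitative, uniform-in-$n$ control needed to pass the exit location (for (1)) and the exit time (for (2)) to the limit. \hfill$\square$
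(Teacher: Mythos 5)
Your route is genuinely different from the paper's. The paper proves (1) by compactness: it extracts subsequential limits, shows they are bounded and harmonic off $\partial D$ (citing the proof of Proposition 3.1 of \cite{ChSm}), and identifies the boundary values via Beurling; it proves (2) by first handling the full square $[-C,C]^2$ (where the Green's function convergence is classical, e.g.\ via an eigenfunction expansion), then observing that $\widehat G_{\widetilde D_n}(z,\cdot)-\widehat G_{Q_C\cap\widetilde\Z^2_n}(z,\cdot)$ is a harmonic extension with uniformly bounded boundary data, so that part (1) applies, and finishing with dominated convergence using the uniform bound $\widehat G_{Q_C\cap\widetilde\Z^2_n}(z,w)\leq c(\log|z-w|^{-1}+1)$. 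You instead argue directly through a probabilistic representation and an invariance-principle coupling. For part (1) this is essentially sound (and close in spirit to what is inside the cited reference): the coupling plus Beurling gives $\widetilde X^{(n)}_{\tau_n}\to B_T$, non-atomicity of harmonic measure handles the finitely many discontinuities of $H$, and uniform equicontinuity upgrades pointwise to locally uniform convergence. The trade-off is that your argument is self-contained where the paper delegates to \cite{ChSm}, at the cost of having to set up the invariance principle carefully.

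There is, however, a genuine gap in your part (2). The identity
\begin{displaymath}
\int_{Q_C} \widehat G_{\widetilde D_n}(z,w)f(w)\,dw \;=\; \E^z\Bigl[\int_0^{\tau_n}f\bigl(\widetilde X^{(n)}_t\bigr)\,dt\Bigr]
\end{displaymath}
is false as written. By the occupation-density formula of Section \ref{Loops Metric}, the right-hand side equals $\int_{\widetilde D_n} G_{\widetilde D_n}(z,w)f(w)\,d\tilde m(w)$, an integral against the cable measure $\tilde m$, under which each edge has length $C(e)^{-1}=1$; the left-hand side is a two-dimensional Lebesgue integral of the harmonic extension. The two reference measures differ by an $n$-dependent factor of order $4^{n}$ per unit Euclidean area, so the identity cannot hold without a compensating diffusive time change of $\widetilde X^{(n)}$ — and for the unrescaled metric graph Brownian motion (generator $f''/4$ on edges of intrinsic length $1$, as defined in the paper) the exit time $\tau_n$ from a macroscopic domain diverges, so the claims ``$\tau_n\to T$ in probability'' and ``the moments of $\tau_n$ are bounded uniformly in $n$'' fail. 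To repair the argument you must (i) fix once and for all the time normalization under which the trace of $\widetilde X^{(n)}$ on $(2^{-n}\Z)^2$ satisfies Donsker's theorem with limit the Brownian motion of generator $\Delta$, and check that this normalization is the one for which the occupation density is the Green's function whose restriction to vertices is the discrete Green's function (i.e.\ the covariance of $\tilde\phi_n$) — this is exactly the constant-matching that the paper sidesteps by reducing to the explicitly computable full-square case; and (ii) justify replacing the (rescaled) cable measure by two-dimensional Lebesgue measure of the harmonic extension, which away from the diagonal follows from continuity of $f$ and local regularity of $\widehat G_{\widetilde D_n}$, but near the diagonal requires the uniform logarithmic bound — your appeal to ``monotonicity of Green's functions under domain inclusion'' compares a metric graph Green's function with a continuum one and is not literally a monotonicity statement; it is precisely the bound $\widehat G_{Q_C\cap\widetilde\Z^2_n}(z,w)\leq c(\log|z-w|^{-1}+1)$ that is needed. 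Neither repair is conceptually hard, but as it stands the central identity of your part (2) is off by an unbounded factor.
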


\begin{rem}
	We include the possibility of finitely many discontinuity points on $\partial D$, as then the statement provides an explicit way of constructing (metric graph) harmonic functions, whose extensions converge to the original harmonic function in the topology defined above. 
\end{rem}

\begin{proof}
	As mentioned just before the statements, the proofs are basically contained in \cite{ChSm}. Hence we will only sketch the steps with appropriate references. 
	\begin{enumerate}
		\item Pre-compactness in the uniform norm on compacts of $D$, and harmonicity outside of $\partial D$ both follow from the proof of Proposition 3.1 in \cite{ChSm}. In particular, we know that each subsequential limit is a bounded harmonic function. To determine the boundary values one uses Beurling estimate as in the proof of Proposition 3.3 in \cite{ChSm}.
		\item The convergence of the (extension of the) discrete Green's function on $[-C,C]^2 \cap \Z_n$ to the continuum Green's function on $[-C,C]^2$ is well-known and can be explicitly shown, for example, via an eigenfunction expansion of the Green's function. The convergence of $G_{[-C,C]^2 \cap \widetilde \Z_n}$ and of $\widehat G_{[-C,C]^2 \cap \widetilde \Z_n}$ then follows.
		
		For the general case, note that the function $\widehat G_{\widetilde D_n}(z,\cdot)- \widehat G_{[-C,C]^2\cap \widetilde \Z^2_n}(z,\cdot)$ is the harmonic extension of function on $\widetilde D_n$ with uniformly bounded boundary values. Thus it converges by (1)
		to $G_{D}(z,\cdot)-G_{[-C,C]^{2}}(z,\cdot)$.
		To deduce the convergence of the integral one finally uses and dominated convergence together with the fact that $\widehat G_{[-C,C]^2\cap \widetilde \Z^2_n}(z,w)$ is upper bounded by $c(\log(|z-w|)+1)$. For more details see e.g. Proposition 3.5 of \cite{BL}.
	\end{enumerate}
\end{proof}

\begin{rem}\label{Rem:: masive}
	Note that statement (2) can be proved similarly for a massive Green's function. One just need to replace the harmonic extensions by the solutions of the appropriate Poisson equation, and the standard Brownian motion by a Brownian motion killed at an appropriate exponential rate.
\end{rem}

Lemma \ref{Basic convergences} allows us to give a short argument for the convergence of the metric GFF-s:

\begin{cor}\label{GFFconv}
	Suppose $D$ and $(D_n)_{n\in \N}$ are open sets that satisfy condition \hyperlink{logof}{$\logof$}, and that $D_n\to D$ in the sense that their complements converge in the Hausdorff topology. Then the extensions $\widehat \phi_n$ of the metric graph GFF-s $\widetilde \phi_n$ on $\widetilde D_n$ converge in law in $H^{-1-\eps}([-C,C]^2)$ to a GFF $\Phi$ on $D$. Moreover, for any connected component $D^z$ of $D$ containing $z$, if $(D_n, z_n)$ converge towards $(D,z)$ in the Carathéodory sense, then the restrictions of $\widehat \phi_n$ to $D$ converge to a zero boundary GFF $\Phi^D$ on $D$.
\end{cor}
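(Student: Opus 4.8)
The plan is to reduce the statement to the two deterministic convergences established in Lemma \ref{Basic convergences} by exploiting the Gaussianity of the (extended) metric graph GFF. First I would recall that $\widehat\phi_n$ is a centered Gaussian process on $[-C,C]^2$: inside the square faces delimited by $\widetilde D_n$ it is the harmonic extension of the metric graph GFF with zero boundary data on $\partial[-C,C]^2$, so it is a linear functional of the underlying Gaussians. Its covariance, tested against a continuous function $f$ on $Q_C$, is exactly $\iint_{Q_C\times Q_C} f(z)\widehat G_{\widetilde D_n}(z,w)f(w)\,dz\,dw$, where $\widehat G_{\widetilde D_n}$ is the harmonically extended metric graph Green's function. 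By part (2) of Lemma \ref{Basic convergences}, this converges to $\iint f(z)G_D(z,w)f(w)\,dz\,dw$, which is the covariance of the GFF $\Phi$ on $D$ (with the convention $\Phi=\sum_{D^z}\Phi^{D^z}$ over connected components, since $G_D$ decomposes accordingly). Hence for every fixed test function $f$, the real Gaussian random variable $(\widehat\phi_n,f)$ converges in law to $(\Phi,f)$; since finite linear combinations are handled the same way, all finite-dimensional distributions converge.

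Next I would upgrade finite-dimensional convergence to convergence in law in $H^{-1-\eps}([-C,C]^2)$. The standard route is tightness: I would show $\sup_n \E[\|\widehat\phi_n\|_{H^{-1-\eps}}^2]<\infty$, using that the Sobolev norm can be written via a Fourier/eigenfunction series and that the bound $\widehat G_{[-C,C]^2\cap\widetilde\Z_n^2}(z,w)\le c(|\log|z-w||+1)$ (already invoked in the proof of Lemma \ref{Basic convergences}) together with the maximum principle gives a uniform-in-$n$ bound on $\widehat G_{\widetilde D_n}(z,w)$ that is integrable against the relevant kernels. Since bounded sets of $H^{-1-\eps/2}([-C,C]^2)$ are compactly — and in particular relatively compactly — embedded in $H^{-1-\eps}([-C,C]^2)$, the laws of $\widehat\phi_n$ form a tight family in $H^{-1-\eps}$. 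Combined with the convergence of finite-dimensional distributions (which determine the limit law on the Borel $\sigma$-algebra of the separable Hilbert space $H^{-1-\eps}$), this yields $\widehat\phi_n \Rightarrow \Phi$ in $H^{-1-\eps}([-C,C]^2)$.

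For the last assertion, about restrictions to a connected component $D^z$ under Carathéodory convergence $(D_n,z_n)\to(D,z)$, I would run the same argument with $\widehat G_{\widetilde D_n}$ replaced by the metric graph Green's function restricted to $D^z$; the final clause of Lemma \ref{Basic convergences} gives precisely the covariance convergence $\iint_{D^z\times D^z} f(z)\widehat G_{\widetilde D_n}(z,w)f(w)\to\iint_{D^z\times D^z} f(z)G_{D^z}(z,w)f(w)$ for $f$ supported in $D^z$, and the same tightness estimate (now localized to a compact subset of $D^z$, using the Beurling estimate of Lemma \ref{Brl} to control boundary effects) gives tightness of the restricted fields. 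The limit is then identified as the zero-boundary GFF $\Phi^{D^z}$ on $D^z$.

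I expect the main obstacle to be the tightness step — more precisely, obtaining a uniform-in-$n$ bound on $\widehat G_{\widetilde D_n}(z,w)$ near $\partial D_n$ and near the diagonal that is strong enough to control the $H^{-1-\eps}$ norm. The diagonal blow-up is logarithmic and harmless after testing against $H^{-1-\eps}$ kernels, but one must make sure the harmonic-extension procedure into the square faces does not create spurious mass; here the maximum principle bounds the extension inside each face by the boundary values of $\widehat G_{\widetilde D_n}$ on the incident edges, so no new singularity appears. Everything else is a routine combination of Gaussianity, the already-proved Lemma \ref{Basic convergences}, and the compact Sobolev embedding.
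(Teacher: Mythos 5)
Your proposal follows essentially the same route as the paper: finite-dimensional convergence from the covariance convergence in Lemma \ref{Basic convergences}(2), tightness from a uniform second-moment bound on a Sobolev norm computed via $\iint \widehat G_{\widetilde D_n}(z,w)G_D(z,w)\,dz\,dw$, and the compact Sobolev embedding; the paper does exactly this, bounding the $H^{-1}$ norm and embedding into $H^{-1-\eps}$. One small correction: to conclude tightness in $H^{-1-\eps}$ via the compact embedding you must establish the uniform moment bound in the \emph{stronger} space ($H^{-1}$ or $H^{-1-\eps/2}$), not in $H^{-1-\eps}$ itself as written — but the Green's function computation gives precisely that, so the argument goes through.
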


\begin{proof}
	Lemma \ref{Basic convergences} (2) guarantees the convergence of finite-dimensional marginals. Thus it remains to prove tightness.
	The norm of the Sobolev space $H^{-1}(D)$ is given by (e.g. see \cite{Dub}, Section 4.2.)
	\begin{displaymath}
	\Vert f\Vert_{H^{-1}}^{2}=
	\iint_{D\times D} f(z) G_{D}(z,w) f(w)
	dz dw.
	\end{displaymath}
	But using Lemma \ref{Basic convergences} (2) and denoting $Q_C = [-C,C]^2$ we can explicitly calculate to see that $$\sup_{n\in \N}\E\left[\|\tilde  \phi_n\|_{H^{-1}(Q_C)}^2 \right] = \sup_{n\in \N} \iint_{Q_C\times Q_C} \widehat G_{\widetilde D_n} (z,w) G_{D}(z,w)
	dz dw <\infty. $$ Hence by the Sobolev embedding theorem, we have that $(\tilde \phi_n)_{n\in \N}$ is tight in $H^{-1-\epsilon}([-C,C]^2)$ for any $\eps > 0$ and the convergence follows. 
	
	The latter part follows similarly.
\end{proof}

\subsubsection{Topologies and convergences on loops and excursions}

Now, let $\L_{\alpha}^{D}$ and $\L_{\alpha}^{\widetilde D_n}$ be respectively a continuum and a metric graph loop-soup, i.e. PPPs with intensity measures $\alpha \mu_{\rm loop}^{D}$ and $\alpha \mu_{\rm loop}^{\widetilde{D}_n}$ respectively. Moreover, for $u$ a positive function on $\partial D$ and $u_n$ a positive function on $\partial \widetilde D_n$, let $\Xi^{D}_{u}$ and $\Xi^{\widetilde D_n}_{ u_n}$ be respectively independent PPP of boundary-to-boundary Brownian excursions of intensity $\mu^{D,u}_{\rm exc}$ and boundary-to-boundary metric graph excursions of intensity $\mu^{\widetilde D_n,u_n}_{\rm exc}$. We use the following topologies when we work with paths, i.e. excursions and loops, and sets of paths:
\begin{itemize}
	\item We consider paths as closed subsets in $\overline D$ and consider the Hausdorff distance $d_H$ on these subsets.
	\item For a set of paths $\Gamma$, define $\Gamma^\eps$ as the subset $\Gamma$, consisting of paths that have diameter larger than $\epsilon$. Now on the sets $\Gamma$, for which the cardinality of $\Gamma^\eps$ is finite for all $\eps > 0$, we define the distance $d(\Gamma_1, \Gamma_2)$ to be equal to 
	$$\inf\left\{\delta > 0: \text{There is a bijection $f: \Gamma_1^\delta \to \Gamma_2^\delta$ with $\sup_{\ell  \in \Gamma_1^\delta}d_H(\ell, f(\ell)) \leq \delta$}\right\}.$$
	One can verify that $d(\Gamma_n, \Gamma) \to 0$ is equivalent to the existence of $\delta_k \to 0$ such that $\Gamma_n^{\delta_k} \to \Gamma^{\delta_k}$ in the sense that there exists a sequence of bijections $f_n : \Gamma_n^{\delta_k} \to \Gamma^{\delta_k}$ such that $\sup_{\ell \in  \Gamma_n^{\delta_k}} d_H(\ell, f(\ell)) \to 0$, and that moreover the sets of path for which the cardinality of $\Gamma^\epsilon$ is finite for all $\eps > 0$, endowed with this distance, defines a Polish space.
\end{itemize}

The following lemma says that these convergences also behave nicely:

\begin{lemma}\label{Basic convergences 2}
	Suppose $D$ and $(D_n)_{n\in \N}$ are open sets that satisfy condition \hyperlink{logof}{$\logof$}, and $D_n\to D$ in the sense that their complements converge in the Hausdorff topology. Moreover, let $u$ be a positive harmonic function in $[-C,C]^2 \backslash \partial D$ defined by piecewise constant boundary values on $\partial D$ and $u_n$ harmonic functions on $\widetilde D_n$ converging to $u$. Then, we have that for all $\epsilon \geq 0$:
	\begin{enumerate}
		\item  $\mu_{\rm loop}^{\widetilde{D}_n}\1_{\operatorname{Diam}(\gamma)\geq \epsilon}\to \mu_{\rm loop}^{D}\1_{\operatorname{Diam}(\gamma)\geq \epsilon}$ weakly w.r.t the Hausdorff distance. It follows that for all $\alpha \geq 0$ we can define on the same probability space $\L_{\alpha}^{D}$, and $(\L_{\alpha}^{\widetilde D_n})_{n \in \N}$ so that $\L_{\alpha}^{\widetilde D_n}\to \L_{\alpha}^{D}$ almost surely, w.r.t. the topology on the sets of paths defined above. 
		\item $\mu^{\widetilde D_n,u_n}_{\rm exc}
		\1_{\operatorname{Diam}(\gamma)\geq \epsilon}\to \mu^{\widetilde D,u}_{\rm exc}\1_{\operatorname{Diam}(\gamma)\geq \epsilon}$ weakly w.r.t the Hausdorff distance. It follows that for all $\alpha \geq 0$ we can define on the same probability space $(\Xi_{u_n}^{\widetilde D_n})_{n \in \N}$ and $\Xi_{u}^{D}$ such that $\Xi_{u_n}^{\widetilde D_n}\to \Xi_{u}^{D}$ almost surely, w.r.t. the topology on sets of paths defined above.
	\end{enumerate}
	Similarly, for any connected component $D^z$ of $D$ containing $z$, if $(D_n, z_n)$ converge towards $(D,z)$ in the Carathéodory sense, then the statements also hold. 
\end{lemma}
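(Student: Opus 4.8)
The plan is to establish the weak convergence of the loop and excursion measures (restricted to paths of diameter $\geq\epsilon$) and then deduce the almost sure coupling by a standard construction. I will use the Hausdorff distance on paths viewed as closed subsets of $\overline{D}$, which makes the space of such compact sets a Polish space; restricting to $\operatorname{Diam}(\gamma)\geq\epsilon$ gives finite-mass measures, so weak convergence is the right notion.

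\emph{Step 1: Loop measures.} For the loop measure, I would first reduce to the already-known convergence on the box $[-C,C]^2$: by the restriction property, $\mu_{\rm loop}^{\widetilde D_n}\1_{\gamma\subset \widetilde D_n}=\mu_{\rm loop}^{[-C,C]^2\cap\widetilde\Z^2_n}\1_{\gamma\subset\widetilde D_n}$ and similarly in the continuum. So it is enough to (a) recall that $\mu_{\rm loop}^{[-C,C]^2\cap\widetilde\Z^2_n}\1_{\operatorname{Diam}\geq\epsilon}$ converges weakly (w.r.t.\ Hausdorff distance on ranges) to $\mu_{\rm loop}^{[-C,C]^2}\1_{\operatorname{Diam}\geq\epsilon}$, which follows from the convergence of the metric graph Brownian bridge transition densities $p_t^{\widetilde\Z^2_n}(z,z)$ to $(4\pi t)^{-1}$ and of the bridge laws $\mathbb P_t^{\widetilde\Z^2_n,z,z}$ to $\mathbb P_t^{z,z}$ (uniformly on compacts in $(z,t)$, e.g.\ via the local CLT and Donsker-type invariance for the metric graph walk, cf.\ \cite{Lupu2016Iso}), integrated against $dt/t\otimes dz$ with the diameter cutoff providing the needed uniform integrability near $t=0$ and $t=\infty$; and (b) handle the indicator $\1_{\gamma\subset D_n}$: since $\C\backslash D_n\to\C\backslash D$ in Hausdorff distance, for $\mu_{\rm loop}^{[-C,C]^2}$-a.e.\ loop $\gamma$ one has $\1_{\gamma\subset D_n}\to\1_{\gamma\subset D}$ (the boundary case $\gamma\cap\partial D\neq\emptyset$ but $\gamma\subset D$ has measure zero because, by the Beurling estimate of Lemma \ref{Brl}, the loop measure of loops touching a fixed boundary component is locally finite and such loops a.s.\ do not merely graze the boundary), so one concludes by a standard weak-convergence argument combining the two. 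The passage from weak convergence of the intensity measures to an almost sure coupling of the PPPs $\L_\alpha^{\widetilde D_n}\to\L_\alpha^D$ is then routine: use Skorokhod's representation on each finite collection $\Gamma^{\delta_k}$ and a diagonal argument over $\delta_k\to 0$, which is exactly the content of the topology on sets of paths described just above the statement.

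\emph{Step 2: Excursion measures.} For $\mu_{\rm exc}^{\widetilde D_n,u_n}$ I would decompose $\mu_{\rm exc}^{\widetilde D_n,u_n}=\tfrac12\sum_{x,y} u_n(x)u_n(y)\mu^{\widetilde D_n,x,y}_{\rm exc}$, but since $u_n\to u$ with $u$ piecewise constant it is cleaner to integrate against the boundary Poisson kernel: $\mu^{\widetilde D_n,u_n}_{\rm exc}=\tfrac12\iint u_n(x)u_n(y)\mathbb P_{\rm exc}^{\widetilde D_n,x,y}H_{\widetilde D_n}(dx,dy)$. Convergence then follows from (i) convergence of the discrete/metric-graph boundary Poisson kernel $H_{\widetilde D_n}(dx,dy)$ to $H_D(dx,dy)$ as measures on $\partial D\times\partial D$ — this is obtained from Lemma \ref{Basic convergences}(2) by writing $H$ as a double normal derivative of the Green's function, or directly from the convergence of harmonic measure / the exit distributions of the metric graph walk (Beurling estimate again controls the behaviour near the boundary and near the diagonal $x=y$, the diameter cutoff discarding the infinite-mass part), (ii) convergence of the excursion bridge laws $\mathbb P_{\rm exc}^{\widetilde D_n,x,y}\to\mathbb P_{\rm exc}^{D,x,y}$ in the Hausdorff topology on ranges, uniformly for $x,y$ away from each other, which follows from the convergence of the killed metric graph Brownian motion conditioned to exit near $x$ and $y$, and (iii) the convergence $u_n\to u$ on $\partial D$ in the appropriate sense, together with the piecewise constancy of $u$ so that $u(x)u(y)$ is a bounded a.e.\ continuous integrand for $H_D(dx,dy)$. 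Combining (i)–(iii) via a standard product-convergence lemma for integrals of converging kernels against converging measures gives the claimed weak convergence of $\mu^{\widetilde D_n,u_n}_{\rm exc}\1_{\operatorname{Diam}\geq\epsilon}$, and the a.s.\ coupling of $\Xi^{\widetilde D_n}_{u_n}\to\Xi^D_u$ follows exactly as in Step 1.

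\emph{Step 3: Carathéodory case and the main obstacle.} For the last sentence, if only $(D_n,z_n)\to(D^z,z)$ in the Carathéodory sense, one restricts attention to the loops/excursions lying in the connected component $D^z$; the same arguments apply, the only subtlety being that Carathéodory convergence is weaker, so one must rely on the fact (noted in the excerpt, cf.\ Theorem 1 of \cite{Co}) that for multiply-connected domains the relevant boundary behaviour is still controlled, and that loops/excursions in $D^z$ of diameter $\geq\epsilon$ stay at positive distance from the ``extra'' boundary pieces that Carathéodory convergence does not see. I expect the main obstacle to be the careful treatment of paths near $\partial D$: showing that the set of loops (resp.\ excursions) that touch $\partial D$ but are not ``genuinely attached'' to it is $\mu_{\rm loop}$-null (resp.\ negligible for the relevant excursion measure), and that the diameter cutoff together with the Beurling estimate (Lemma \ref{Brl}) makes all the limiting procedures uniform — this is where the finitely-connected hypothesis and condition \hyperlink{logof}{$\logof$} (bounded number of boundary components, uniform lower bound on their sizes) are genuinely used, and where the bulk of the technical work lies; the rest is bookkeeping with weak convergence and Skorokhod coupling.
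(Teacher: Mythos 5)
Your Step 1 is essentially what the paper does (it simply cites Corollary 5.4 of Lawler--Trujillo Ferreras for the loop part and notes the argument carries over to metric graphs and to finitely many boundary components), so no complaint there. The problem is Step 2. Your route for the excursion measures rests on two inputs that are themselves unproved and, for the boundaries allowed here, genuinely problematic: (i) convergence of the metric graph boundary Poisson kernel $H_{\widetilde D_n}(dx,dy)$ to $H_D(dx,dy)$ as measures, and (ii) convergence of the boundary-to-boundary bridge laws $\mathbb P^{\widetilde D_n,x,y}_{\rm exc}\to\mathbb P^{D,x,y}_{\rm exc}$. The domains here are only finitely connected with no smoothness assumption, so $H_D(dx,dy)$ is defined only via conformal invariance and cannot be written as $\partial_{n_x}\partial_{n_y}G_D$; deriving (i) from Lemma 4.3(2) or from ``convergence of harmonic measure'' is not available, and $\partial\widetilde D_n$ does not even sit on $\partial D$, so one would first have to make sense of the convergence of measures supported on different sets. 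Moreover, the near-diagonal mass of $H$ is infinite, and the dangerous contribution to $\{\operatorname{Diam}(\gamma)\geq\epsilon\}$ comes precisely from excursions whose endpoints are close together but which travel a macroscopic distance while hugging $\partial D$; you assert that Beurling plus the diameter cutoff handles this, but that is exactly the estimate that needs to be proved, not a consequence of (i)--(iii).

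The paper's proof is organized so as to never touch these objects. It splits the claim into two statements: (a) for every closed square $Q\Subset D$ with rational corners, $\1_{\gamma\cap Q\neq\emptyset}\,\mu^{\widetilde D_n,u_n}_{\rm exc}\to\1_{\gamma\cap Q\neq\emptyset}\,\mu^{D,u}_{\rm exc}$, proved by applying the Markov property of the excursion measures (Propositions 2.2 and 3.7) to decompose each excursion at its first hitting of $\partial Q$ (and, for the incoming piece, at $\partial Q'$ for a slightly larger square, using time reversal); every factor in this decomposition is either a hitting distribution of an interior compact set or a (metric graph) Brownian motion started from an interior point, whose convergence follows from Donsker plus the Beurling estimate, with no boundary Poisson kernel in sight; and (b) the tightness statement
\begin{displaymath}
\lim_{\delta\to 0}\limsup_{n\to\infty}\ \mu_{\rm exc}^{\widetilde D_n,u_{n}}\Bigl(\operatorname{Diam}(\gamma)\geq 2\epsilon,\ \sup_{x\in \gamma} d(x,\partial D)\leq \delta\Bigr)= 0,
\end{displaymath}
proved by covering $\partial\widetilde D_n$ by finitely many $\epsilon$-balls, decomposing any such excursion at its exit from the ball containing its starting point, and applying the Beurling estimate to the resulting Brownian piece. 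If you want to salvage your proposal, you should replace your (i)--(ii) by this Markov-decomposition argument and state and prove (b) explicitly; as written, the excursion part of your proof has a genuine gap at its central step.
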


\begin{proof}
	In both points (1) and (2) the second conclusion follows directly from the first. For example, in the case (1) we can choose $\delta_k \to 0$ such that the PPPs of intensity measures $\mu_{\rm loop}^{\widetilde{D}_n}\1_{\operatorname{Diam}(\gamma)\geq \delta_k}$ converge jointly in law to PPPs of intensity measure $\mu_{\rm loop}^{D}\1_{\operatorname{Diam}(\gamma)\geq \delta_k}$. By Skorokhod representation theorem we can couple them all on the same probability space to have an almost sure convergence of these PPPs. But then by the equivalent description of the topology on sets of paths given above, we obtain the second conclusion. Thus, in what follows we just prove the first statement for both (1) and (2).	
	
	(1) The statement for random walk loop-soups on $\Z^2_n \cap D^z$ for a domain $D^z$ follows from Corollary 5.4 of \cite{LawlerFerreras2007RWLoopSoup}. The proof for the metric graph 
	loop-soups in that context is exactly the same. As remarked just after the proof (of Corollary 5.4 of \cite{LawlerFerreras2007RWLoopSoup}), the ideas extend to our non-simply connected case with finitely many boundary components. Moreover, one can verify that one can also approximate $D^z$ using $\Z^2_n \cap D_n$ where $(D_n,z_n) \to (D^z,z)$ in the sense of Carathéodory. As the convergence of $D_n \to D$ in the sense that the complements converge in the Hausdorff metric implies the Carathéodory convergence for all components, and we have only countably many components, the claim follows. 
	
	(2) Essentially the proof follows the steps of \cite{LawlerFerreras2007RWLoopSoup}: we need to first show convergence of excursions with diameter larger than $\eps$ that visit some compact set inside $D$, and then to show that there are no excursions of diameter $\eps$ that stay $\delta$ close to the boundary. 
	
	For the first part it suffices to show that for any closed square $Q\subseteq D$ with rational endpoints, we have weak convergence $\1_{\gamma\cap Q\neq\emptyset}\mu_{\rm exc}^{\widetilde D_n,u_{n}}\to \1_{\gamma\cap Q\neq\emptyset}\mu_{\rm exc}^{D,u}$. This follows from the Markov property for the metric graph excursions (Proposition \ref{PropMarkovExcMG}) and the Brownian excursion measure (Proposition \ref{PropExcMeas}). Indeed, we can decompose the excursions in $D$
	(or $\widetilde D_n$) at their first hitting time at $Q$ into an excursion from 
	$\partial D$ (or 
	$\partial\widetilde D_n$) to
	$\partial Q$ and a Brownian motion (continuum 2D or on metric graph) started on $\partial Q$ and stopped at its first hitting time of 
	$\partial D$ (or $\partial \widetilde D_n$). The convergence of the second part just follows from the convergence of random walks to Brownian motion inside compacts of $D$ and Beurling estimate for the convergence of the actual hitting point. For the excursion from 
	$\partial D$ (or $\partial\widetilde D_n$) to
	$\partial Q$, we can decompose it further into an excursion from 
	$\partial Q'$ to $\partial Q$, where $Q'$ is some closed square with rational endpoints containing $Q$ in its interior, and a time-reversed Brownian motion (continuum 2D or on metric graph) from $\partial Q'$ to the boundary of $\partial D$ (or $\partial \widetilde D_n$). The convergence of both pieces is now clear. 
	
	Finally, we need to show that for all $\epsilon>0$, 
	$$\lim_{\delta\to 0}\limsup_{n\to +\infty} \mu_{\rm exc}^{\widetilde D_n,u_{n}}\left (\operatorname{Diam}(\gamma)\geq 2\epsilon, \sup_{x\in \gamma} d(x,\partial D)\leq \delta\right )= 0.$$ 
	To do this we can again use the Markov decomposition. We cover the boundary of $\widetilde D_{n}$, for all $n$, with open disks $(B(z_{i},\eps))_{i \in I}$. The minimal number of disks needed depends on $\eps$, but is uniformly upper bounded in $n$. Any excursion that is at least $2\eps$ in diameter and has one endpoint in $B(z_{i},\eps)$, has to hit $\partial B(z_{i},\eps)$. But then it can be decomposed into an excursion from $\partial \widetilde D_{n}$ to 
	$\partial B(z_{i},\eps)$ and a metric graph BM from $\partial B(z_{i},\eps)$ to $\partial \widetilde D_{n}$. The probability that the latter goes $\eps$ far without getting $\delta$ far from $\partial D$ can be bounded by Beurling estimate (Lemma \ref{Brl}) and goes to 0 as 
	$\delta \to 0$ uniformly in sufficiently large $n$.
\end{proof}

\subsection{Convergence of first passage sets}

In this subsection we prove that the discrete FPS converge to the continuum FPS. Recall that by convention the FPS always contains the boundary of the domain, that $\widetilde D_n$ is the intersection of $D_n$ with $\widetilde \Z_n^2$, and that we use $\widehat \phi_n$ to denote the extension of the metric graph GFF on $\widetilde D_n$ to the rest of $[-C,C]^2$.

\begin{prop}\label{Convergence}
	Suppose $D$ and $(D_n)_{n\in \N}$ are open sets that satisfy condition \hyperlink{logof}{$\logof$}, and $D_n\to D$ in the sense that their complements converge in the Hausdorff topology. Moreover let $\hat \phi_n$ be the extension of the metric graph GFF on $\widetilde D_n$ and suppose that $(u_n)_{n\in \N}$ is a sequence of bounded harmonic functions in $\widetilde D_n$ such that $u_n \rightarrow u$, a bounded harmonic function with piecewise constant boundary values. Denote further for any $z \in D$ by $D^z$ the connected component of $z$ in $D$. Then for any $D^z$, the coupling of the metric graph GFF and its FPS restricted to this component converges in law: $(\widehat \phi_n^{D^z}, (\widetilde \A^{u_n}_{-a} \cap D^z)\cup \partial D^z) \Rightarrow (\Phi^{D^z},\A_{-a}^{u})$ as $n\to \infty$, where $\A_{-a}^u$ is the FPS in the component $D^z$. 
	
	Furthermore, if we couple $(\widehat \phi_n)_{n\in \N}$ and $\Phi^D$ such that $\widehat \phi_n^D\to \Phi^D$ in probability as generalized functions, then $(\widehat \phi_n^{D^z}, (\widetilde \A^{u_n}_{-a} \cap D^z)\cup \partial D^z)  \to (\Phi^{D^z},\A^{u}_{-a})$ in probability.
\end{prop}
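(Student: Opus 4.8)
The plan is a compactness argument: show that the coupling $(\widehat\phi_n,\widetilde\A_{-a}^{u_n}\cup\partial D_n)$ is tight, identify every subsequential limit with the continuum FPS using the uniqueness part of Theorem~\ref{Thm::FPS}, and then upgrade convergence in law to convergence in probability using the measurability of the FPS. For tightness, Corollary~\ref{GFFconv} gives that the extended fields $\widehat\phi_n$ are tight in $H^{-1-\eps}([-C,C]^2)$; the sets $\widetilde\A_{-a}^{u_n}\cup\partial D_n$ range over the compact space of closed subsets of $[-C,C]^2$ for the Hausdorff metric; and the harmonic parts $\widehat\phi_{n,\widetilde\A_{-a}^{u_n}}$ of the metric graph Markov decomposition (Proposition~\ref{PropStrongMarkov}) are tight as well, being uniformly bounded on $\widetilde\A_{-a}^{u_n}$ (values between $-a$ and $\sup_n\|u_n\|_\infty$, so their harmonic extensions are precompact on compacts of $[-C,C]^2\setminus\partial D$), while $\widehat\phi_n-\widehat\phi_{n,\widetilde\A_{-a}^{u_n}}$ is controlled in $H^{-1}$ by $\widehat\phi_n$. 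Thus along a subsequence the triple $(\widehat\phi_n,\widetilde\A_{-a}^{u_n}\cup\partial D_n,\widehat\phi_{n,\widetilde\A_{-a}^{u_n}})$ converges in law to some $(\Phi,A,\Psi)$, and I would pass to a coupling where this holds almost surely (Skorokhod); here $\Phi$ is a GFF in $D$ by Corollary~\ref{GFFconv}. Since a local set restricts to each connected component and the FPS restricts to the FPS of that component, it suffices to show that $A$ is, on each component $D^z$, the FPS $\A_{-a}^{u}$ there.

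The heart of the argument is to show that $A$ is a local set of $\Phi$ with $\Phi_A=\Psi$. For this I would use that, by the metric graph strong Markov property, conditionally on $\widetilde\A_{-a}^{u_n}$ the field $\widehat\phi_n-\widehat\phi_{n,\widetilde\A_{-a}^{u_n}}$ is a zero boundary GFF on $\widetilde D_n\setminus\widetilde\A_{-a}^{u_n}$ and is independent of $\widehat\phi_{n,\widetilde\A_{-a}^{u_n}}$; hence for any test function $f$, any $t\in\R$, and any bounded continuous functional $g$ of $(\widetilde\A_{-a}^{u_n},\widehat\phi_{n,\widetilde\A_{-a}^{u_n}})$,
\[
\E\!\left[g\,e^{it(\widehat\phi_n-\widehat\phi_{n,\widetilde\A_{-a}^{u_n}},\,f)}\right]=\E\!\left[g\,e^{-\frac{t^{2}}{2}\iint f\,\widehat G_{\widetilde D_n\setminus\widetilde\A_{-a}^{u_n}}\,f}\right].
\]
Since the complements $([-C,C]^2\setminus D_n)\cup\widetilde\A_{-a}^{u_n}$ converge in the Hausdorff metric to $([-C,C]^2\setminus D)\cup A$ (the union of two Hausdorff-convergent sequences converges to the union), Lemma~\ref{Basic convergences}(2), applied in the almost sure coupling to the random domains $\widetilde D_n\setminus\widetilde\A_{-a}^{u_n}$, yields $\widehat G_{\widetilde D_n\setminus\widetilde\A_{-a}^{u_n}}\to G_{D\setminus A}$ in the bilinear form sense. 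Letting $n\to\infty$ in the displayed identity (and similarly with finite linear combinations, to get joint Gaussianity) shows that, conditionally on $(A,\Psi)$, $\Phi-\Psi$ is a GFF on $D\setminus A$, i.e.\ $A$ is local and $\Psi=\Phi_A$. To make this rigorous one needs the domains $\widetilde D_n\setminus\widetilde\A_{-a}^{u_n}$ to satisfy the standing condition on approximating domains (boundedness and a uniform bound on the number of complementary components); I would obtain the latter from the Beurling estimate (Lemma~\ref{Brl}), which forces the number of complementary components of diameter at least $\epsilon$ to stay bounded and ensures no new ones appear in the limit.

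It then remains to verify the three defining conditions of Definition~\ref{Def ES} for $(\Phi,A,\Phi_A)$ and to invoke uniqueness. The inclusion $\partial D\subseteq A$ is immediate. Condition (1) — harmonicity of $h_A+u$ on $D\setminus A$, its boundary values ($-a$ on $\partial A\setminus\partial D$, $u$ on $\partial D\setminus A$) and the inequality $h_A+u\le -a$ — is obtained by passing to the limit in the corresponding metric graph statements: the harmonic parts $\widetilde h_n+u_n$ equal $-a$ on $\partial\widetilde\A_{-a}^{u_n}\setminus\partial\widetilde D_n$ and are $\le -a$ on every complementary component (such a component can only meet $\partial\widetilde D_n$ where $u_n\le -a$, since by continuity of the metric graph field, points near a boundary arc where $u_n>-a$ already belong to $\widetilde\A_{-a}^{u_n}$), and the boundary values survive in the limit exactly as in the proof of Lemma~\ref{Basic convergences}(1), via the Beurling estimate. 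Condition (2), the positivity $\Phi_A-h_A\ge 0$, is easiest when $u\ge -a$: then $\widetilde h_n+u_n\equiv -a$ on $\widetilde D_n\setminus\widetilde\A_{-a}^{u_n}$ and $\widetilde\phi_n+u_n\ge -a$ on $\widetilde\A_{-a}^{u_n}$, so the metric graph FPS satisfies it exactly and positivity passes through weak limits; the general case $u<-a$ is more delicate and is handled together with condition (3), which is precisely an escape clause for the a priori unprescribed values of $h_A$ on $\partial A\cap\partial D$ and is again checked through Beurling-type estimates on the approximations. Once the three conditions hold, Theorem~\ref{Thm::FPS}(1) identifies $(A\cap D^z)\cup\partial D^z$ with the FPS $\A_{-a}^{u}$ in $D^z$ and $\Phi_A$ with $\Phi_{\A_{-a}^{u}}$; as every subsequential limit coincides, the whole sequence converges in law. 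For the last statement, Theorem~\ref{Thm::FPS}(2) says that $\A_{-a}^{u}$ is a measurable function of $\Phi^{D^z}$, so if in addition $\widehat\phi_n^{D}\to\Phi^{D}$ in probability, then combining convergence in law of the pair with a standard subsequence-plus-Skorokhod argument (every subsequential limit in law of $(\widehat\phi_n^{D^z},(\widetilde\A_{-a}^{u_n}\cap D^z)\cup\partial D^z,\Phi^{D})$ has the form $(\Phi^{D^z},\A_{-a}^{u},\Phi^{D})$ with $\A_{-a}^{u}$ a fixed function of $\Phi^{D^z}$, and $\widehat\phi_n^{D}$ is forced to equal its limit) yields convergence in probability.

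I expect the main obstacle to be the boundary analysis underlying conditions (1) and especially (3). Hausdorff convergence of the sets $\widetilde\A_{-a}^{u_n}$ carries essentially no information about their fine geometry near $\partial D$ or near $\partial A$, so recovering the exact boundary values of the limiting harmonic function, controlling its sign, and verifying the technical condition (3) all require the quantitative input of the Beurling estimate, and the genuinely subtle case is that of variable boundary conditions with $u<-a$ on part of $\partial D$, where condition (3) is indispensable. A secondary difficulty is the a priori bound on the number of macroscopic connected components of $\widetilde D_n\setminus\widetilde\A_{-a}^{u_n}$, which is needed so that the Green's function convergence of Lemma~\ref{Basic convergences}(2) can be applied uniformly along the sequence.
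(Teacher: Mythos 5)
Your proposal is correct and follows essentially the same route as the paper, which packages the argument into Lemma \ref{Conv ls} (tightness of $(\widehat\phi_n,A_n,(\widehat\phi_n)_{A_n})$ and identification of every subsequential limit as a local-set coupling, using Lemma \ref{GFFconv} on the random domains $\widetilde D_n\setminus A_n$), Lemma \ref{Conv prob} (upgrading to convergence in probability via measurability of the FPS from Theorem \ref{Thm::FPS}), and exactly the same Beurling-estimate verification of condition (3) of Definition \ref{Def ES} in the case $\min u_n<-a$. One small slip worth fixing: $(\widehat\phi_n)_{A_n}$ is \emph{not} uniformly bounded on $\widetilde\A^{u_n}_{-a}$ itself (the field there is only bounded below by $-a-u_n$); its tightness is obtained, as in the paper, by first controlling $\E[\|(\widehat\phi_n)^{A_n}\|^2_{H^{-1}}]$ conditionally on $A_n$ and then using the triangle inequality in $H^{-1-\eps}$.
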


\begin{rem}
	The convergence of the open sets $D_n \to D$ in the sense that their complements converge implies, for any $z \in D$ and any $z_n \to z$, the Carathéodory convergence of $(D_n, z_n)$ to $(D^z,z)$. Yet it does not imply that $\partial D_n$ converge to 
	$\partial D^z$ in the Hausdorff metric, hence the need to treat the boundary separately.
\end{rem}

The proof follows from two lemmas. The first one says that the metric graph local sets converge towards continuum local sets. The second one is a general lemma, which in our case will imply that, due to the uniqueness of the FPS, the convergence in law of the pair (GFF, FPS) can be promoted to a convergence in probability. We remark that similar lemmas appear in \cite{SchSh2}, where the authors prove the convergence of DGFF level lines \cite{SchSh2}.

\begin{lemma}[Convergence of metric local sets] \label{Conv ls}
	Suppose $D$ and $(D_n)_{n\in \N}$ are open sets that satisfy condition \hyperlink{logof}{$\logof$}, and $D_n\to D$ in the sense that their complements converge in the Hausdorff topology. Moreover, let $(\tilde\phi_n,A_n)$ be such that $A_n$ is optional for $\tilde \phi_n$ and that for some $c > 0$, the sets $A_n$ have almost surely less than $c$ components none of which reduces to a point\textsl{}. 
	
	Then $(\widehat \phi_n,A_n, (\widehat \phi_n)_{A_n})$ is tight and any sub-sequential limit $(\Phi,A, \Phi_A)$ is a local set coupling. Additionally, for any connected component $D^z$ of $D$ we have that $(\widehat \phi_n^{D^z},(A_n\cap D^z))$ converges to a local set coupling in $D^z$ and $\Phi_{A\cap D^z}$ is given by the restriction of $\Phi_A$ to $D^z$.	
\end{lemma}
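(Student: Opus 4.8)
The plan is to obtain tightness of the triple $(\widehat\phi_n,A_n,(\widehat\phi_n)_{A_n})$, to pass to a subsequential limit and apply Skorokhod's representation theorem so that all the convergences become almost sure, and then to verify that the Markov decomposition characterising a local set is preserved in the limit. For tightness, the fields $\widehat\phi_n$ are tight in $H^{-1-\eps}([-C,C]^2)$ by (the proof of) Corollary~\ref{GFFconv}, and the sets $A_n$ are automatically tight because the closed subsets of $[-C,C]^2$ form a compact space for the Hausdorff distance. For the harmonic parts I would use the strong Markov property (Proposition~\ref{PropStrongMarkov}) to write $\widehat\phi_n=(\widehat\phi_n)_{A_n}+\widehat\phi_n^{A_n}$, where, conditionally on $\sigma(A_n,(\widehat\phi_n)_{A_n})$, the field $\widehat\phi_n^{A_n}$ is a centred metric graph GFF on $\widetilde D_n\setminus A_n$; a conditional variance comparison together with the uniform $H^{-1}$-bound already used in Corollary~\ref{GFFconv} gives $\sup_n\E[\|\widehat\phi_n^{A_n}\|_{H^{-1}}^2]<\infty$, hence tightness of $\widehat\phi_n^{A_n}$ in $H^{-1-\eps}$, hence of $(\widehat\phi_n)_{A_n}=\widehat\phi_n-\widehat\phi_n^{A_n}$. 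Along the coupled subsequence one then has $\widehat\phi_n\to\Phi$, a GFF in $D$ by Corollary~\ref{GFFconv}, $(\widehat\phi_n)_{A_n}\to\Phi_A$, and $A_n\to A\subseteq\overline D$ almost surely (the Hausdorff convergence of the complements confining $A_n$ to an $o(1)$-neighbourhood of $D$).

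Two points then require work. First, that $\Phi_A$ agrees on $D\setminus A$ with a harmonic function: I would fix compacts $K\Subset K'\Subset D\setminus A$, observe that for $n$ large $K'$ sits inside $\widetilde D_n\setminus A_n$ at a definite distance from its boundary, so that $(\widehat\phi_n)_{A_n}$ restricted to $K'$ is the extension of a metric graph harmonic function, and then invoke an interior regularity estimate for metric graph harmonic functions, uniform in the mesh, of the form $\sup_K|(\widehat\phi_n)_{A_n}|\le c(K,K')\,\|(\widehat\phi_n)_{A_n}\|_{H^{-1-\eps}(K')}$; since the right-hand side is bounded along the coupling, $((\widehat\phi_n)_{A_n})_n$ is precompact in $C(K)$ and, as in the proof of Lemma~\ref{Basic convergences}(1), every subsequential local-uniform limit is continuum harmonic, which together with the $H^{-1-\eps}$-convergence identifies $\Phi_A$ on $D\setminus A$ with a harmonic function. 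Second, that conditionally on $(A,\Phi_A)$ the field $\Phi^A:=\Phi-\Phi_A$ is a GFF in $D\setminus A$: conditioning on $\sigma(A_n,(\widehat\phi_n)_{A_n})$, Proposition~\ref{PropStrongMarkov} says $\widehat\phi_n^{A_n}=\widehat\phi_n-(\widehat\phi_n)_{A_n}$ is the extension of a zero-boundary metric graph GFF on $\widetilde D_n\setminus A_n$ whose law depends only on $A_n$, so that for bounded continuous $F$ and $G$ one has $\E[F(\widehat\phi_n^{A_n})\,G(A_n,(\widehat\phi_n)_{A_n})]=\E[\psi_n(A_n)\,G(A_n,(\widehat\phi_n)_{A_n})]$ with $\psi_n(B):=\E[F(\text{extension of the metric graph GFF on }\widetilde D_n\setminus B)]$. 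Letting $n\to\infty$, the left-hand integrand converges a.s. to $F(\Phi^A)G(A,\Phi_A)$, while the complements of $\widetilde D_n\setminus A_n$ converge in the Hausdorff distance (they equal $(Q_C\setminus D_n)\cup A_n\to(Q_C\setminus D)\cup A$) with a bounded number of components, so the basic convergence results of Section~\ref{SecConv}, applied to these cut domains, yield $\psi_n(A_n)\to\psi(A):=\E[F(\Phi^{D\setminus A})]$ a.s.; dominated convergence on both sides gives $\E[F(\Phi^A)\mid A,\Phi_A]=\psi(A)$, which is exactly the conditional law required. Running the same argument componentwise, using the Carathéodory part of Corollary~\ref{GFFconv} and of Lemma~\ref{Basic convergences}, gives the statement in each $D^z$, with harmonic part $\Phi_{A\cap D^z}$ equal to the restriction of $\Phi_A$ to $D^z$ since both are the limit of $(\widehat\phi_n)_{A_n}$ there.

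I expect the main obstacle to be the combination of these last two points: one has no a priori control on the (random and possibly unbounded) values taken by $\widehat\phi_n$ on $A_n$, so the harmonicity of the limiting $\Phi_A$ has to be extracted from the interior regularity of metric graph harmonic functions against the negative Sobolev norm that is actually known to converge, rather than from the boundary values on $A_n$; and the conditional-law step needs the convergence lemmas of Section~\ref{SecConv}, stated for domains of the form $\widetilde\Z^2_n\cap D_n$, to remain valid for the cut domains $\widetilde D_n\setminus A_n$, which are sub-metric-graphs carrying possibly split edges along $\partial A_n$.
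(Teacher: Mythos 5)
Your overall strategy coincides with the paper's: tightness of the triple via compactness of the space of closed sets, the uniform $H^{-1}$ bound on the conditional GFF $\widehat\phi_n^{A_n}$ (obtained by conditioning on $A_n$ and comparing Green's functions), and the triangle inequality for $(\widehat\phi_n)_{A_n}$; then the identification of the conditional law of the limit of $\widehat\phi_n^{A_n}$ by conditioning on $(A_n,(\widehat\phi_n)_{A_n})$, applying the GFF convergence of Corollary~\ref{GFFconv} to the cut domains $\widetilde D_n\setminus A_n$ (whose complements indeed converge with a bounded number of non-degenerate components, as you note), and passing to the limit by bounded convergence. The componentwise statement is also handled the same way.

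The one place where you genuinely diverge is the harmonicity of $\Phi_A$ on $D\setminus A$. You propose an interior regularity estimate for metric graph harmonic functions, uniform in the mesh, bounding $\sup_K|\cdot|$ by the $H^{-1-\eps}(K')$ norm, to get precompactness in $C(K)$ and then identify the limit as harmonic. This estimate is plausible (it amounts to an approximate mean-value property tested against a smooth bump), but it is not in the paper and is the one step of your argument that is asserted rather than proved; making it rigorous on the metric graph would take some work. The paper avoids it entirely with a softer distributional argument: for $f$ smooth and compactly supported in $D\setminus A$, discrete integration by parts gives $((\tilde\phi_n)_{A_n},\Delta_n f)=0$ for $n$ large, and since $\Delta_n f=\Delta f+O(2^{-n})$ uniformly on compacts (Lemma~2.2 of the Chelkak--Smirnov reference) while $(\tilde\phi_n)_{A_n}\to\Phi_A$ in $H^{-1-\eps}$, one gets $(\Phi_A,\Delta f)=0$ and concludes by Weyl's lemma. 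You may want to substitute that argument, or else supply a proof of your interior estimate; as written, that single step is a gap, while everything else matches the paper's proof.
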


\begin{proof}
	Let us first argue tightness. By Lemma \ref{GFFconv} we know that the GFF-s converge in law. Moreover, the space of closed subsets of the closure of a bounded domain is compact for the Hausdorff distance. Hence the sequence $A_n$ is tight. By conditioning on $A_n$, we can uniformly bound the  expected value of the $H^{-1}([-C,C]^2)$ norm of $(\tilde \phi_n)^{A_n}$ and obtain tightness of $(\tilde \phi_n)^{A_n}$ in $H^{-1-\eps}$. Finally, by the Markov decomposition $\tilde \phi-(\tilde \phi_n)^{A_n} = \tilde \phi_{A_n} $ and the triangle inequality, we see that also $(\tilde \phi_n)_{A_n}$ is a tight sequence in $H^{-1-\epsilon}([-C,C]^2)$. Thus, we have tightness of the quadruple $(\tilde \phi_n, A_n, (\tilde \phi_n)_{A_n}, (\tilde \phi_n)^{A_n})$, from which the tightness for  $(\widehat \phi_n, A_n, (\widehat \phi_n)_{A_n}, (\widehat \phi_n)^{A_n})$ also follows.
	
	We pick a subsequence (that we denote the same way) such that $(\tilde \phi_n, A_n, (\tilde \phi_n)_{A_n}, (\tilde \phi_n)^{A_n})$ converges in law to $(\Phi, A, \Phi^1, \Phi^2)$. From the joint convergence we then have that for any bounded continuous functionals $f_{1}$ and $f_{2}$ 
	$$\lim_{n\to +\infty}\E\left[f_{1}((\tilde \phi_n)^{A_n})f_{2}((\tilde \phi_n)_{A_n},A_n)\right] 
	=\E\left[ f_{1}(\Phi^1)f_{2}(\Phi^2,A)\right].$$
	On the other hand, conditionally on $(A_n,(\widetilde \phi_n)_{A_n})$, the law of $(\widetilde \phi_n^{A_n})$ is that of a metric graph GFF in $\widetilde{D}_n \backslash A_n$. By Lemma \ref{GFFconv},  it follows that
	$\E[f_{1}((\tilde\phi_n)^{A_n})\vert A_n,\phi_{A_n}]$ converges a.s. to $ \E[f_{1}(\bar \Phi^A)\mid A]$, where  conditionally on $A$, $\hat \Phi^A$ is a GFF in $D\backslash A$. Thus, by bounded convergence, we have that $\E\left[ f_{1}(\Phi^1)f_{2}(\Phi^2,A)\right]$ is equal to
	\[\lim_{n\to +\infty}\E\left[\E[f_{1}((\tilde \phi_n)^{A_n})\mid A_n, (\tilde \phi_n)_{A_n}]
	f_{2}((\tilde \phi_n)_{A_n},A_n)\right] 
	= 
	\E\left[ \E[f_{1}(\bar \Phi^A)\vert A]f_{2}(\Phi^2,A)\right],\]
	This implies that conditionally on $\Phi^2$ and $A$, the law of $\Phi^1$ is that of a GFF on $D\backslash A$.
	
	Thus, it remains to show that $\Phi^1$ is almost surely harmonic in $D\backslash A$: indeed, then from Lemma \ref{BPLS}, it would follow that $A$ is local and $\Phi^1=\Phi_A$ and $\Phi^2=\Phi^A$.
	
	Let $\Delta_n$ be the discrete Laplacian. From Lemma 2.2 of \cite{ChSm}, it follows that for any smooth function $f$, inside any compact set where derivatives of $f$ remain bounded we have that $\Delta_n f(u)$ is equal to $\Delta f(u)+ O(2^{-n})$. However, from integration by parts it follows that if $f$ is a smooth function with compact support in $D\backslash A$, then $((\tilde \phi_n)_{A_n}, \Delta_n f) =0$ for sufficiently large $n$. Hence $(\Phi^1,\Delta f)=0$ almost surely and thus $\Phi^1$ is harmonic in $D\backslash A$. 
	
	The final claim just follows from Lemma \ref{GFFconv} and the simple fact that if $A$ is a local set for $\Phi$ in a non-connected domain $D$, then for any component of $D$, $D^z$, we have that $A\cap D^z$ is a local set of $\Phi^D$
	
\end{proof}	

The next lemma shows how to promote convergence in law to convergence in probability. See Lemma 4.5 in \cite{SchSh}, and Lemma 31 in \cite{Sha} for earlier appearances in the context of GFF level lines and of Gaussian multiplicative chaos, respectively. We give a slight rewording of the latter proof adapted to our setting.

\begin{lemma} \label{Conv prob}
	Let $(X_n,Y_n)_{n\in \N\cup \{\infty\}}$ be a sequence of random variables in a metric space, living all of them in the same probability space. Suppose we know that
	\begin{enumerate}
		\item $(X_n,Y_n)\Rightarrow (X_\infty,Y_\infty)$
		\item $X_n\to X_\infty$ in probability.
		\item There exists a measurable function $F$ such that $F(X_\infty)=Y_\infty$.
	\end{enumerate}
	Then $(X_n,Y_n)\to(X_\infty,F(X_\infty))$ in probability.
\end{lemma}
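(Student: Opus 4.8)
\emph{Proof proposal.} The plan is to bootstrap the marginal convergence $X_n\to X_\infty$ to the joint one by exploiting crucially that the limit $Y_\infty$ is a \emph{deterministic} (measurable) function of $X_\infty$; without hypothesis (3) the statement is false (take $X_n\equiv 0$ for all $n$ and $(Y_n)$ i.i.d.\ non-degenerate). Write $\mathcal Y$ for the space where the $Y_n$'s live and $d_{\mathcal Y}$ for its metric. Since $X_n\to X_\infty$ in probability is assumed and the product metric is bounded by the sum of the coordinate metrics, it suffices to prove $Y_n\to F(X_\infty)$ in probability.

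First I would replace the merely measurable $F$ by continuous approximations. The law $\mu$ of $X_\infty$ is a Borel probability measure on a metric space which in all our applications is Polish, hence $\mu$ is Radon; so by Lusin's theorem, for each $\eta>0$ there is a compact set $K$ with $\mu(K)>1-\eta$ and $F|_K$ continuous. Using a Kuratowski isometric embedding of the target into a Banach space together with Dugundji's extension theorem, one extends $F|_K$ to a continuous map $F_\eta$ (valued a priori in the ambient Banach space, which is harmless since we only ever compare points through the metric) with $F_\eta=F$ on $K$; thus $\mathbb{P}\big(F_\eta(X_\infty)\neq F(X_\infty)\big)<\eta$. When the target is a compact metric space, as for the Hausdorff distance on closed subsets of $[-C,C]^2$, this step is completely elementary.

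The heart of the argument is then the following three facts. (i) The map $\phi(x,y):=1\wedge d_{\mathcal Y}(y,F_\eta(x))$ is bounded and continuous, so by (1) and (3),
\[
\mathbb{E}[\phi(X_n,Y_n)]\ \longrightarrow\ \mathbb{E}[\phi(X_\infty,F(X_\infty))]=\mathbb{E}\big[1\wedge d_{\mathcal Y}(F(X_\infty),F_\eta(X_\infty))\big]\ \le\ \eta,
\]
the inequality because the integrand vanishes on $\{X_\infty\in K\}$; hence, by Markov's inequality, $\limsup_n\mathbb{P}\big(d_{\mathcal Y}(Y_n,F_\eta(X_n))>\sqrt\eta\big)\le\sqrt\eta$. (ii) Since $F_\eta$ is continuous and $X_n\to X_\infty$ in probability, $F_\eta(X_n)\to F_\eta(X_\infty)$ in probability (pass to a.s.-convergent subsequences and use continuity). (iii) $\mathbb{P}(F_\eta(X_\infty)\neq F(X_\infty))<\eta$. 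Feeding (i)--(iii) into the triangle inequality
\[
d_{\mathcal Y}(Y_n,F(X_\infty))\ \le\ d_{\mathcal Y}(Y_n,F_\eta(X_n))+d_{\mathcal Y}(F_\eta(X_n),F_\eta(X_\infty))+d_{\mathcal Y}(F_\eta(X_\infty),F(X_\infty))
\]
gives $\limsup_n\mathbb{P}\big(d_{\mathcal Y}(Y_n,F(X_\infty))>3\sqrt\eta\big)\le 2\sqrt\eta$. Letting $\eta\downarrow 0$ yields $Y_n\to F(X_\infty)$ in probability, and combined with $X_n\to X_\infty$ in probability we conclude $(X_n,Y_n)\to(X_\infty,F(X_\infty))$ in probability.

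The only step I expect to require genuine care is the continuous approximation of the measurable map $F$ (Lusin together with an extension theorem), and in particular checking that no unstated regularity of the target metric space is being used; this is why I would phrase the approximation via a Banach-space embedding so that it is valid in full generality, while noting that in every concrete application of this lemma in the paper the target is a compact metric space, where the step is routine. Everything else — continuity of $\phi$, the convergence $\mathbb{E}[\phi(X_n,Y_n)]\to\mathbb{E}[\phi(X_\infty,Y_\infty)]$ from weak convergence, Markov's inequality, and preservation of convergence in probability under continuous maps — is standard.
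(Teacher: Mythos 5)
Your proof is correct, but it takes a genuinely different route from the one in the paper. The paper's argument is a soft coupling/identification argument: it forms the quadruple $M_n=(X_n,Y_n,X_\infty,F(X_\infty))$, extracts a subsequential weak limit $(\bar X_\infty,\bar Y_\infty, X_\infty, F(X_\infty))$ by tightness, uses hypothesis (2) to force $\bar X_\infty=X_\infty$ a.s.\ in the limiting coupling (since $(X_n,X_\infty)$ converges in law to a diagonal distribution), uses hypotheses (1) and (3) to see that $\bar Y_\infty=F(\bar X_\infty)$ a.s.\ (the graph of $F$ carries full mass under the law of $(X_\infty,Y_\infty)$), and concludes that $d(Y_n,F(X_\infty))$ converges in law, hence in probability, to $0$. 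Your argument instead regularizes the merely measurable $F$ via Lusin's theorem and an extension theorem, tests the weak convergence (1) against the explicit bounded continuous function $\phi(x,y)=1\wedge d_{\mathcal Y}(y,F_\eta(x))$, and closes with Markov and the triangle inequality. Both are valid; the trade-off is that the paper's route is shorter and only needs measurability of the graph of $F$ plus tightness (Prokhorov), whereas yours is more quantitative (it produces explicit $\sqrt\eta$ bounds) at the cost of invoking Lusin, which requires the law of $X_\infty$ to be Radon and the target to be separable -- essentially the same implicit Polish-space assumption that the paper's tightness step uses, and one that is satisfied in every application of the lemma in the paper. Your remark that hypothesis (3) is indispensable (i.i.d.\ $Y_n$ with constant $X_n$) is a correct and useful sanity check that the paper does not make explicit.
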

\begin{proof}
	Denote $M_n:=(X_n,Y_n,X_\infty,F(X_\infty))$. Because, each coordinate is tight we have that up to a subsequence $M_n\Rightarrow (\bar X_\infty, F(\bar X_\infty), X_\infty, F(X_\infty))$. Thus, any linear combination of them will also converge in law. Note that by (2), $(X_n,X_\infty) \to (X_\infty,X_\infty)$, so $\bar X_\infty = X_\infty$. This fact implies that a.s. $\bar Y_\infty=F(X_\infty)$, thus $Y_n-F(X_\infty)$ converges in law, and therefore in probability, to $0$.
\end{proof}

We have now all the tools to prove the convergence.

\begin{proof}[Proof of Proposition \ref{Convergence}]
	When $\min_{\partial\widetilde D_{n}} u_{n} \geq -a$, we know that $(\tilde \phi_n)_{A_n}+u_{n}$ is constantly equal to $-a$ 
	on $\widetilde D_{n}\backslash A_{n}$ and the claim follows directly from  Lemmas  \ref{Conv ls} and \ref{Conv prob} . 
	
	When $\min_{\partial\widetilde D_{n}} u_{n} < -a$, we can again use the Lemmas \ref{Conv ls} and \ref{Conv prob} to obtain the convergence to a local set $(A, \Phi_A)$ in probability. Moreover, it is easy to see that the conditions (1) and (2) in the Definition \ref{Def ES} hold for $A$, as these properties hold for all approximations and pass to the limit. Thus, it just remains to argue for (3). This condition however follows from Beurling estimate. Pick some component $O$ of the complement of $A$ and any $z$ on its boundary. We can then choose a small enough ball $U^1_z$ around $z$ such that the boundary conditions only change once in this neighborhood. By Beurling estimate (Lemma \ref{Brl}), we can further choose an even smaller ball $U_z$ such that the Brownian motion started inside $U_z \cap O$ exits $O$ through $U^1_z \cap \partial O$ with a probability larger than $1 - \eps/(4\max |u|)$. By the convergence of $A_n \to A$ in probability and Beurling estimate again, we can choose $n_0$ large enough so that for all $n \geq n_{0}$ the metric graph Brownian motion started inside $U_z \cap (\widetilde \Z_n^2 \backslash A_n)$ exits $\widetilde \Z_n^2 \backslash A_n$ through $U^1_z \cap (\widetilde \Z_n^2 \backslash A_n)$ with probability larger than $1 - \eps/(2\max |u|)$ and $u_n - u \leq \eps/2$ uniformly over $\widetilde D_n \cap D$. A final use of Beurling estimate then implies that for any $z_n \in U_z \cap (\widetilde \Z_n^2 \backslash A_n)$, we have that $\tilde h_{A_n}(z_n) + u_n(z_n) \geq \min \{-a, \inf_{w \in U_z \cap \overline O} u(w)\} - \eps$, where $h_{A_n}$ is the metric graph harmonic function outside of $A_n$ as in Proposition \ref{PropStrongMarkov}.The claim follows.
	
\end{proof}


\subsection{Convergence of clusters of loops and excursions}
\label{SubSecApproxBMClusters}

In this subsection we assume that $u$ is non-negative. Let $\L_{\alpha}^D$ and $\L_\alpha^{\widetilde D_n}$ denote respectively a continuum and metric graph loop-soups of intensity $\alpha \in (0,1/2]$. Similarly, let $\Xi^{D}_{u}$ and 
$\Xi^{\widetilde D_n}_{ u_n}$ denote PPP of boundary-to-boundary excursions in the continuum of intensity $\mu^{D,u}_{\rm exc}$ and in the metric graph setting of intensity $\mu^{\widetilde D_n,u_{n}}_{\rm exc}$ respectively. We sample the loop-soups and PPP of excursions independently and are interested in the clusters of 
$\mathcal{L}_{\alpha}^{D}\cup \Xi^{D}_{u}$ and $\mathcal{L}^{\widetilde{D}_{n}}_{\alpha}\cup \Xi^{\widetilde{D}_{n}}_{u_{n}}$ that contain at least one excursion. By definition two paths belong to the same cluster if they are joined by a finite chain of paths along which two consecutive ones intersect. We denote by $\mathcal{A}=\mathcal{A}(\mathcal{L}^{D}_{\alpha},\Xi^{D}_{u})$ and 
$\widetilde {\AA}_n=\widetilde \AA_{n}(\mathcal{L}^{\widetilde D_n}_{\alpha},\Xi^{\widetilde D_n}_{u_n})$ the closed union of such clusters. 

The main content of this subsection shows that metric graph clusters converge to their continuum counterparts:
\begin{prop}
	\label{PropConvClustExc} Suppose $(\widetilde D_n, z_n)$ satisfy the condition \hyperlink{logof}{$\logof$}  and converge to $(D,z)$ in the Carathéodory sense. Moreover suppose that $u$ is a non-negative bounded harmonic function and $u_{n}\to u$ uniformly on compact subsets of $D$. We also assume that
	whenever $u=0$ on a part of the boundary $\B$, then for any sequence of metric graph boundary points $x_n \to x \in \overline{\B}$ we have that 
	$u_n(x_n) = 0$ as well, for $n$ large enough.
	Then, the sequence of compact sets $(\overline{\widetilde{\AA}_{n}\cap D})_{n\geq 0}$ converges in law for the Hausdorff metric towards $\AA$.
\end{prop}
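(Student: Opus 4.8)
The strategy is to combine three ingredients already assembled in the excerpt: (i) the convergence of metric graph FPS to continuum FPS (Proposition \ref{Convergence}), (ii) the representation of the metric graph FPS at level $0$ as the closed union of loop-excursion clusters containing an excursion (Proposition \ref{CorFPCluster}), together with the analogous continuum representation that is being established, and (iii) the convergence of the loops and excursions themselves (Lemma \ref{Basic convergences 2}). The cleanest route is to first show that, in the continuum, $\AA = \AA(\L^D_\alpha, \Xi^D_u)$ has the same law as the continuum FPS $\A^u_0$ at $\alpha = 1/2$, and then transfer the convergence through Proposition \ref{Convergence}. Actually, since we want the result for all $\alpha \in (0,1/2]$ and the FPS identification only holds at $\alpha = 1/2$, it is better to argue directly: couple everything on one probability space via Skorokhod (using Lemma \ref{Basic convergences 2}) so that $\L^{\widetilde D_n}_\alpha \to \L^D_\alpha$ and $\Xi^{\widetilde D_n}_{u_n} \to \Xi^D_u$ almost surely in the topology on sets of paths, and then prove that $\overline{\widetilde \AA_n \cap D} \to \AA$ in the Hausdorff metric almost surely along this coupling.

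First I would establish the \emph{lower semicontinuity} type bound: any point $x \in \AA$ lies in some cluster $\mathcal{C}$ of $\L^D_\alpha \cup \Xi^D_u$ containing an excursion. A cluster is a connected union of paths, and connectedness in the plane for such unions is "quantitatively stable": by an argument as in \cite{SheffieldWerner2012CLE,Lupu2015ConvCLE}, a finite chain of mutually intersecting loops/excursions of diameter $\geq \delta$ realizing the connection from the boundary to a neighbourhood of $x$ will, for $n$ large, be shadowed by a corresponding chain of metric graph loops/excursions (the bijection $f_n$ from the path topology sends each macroscopic path to a Hausdorff-close one, and Hausdorff-close paths of positive diameter still intersect when the originals do, up to an arbitrarily small error absorbed by slightly fattening). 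Hence $d(x, \widetilde\AA_n) \to 0$, giving $\AA \subseteq \liminf \overline{\widetilde\AA_n\cap D}$ in the appropriate sense. The delicate point here is that a priori a continuum cluster may only be connected through infinitely many small loops; one handles this exactly as in the loop-soup cluster literature, using that with probability one each cluster's "two-point connectivity" is already witnessed at some positive scale, and that loops of diameter $\leq \delta$ contribute an $\AA$-set which shrinks with $\delta$ (a consequence of the a.s. finiteness/Minkowski content bounds, or simply of the Hausdorff convergence of the truncated soups).

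Second, the \emph{upper bound}: $\limsup \overline{\widetilde\AA_n\cap D} \subseteq \AA$. Suppose $x_n \in \widetilde\AA_n$, $x_n \to x$, and $x \notin \AA$. Since $x \notin \AA$ and $\AA$ is closed, there is a ball $B(x,2r)$ disjoint from $\AA$. If $x$ were surrounded (separated from $\partial D$) by the complement of the continuum clusters, i.e. $x$ is in the "free field" region, one wants to say no large metric graph cluster can reach $B(x,r)$ — but this is the genuinely hard direction, because a priori the discrete clusters could be \emph{larger} than the continuum ones (as the excerpt itself warns, metric graph clusters connect more than discrete ones, and the continuum limit must not suddenly gain connections). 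This is precisely where I would invoke Proposition \ref{Convergence} and Proposition \ref{CorFPCluster}: when $\alpha = 1/2$, $\overline{\widetilde\AA_n \cap D} = (\widetilde\A^{u_n}_0 \cap D) \cup \partial D$ up to the identification in Proposition \ref{CorFPCluster}, which converges in law to $\A^u_0$; so the limit of the discrete cluster sets is \emph{at most} $\A^u_0$ in law, hence cannot be strictly bigger than what the lower bound already produced — forcing equality and hence identifying the subsequential limit. For $\alpha < 1/2$, I would instead note the upper bound follows by monotonicity: $\L^D_\alpha \subseteq \L^D_{1/2}$ in the natural coupling, so $\AA(\L^D_\alpha,\Xi^D_u) \subseteq \AA(\L^D_{1/2},\Xi^D_u)$ and similarly discretely, and then run the same comparison; alternatively one gives a direct Beurling-estimate argument showing no macroscopic discrete cluster can approach a point that the continuum soup avoids, using that the continuum and discrete soups are coupled close. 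The main obstacle is exactly this upper bound — ruling out "spurious" extra connectivity in the limit — and the role of the FPS identification via Proposition \ref{Convergence} is to sidestep it by an indirect identification of the limiting law rather than a direct geometric argument.

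Finally I would assemble: tightness of $(\overline{\widetilde\AA_n\cap D})_n$ in the Hausdorff topology on closed subsets of $[-C,C]^2$ is automatic (compactness of that space); the lower bound shows every subsequential limit contains $\AA$; the upper bound (via the $\alpha=1/2$ FPS identification and Proposition \ref{Convergence}, plus monotonicity for $\alpha<1/2$) shows it is contained in $\AA$; so every subsequential limit equals $\AA$ in law, giving convergence in law as claimed. I would close by remarking that when $\alpha = 1/2$ the convergence is in fact in probability, inherited directly from the "in probability" part of Proposition \ref{Convergence}.
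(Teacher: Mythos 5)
Your tightness/subsequential-limit framework and your lower bound $\AA\subseteq\mathbf A$ (via diameter-$\varepsilon$ cutoffs, a.s. finiteness of macroscopic paths, and convergence of their intersection relations) match the paper's argument. The genuine gap is in the upper bound $\mathbf A\subseteq\AA$. At $\alpha=1/2$ you argue that Propositions \ref{Convergence} and \ref{CorFPCluster} give $\mathbf A\cup\partial D\stackrel{d}{=}\A^u_0$, and that combined with $\AA\subseteq\mathbf A$ this ``forces equality''. But to force equality from an a.s.\ inclusion plus an identification of the law of the \emph{larger} set, you must already know that the smaller set has the same law, i.e.\ that $\AA\cup\partial D\stackrel{d}{=}\A^u_0$. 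That identity is precisely Proposition \ref{CorEqFPS}, which the paper derives \emph{from} Proposition \ref{PropConvClustExc}; assuming it here is circular, and you give no independent proof of it. Without it, $\AA\subseteq\mathbf A$ together with $\mathbf A\cup\partial D\stackrel{d}{=}\A^u_0$ is perfectly compatible with $\AA$ being strictly smaller than $\mathbf A$. For $\alpha<1/2$ the situation is worse: monotonicity in $\alpha$ only bounds $\mathbf A_\alpha$ by the $\alpha=1/2$ cluster set, which does not identify it with $\AA_\alpha$, and the alternative ``direct Beurling argument'' is not carried out.

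The paper closes this gap by a different mechanism. It extracts $\varepsilon(n)\searrow0$ with $\overline{\widetilde\AA^{\varepsilon(n)}_n\cap D}\to\AA$ a.s., and then must rule out that the discarded loops of diameter $<\varepsilon(n)$ create spurious macroscopic connections inside the complementary components of $\widetilde\AA^{\varepsilon(n)}_n$. This is exactly Lemma \ref{LemCrucialConv}: uniformly in $n$, no cluster of a metric graph loop soup in $\widetilde\Omega_n\to\Omega$ can come within $\zeta$ of $\partial\widetilde\Omega_n$ while reaching distance $\delta$ from $\partial\Omega$, as $\zeta\to0$. Its proof does invoke Proposition \ref{Convergence}, but non-circularly: one adds an independent PPP of excursions of small constant intensity $u>0$, observes that any offending cluster must meet one of these excursions (the excursion measure a.s.\ disconnects microscopic boundary neighbourhoods) and hence lies in the metric graph FPS $(\widetilde\A^u_0)_n$, and then uses $(\widetilde\A^u_0)_n\Rightarrow\A^u_0$ together with $\A^0_0=\partial\Omega$ and monotonicity in $u$ to force that FPS, and hence the cluster, to stay near the boundary; monotonicity in $\alpha$ enters only to reduce this lemma to the case $\alpha=1/2$. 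To salvage your outline you would need either an independent proof that $\AA\cup\partial D\stackrel{d}{=}\A^u_0$, or an analogue of Lemma \ref{LemCrucialConv}.
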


Let us explain the additional condition on the convergence of $u_n$. 
We want to avoid the following situation. Assume $\B$ is an arc of the boundary
$\partial D$ and $u$ equals 0 on $\B$. Then $\AA$ does not intersect 
$\mathcal{B}$. However one could approximate $u$ by $u_{n}$ small but positive on $\B_{n}\subseteq \partial \widetilde{D}_{n}$ approaching
$\B$. Then almost surely 
$\B_{n}\subset \widetilde {\AA}_n$ and the limit of
$\widetilde {\AA}_n$ would contain $\B$.

Before proving Proposition \ref{PropConvClustExc}, let us show how it allows us to improve the convergence result of for the FPS. Indeed, from Proposition \ref{Convergence} it follows that
$(\widetilde\A^{u_{n}}_{-a}\cap D)\cup\partial D$ converges in law to $\A^{u}_{-a}$. However, by convention $\A^{u}_{-a}$ is defined to contain $\partial D$, and Proposition \ref{Convergence} does not guarantee that there is no part of
$\widetilde\A^{u_{n}}_{-a}$ that for each $n$ intersects $D$ but at the limit converges to a non-trivial arc on $\partial D$. This can be addressed using Proposition \ref{PropConvClustExc}.

\begin{cor}
	\label{CorConvWithoutBoundary}
	Suppose we are in the setting of Proposition \ref{Convergence}. Let 
	$\B$ denote
	\begin{displaymath}
	\mathcal{B}=\lbrace x\in\partial D\vert u(x)\leq -a\rbrace.
	\end{displaymath} 
	Assume that for any sequence of metric graph boundary points 
	$x_n\in\partial\widetilde{D}_{n}$ converging to a point $x \in \overline{\B}$, we have that $u_n(x_n)\leq -a$ for $n$ large enough. Then, 
	the limit of $\overline{(\widetilde\A^{u_{n}}_{-a}\backslash\partial\widetilde{D}_{n})\cap D}$ has empty intersection with the part of the boundary where $u \leq -a$. 
\end{cor}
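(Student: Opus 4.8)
The plan is to exploit the representation of the FPS as a union of loop-and-excursion clusters (Proposition \ref{CorFPCluster}) together with the cluster-convergence statement of Proposition \ref{PropConvClustExc}. The key observation is that, after shifting levels, the first passage set $\widetilde\A^{u_n}_{-a}$ of the metric graph GFF $\tilde\phi_n + u_n$ is exactly the FPS at level $0$ of the shifted field, i.e. of the metric graph GFF $\tilde\phi_n + (u_n + a)$, restricted to the region where this new boundary condition $u_n + a$ is non-negative. Indeed, the FPS is built from paths on which $\tilde\phi_n + u_n \geq -a$, equivalently $\tilde\phi_n + (u_n+a) \geq 0$; on the part of the boundary where $u_n + a < 0$, no such path can start, so that part of the boundary is not in the FPS at all (up to the convention $\partial\widetilde D_n \subseteq \widetilde\A^{u_n}_{-a}$, which is precisely what we are stripping off by looking at $\overline{(\widetilde\A^{u_n}_{-a}\setminus\partial\widetilde D_n)\cap D}$).

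First I would make this reduction precise: let $v_n := (u_n + a)^+$ and $v := (u+a)^+$, so that $v$ vanishes exactly on $\mathcal B = \{x\in\partial D : u(x)\leq -a\}$. By Proposition \ref{CorFPCluster}, $\widetilde\A^{u_n}_{-a}$ minus the trivial boundary piece coincides (in law, and one can couple) with the closed union $\overline{\widetilde\AA_n}$ of those clusters of $\mathcal L^{\widetilde D_n}_{1/2}\cup\Xi^{\widetilde D_n}_{v_n}$ that contain an excursion — at least on the part of the domain where the relevant boundary condition is non-negative; the bookkeeping here is that an excursion of $\Xi^{\widetilde D_n}_{v_n}$ has positive intensity to start from a boundary point $x$ only if $v_n(x) > 0$, i.e. $u_n(x) > -a$. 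The hypothesis of the Corollary — that $u_n(x_n) \leq -a$ for $n$ large whenever $x_n \to x \in \overline{\mathcal B}$ — is exactly the condition needed so that $v_n \to v$ in the sense required by Proposition \ref{PropConvClustExc} (no small positive boundary values creeping in near $\mathcal B$). Hence Proposition \ref{PropConvClustExc} applies and gives that $\overline{\widetilde\AA_n \cap D}$ converges in law, for the Hausdorff metric, to $\AA = \AA(\mathcal L^D_{1/2}, \Xi^D_v)$.

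Next I would identify the limit $\AA$ and check that it does not touch $\mathcal B$. On the one hand, by the continuum FPS representation (Proposition \ref{CorEqFPS} / the analogue in the continuum of Proposition \ref{CorFPCluster}), $\AA$ together with $\partial D$ is the continuum FPS $\A^u_{-a}$, so the Hausdorff limit of the boundary-stripped metric graph FPS is $\AA$. On the other hand, since $v \equiv 0$ on $\mathcal B$, the Brownian boundary-to-boundary excursion measure $\mu^{D,v}_{\rm exc}$ puts no mass on excursions with an endpoint on $\mathcal B$, and an excursion-containing cluster of $\mathcal L^D_{1/2}\cup\Xi^D_v$ is almost surely at positive distance from $\mathcal B$ — this is the continuum analogue of the "no clusters near the $u=0$ boundary" phenomenon exploited in the proof of Proposition \ref{PropConvClustExc} (concretely: by the restriction property of the Brownian loop measure and a Beurling-type estimate, with probability one no loop and no excursion of the soup comes $\delta$-close to $\mathcal B$ for some random $\delta > 0$, since excursions avoid $\mathcal B$ by construction and loops avoid $\partial D$ entirely). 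Therefore $\AA \cap \mathcal B = \emptyset$ almost surely, and since Hausdorff convergence in law of $\overline{(\widetilde\A^{u_n}_{-a}\setminus\partial\widetilde D_n)\cap D}$ to $\AA$ transfers this to the limit, the conclusion follows.

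The main obstacle I anticipate is the careful alignment between the level-shift reduction and the "part of the domain where the boundary condition is non-negative" caveat in Proposition \ref{CorFPCluster}: one must be sure that no component of $\widetilde\A^{u_n}_{-a}$ accumulates on $\mathcal B$ through the continuum limit, and the cluster picture only controls this once we know that near $\mathcal B$ the shifted FPS genuinely behaves like the FPS at level $0$ with vanishing boundary data. Making the coupling between Proposition \ref{PropConvClustExc}'s clusters and Proposition \ref{Convergence}'s FPS consistent (so that the two Hausdorff limits can be compared on the same probability space) is the technical heart; once that is in place, the Beurling/restriction-property argument showing that the limiting clusters avoid $\mathcal B$ is routine and is essentially already contained in the proof of Proposition \ref{PropConvClustExc}.
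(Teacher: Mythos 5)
Your overall strategy is the one the paper uses: reduce to non-negative boundary data by passing to $v_n=(u_n+a)^+$, invoke the cluster representation and Proposition \ref{PropConvClustExc}, and observe that the limiting cluster set avoids $\mathcal{B}$ because the excursion intensity vanishes there. However, one step is wrong as stated. You claim that $\widetilde\A^{u_n}_{-a}\setminus\partial\widetilde{D}_n$ coincides (in law) with the excursion-containing clusters of $\mathcal{L}^{\widetilde D_n}_{1/2}\cup\Xi^{\widetilde D_n}_{v_n}$. By Proposition \ref{CorFPCluster} those clusters represent $\widetilde\A^{v_n}_{0}=\widetilde\A^{u_n\vee(-a)}_{-a}$, i.e.\ the FPS of the field with the \emph{truncated} boundary condition $u_n\vee(-a)$. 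This is not the same set as $\widetilde\A^{u_n}_{-a}$ when $u_n<-a$ on part of the boundary: truncating the boundary data changes the harmonic extension, hence the field throughout the interior, hence the FPS. Your remark that ``no path can start where $u_n+a<0$'' only explains why such boundary points carry no excursions; it does not show that the interior part of the FPS is unaffected by replacing $u_n$ with $u_n\vee(-a)$. As a consequence, your intermediate claim that the Hausdorff limit of $\overline{(\widetilde\A^{u_n}_{-a}\setminus\partial\widetilde{D}_n)\cap D}$ \emph{equals} $\AA(\mathcal{L}^{D}_{1/2},\Xi^{D}_{v})$ is false in general: by Proposition \ref{Convergence} that limit is governed by $\A^{u}_{-a}$, which is in general a strict subset of $\A^{u\vee(-a)}_{-a}$.

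The fix is exactly the ingredient the paper uses and you omit: monotonicity of the FPS (Theorem \ref{Thm::FPS}(3)) gives $\widetilde\A^{u_n}_{-a}\subseteq\widetilde\A^{u_n\vee(-a)}_{-a}$, and it is the \emph{larger} set that carries the cluster representation with intensity $v_n$ and converges, via Proposition \ref{PropConvClustExc}, to a set disjoint from $\{u\leq -a\}$. Any subsequential Hausdorff limit of the smaller sets is then contained in that limit and hence also avoids $\{u\leq -a\}$, which is the assertion of the corollary. With ``equals'' replaced by ``is contained in, by monotonicity,'' your argument becomes the paper's proof; the rest of what you write (the role of the hypothesis on $u_n$ near $\overline{\B}$, and why $\AA$ does not meet $\mathcal{B}$) matches the paper.
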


\begin{proof}

	First assume that $-a\leq\inf u$. 
	Note that $\overline{\A^{u+a}_{0}\backslash\partial D}$ has same law as
	$\AA(\mathcal{L}^{D}_{1/2},\Xi^{D}_{u+a})$. 
	Then, 	$\overline{(\widetilde\A^{u_{n}}_{-a}\backslash\partial\widetilde{D}_{n})\cap D}$ has the law of $
	\overline{(\widetilde\A^{u_{n}+a}_{0}\backslash\partial\widetilde{D}_{n})\cap D}$ that has the law of
	$\overline{\widetilde\AA(\mathcal{L}^{\widetilde D_{n}}_{1/2},
		\Xi^{\widetilde D_{n}}_{u_{n}+a})\cap D}$. Thus, the claim follows from Proposition
	\ref{PropConvClustExc} and the fact that the set $\AA$ does not touch the parts of the boundary with $u = -a$. 
	
	For the general case, consider the boundary condition $u^{\ast} := u\vee (-a)$ and $u_{n}^{\ast} = u_{n}\vee (-a)$ on $D$ and $\widetilde D_n$ respectively. Notice that then $u_n^{\ast}, u^{\ast}$ still satisfy the hypothesis in the statement. Furthermore,
	by monotonicity of the FPS on the metric graph
	$\widetilde\A^{u_{n}}_{-a}\subseteq 
	\widetilde\A^{u^{\ast}_{n}}_{-a}$.
	We conclude by applying the previous case to 
	$\widetilde\A^{u^{\ast}_{n}}_{-a}$.
\end{proof}

Let us now comeback to the proof of Proposition \ref{PropConvClustExc}. The core of our proof is the following lemma, saying that there are no loop-soup clusters that at the same time stay at a positive distance from the boundary, but also come microscopically close to it. 
\begin{lemma}
	\label{LemCrucialConv}
	Let $\alpha\in (0,1/2]$. Suppose that $(\widetilde \Omega_n,w_n)_{n\in \N}$  satisfy \hyperlink{logof}{$\logof$} and $(\widetilde \Omega_n,w_n)\to (\Omega,w)$ in the Carathéodory sense. Then, for all $\delta>0$,
	\begin{displaymath}
	\lim_{\zeta\to 0}\sup_{n\in\mathbb{N}}
	\mathbb{P}\left (\text{There is } \mathcal{C}~\text{cluster of}~
	\mathcal{L}^{\widetilde{\Omega}_{n}}_{\alpha}\text{ s.t. }
	d(\mathcal{C},\partial \widetilde{\Omega}_n)\leq \zeta,
	\sup_{z\in\mathcal{C}}d(z,\partial \Omega)
	\geq\delta\right )=0.
	\end{displaymath}
\end{lemma}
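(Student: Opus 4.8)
The plan is to reduce the statement to a second-moment estimate on the Brownian loop measure of loops that both come $\zeta$-close to $\partial\widetilde\Omega_n$ and reach macroscopic distance $\delta$ into the interior, combined with a chaining argument across the at most $C'$ boundary components. The key point is that a cluster witnessing the event must be built from a finite chain of loops, the first of which comes within $\zeta$ of the boundary and the last of which (or some loop along the chain) reaches distance $\delta$; by concatenating consecutive intersecting loops we may as well look for a single \emph{connected} union of loops from a $\zeta$-neighborhood of $\partial\widetilde\Omega_n$ to the $\delta$-interior. This is the classical strategy for controlling loop-soup clusters near the boundary, as in \cite{LawlerFerreras2007RWLoopSoup} and \cite{Lupu2016Iso}.

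Here is the sequence of steps. First I would fix a boundary component and a point $x\in\partial\widetilde\Omega_n$, and consider dyadic annuli $A_k(x)=\{z: 2^{-k-1}\le d(z,x)\le 2^{-k}\}$ for $\zeta \le 2^{-k}\le \delta$. A cluster as in the statement that approaches $\partial\widetilde\Omega_n$ near $x$ and exits the ball $B(x,\delta)$ must cross every such annulus, meaning that in each $A_k(x)$ there is a loop of the soup intersecting both boundary circles of the annulus, \emph{or} there is a loop connecting two loops in adjacent annuli — but in any case one gets a loop crossing an annulus of aspect ratio bounded below. Second, I would use the well-known estimate (a consequence of the conformal restriction/quasi-invariance properties of $\mu_{\mathrm{loop}}$, plus the Beurling estimate Lemma \ref{Brl} to handle the fact that $\partial\widetilde\Omega_n$ is nearby) that the $\mu^{\widetilde\Omega_n}_{\mathrm{loop}}$-mass of loops crossing a fixed annulus $A_k(x)$ of fixed modulus is bounded by an absolute constant $M_0<\infty$, uniformly in $n$ and in $x$. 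Hence the expected number of such crossing loops in each annulus is $\le \alpha M_0$, and — this is the crucial quantitative input — the probability that \emph{every single one} of the $\sim\log_2(\delta/\zeta)$ annuli $A_k(x)$ contains a crossing loop decays like $(1-e^{-\alpha M_0})^{\log_2(\delta/\zeta)}$, i.e. polynomially in $\zeta/\delta$, because the restrictions of a Poisson point process to disjoint loop families (loops staying inside disjoint annuli) are independent; a crossing loop of $A_k$ stays inside $B(x, 2^{-k+1})\setminus B(x, 2^{-k-2})$, and these families for $k$ two apart are disjoint, so one still gets independence along a subsequence of every other annulus. Third, I would take a union bound over an $O(\zeta^{-1})$-net of points $x$ on $\partial\widetilde\Omega_n$ (the number of balls of radius $\zeta$ needed to cover $\partial\widetilde\Omega_n$ is $O(\zeta^{-1})$ uniformly in $n$ by condition \hyperlink{logof}{$\logof$}), requiring the polynomial decay exponent to beat $\zeta^{-1}$. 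Since the naive exponent from the above is only logarithmic-in-$\zeta$ many factors, giving decay $\zeta^{c}$ for a \emph{fixed} $c>0$ independent of how small $\zeta$ is, one needs $c$ large; but $c$ can be made as large as desired by instead looking at the annuli $A_k$ crossed in a thin shell of width $\delta/2$ rather than the full range, or more cleanly by iterating: require crossings of the $\sim \log_2(\delta/\zeta)$ annuli, obtaining decay $q^{\log_2(\delta/\zeta)} = (\zeta/\delta)^{\log_2(1/q)}$ with $q=1-e^{-\alpha M_0}<1$, which already gives a power of $\zeta$; to beat the $\zeta^{-1}$ from the net one restarts the chain from each of the $O(\zeta^{-1})$ net points but only asks it to reach distance $\delta$, and by refining the net to radius $\delta$ instead (the event at scale $\delta$ only needs an $O(\delta^{-1})$, i.e. $O(1)$, net since $\delta$ is fixed) the union bound cost becomes $O(1)$ and the polynomial decay in $\zeta$ wins outright. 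I would organize the final union bound this way: cover the part of the boundary relevant to a cluster reaching distance $\delta$ by $O(1)$ balls (depending on $\delta$ but not $n$), and inside each ask for the full annulus-crossing chain down to scale $\zeta$, whose probability is $O((\zeta/\delta)^{c(\alpha)})\to 0$.

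The main obstacle, and the step requiring the most care, is the uniformity in $n$ of the single-annulus loop-mass bound $M_0$ when the annulus sits right against the irregular metric-graph boundary $\partial\widetilde\Omega_n$. On a smooth domain this is immediate from conformal invariance of the loop measure, but $\widetilde\Omega_n=\widetilde\Z^2_n\cap\Omega_n$ has a rough lattice boundary and one is working on the metric graph rather than in $\C$. I would handle this by a domain-monotonicity / restriction argument: by the restriction property of the loop measure, $\mu^{\widetilde\Omega_n}_{\mathrm{loop}}\le \mu^{\widetilde\Z^2_n}_{\mathrm{loop}}$ (loops staying in the smaller domain are a subset), so it suffices to bound the mass of loops of the \emph{full-plane} metric graph loop measure crossing $A_k(x)$ — and there, by translation invariance and the convergence of the metric graph loop measure to the Brownian loop measure on $\C$ (which is conformally, in particular scale, invariant), the mass of loops crossing an annulus of fixed modulus is a single finite constant independent of $k$, $x$, $n$. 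One must only check that the Beurling estimate Lemma \ref{Brl} correctly forces the chain of loops realizing the event to actually cross all the intermediate annuli rather than "jumping over" them along the boundary — but a cluster that is $\zeta$-close to $\partial\widetilde\Omega_n$ at one point and reaches interior-distance $\delta$ is a connected set, so it meets every sphere $\partial B(x,r)$ for $\zeta< r<\delta$, and therefore contains, in each annulus, at least one loop meeting both its bounding circles; this topological observation is what licenses the whole argument and requires no estimate.
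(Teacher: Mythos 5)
There is a genuine gap, and it sits exactly at the step you flag as ``requiring no estimate.'' You assert that a cluster which is $\zeta$-close to $\partial\widetilde\Omega_n$ and reaches interior-distance $\delta$ must, in each dyadic annulus $A_k(x)$, contain \emph{a single loop} meeting both bounding circles. This is false: a cluster is a connected \emph{union} of loops, and it can cross an annulus of width $2^{-k-1}$ via a chain of many loops each of diameter far smaller than $2^{-k}$, none of which crosses the annulus (or any annulus of comparable modulus). Consequently the whole first-moment machinery that follows — the uniform bound $M_0$ on the $\mu_{\rm loop}$-mass of single-loop crossings, the Poissonian independence across alternating annuli, and the resulting decay $(1-e^{-\alpha M_0})^{\log_2(\delta/\zeta)}$ — controls the wrong event. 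The event you actually need to exclude is a \emph{chain-of-small-loops} crossing near the boundary, which is a percolation-type event; bounding its probability is essentially the content of the lemma itself, not an input to it. (For $\alpha$ above the critical intensity such chains do occur at every scale with probability bounded below, so any correct argument must use $\alpha\le 1/2$ in a quantitative way; your proof never does.) There are also secondary issues — a loop crossing $A_k$ need not stay in $B(x,2^{-k+1})\setminus B(x,2^{-k-2})$, so even the independence claim for single-loop crossings needs repair, and the net/union-bound bookkeeping is muddled — but these are downstream of the main error.

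For comparison, the paper's proof avoids annulus crossings entirely. It reduces to $\alpha=1/2$ by monotonicity, superposes an independent Poisson process of boundary-to-boundary excursions $\Xi^{\widetilde\Omega_n}_u$ with small constant intensity $u>0$, and splits the bad event in two: either the resulting metric-graph first passage set $(\widetilde\A^u_0)_n$ reaches distance $\delta/2$ from $\partial\Omega$ — whose probability vanishes as $u\to0$ uniformly in large $n$ by the already-proved FPS convergence (Proposition \ref{Convergence}) together with monotonicity and $\A^0_0=\partial\Omega$ — or the offending cluster avoids every excursion, which becomes impossible as $\zeta\to0$ because the excursion measure has infinite mass on the diagonal and so excursions a.s. disconnect each boundary point from the bulk at small scales. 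If you want to repair your argument you would need to replace ``single loop crosses $A_k$'' by ``cluster of $\mathcal{L}^{\widetilde\Omega_n}_\alpha$ crosses $A_k$'' and then supply a quantitative, $n$-uniform, boundary-uniform upper bound on that crossing probability at $\alpha\le1/2$; no such bound is available from the restriction property and first moments alone.
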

Note that the above lemma is not implied by the convergence result proved by Lupu in \cite{Lupu2015ConvCLE}. However, it could have been proved using the same strategy as in \cite{Lupu2015ConvCLE}. In our article,
we will have a slightly different approach, relying on the convergence of first passage sets.
We will first show how the proposition follows from this lemma, and then prove the lemma.

\begin{proof}
	[Proof of Proposition \ref{PropConvClustExc}] From Lemma \ref{Basic convergences 2} we know that
	$$\lbrace\gamma\in\L_\alpha^{\widetilde D_n}\vert
	\gamma\cap D\neq\emptyset\rbrace\Rightarrow \L_\alpha^{D}, 
	\qquad 
	\lbrace \gamma\in\Xi^{\widetilde D_n}_{u_n}\vert\gamma\cap D\neq\emptyset\rbrace\Rightarrow \Xi^{ D}_{u},$$ 
	as $n\to \infty$. Also $ (\widetilde\AA_n)_{n\in \N}$ is a sequence of random closed sets and thus is tight. Thus, as each coordinate is tight, we can extract a subsequence
	(which we denote in the same way) along which 
	$$(\lbrace\gamma\in\L_\alpha^{\widetilde D_n}\vert
	\gamma\cap D\neq\emptyset\rbrace,
	\lbrace \gamma\in\Xi^{\widetilde D_n}_{u_n}\vert\gamma\cap D\neq\emptyset\rbrace,
	\overline{\widetilde\AA_n\cap D})_{n\in \N}$$
	converges in law to a triple $(\L_\alpha^{D},\Xi^{D}_u,\mathbf A)$. By using Skorokhod's representation theorem, we may assume that this convergence is almost sure. Then, as $\AA$ is a measurable function of $\L_\alpha^D$ and $\Xi_u^D$, it remains to show that $\AA=\mathbf A$ almost surely.
	
	\textit{Let us first show that  $\AA\subseteq \mathbf A$. }
	To do this we consider loops and excursions with cutoff on the diameter and the clusters formed by these loops and excursions. More precisely, respectively in the continuum and on the metric graph, let $\mathcal{A}^{\varepsilon}$ and $\widetilde \AA ^\epsilon$ denote the union of clusters, that are formed of loops and excursions that have diameter greater than or equal to 
	$\varepsilon>0$, and that contain at least one excursion.
	Recall that the diameter is always measured using the Euclidean distance on $\C$, even for paths living on metric graphs.
	
	Note that both $\mathcal{A}^{\varepsilon}$ and 
	$\widetilde\AA^\epsilon$ consist a.s. of finitely many path, and are in particular compact, since a.s. there are finitely many loops and excursions of diameter larger than some value. Now, in our coupling almost surely metric graph loops converge to continuum Brownian loops, metric graph excursions to Brownian excursions, and moreover by (Lemma 2.7 in \cite{Lupu2014LoopsHalfPlane}) their intersection relations also converge. Hence we have that 
	$\overline{\widetilde\AA^\epsilon_n\cap D} \stackrel{a.s.}{\to} \AA^{\epsilon}$. On the other hand $\widetilde\AA_n^\epsilon \subseteq \widetilde\AA_n$ and $\AA^\epsilon \to \AA$ as $\epsilon \to 0$. We conclude that 
	$\AA \subseteq \mathbf A$ almost surely.
	
	\textit{Let us now show that $\mathbf A\subseteq \mathcal{A}$.}
	First notice that there exists a deterministic sequence
	$\epsilon(n)\searrow 0$ such that
	$\overline{\widetilde\AA^{\epsilon(n)}_n\cap D}\stackrel{a.s.}{\to}\AA$. Indeed, as both 
	$\overline{\widetilde\AA^\epsilon_n\cap D}
	\stackrel{a.s.}{\to}\AA^{\epsilon}$ as 
	$n\to \infty$, and $\AA^\epsilon \stackrel{a.s.}{\to}\AA$ as $\epsilon \to 0$ in the Hausdorff distance, we can apply a diagonal argument to choose the sequence $\eps(n)$.
	
	Now, fix a dense sequence of distinct points 
	$(w_i)_{i\geq 0}$ in $D$. Let 
	$\widetilde O_n(w_i)$ and $\widetilde O^{\varepsilon(n)}_n(w_i)$, denote the connected components containing $w_i$ of $\widetilde D_n\backslash\widetilde{\AA}_n$ and $\widetilde D_n\backslash\widetilde{\AA}^{\varepsilon(n)}_n$ respectively. By connected component of $w_{i}$ on a metric graph, we mean the connected component that either contains $w_{i}$ or contains the dyadic square surrounding $w_{i}$. For any fixed $w_i$ it is defined only with certain probability that converges to $1$ as
	$n\to +\infty$. Further, define
	$O(w_i)$ as the connected component of 
	$w_{i}$ in $D\backslash\AA$ and for any $\delta >0$ let $\Theta_{\delta}(w_i)$ be the connected component of $w_i$ in $D\backslash \overline{(\AA+B(0,\delta)})$. As the condition on the boundary convergence of $u_n \rightarrow u$ guarantees that $\mathbf A\cap \partial D =\AA\cap \partial D$, it remains to prove that $\mathbf A\cap D\subseteq \AA\cap D$. To do this it suffices to show that for all $w_{i}$ and $\delta>0$
	\begin{equation} 
	\label{EqCCcomplementary}
	\lim_{n\to +\infty}\mathbb{P}(\Theta_{\delta}(w_i)\subseteq 
	\widetilde O_{n}(w_i))= 1.
	\end{equation}
	For any fixed $w_i$, we will apply Lemma \ref{LemCrucialConv} to 
	$\Omega=O(w_{i})$ and
	$\widetilde{\Omega}_{n}=
	\widetilde{O}_{n}^{\varepsilon(n)}(w_{i})$. Note that 
	$\C\backslash O(w_{i})$ has at most as many connected components as
	$\C\backslash D$. Moreover, from Theorem 1 of \cite{Co} we know that the Hausdorff convergence of 
	$\widetilde\AA^{\varepsilon(n)}_{n}$ to $\AA$ 
	implies the Carathéodory convergence of 
	$(\widetilde{O}_{n}^{\varepsilon(n)}(w_{i}),w_{i})\to
	(O(w_{i}),w_{i})$. Finally, conditioned on 
	$\widetilde\AA^{\varepsilon(n)}_{n}$,
	the law of $\L_\alpha^{\widetilde O_n^{\epsilon(n)}(w_i)}$ (i.e. the law of the metric graph loops of 
	$\mathcal{L}^{\widetilde{D}_{n}}_{1/2}$ that are contained inside
	$\widetilde O_n^{\epsilon(n)} (w_i)$),
	is that of a metric graph loop soup in 
	$\widetilde O_n^{\epsilon(n)} (w_i)$.
	Hence Lemma \ref{LemCrucialConv} implies that
	\begin{equation}
	\label{EqFarClose}
	\lim_{n\to +\infty}\P\left (\text{There is } \mathcal{C}~\text{cluster of}~
	\mathcal{L}_\alpha^{\widetilde{O}_n^{\epsilon(n)}(w_i)}\text{ s.t. }
	d(\mathcal{C},\widetilde{\AA}^{\epsilon(n)}_n(w_i))\leq 2\epsilon(n),
	\sup_{z\in\mathcal{C}}d(z,\partial O(w_i))
	\geq\delta\right )=0.
	\end{equation}
	The metric graph loops that intersect but are not contained in
	$\widetilde\AA_n^{\epsilon(n)}$ are by construction all of diameter smaller than $\epsilon(n)$. Thus, the only way 
	for $\widetilde\AA_{n}$ to have points $\delta$-far from 
	$\widetilde\AA_n^{\epsilon(n)}$ is the event in \eqref{EqFarClose} to be satisfied. We conclude that, with probability converging to 1, we have 
	$\widetilde \AA_n \cap \Theta_\delta(w_i) = \emptyset$. Hence we obtain \eqref{EqCCcomplementary} and conclude the proof of the proposition.
\end{proof}

Now, we present a short proof of the lemma using the already proved convergence of FPS.	The idea is to add Brownian excursions to the loop soup to get an FPS. Then, when the event of having a macroscopic cluster close to the boundary occurs, we use bounds on the FPS and the fact that Brownian excursions intersect any cluster that goes from microscopically close to the boundary to a macroscopic distance, to conclude.

\begin{proof}[Proof of Lemma \ref{LemCrucialConv}]
	
	Notice that by monotonicity of the clusters in $\alpha$, it suffices to prove the claim for $\alpha = 1/2$. By Lemma \ref{Basic convergences}, we can 
	couple 
	$(\L^{\widetilde{\Omega}_{n}}_{1/2})_{n\geq 0}$ and
	$\L^{\Omega}_{1/2}$ in such a way that 
	$\L^{\widetilde{\Omega}_{n}}_{1/2}
	\stackrel{a.s.}{\to}\L^{\Omega}_{1/2}$.
	We also add PPP-s of excursions 
	$\Xi^{\widetilde{\Omega}_{n}}_{n}$
	and $\Xi^{\Omega}_{u}$ for some constant $u>0$ to be chosen later.
	We do it in such a way that $\Xi^{\widetilde{\Omega}_{n}}_{n}$ independent of $\L^{\widetilde{\Omega}_{n}^{2}}_{1/2}$,
	$\Xi^{\Omega}_{u}$ independent of $\L^{\Omega}_{1/2}$, and
	\begin{displaymath}
	\lbrace\gamma\in\Xi^{\widetilde{\Omega}_{n}}_{n}\vert\gamma\cap \Omega\neq\emptyset\rbrace
	\stackrel{a.s.}{\to}
	\Xi^{\Omega}_{u}.
	\end{displaymath}
	
	Now, let us define $$E^{n,\zeta} = \left\{\text{There is } \mathcal{C}~\text{cluster of}~
	\mathcal{L}^{\widetilde{\Omega}_{n}}_{1/2}\text{ s.t. }
	d(\mathcal{C},\partial \widetilde{\Omega}_n)\leq \zeta,
	\sup_{z\in\mathcal{C}}d(z,\partial \Omega)
	\geq\delta\right\}.$$
	Then, by the representation of a metric graph first passage set $(\widetilde \A_0^u)_n$ inside $\widetilde{\Omega}_{n}$ by loops and excursions (Corollary \ref{CorFPCluster}), we can bound $E^{n,\zeta} \subset E_1^{n,u} \cup E_2^{n,\zeta,u}$, where $E_1^{n,u} = 
	\left\{\sup_{z\in (\widetilde \A_0^u)_{n}}d(z,\partial \Omega) \geq \delta/2 \right \}$ and
	\begin{align*}
	E_2^{n,\zeta,u} = \left\{\text{There is } \mathcal{C}~\text{cluster of}~
	\mathcal{L}^{\widetilde{\Omega}_{n}}_{1/2}\text{ s.t. }
	d(\mathcal{C},\partial \widetilde{\Omega}_n)\leq \zeta,
	\sup_{z\in\mathcal{C}}d(z,\partial \Omega)
	\geq\delta \text{, but}~\Xi_{u}^{\widetilde{\Omega}_{n}} \cap \mathcal{C} = \emptyset \right\}.
	\end{align*}
	
	Now, using Proposition \ref{Convergence} for any constant and positive boundary condition $u$, we have that $((\widetilde{\A}^{u}_{0})_{n}\cap \Omega)\cup \partial \Omega \Rightarrow \A_0^u$ in the Hausdorff topology. On the other hand, by convergence of nested local sets (Lemma \ref{BPLS}), monotonicity of FPS (Theorem \ref{Thm::FPS} (3)) and the fact that $\A_0^0 = \partial \Omega$, we know that $\P\left(\sup_{z\in \A_0^u}d(z,\partial \Omega)\geq \delta\right) \rightarrow 0$ as $u \rightarrow 0$. Thus, we get 
	$$\lim_{u\to 0}\limsup_{n\to +\infty}\P(E_1^{n,u})=0.$$
	So, we can chose $u$ such that $\P(E_1^{n,u})$ is arbitrarily small,
	uniformly in $n$ large.
	
	It remains to show that, for any fixed value of $u$, 
	$$\lim_{\zeta\to 0}\limsup_{n\to +\infty}\P(E_2^{n,\zeta,u})=0.$$
	As the excursion measure has infinite mass on the diagonal, it follows that for any fixed $x\in\partial\Omega$, there is a.s. a Brownian excursion in $\Xi^{\Omega}_{u}$ disconnecting 
	$x$ from $\Omega\backslash B(x,\delta/2)$ in $\Omega$. Hence, any connected set joining 
	$x$ to a point at distance $\delta$ from 
	$\partial \Omega$ has to intersect this excursion. However, we know that  $\Xi^{\widetilde{\Omega}_{n}}_{n}$ is independent of $\L^{\widetilde{\mathbb{Z}}_{n}^{2}}_{1/2}$ and that
	$\lbrace\gamma\in\Xi_{u}^{\widetilde{\Omega}_{n}}\vert\gamma\cap\Omega\neq\emptyset\rbrace$ converges in law to
	$\Xi^{\Omega}_{u}$. Thus, the lemma follows.
	
\end{proof}

\section{Consequences of the convergence results}
\label{SecConsec}
In this section, we use Proposition \ref{Convergence} and Proposition \ref{PropConvClustExc} to obtain several results concerning FPS and the Brownian loop soup. These results can be roughly partioned into two: In Section 5.1 we discuss a representation of the FPS with Brownian loops and excursions, and the consequences of this representation: extensions of the isomorphism theorems and several basic properties of the FPS like its local finiteness. In Section \ref{SubSecConseq2}, we discuss consequences on the level lines of the GFF, in particular we prove a convergence result of certain interfaces of the metric graph GFF towards SLE$_4(\rho)$ processes. Let us however start from an easy consequence on the probability of percolation for super-level sets of a metric graph GFF in a large box. This type of percolation questions are for example  studied in \cite{DingLi2018Chemical}.

\begin{cor}[Continuity of percolation]
	\label{CorContPerc}
	Let $\Lambda_{N}$ be the box $\lbrace -N,\dots,N\rbrace^{2}$ in
	$\mathbb{Z}^{2}$ and $\widetilde{\Lambda}_{N}$ the associated metric graph. Let $b>0$ and $\tilde{\phi}_{N}+a$ the metric graph GFF on 
	$\widetilde{\Lambda}_{N}$ with constant boundary condition $a$ on
	$\partial \widetilde{\Lambda}_{N}$. For $\theta\in [0,1]$, we denote 
	$p_{N}(\theta)$ the probability that there is a crossing from	$\partial \widetilde{\Lambda}_{N}$ to 
	$[-\theta N,\theta N]^{2}$
	by positive values of $\tilde{\phi}_{N}+a$.
	Then, the probabilities $p_{N}(\theta)$ are bounded away from $0$ and $1$ also uniformly in $N$, and are moreover continuous in $\theta$ uniformly in $N$.
\end{cor}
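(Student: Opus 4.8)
The plan is to rescale space by $1/N$ so as to land within the scope of Proposition~\ref{Convergence}, and then to extract both assertions from properties of the limiting continuum first passage set. First I would note that $\tilde\phi_N+a$ has a positive crossing from $\partial\widetilde\Lambda_N$ to $[-\theta N,\theta N]^2$ precisely when the metric-graph FPS $\widetilde\A_N$ — the set of points joined to $\partial\widetilde\Lambda_N$ by a path along which $\tilde\phi_N+a\ge 0$, which in the notation of Section~\ref{dFPS} is $\widetilde\A^{a}_{-a}(\tilde\phi_N)$ — meets $[-\theta N,\theta N]^2$. Writing $\rho_N:=d_\infty(0,\widetilde\A_N)$ for the $\ell^\infty$-distance from the origin, this gives $p_N(\theta)=\P(\rho_N\le\theta N)$, so $p_N$ is the distribution function of $N^{-1}\rho_N$. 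Since the metric-graph GFF depends only on the weighted graph and not on its embedding, the image of $\tilde\phi_N$ under $x\mapsto x/N$ is exactly the metric-graph GFF, with unit conductances and constant boundary value $a$, on the lattice approximation $\widetilde D_N:=(N^{-1}\Z)^2\cap[-1,1]^2$ of $(-1,1)^2$ (whose complements converge, and to which the convergence results of Section~\ref{SecConv} apply verbatim with the meshes $1/N$). Hence Proposition~\ref{Convergence}, together with the continuity of $A\mapsto d_\infty(0,A)$ for the Hausdorff topology, yields $N^{-1}\rho_N\Rightarrow R:=d_\infty(0,\A)$, where $\A=\A^{a}_{-a}$ is the continuum FPS of level $-a$ and boundary value $a$ in $(-1,1)^2$ (existing by Theorem~\ref{Thm::FPS} since $a\ge 0$); in particular $p_N(\theta)\to p_\infty(\theta):=\P(R\le\theta)$ at every continuity point of the law of $R$.

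I would then establish that $p_\infty(\theta)\in(0,1)$ for $\theta\in(0,1)$ and that $p_\infty$ is continuous on $[0,1]$. For the first, a uniform-in-$N$ lower bound on $p_N(\theta)$ comes from the loop/excursion representation of the metric-graph FPS: by monotonicity of the metric-graph FPS in the level, $\widetilde\A_N\supseteq\widetilde\A^{a}_{0}(\tilde\phi_N)$, which by Corollary~\ref{CorFPCluster} (with boundary value $a$) is the union of the clusters carrying an excursion together with $\partial\widetilde\Lambda_N$; hence $p_N(\theta)$ is at least the probability that some boundary-to-boundary excursion of $\Xi^{\widetilde D_N}_a$ meets $[-\theta N,\theta N]^2$, and by Lemma~\ref{Basic convergences 2} these excursions converge to Brownian excursions, which enter the bulk with positive probability, so $\inf_N p_N(\theta)>0$ and hence $p_\infty(\theta)>0$. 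The upper bound $p_\infty(\theta)<1$ is a basic property of the continuum FPS: with positive probability the GFF is negative enough on a circuit surrounding $[-\theta,\theta]^2$ — an absolutely continuous (Cameron--Martin) perturbation of $\Phi$ — that no $(-a)$-super-level path crosses it, so $\A$ misses the central box. Granting these facts and the continuity of $p_\infty$, the weak convergence promotes to $p_N\to p_\infty$ pointwise on all of $[0,1]$; since the $p_N$ and $p_\infty$ are non-decreasing and $p_\infty$ is continuous on the compact interval $[0,1]$, the classical fact (P\'olya) that pointwise convergence of monotone functions to a continuous limit is uniform on compacts gives $p_N\to p_\infty$ uniformly on $[0,1]$. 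Uniform boundedness away from $0$ and $1$ on compact subintervals of $(0,1)$ then follows (each $p_N(\theta)$ lies in $(0,1)$, since on the metric graph both the crossing and its complement have positive probability, and $p_N(\theta)\to p_\infty(\theta)\in(0,1)$, so the finitely many small $N$ are handled by hand), and uniform equicontinuity on $[0,1]$ follows from the uniform convergence together with the uniform continuity of $p_\infty$, again treating finitely many $N$ individually.

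The hard part, which I expect to be the main obstacle, is the continuity of $p_\infty$, equivalently the absence of atoms of $R=d_\infty(0,\A)$ in $(0,1)$ (that $R\in(0,1)$ a.s.\ is easy: $0\notin\A$ a.s., and some boundary-to-boundary excursion always enters the interior). Note that ``a real random variable has at most countably many atoms'' is \emph{not} enough, since equicontinuity really requires no atom at any single $\theta$. The route I would pursue is a one-point, Beurling-type estimate for the inner frontier of the continuum FPS, namely that $\P\big(\A \text{ meets the } \delta\text{-neighbourhood of } \{\|z\|_\infty=\theta\} \text{ but not } \{\|z\|_\infty<\theta\}\big)\to 0$ as $\delta\to0$, uniformly for $\theta$ in compact subsets of $(0,1)$; this gives continuity (in fact a H\"older modulus) of $p_\infty$ directly, but demands genuine control of the behaviour of $\partial\A$ near a fixed circle — possibly via additional properties of the continuum FPS from \cite{ALS1}, or via a Cameron--Martin argument showing that adding a small smooth bump supported near $\{\|z\|_\infty=\theta\}$ strictly perturbs $R$. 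Once continuity of $p_\infty$ is in hand, the rest is routine bookkeeping.
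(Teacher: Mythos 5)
Your overall architecture coincides with the paper's: identify $p_N(\theta)$ with the probability that the metric graph FPS $(\widetilde\A^{a}_{0})_N$ meets $[-\theta N,\theta N]^2$, rescale by $1/N$, apply Proposition \ref{Convergence} to get $p_N\to p_\infty$ pointwise at continuity points of the limit, and then use the P\'olya/Dini fact that monotone functions converging pointwise to a continuous limit on a compact interval converge uniformly. The genuine gap is exactly at the step you flag as ``the main obstacle'': the continuity of $p_\infty(\theta)=\P\bigl(\A^{a}_{0}\cap[-\theta,\theta]^2\neq\emptyset\bigr)$ is never established, only reduced to a quantitative Beurling-type estimate for the frontier of the FPS which you do not supply and which is far stronger than what is needed. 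Without atomlessness of $d_\infty(0,\A^{a}_{0})$ at \emph{every} $\theta$, nothing uniform in $N$ comes out. The paper closes this softly: since the FPS is compact, $\theta\mapsto\P(\A^{a}_{0}\cap[-\theta,\theta]^2\neq\emptyset)$ is automatically right-continuous, and its left limit at $\theta$ equals $\P(\A^{a}_{0}\cap(-\theta,\theta)^2\neq\emptyset)$; continuity at $\theta$ is therefore precisely the a.s.\ dichotomy that the FPS either misses the closed box or enters the open one, i.e.\ that it a.s.\ does not touch the fixed curve $\partial[-\theta,\theta]^2$ without crossing it. No rate and no control of $\partial\A$ near the circle is required. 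The same dichotomy applied on the metric graph (where it reduces to $\tilde\phi_N$ not taking the exact value $-a$ at the finitely many fixed points where edges meet $\partial[-\theta N,\theta N]^2$) gives the continuity of each individual $p_N$ — which you also need in order to ``treat finitely many $N$ by hand'' in the equicontinuity step, and which you likewise leave unaddressed.

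Two secondary points. For the bounds away from $0$ and $1$ the paper simply cites Corollary 14 of \cite{LupuWerner2016Levy}. Your lower bound via Proposition \ref{CorFPCluster} and Lemma \ref{Basic convergences 2} is reasonable, but your upper bound is not rigorous as stated: ``the GFF is negative enough on a circuit'' is not an event for the continuum GFF, which is not defined pointwise, and the monotonicity $\A(\Phi)\subseteq\A(\Phi+f)$ for $f\ge 0$ makes a Cameron--Martin shift by a negative bump shrink the FPS of the \emph{shifted} field rather than directly produce a positive-probability avoidance event for the unshifted one. It is both easier and what the paper does to import the uniform-in-$N$ bound at the metric graph level and let it pass to the limit.
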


\begin{proof} Observe that
	$p_{N}(\theta)$ is the probability that the metric graph FPS
	$(\widetilde{\A}^{a}_{0})_{N}=(\widetilde{\A}^{0}_{-a})_{N}$ 
	intersects $[-\theta N,\theta N]^{2}$. By Corollary 14 in \cite{LupuWerner2016Levy} we see that $p_{N}$ is bounded away from $0$ and $1$ uniformly in $N$.
	
	Let us now consider the continuity in $\theta$. Since for any $\theta$ fixed, a.s. either
	$(\widetilde{\A}^{a}_{0})_{N}\cap [-\theta N,\theta N]^{2}=\emptyset$ or
	$(\widetilde{\A}^{a}_{0})_{N}\cap (-\theta N,\theta N)^{2}\neq\emptyset$, we obtain that
	$p_{N}(\theta)$ is continuous in $\theta$ for any fixed $N$. 
	
	To obtain the uniformity in $N$ we argue as follows.  Let $p(\theta)$ be the probability that the continuum FPS 
	$\A^{a}_{0}$ intersects $[-\theta,\theta]^{2}$. Again, $p(\theta)$ is non-decreasing and continuous, because for fixed $\theta$, a.s. either
	$\A^{a}_{0}\cap [-\theta,\theta]^{2}=\emptyset$ or
	$\A^{a}_{0}\cap (-\theta,\theta)^{2}\neq\emptyset$. But now Proposition \ref{Convergence} tells that $(\widetilde{\A}^{a}_{0})_{N}$
	rescaled by $N^{-1}$ converges in law to $\A^{a}_{0}$ in $[-1,1]^{2}$. Thus, by convergence in law,
	the sequence $(p_{N}(\theta))_{N}$ converges pointwise to $p(\theta)$, and since the functions are non-decreasing, the convergence is uniform in 
	$\theta$. Hence the continuity of $p(\theta)$ gives the uniformity in $N$.
\end{proof}

\begin{rem}
	One can similarly get the continuity in percolation in annuli at macroscopic distance from the boundary of the domain ($\partial \widetilde{\Lambda}_{N}$). For this, the convergence of first passage sets is however not enough. One needs the convergence of all excursion sets, i.e. sign components of $\tilde{\phi}_{N}+b$. This will be done in
	\cite{ALS4}.
\end{rem}

\subsection{Representation of the continuum FPS with Brownian loops and excursions, and consequences on basic properties of FPS}
\label{SubSecConseq1}

From Proposition \ref{CorFPCluster}, we know that a FPS on a metric graph is represented as closure of clusters of metric graph loops and excursions. By using the convergence of the metric graph FPS to the continuum FPS (Proposition \ref{Convergence}) and the convergence of clusters of metric graph loops and excursions to their continuum counterparts (Proposition \ref{PropConvClustExc}), we obtain a similar representation in continuum.
\begin{prop}[FPS $= $ clusters with excursions]
	\label{CorEqFPS}
	Let $u$ be a non-negative harmonic function with piecewise constant boundary values.
	Then, the set 
	$\mathcal{A}(\mathcal{L}^{D}_{1/2},\Xi^{D}_{u})\cup\partial D$ corresponding to the closure of clusters containing excursions, and the first passage set $\A^{u}_{0}$, have the same law.
\end{prop}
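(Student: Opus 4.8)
The plan is to combine the convergence of metric graph first passage sets (Proposition \ref{Convergence}) with the convergence of clusters of metric graph loops and excursions (Proposition \ref{PropConvClustExc}), using as a bridge the metric graph identity of Corollary \ref{CorFPCluster}, which states that on the metric graph $\widetilde{D}_n$ the closed union of clusters of $\mathcal{L}^{\widetilde{D}_n}_{1/2}\cup\Xi^{\widetilde{D}_n}_{u_n}$ that meet a boundary-to-boundary excursion, together with $\partial\widetilde{D}_n$, equals the metric graph FPS $\widetilde{\A}^{u_n}_{0}$ in law (indeed, a.s. in the isomorphism coupling). First I would fix a circle domain $D$, take the standard approximating sequence $\widetilde{D}_n=\widetilde{\Z}^2_n\cap D_n$ with $D_n\to D$ in the sense that complements converge in the Hausdorff metric, and choose harmonic functions $u_n$ on $\widetilde{D}_n$ with $u_n\to u$ (using Lemma \ref{Basic convergences}(1) to build them from the boundary data of $u$). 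Since $u$ is non-negative, one can arrange $u_n\geq 0$; and since $u$ is piecewise constant and $u\geq 0$, if $u$ vanishes on a boundary arc $\B$ one can pick the $u_n$ to vanish near $\B$ as well, so that the extra hypothesis of Proposition \ref{PropConvClustExc} is satisfied.

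The core of the argument is then a diagram of convergences. On the one hand, Proposition \ref{Convergence} (with $a=0$) gives that $(\widetilde{\A}^{u_n}_{0}\cap D)\cup\partial D\Rightarrow \A^{u}_{0}$ in the Hausdorff topology. On the other hand, Proposition \ref{PropConvClustExc} gives that $\overline{\widetilde{\AA}_n\cap D}\Rightarrow \AA(\mathcal{L}^D_{1/2},\Xi^D_{u})$, where $\widetilde{\AA}_n=\widetilde{\AA}_n(\mathcal{L}^{\widetilde{D}_n}_{1/2},\Xi^{\widetilde{D}_n}_{u_n})$. Now by Corollary \ref{CorFPCluster}, for each fixed $n$ we have the equality in law $\overline{\widetilde{\AA}_n}=\widetilde{\A}^{u_n}_{0}$ as closed subsets of $\widetilde{D}_n$ (both containing $\partial\widetilde{D}_n$); intersecting with $D$ and taking closures, $\overline{\widetilde{\AA}_n\cap D}$ and $(\widetilde{\A}^{u_n}_{0}\cap D)\cup\partial D$ agree in law up to the behaviour of a possible part of the set that approaches $\partial D$ from inside — but this discrepancy is exactly what Corollary \ref{CorConvWithoutBoundary} (here with $a=0$, so $\B=\partial D$ when $u$ could be $0$ somewhere, or the relevant boundary portion in general) rules out: the limiting set has no spurious arcs on $\partial D$ beyond what is prescribed by the boundary condition. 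Hence the two subsequential limits $\A^{u}_{0}$ and $\AA(\mathcal{L}^D_{1/2},\Xi^D_{u})\cup\partial D$ have the same law, which is the claimed identity.

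Concretely, I would phrase it as: the law of $\overline{\widetilde{\AA}_n\cap D}$ (as an element of the space of closed subsets of $\overline{D}$ with the Hausdorff metric) coincides, for every $n$, with the law of $(\widetilde{\A}^{u_n}_{0}\cap D)\cup\partial D$ by Corollary \ref{CorFPCluster} together with Corollary \ref{CorConvWithoutBoundary} applied on $\widetilde{D}_n$ to control that no macroscopic-in-$n$ piece of $\widetilde{\AA}_n$ hugs the boundary; passing to the limit $n\to\infty$ and invoking the two convergence-in-law statements, uniqueness of limits in law gives $\A^{u}_{0}\stackrel{(d)}{=}\AA(\mathcal{L}^D_{1/2},\Xi^D_{u})\cup\partial D$. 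Finally, since both Propositions \ref{Convergence} and \ref{PropConvClustExc} actually produce couplings in which the convergence is almost sure (via Skorokhod), one can even state the identity as an almost sure equality in a joint coupling of the continuum GFF with an independent loop soup and excursion process — but for the statement as written, equality in law suffices.

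The main obstacle I expect is the bookkeeping near $\partial D$: the sequence $\partial D_n$ need not converge to $\partial D^z$ in the Hausdorff metric, so a priori the metric graph FPS, which by convention includes $\partial\widetilde{D}_n$, could have a part that for each $n$ lies in the interior of $D$ but in the limit collapses onto a non-trivial arc of $\partial D$ — and symmetrically the cluster set $\widetilde{\AA}_n$ could do the same. Both phenomena are exactly what Corollary \ref{CorConvWithoutBoundary} (whose proof relies on Proposition \ref{PropConvClustExc} and the Beurling estimate) and the extra boundary hypothesis in Proposition \ref{PropConvClustExc} are designed to exclude, so the task is to invoke them with the correct choice of $u_n$ rather than to prove anything new; still, checking that with $u$ non-negative and piecewise constant one can pick $u_n\geq 0$ converging to $u$ and vanishing near the zero-set of $u$ on the boundary is the one point requiring a small verification.
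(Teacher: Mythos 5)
Your proposal is correct and follows essentially the same route as the paper, which derives Proposition \ref{CorEqFPS} precisely by combining the metric graph identity of Corollary \ref{CorFPCluster} with the two convergence results, Proposition \ref{Convergence} for the FPS and Proposition \ref{PropConvClustExc} for the clusters, and identifying the limits in law. Your extra care about boundary bookkeeping via Corollary \ref{CorConvWithoutBoundary} and the choice of $u_n$ matches the discussion the paper itself gives around Proposition \ref{PropConvClustExc}.
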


We can use this result to obtain a geometric description of the outermost clusters in a Brownian loop-soup $\L^{D}_{1/2}$ when we condition on their outer boundary. More precisely, let $D$ now be simply connected. Then, the outer boundaries of outermost clusters (not surrounded by others) in a Brownian loop-soup $\L^{D}_{1/2}$ are distributed like a conformal loop ensembles CLE$_4$ (\cite{SheffieldWerner2012CLE}). Take one of these boundaries $\Upsilon$ and define $\operatorname{Int}(\Upsilon)$ to be the bounded connected component of $\C\backslash \Upsilon$. It is shown in \cite{QW2015} that conditionally on $\Upsilon$, the Brownian loops in $\operatorname{Int}(\Upsilon)$ that do not touch $\Upsilon$ are distributed like a Brownian loop-soup $\L^{\operatorname{Int}(\Upsilon)}_{1/2}$ inside 
$\operatorname{Int}(\Upsilon)$. Moreover, in the same article the authors prove that conditioned on $\Upsilon$, the loops that intersect $\Upsilon$ are independent from those that do not intersect it, and they have the law of a PPP of Brownian excursions from $\Upsilon$ to 
$\Upsilon$ inside $\operatorname{Int}(\Upsilon)$ with intensity
$\mu_{\rm exc}^{\operatorname{Int}(\Upsilon), 2\lambda}$. Combining this with Proposition \ref{CorEqFPS}, we can give a geometric description of the whole outermost cluster:

\begin{cor}[Cluster of $\L^{D}_{1/2}$ $=$ $\A^{2\lambda}_{0}$=$\A_{-2\lambda}$]
	\label{CorClusterLoopSoup}
	Let the domain $D$ be simply connected. Conditioned on the outer boundary $\Upsilon$ of a Brownian loop-soup cluster in 
	$\L^{D}_{1/2}$, the topological closure of the cluster itself is 
	distributed like a first passage set 
	$\A^{2\lambda}_{0}=\A_{-2\lambda}$ inside 
	$\operatorname{Int}(\Upsilon)$, the interior surrounded by $\Upsilon$.
\end{cor}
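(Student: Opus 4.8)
The proof combines the loop-soup cluster decomposition of \cite{QW2015} recalled above with the representation of the continuum FPS as a union of loop and excursion clusters (Proposition~\ref{CorEqFPS}). Write $\mathcal{C}$ for the loop-soup cluster whose outer boundary is $\Upsilon$.

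I would work conditionally on $\Upsilon$. By the loop-soup construction of CLE$_4$ \cite{SheffieldWerner2012CLE}, $\Upsilon$ is the outer boundary of the single cluster $\mathcal{C}$, all of whose loops lie in $\overline{\operatorname{Int}(\Upsilon)}$, and every loop of $\L^{D}_{1/2}$ meeting $\Upsilon$ belongs to $\mathcal{C}$. By \cite{QW2015}, conditionally on $\Upsilon$ the loops of $\L^{D}_{1/2}$ lying in $\operatorname{Int}(\Upsilon)$ split into two conditionally independent families: those not touching $\Upsilon$ form a Brownian loop soup $\L^{\operatorname{Int}(\Upsilon)}_{1/2}$, and those touching $\Upsilon$ form a Poisson point process of Brownian excursions from $\Upsilon$ to $\Upsilon$ in $\operatorname{Int}(\Upsilon)$ of intensity $\mu^{\operatorname{Int}(\Upsilon),2\lambda}_{\rm exc}$, i.e.\ a copy of $\Xi^{\operatorname{Int}(\Upsilon)}_{2\lambda}$; the constant here is exactly $2\lambda$, which is what makes the identification below possible.

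Next I would identify $\overline{\mathcal{C}}$ with a closed union of clusters. The excursions just produced are precisely the loops of $\mathcal{C}$ touching $\Upsilon$, so they all belong to $\mathcal{C}$. A loop of $\L^{\operatorname{Int}(\Upsilon)}_{1/2}$, i.e.\ one not touching $\Upsilon$, lies in $\mathcal{C}$ if and only if it is joined to $\Upsilon$ by a finite chain of pairwise intersecting trajectories; since such a chain may be taken to end with one of the touching loops, this is equivalent to its cluster in $\L^{\operatorname{Int}(\Upsilon)}_{1/2}\cup\Xi^{\operatorname{Int}(\Upsilon)}_{2\lambda}$ containing at least one excursion. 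Taking topological closures, conditionally on $\Upsilon$,
\begin{displaymath}
\overline{\mathcal{C}}=\AA\bigl(\L^{\operatorname{Int}(\Upsilon)}_{1/2},\Xi^{\operatorname{Int}(\Upsilon)}_{2\lambda}\bigr)\cup\partial\operatorname{Int}(\Upsilon).
\end{displaymath}
Since $\operatorname{Int}(\Upsilon)$ is simply connected and $u\equiv 2\lambda$ is a (piecewise) constant non-negative boundary condition, Proposition~\ref{CorEqFPS} --- applied conditionally on $\Upsilon$, reducing to a fixed reference domain such as $\D$ by the conformal invariance of loop soups, excursion measures and first passage sets --- gives that the right-hand side has the law of $\A^{2\lambda}_0$ in $\operatorname{Int}(\Upsilon)$. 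Finally $\A^{2\lambda}_0=\A_{-2\lambda}$ directly from Definition~\ref{Def ES}: both are characterised by the same conditions, namely $h_A=-2\lambda$ outside the set and $\Phi_A+2\lambda\geq 0$.

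The step I expect to be the main obstacle is the set-level identification of $\overline{\mathcal{C}}$: one must verify, not just in law, that the topological closure of the loop-soup cluster coincides with the closed union of the excursion-containing clusters of the conditionally resampled loops and excursions --- in particular that $\Upsilon$ itself is recovered and that no Brownian loop touches $\Upsilon$ from outside $\operatorname{Int}(\Upsilon)$. This rests on carefully unwinding the relation between loop-soup clusters and their CLE$_4$ boundaries from \cite{SheffieldWerner2012CLE,QW2015}, together with the convention that a first passage set always contains the boundary of its domain.
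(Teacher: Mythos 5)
Your proposal is correct and follows essentially the same route as the paper: the paper derives the corollary directly from the \cite{QW2015} decomposition of the loops inside $\operatorname{Int}(\Upsilon)$ into an independent loop soup $\L^{\operatorname{Int}(\Upsilon)}_{1/2}$ and a PPP of excursions of intensity $\mu^{\operatorname{Int}(\Upsilon),2\lambda}_{\rm exc}$, combined with Proposition~\ref{CorEqFPS}. Your explicit set-level identification of $\overline{\mathcal{C}}$ with the closed union of excursion-containing clusters is a point the paper leaves implicit, but it is the intended argument.
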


One can also combine the isomorphism for the Wick square of the GFF (Proposition \ref{PropIsoURenorm}) and the construction of the FPS from clusters of loops and excursions:

\begin{prop}[FPS $+ $ Wick square]
	\label{PropFPSplusWick}
	Let $u$ be a non-negative harmonic function with piecewise constant boundary values.
	One can couple on the same probability space a GFF 
	$\Phi$ and two point processes 
	$\mathcal{L}^{D}_{1/2}$ and $\Xi^{D}_{u}$ of loops, resp. excursions,
	with $\Xi^{D}_{u}$ independent from $\mathcal{L}^{D}_{1/2}$, such that
	the two following conditions hold simultaneously:
	\begin{enumerate}
		\item $\frac{1}{2}:\Phi^{2}:+ u\Phi+
		\frac{1}{2} u^{2}=L_{\rm ctr}(\mathcal{L}^{D}_{1/2})+
		L(\Xi^{D}_{u})$,
		\item $\A^{u}_{0}=\mathcal{A}(\mathcal{L}^{D}_{1/2},\Xi^{D}_{u})\cup\partial D$.
	\end{enumerate}
\end{prop}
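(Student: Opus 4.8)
The plan is to obtain the desired coupling as a limit of metric graph couplings, in the spirit of Proposition \ref{CorEqFPS}, but keeping track of the GFF and of the renormalized occupation fields at the same time. The key point is that on a metric graph both identities already hold \emph{simultaneously} in a single coupling. Fix a metric graph approximation $\widetilde D_n$ of $D$ and piecewise constant boundary data $u_n\ge 0$ with $u_n\to u$, chosen (as needed in Proposition \ref{PropConvClustExc} and Corollary \ref{CorConvWithoutBoundary}) so that $u_n$ vanishes near each boundary arc on which $u$ vanishes. Proposition \ref{PropIsoCable} with $a=0$ gives a coupling of the metric graph GFF $\tilde\phi_n+u_n$ with an independent pair $(\mathcal L^{\widetilde D_n}_{1/2},\Xi^{\widetilde D_n}_{u_n})$ such that $\tfrac12(\tilde\phi_n+u_n)^2=L(\mathcal L^{\widetilde D_n}_{1/2})+L(\Xi^{\widetilde D_n}_{u_n})$, and by Corollary \ref{CorFPCluster} the metric graph FPS $(\widetilde\A^{u_n}_0)_n$ equals $\overline{\widetilde\AA_n}\cup\partial\widetilde D_n$, where $\widetilde\AA_n$ is the closed union of the clusters of $\mathcal L^{\widetilde D_n}_{1/2}\cup\Xi^{\widetilde D_n}_{u_n}$ that contain an excursion. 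Subtracting $\E[\tfrac12\tilde\phi_n^{\,2}]$ from both sides of the first identity, and using $\E[L(\mathcal L^{\widetilde D_n}_{1/2})]=\E[\tfrac12\tilde\phi_n^{\,2}]$, rewrites it as
\[
\tfrac12{:}\tilde\phi_n^{\,2}{:}+u_n\tilde\phi_n+\tfrac12u_n^2=L_{\rm ctr}(\mathcal L^{\widetilde D_n}_{1/2})+L(\Xi^{\widetilde D_n}_{u_n}),
\]
with ${:}\tilde\phi_n^{\,2}{:}:=\tilde\phi_n^{\,2}-\E[\tilde\phi_n^{\,2}]$ and $L_{\rm ctr}(\mathcal L^{\widetilde D_n}_{1/2}):=L(\mathcal L^{\widetilde D_n}_{1/2})-\E[L(\mathcal L^{\widetilde D_n}_{1/2})]$.

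I would then prove tightness of the joint law of the tuple (of extensions to $[-C,C]^2$)
\[
\bigl(\widehat\phi_n,\ {:}\widehat\phi_n^{\,2}{:},\ \mathcal L^{\widetilde D_n}_{1/2},\ L_{\rm ctr}(\mathcal L^{\widetilde D_n}_{1/2}),\ \Xi^{\widetilde D_n}_{u_n},\ L(\Xi^{\widetilde D_n}_{u_n}),\ \overline{\widetilde\AA_n\cap D},\ (\widetilde\A^{u_n}_0\cap D)\cup\partial D\bigr)
\]
in the relevant product of Polish spaces: tightness of the GFF is Corollary \ref{GFFconv}; of the loop soup and the excursion PPP, Lemma \ref{Basic convergences 2}; of the two closed sets, compactness of the Hausdorff space on closed subsets of $[-C,C]^2$; and of the Wick square and the two occupation fields, a uniform $H^{-1-\eps}$ second moment bound coming from the explicit covariance and $H^{-1}$-norm formulas (involving $G^2$ and $G^3$) together with the Green's-function convergence of Lemma \ref{Basic convergences}(2). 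Passing to a subsequence and applying Skorokhod's representation theorem, I may assume the tuple converges almost surely, to a limit $(\Phi,W,\mathcal L^D_{1/2},V,\Xi^D_u,V',\mathbf A,\mathbf A')$. By Corollary \ref{GFFconv} and Lemma \ref{Basic convergences 2}, $\Phi$ is a GFF in $D$ and $(\mathcal L^D_{1/2},\Xi^D_u)$ an independent loop soup and excursion PPP. Moreover $W={:}\Phi^2{:}$, $V=L_{\rm ctr}(\mathcal L^D_{1/2})$ and $V'=L(\Xi^D_u)$: indeed, convergence of the discrete Wick square to ${:}\Phi^2{:}$ jointly with $\widehat\phi_n\to\Phi$, and of the discrete centered occupation field to its continuum analogue jointly with the loop soup, are the convergences underlying the continuum isomorphisms of Propositions \ref{PropIsoLeJanRenorm} and \ref{PropIsoURenorm}, and they hold as convergences of joint laws; hence the subsequential limit of $(\widehat\phi_n,{:}\widehat\phi_n^{\,2}{:})$ has the law of $(\Phi,{:}\Phi^2{:})$, and since ${:}\Phi^2{:}$ is a fixed measurable function of $\Phi$ this forces $W={:}\Phi^2{:}$ almost surely; likewise for $V$ and $V'$.

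It remains to pass the two identities to the limit. For property (2) one argues as for Proposition \ref{CorEqFPS}: by the in-probability form of Proposition \ref{Convergence} (available since $\widehat\phi_n\to\Phi$ almost surely) together with Corollary \ref{CorConvWithoutBoundary}, $(\widetilde\A^{u_n}_0\cap D)\cup\partial D\to\A^u_0(\Phi)$, and by the argument in the proof of Proposition \ref{PropConvClustExc}, $\overline{\widetilde\AA_n\cap D}\to\mathcal A(\mathcal L^D_{1/2},\Xi^D_u)$; since $\widetilde\A^{u_n}_0=\overline{\widetilde\AA_n}\cup\partial\widetilde D_n$ for all $n$, letting $n\to\infty$ gives $\A^u_0=\mathcal A(\mathcal L^D_{1/2},\Xi^D_u)\cup\partial D$. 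For property (1), I let $n\to\infty$ in the displayed metric graph identity: the left side converges to $\tfrac12{:}\Phi^2{:}+u\Phi+\tfrac12u^2$ (using $W={:}\Phi^2{:}$; $u_n\widehat\phi_n\to u\Phi$ since $\widehat\phi_n\to\Phi$ in $H^{-1-\eps}$ and $u_n\to u$ uniformly on compacts of $D$; and $u_n^2\to u^2$), and the right side to $L_{\rm ctr}(\mathcal L^D_{1/2})+L(\Xi^D_u)$, which is (1). The required independence of $\Xi^D_u$ from $\mathcal L^D_{1/2}$ is inherited from finite $n$.

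The step I expect to be the main obstacle is the joint convergence $(\widehat\phi_n,{:}\widehat\phi_n^{\,2}{:})\Rightarrow(\Phi,{:}\Phi^2{:})$, and its analogue for the loop soup and its centered occupation field — essentially the ``continuity'' of Le Jan's renormalized isomorphism under the metric-graph-to-continuum limit — which is what allows one to identify $W$ and $V$ in the Skorokhod coupling. I would prove it by the standard $\mathbb L^{2}$ argument: realize $\widehat\phi_n$ and $\Phi$ (respectively the two loop soups) on one probability space from a common noise so that $\widehat\phi_n\to\Phi$ in $H^{-1-\eps}$, and then, using the explicit covariance formulas and the Green's-function convergence of Lemma \ref{Basic convergences}, check that ${:}\widehat\phi_n^{\,2}{:}\to{:}\Phi^2{:}$ in $\mathbb L^{2}$ tested against continuous compactly supported functions; the convergence of the centered occupation fields is the same computation via the $G^3$ formula. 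All other ingredients are routine bookkeeping with the convergence statements already established.
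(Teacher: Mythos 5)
Your proposal is correct and follows essentially the same route as the paper: build the coupling on metric graphs via Propositions \ref{PropIsoCable} and \ref{CorFPCluster}, establish tightness of the full tuple, and pass both identities to a subsequential limit. The only difference is that you propose to prove the joint convergence of the Wick square and of the recentered occupation field by a direct $\mathbb{L}^{2}$ computation, whereas the paper simply cites \cite{QW2015} for these convergences.
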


\begin{proof}
	We follow the method of \cite{QW2015} and use subsequential limits of couplings on metric graphs to create a coupling in continuum. As in Propositions \ref{CorFPCluster} and \ref{PropConvClustExc}, we consider metric graph domains 
	$\widetilde{D}_{n}$ converging to $D$ and non-negative bounded metric graph harmonic functions $u_{n}$ converging to $u$. By Propositions 
	\ref{PropIsoCable} and \ref{CorFPCluster} on $\widetilde{D}_{n}$, one can couple a GFF $\tilde{\phi}_{n}$ and loops and excursions 
	$(\mathcal{L}^{\widetilde{D}_{n}}_{1/2},
	\Xi_{u}^{\widetilde{D}_{n}})$ such that
	\begin{enumerate}
		\item $\frac{1}{2}\tilde\phi_{n}^{2}-\E\big[\tilde\phi_{n}^{2}\big]+ 
		u_{n}\tilde\phi_{n}+\frac{1}{2} u_{n}^{2}=
		L(\mathcal{L}^{\widetilde{D}_{n}}_{1/2})
		-\E\big[L(\mathcal{L}^{\widetilde{D}_{n}}_{1/2})\big]+
		L(\Xi^{\widetilde{D}_{n}}_{u_{n}})$,
		\item $\widetilde{\A}_{0}^{u_{n}}=
		\widetilde{\mathcal{A}}_{n}(\mathcal{L}^{\widetilde{D}_{n}}_{1/2},
		\Xi_{u}^{\widetilde{D}_{n}})\cup\partial \widetilde{D}_{n}$.
	\end{enumerate}
	In \cite{QW2015}\footnote{It can be found in the middle of the proof of Lemma 6, starting with the phrase ``\textit{The goal of the following few paragraphs is to explain that the recentered occupation time fields of the cable-system loop-soup can be made to converge to the renormalized occupation time field of $\mathcal L$}''. In fact, their convergence in terms of finite-dimensional marginals can be strenghtened to a convergence, for example, in $H^{-1-\eps}(D)$, but we will not need it here.}, it was shown that 
	$\1_{D}(L(\mathcal{L}^{\widetilde{D}_{n}}_{1/2})
	-\E\big[L(\mathcal{L}^{\widetilde{D}_{n}}_{1/2})\big])$ converges in law to
	$L_{\rm ctr}(\mathcal{L}^{D}_{1/2})$, in the sense that tested against any finite family of smooth functions compactly supported in $D$,
	$f_{1},\dots, f_{k}$, the finite-dimensional vectors converge. 
	Also there one can find the convergence in law of
	$\1_{D}(\tilde\phi_{n}^{2}-\E\big[\tilde\phi_{n}^{2}\big])$ to
	$:\Phi^{2}:$.
	The family of random variables
	\begin{displaymath}
	(\tilde\phi_{n}, 
	\1_{D}(\tilde\phi_{n}^{2}-\E\big[\tilde\phi_{n}^{2}\big]),
	\mathcal{L}^{\widetilde{D}_{n}}_{1/2},
	\Xi_{u}^{\widetilde{D}_{n}},
	\1_{D}(L(\mathcal{L}^{\widetilde{D}_{n}}_{1/2})
	-\E\big[L(\mathcal{L}^{\widetilde{D}_{n}}_{1/2})\big]),
	L(\Xi^{\widetilde{D}_{n}}_{u_{n}}),
	\overline{\widetilde{\mathcal{A}}_{n}\cap D})_{n\in\mathbb{N}}
	\end{displaymath}
	is tight because each component converges in law. Thus, the whole coupling
	has subsequential limits in law, and identities (1) and (2) pass to the limit.
\end{proof}

\subsubsection{Basic properties of the FPS}
\label{BasicFPS}

In this section, we prove several basic but fundamental properties of the continuum FPS: we show that its Hausdorff dimension is a.s. $2$, that it is a.s. locally finite and finally, that it satisfies the FKG inequality.

\begin{cor}[Hausdorff dimension of FPS]
	\label{CorDim}
	Let $u$ be harmonic
	with piecewise constant boundary values. 
	Suppose that
	$\lbrace z\in \partial D\vert u(x)>-a\rbrace$ is non-empty. Then 
	$\A^{u}_{-a}$ has almost surely Hausdorff dimension 2.
\end{cor}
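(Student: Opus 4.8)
The upper bound $\dim_{\mathcal{H}}\A^{u}_{-a}\le 2$ is immediate since $\A^{u}_{-a}\subseteq\overline D\subseteq\C$, so the content of the statement is the lower bound $\dim_{\mathcal{H}}\A^{u}_{-a}\ge 2$; I plan to produce inside $\A^{u}_{-a}$ a subset of Hausdorff dimension $2$.

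Consider first the case $u\ge -a$ on all of $\partial D$. Replacing $u$ by $u+a$ and the level $-a$ by $0$ leaves the set unchanged (the three conditions of Definition~\ref{Def ES} transform accordingly), so we may assume $a=0$, $u\ge 0$, and $u>0$ on a boundary arc. Then Proposition~\ref{CorEqFPS} gives $\A^{u}_{0}\overset{(d)}{=}\mathcal{A}(\mathcal{L}^{D}_{1/2},\Xi^{D}_{u})\cup\partial D$, so $\A^{u}_{0}$ contains the range of every excursion of $\Xi^{D}_{u}$. Since $u$ is bounded below by a positive constant on a boundary arc, the total mass of $\mu^{D,u}_{\rm exc}$ is infinite (the near-diagonal singularity of $H_{D}$ already forces this), hence $\Xi^{D}_{u}$ is almost surely non-empty; and the range of a boundary-to-boundary Brownian excursion is, in the interior of $D$, locally absolutely continuous with respect to planar Brownian motion and so has Hausdorff dimension $2$ almost surely. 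Thus $\dim_{\mathcal{H}}\A^{u}_{-a}=2$ a.s.\ in this case.

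For a general $u$ with $\{u>-a\}\neq\emptyset$ the monotonicity of the FPS is of no help, since lowering $u$ only pushes $u+a$ further below $0$, away from the regime where the loop-and-excursion representation is available. Instead I would use the structure of $\A^{u}_{-a}$ established in \cite{ALS1}: it carries a Minkowski content measure $\nu_{\A}$ in the gauge $g(r)=|\log r|^{1/2}r^{2}$ --- namely the positive measure $\Phi_{\A^{u}_{-a}}-h_{\A^{u}_{-a}}$ of condition~(2) --- which is almost surely non-trivial under the hypothesis $\{u>-a\}\neq\emptyset$ and which satisfies $\nu_{\A}(B(z,r))\le \alpha(z)\,g(r)$ for small $r$, with $\alpha(z)<\infty$ for $\nu_{\A}$-almost every $z$. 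Splitting the support into the sets $\{\alpha\le n\}$, using $g(r)=o(r^{s})$ as $r\to 0$ for every $s<2$, and applying the mass distribution principle yields $\mathcal{H}^{s}(\A^{u}_{-a})>0$ for all $s<2$, hence $\dim_{\mathcal{H}}\A^{u}_{-a}\ge 2$ a.s. If one prefers not to quote the non-triviality of $\nu_{\A}$ from \cite{ALS1}, one can instead argue locally near a boundary arc $I$ on which $u\equiv c_{0}>-a$: the strong Markov property of the GFF together with the boundary behaviour of the FPS --- condition~(3) of Definition~\ref{Def ES}, controlled through the Beurling estimate (Lemma~\ref{Brl}) --- shows that with positive probability the trace of $\A^{u}_{-a}$ on a small subdomain dominates a first passage set whose boundary values near $\partial D$ are $\ge -a+\varepsilon$, which has dimension $2$ by the previous case, and a zero-one law across a nested family of annuli around a point of $I$ promotes this to an almost sure statement.

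The main difficulty is precisely this general $u$: every soft tool (monotonicity, adding excursions) moves the effective boundary condition in the unhelpful direction, so one must either import from \cite{ALS1} that $\A^{u}_{-a}$ is non-thin as soon as $\{u>-a\}\neq\emptyset$, or localise near the part of the boundary where $u>-a$ and control the random, possibly very negative, boundary data produced by the Markov decomposition along the separating curve.
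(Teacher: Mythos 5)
Your first case ($u\ge -a$) is exactly the paper's argument: shift to $\A^{u+a}_{0}$, invoke Proposition~\ref{CorEqFPS}, and use that Brownian trajectories (the paper cites the loops, you cite the excursions --- either works, and your observation that $\Xi^{D}_{u+a}$ is a.s.\ non-empty because $u+a>0$ on an arc is the right justification) have Hausdorff dimension $2$. No issue there.

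For the general case there is a genuine gap: you declare that ``every soft tool moves the boundary condition in the unhelpful direction,'' but you overlook the two-sided structure available from Remark~\ref{bw}. The paper's argument is to first sample the \emph{reverse} set $\AB^{u}_{-a}$ (the FPS stopped from above at level $-a$). By construction, in each connected component $O$ of $D\backslash\AB^{u}_{-a}$ the induced harmonic function $u_{O}$ (equal to $u$ on $\partial O\cap\partial D$ and to $-a$ on $\partial O\cap\partial\AB^{u}_{-a}$) satisfies $u_{O}\ge -a$, and since $\{u>-a\}\neq\emptyset$ there is a.s.\ a component with $u_{O}>-a$ somewhere. The FPS $\A^{O,u_{O}}_{-a}$ of the conditional GFF in that component falls under the first case, hence has dimension $2$, and it is contained in $\A^{u}_{-a}$. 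This is a two-line reduction; by contrast, both of your proposed substitutes rest on assertions that are not established. In (a), the existence of the Minkowski content measure in the gauge $|\log r|^{1/2}r^{2}$ does not by itself give the pointwise upper bound $\nu_{\A}(B(z,r))\le\alpha(z)\,g(r)$ for $\nu_{\A}$-a.e.\ $z$ that the mass distribution principle requires, nor is the non-triviality of $\Phi_{\A^{u}_{-a}}-h_{\A^{u}_{-a}}$ for general sign-changing $u$ quoted from anywhere; both would need proof. In (b), the claim that the trace of $\A^{u}_{-a}$ on a small subdomain ``dominates'' an FPS with boundary values $\ge -a+\varepsilon$, and the zero--one law across nested annuli, are left unjustified, and it is precisely the control of the random, possibly very negative, boundary data on the separating curve --- which the $\AB^{u}_{-a}$ trick handles for free --- that is the crux. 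So: identify the missing ingredient as the reverse first passage set, and the general case collapses onto the case you have already proved.
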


Notice that if $u\leq -a$ then
$\A^{u}_{-a}=\partial D$ almost surely.
\begin{proof}
	First consider the case $u\geq -a$ on $D$.
	Then $\A^{u}_{-a}$ has the law of $\A^{u+a}_{0}$ and by Proposition \ref{CorEqFPS} the first passage set is obtainable from clusters of Brownian loops and excursion. Since the trace of a planar Brownian motion has Hausdorff dimension 2, so has 
	$\A^{u}_{-a}$.
	
	Now we do not assume that $u\geq -a$ everywhere on $D$. Then first sample $\AB_{-a}^{u}$. Then $D\backslash\AB^{u}_{-a}$ has almost surely a connected component $O$ on which $u_{O}>-a$,
	where $u_{O}$ is the harmonic function with boundary condition
	$u$ on $\partial O\cap\partial D$ and $-a$ on 
	$\partial O\cap\partial\AB^{u}_{-a}$.
	Then the first passage set of level $-a$ inside this component $O$, 
	$\A_{-a}^{O,u_{O}}$, is of Hausdorff dimension 2. Since
	$\A_{-a}^{O,u_{O}}\subseteq  \A^{u}_{-a}$, so is $\A^{u}_{-a}$.
\end{proof}

Next, we show that $\A^{u}_{-a}$ is locally finite.

\begin{prop}[Local finiteness of $\A^{u}_{-a}$]
	\label{PropLocFin}
	Let $D$ be a bounded finitely connected domain of $\C$, $u$ a
	harmonic function with piecewise constant boundary values, and $a\in\R$. Then
	$\A^{u}_{-a}$ is locally finite, that is to say that for any 		$\varepsilon>0$, there are finitely many connected components of $D\backslash \A^{u}_{-a}$ of diameter larger than $\epsilon$.
\end{prop}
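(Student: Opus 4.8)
The plan is to reduce the general case to the case $u \geq -a$, and there invoke the representation of the FPS by Brownian loops and excursions (Proposition \ref{CorEqFPS}) together with the local finiteness of loop-soup clusters. First I would treat the case $u \geq -a$: by Theorem \ref{Thm::FPS} the FPS $\A_{-a}^u$ then equals $\A_0^{u+a}$ in law, and by Proposition \ref{CorEqFPS} this set has the law of $\mathcal{A}(\mathcal{L}^D_{1/2},\Xi^D_{u+a})\cup\partial D$, the closure of the union of clusters of a critical Brownian loop-soup and an independent Poisson point process of boundary-to-boundary excursions that contain at least one excursion. Since the complement of $\A_{-a}^u$ is contained in the complement of $\mathcal{A}(\mathcal{L}^D_{1/2})\cup\partial D$, it suffices to know that a critical Brownian loop-soup has a.s. locally finitely many clusters, i.e. for every $\varepsilon>0$ only finitely many clusters of diameter $\geq\varepsilon$; this is the main structural input and is known — it follows from the finiteness of the expected number of loops of diameter $\geq\delta$ crossing a fixed annulus combined with a Borel–Cantelli / quasi-multiplicativity argument (as in the proof that loop-soup clusters form CLE$_\kappa$). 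Alternatively, one can deduce it directly from the convergence result: by Proposition \ref{Convergence} the metric-graph FPS converges to the continuum FPS, and on the metric graph the FPS has a.s. finitely many connected components, so the number of components of diameter $\geq\varepsilon$ of the continuum FPS is dominated in the limit; but the cleanest route is through the loop-soup picture.

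For the general case where $u < -a$ on part of the domain, I would mimic the argument used in the proof of Corollary \ref{CorDim}. Sample the dual excursion set $\AB^{u}_{-a}$ (the set stopped from above at level $-a$, as in Remark \ref{bw}). On each connected component $O$ of $D\setminus\AB^{u}_{-a}$ the induced harmonic boundary value $u_O$ — equal to $u$ on $\partial O\cap\partial D$ and to $-a$ on $\partial O\cap\partial\AB^u_{-a}$ — satisfies $u_O \geq -a$ by construction. By the Markov property of the GFF and the definition of the FPS, $\A^u_{-a}$ restricted to $O$ coincides (in the appropriate conditional sense) with the FPS of level $-a$ inside $O$ with boundary condition $u_O$, which by the previous paragraph is locally finite inside $O$. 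Since $\AB^u_{-a}$ itself has only finitely many connected components (hence $D\setminus\AB^u_{-a}$ has finitely many components, of which only finitely many have diameter $\geq\varepsilon/2$, and each component $O$ of diameter $<\varepsilon/2$ contributes no component of $D\setminus\A^u_{-a}$ of diameter $\geq\varepsilon$ that is contained in it), local finiteness of $\A^u_{-a}$ follows by summing the finitely many contributions. One must also observe that components of $D\setminus\A^u_{-a}$ are not split across different components of $D\setminus\AB^u_{-a}$, which holds because $\A^u_{-a}\supseteq\partial D$ and $\AB^u_{-a}\supseteq\partial D$ and the two sets are nested with the GFF's Markov structure — more precisely, any connected component of $D\setminus\A^u_{-a}$ lies in a single component of $D\setminus\AB^u_{-a}$ since $\AB^u_{-a}\subseteq\A^u_{-a}$ is false in general, so this point requires a small argument: instead one uses that $\A^u_{-a}\cup\AB^u_{-a}$ separates $D$ into the components of $D\setminus\AB^u_{-a}$ further subdivided, and local finiteness is stable under this operation.

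The main obstacle I expect is precisely the last bookkeeping point in the general case: controlling how the two excursion sets $\A^u_{-a}$ and $\AB^u_{-a}$ interact geometrically, and making rigorous that the FPS of level $-a$ in $D$ restricted to a component $O$ of $D\setminus\AB^u_{-a}$ really is the FPS of level $-a$ in $O$ with the induced boundary data (this is an iteration/strong-Markov statement for local sets, of the type established in \cite{ALS1}, but one should cite or reprove the exact version needed). The loop-soup local finiteness itself, while the conceptual heart of the statement, can be quoted: it is implicit in \cite{SheffieldWerner2012CLE} and \cite{QW2015}, and can in any case be recovered from Proposition \ref{Convergence} by the observation that the metric-graph FPS has a.s. finitely many components and lower semicontinuity of the number of $\varepsilon$-large components under Hausdorff convergence of the complements passes the bound to the limit.
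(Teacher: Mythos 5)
Your reduction to the case $u\ge -a$ (sample $\AB^{u}_{-a}$ first, note that only finitely many components of $D\setminus\AB^{u}_{-a}$ carry boundary data above $-a$, and work in each) is exactly the paper's, and Proposition \ref{CorEqFPS} is indeed the paper's starting point for the main case. But the core of your argument for $u\ge -a$ has a genuine gap: you reduce the claim to ``a critical Brownian loop-soup has a.s.\ finitely many clusters of diameter $\ge\varepsilon$''. That is a different statement from the one to be proved. Local finiteness of $\A^{u}_{-a}$ concerns the connected components of the \emph{complement} $D\setminus\A^{u}_{-a}$, i.e.\ the holes of the union of clusters, and a single macroscopic cluster of loops and excursions could a priori have infinitely many macroscopic holes; finiteness of the number of large clusters says nothing about this. (The set inclusion you invoke is also reversed: the union of clusters containing an excursion is \emph{contained in} the union of all clusters, so $D\setminus\A^{u}_{0}$ \emph{contains} the complement of the full cluster union.) Your fallback via Proposition \ref{Convergence} fails for a symmetric reason: the number of $\varepsilon$-large complementary components is lower semicontinuous under Hausdorff convergence, so the finiteness of the number of complementary components of each metric-graph FPS only yields a bound by a $\liminf$ that could be $+\infty$.

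What the paper does at this point is the actual content of the proof and is absent from your proposal. Fix an annulus $U=\lbrace r<\vert z-z_{0}\vert<4r\rbrace$ and show that a.s.\ only finitely many components of $D\setminus\A^{u}_{0}$ cross it: $k$ such components are separated by $k$ chains of Brownian loops and excursions crossing $U$; after discarding the events that a single path crosses a sub-annulus of $U$ too many times, or that too many distinct paths cross it (Poisson tails and elementary Brownian estimates), one is left with roughly $k^{1/3}$ \emph{disjointly occurring} chain-crossings of the middle sub-annulus, whose probability is controlled by the van den Berg--Kesten inequality for Poisson point processes. This gives $\P(E(k))\le C'e^{-C''k^{1/3}}\to 0$. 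Some combinatorial separation argument of this type is needed; it cannot be quoted away as ``local finiteness of loop-soup clusters'', which is a statement about the clusters rather than about the holes of their union.
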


\begin{proof}
	First, one can assume that $u\geq -a$. If this is not the case, one can first sample $\AB_{-a}^u$ and note that $D\backslash \AB_{-a}^u$ has only finitely many components where $h_{\AB_{-a}}>-a$, and proceed as in the proof of Corollary \ref{CorDim}. For simplicity, we can take $a=0$ and $u\geq 0$.
	
	Let $U$ be an annulus of form
	\begin{displaymath}
	\lbrace z\in\C\vert r<\vert z-z_{0}\vert < 4r\rbrace
	\end{displaymath}
	such that $U\cap D\neq \emptyset$. If $\A^{u}_{-a}$ is not locally finite with positive probability. Then, for some rational $\delta$, at least one annuli with a rational midpoint and with $r = \delta / 4$ is crossed by infinitely many components of $\A^{u}_{-a}$ with positive probability. Thus, it is enough to show that for any fixed annulus $U$ it is a.s. not crossed by an infinity of connected components of $ \A^{u}_{-a}$.
	
	So consider a fixed annulus $U$ and divide it into sub-annuli
	\begin{displaymath}
	U_{\rm int}:=
	\lbrace z\in U\vert \left|  z-z_{0}\right|  <2r\rbrace,
	\qquad
	U_{\rm ext}:=\{ z\in U\vert 3r <\vert z-z_{0}\vert \},
	\qquad 
	U_{\rm mid:}= U \backslash \left( U_{\rm int}\cup U_{\rm ext}\right).
	\end{displaymath}
	We will use the representation of $\A^{u}_{-a}$ by clusters of Brownian loops and excursions as in Proposition \ref{CorEqFPS}. Our aim is to bound the probability of $E(k)$, the event that  there are at least $k$ connected components of $\A^{u}_{-a}$ crossing $U$. To do this, let us first let us consider the following five events:
	\begin{itemize}
		\item $E_{0}(k_{0})$: there are at least
		$k_{0}$ chains of Brownian loops and excursions crossing $U_{\rm mid}$, such that no two different chains contain a common loop or excursion of $\mathcal{L}^{D}_{1/2}\cup\Xi^{D}_{u}$;
		\item $E_{1}(k_{1})$: there are at least $k_{1}$ different Brownian paths (loops or excursions) in 
		$\mathcal{L}^{D}_{1/2}\cup\Xi^{D}_{u}$ that cross $U_{\rm int}$;
		\item $E_{2}(k_{2})$: at least $k_{2}$ different Brownian paths (loops or excursions) in 
		$\mathcal{L}^{D}_{1/2}\cup\Xi^{D}_{u}$ crossing $U_{\rm ext}$;
		\item $E_{3}(k_{3})$: there is at least one loop or excursion in $\mathcal{L}^{D}_{1/2}\cup\Xi^{D}_{u}$ crossing at least 
		$k_{3}$ times $U_{\rm int}$;
		\item $E_{4}(k_{4})$: there is at least one loop or excursion in $\mathcal{L}^{D}_{1/2}\cup\Xi^{D}_{u}$  crossing at least 
		$k_{4}$ times $U_{\rm ext}$.
	\end{itemize}
	We claim that
	\begin{displaymath}
	E(k)\subseteq E_{0}(k_{0})\cup E_{1}(k_{1})\cup E_{2}(k_{2})
	\cup E_{3}(k_{3}) \cup E_{4}(k_{4})
	\end{displaymath}
	whenever
	\begin{equation}
	\label{EqCombinatorics}
	k\geq k_{0}(k_{1}+k_{2})(k_{3}+k_{4}).
	\end{equation}
	Indeed, the $k$ connected components of $D\backslash \A^{u}_{-a}$ crossing $U$ are separated by $k$ chains on Brownian loops and excursions
	that cross $U$. Any two such chains have subchains crossing $U_{\rm mid}$. These subchains may be composed of disjoint paths, or have some Brownian paths in common. In the latter case, the shared paths have to cross either $U_{\rm int}$ or $U_{\rm ext}$ (or both), as two chains cannot be connected inside $U$. Now, suppose that the event
	$E_{1}(k_{1})\cup E_{2}(k_{2})
	\cup E_{3}(k_{3}) \cup E_{4}(k_{4})$ 
	does not hold. Then, any subchain crossing $U_{\rm mid}$ can be connected to at most
	$(k_{1}+k_{2} - 1)(k_{3}+k_{4} - 2)$ others, implying that under \eqref{EqCombinatorics} the event $E(k)$ also cannot hold.
	
	To finish the proof, we just need to argue that $\P(E(k)=\infty)=0$. Let us first note that $\mathbb{P}(E_{1}(k_{1}))$, resp. $\mathbb{P}(E_{2}(k_{2}))$, are tail probabilities of Poisson random variables, and hence decrease faster than $C e^{-k_{i}\log(k_{i})}$, for some constant $C > 0$.	Further, by elementary properties of Brownian paths $\mathbb{P}(E_{3}(k_{3}))$, resp. 
	$\mathbb{P}(E_{4}(k_{4}))$, decrease at least exponentially fast in $k_{3}$, resp. $k_{4}$. Finally, in order to control $\mathbb{P}(E_{0}(k_{0}))$ one can apply the
	van den Berg - Kesten (BK) inequality for Poisson point processes \cite{Berg96BKPoisson}. More precisely, the event $E_{0}(k_{0})$ corresponds to 
	$k_{0}$ disjoint occurrences of the event $E_{0}(1)$ and by BK inequality we have
	\begin{displaymath}
	\mathbb{P}(E_{0}(k_{0}))\leq 
	\mathbb{P}(E_{0}(1))^{k_{0}}.
	\end{displaymath}
	Now, taking for all $i\geq 0$, $k_i=\lfloor k^{\frac{1}{3}}/2\rfloor,$
	so that \eqref{EqCombinatorics} is satisfied, we have that
	\begin{displaymath}
	\P(E(k))\leq C'e^{-C''k^{\frac{1}{3}}},
	\end{displaymath}
	for some constants $C',C''>0$. In particular, this probability tends to $0$ as $k\to +\infty$. 
\end{proof}

Next, we see that $\A^{u}_{0}$ satisfies an Harris-FKG inequality. This follows from the general Harris-FKG inequality for Poisson point processes
(Lemma 2.1 in \cite{Janson1984FKGPPP})

\begin{cor}[Harris-FKG]
	\label{CorFKG}
	Consider a non-negative boundary condition $u$. Let $F_{1}$ and 
	$F_{2}$ be two bounded measurable functionals on compact sets. We assume that $F_{1}$ and $F_{2}$ are increasing, that is to say
	if $K\subseteq K'$, $F_{i}(K)\subseteq F_{i}(K')$. Then
	\begin{displaymath}
	\E[F_{1}(\A^{u}_{0})F_{2}(\A^{u}_{0})]\geq
	\E[F_{1}(\A^{u}_{0})]\E[F_{2}(\A^{u}_{0})].
	\end{displaymath}
\end{cor}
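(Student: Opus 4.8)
The plan is to reduce the statement to the Harris--FKG inequality for Poisson point processes, using the representation of the FPS by clusters of Brownian loops and excursions. First, by Proposition \ref{CorEqFPS}, the set $\A^{u}_{0}$ has the same law as $\mathcal{A}(\mathcal{L}^{D}_{1/2},\Xi^{D}_{u})\cup\partial D$, so it suffices to establish the inequality for $F_{1},F_{2}$ applied to this latter set. Since $\mathcal{L}^{D}_{1/2}$ and $\Xi^{D}_{u}$ are independent, the pair $\Pi:=(\mathcal{L}^{D}_{1/2},\Xi^{D}_{u})$ is a single Poisson point process on the disjoint union of the space of loops and the space of boundary-to-boundary excursions, with intensity $\tfrac12\mu^{D}_{\rm loop}+\mu^{D,u}_{\rm exc}$. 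We regard $\mathcal{A}(\cdot)\cup\partial D$ as a map from configurations $\Pi$ to compact subsets of $\overline D$ (recall $\A^{u}_{0}$ is a closed subset of the compact set $\overline D$).

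The key point is that this map is monotone for the partial order given by inclusion of configurations: if $\Pi\subseteq\Pi'$, then $\mathcal{A}(\Pi)\cup\partial D\subseteq\mathcal{A}(\Pi')\cup\partial D$. Indeed, adding a single trajectory (loop or excursion) to a configuration can only merge existing clusters, and in the case of an excursion additionally creates a cluster containing at least that excursion; a cluster that contained an excursion before still does afterwards, and any merged cluster contains an excursion as soon as one of its constituents did. Hence the union of those clusters that contain at least one excursion (equivalently, that meet $\partial D$) can only grow, and taking closures preserves this inclusion. Consequently, for any increasing bounded measurable functional $F$ on compact sets, $\Pi\mapsto F(\mathcal{A}(\Pi)\cup\partial D)$ is an increasing bounded measurable functional of the Poisson point process $\Pi$; measurability (into the space of compact sets with the Hausdorff topology) is exactly the one already used in Propositions \ref{PropConvClustExc} and \ref{CorEqFPS}.

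It then remains to invoke the general Harris--FKG inequality for Poisson point processes (Lemma 2.1 in \cite{Janson1984FKGPPP}), which states that for any two increasing bounded measurable functionals $G_{1},G_{2}$ of $\Pi$ one has $\E[G_{1}(\Pi)G_{2}(\Pi)]\geq\E[G_{1}(\Pi)]\,\E[G_{2}(\Pi)]$. Applying it to $G_{i}(\Pi)=F_{i}(\mathcal{A}(\Pi)\cup\partial D)$ gives
\[
\E\big[F_{1}(\mathcal{A}(\Pi)\cup\partial D)\,F_{2}(\mathcal{A}(\Pi)\cup\partial D)\big]
\geq
\E\big[F_{1}(\mathcal{A}(\Pi)\cup\partial D)\big]\,\E\big[F_{2}(\mathcal{A}(\Pi)\cup\partial D)\big],
\]
and the claimed inequality for $\A^{u}_{0}$ follows from Proposition \ref{CorEqFPS}. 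The only genuinely delicate step is the monotonicity of $\Pi\mapsto\mathcal{A}(\Pi)\cup\partial D$ (and checking it survives the closure operation); once this is in place, the application of Janson's inequality is immediate.
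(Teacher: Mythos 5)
Your proof is correct and follows essentially the same route as the paper: the paper derives the corollary directly from the cluster representation of Proposition \ref{CorEqFPS} together with the general Harris--FKG inequality for Poisson point processes (Lemma 2.1 in \cite{Janson1984FKGPPP}), which is exactly your argument. The monotonicity of $\Pi\mapsto\mathcal{A}(\Pi)\cup\partial D$ under addition of trajectories, which you spell out, is the (implicit) key step in the paper as well.
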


\begin{rem}
	\label{RemFKG}
	One could also obtain a Harris-FKG inequality for $\A^{u}_{-a}$ from a Harris-FKG inequality for the GFF $\Phi$. Then one does not need the constraint $u\geq -a$. First, note that $\A^{u}_{-a}$ is an non-decreasing function of $\Phi$: if $f\in H^{1}_{0}(D)$, $f\geq 0$, then $\A^{u}_{-a}(\Phi)\subseteq\A^{u}_{-a}(\Phi+f)$ a.s. This can be proven similarly to the monotonicity part in Proposition 4.5 in \cite{ALS1}.
	Further, $\Phi$ satisfies itself a Harris-FKG inequality: 
	if $F_{1}$ and $F_{2}$ are functionals such that
	$F_{i}(\Phi+f)\geq F_{i}(\Phi)$ a.s. for $f\in H^{1}_{0}(D)$, 
	$f\geq 0$, then $\E[F_{1}(\Phi)F_{2}(\Phi)]\geq 0$.
	See \cite{Pitt82FKG} for the Harris-FKG property for finite-dimensional Gaussian vectors with covariance matrix having non-negative entries.
\end{rem}

\subsection{Convergence to the level lines and an explicit coupling of level lines}
\label{SubSecConseq2}
In \cite{WernerWu2013FromCLEtoSLE} the authors show that in simply connected domains SLE$_{\kappa}(\rho)$ curves with $\kappa\in(8/3,4]$ can be obtained as ``envelopes'' of clusters of Brownian excursions from boundary to boundary and Brownian loops inside the domain. We will first show how to extend this result to the generalized level lines in multiply connected domains, defined in \cite{ALS1} Section 3, and then use this to prove the main result of this subsection: we show that certain interfaces of metric graph GFFs converge to generalized level lines. In particular, we show that for specific boundary conditions and for simply connected domains, some basic metric graph GFF interfaces converge to SLE$_4(\rho)$ processes.

We work in the following set-up: $D$ is a finitely connected domain and $\partial_{\rm ext} D$ the outermost connected component of $\partial D$. In other words, $\partial_{\rm ext} D$ separates $D$ from infinity. We consider two boundary points
$x_{0}\neq y_{0}\in\partial_{\rm ext}D$ that split 
$\partial_{\rm ext} D$ in two boundary arcs, $\mathcal{B}_{1}$ and $\mathcal{B}_{2}$ (see Figure \ref{Art}). Assume that $u$ is a harmonic function such that on the boundary it is piecewise constant, equal to $-\lambda$ on 
$\mathcal{B}_{2}$, $\inf_{\mathcal{B}_{1}}u>-\lambda$ 
and 
$\inf_{\partial D\backslash\partial_{\rm ext}D}u\geq\lambda$. By Corollary 4.12 of \cite{ALS1}, $\A^u_{-\lambda}$ does not intersect $\B_2$ and thus we can take $O$ the unique connected component of $D\backslash \A^u_{-\lambda}$ such that $\B_2\subseteq \partial O$. Let $\eta$ denote the curve defined by $\A^u_{-\lambda}\cap \partial O$. It is proven in \cite{ALS1} Corollary 4.12 that $\eta$ is a.s. equal to the generalized level line of $\Phi +u$ going from $y_0$ to $x_0$. However, for the rest of this section, one can also treat the generalized level lines as the FPS boundaries just described.

Consider also an independent PPP-s of
loops $\L^{D}_{1/2}$ and boundary-to-boundary excursions
$\Xi^{D}_{u+\lambda}$. By definition there are no excursions hitting 
$\mathcal{B}_{2}\backslash\lbrace x_{0},y_{0}\rbrace$ in $\Xi^{D}_{u+\lambda}$. Let
$\mathscr{D}_{2}$ be the unique connected component of
$D\backslash\AA(\L^{D}_{1/2},\Xi^{D}_{u+\lambda})$ such that
$\mathcal{B}_{2}\subset \partial\mathscr{D}_{2}$ and let
$\partial_{2}\AA(\L^{D}_{1/2},\Xi^{D}_{u+\lambda})=
\partial\mathscr{D}_{2}\cap\AA(\L^{D}_{1/2},\Xi^{D}_{u+\lambda})$. It is also a path in $D$ joining $y_0$ and $x_0$ like the generalized level line $\eta$. The following corollary says that these two paths agree (see Figure \ref{Art} for an illustration):

\begin{figure}[ht!]
	\centering
	\includegraphics[width=3.2in]{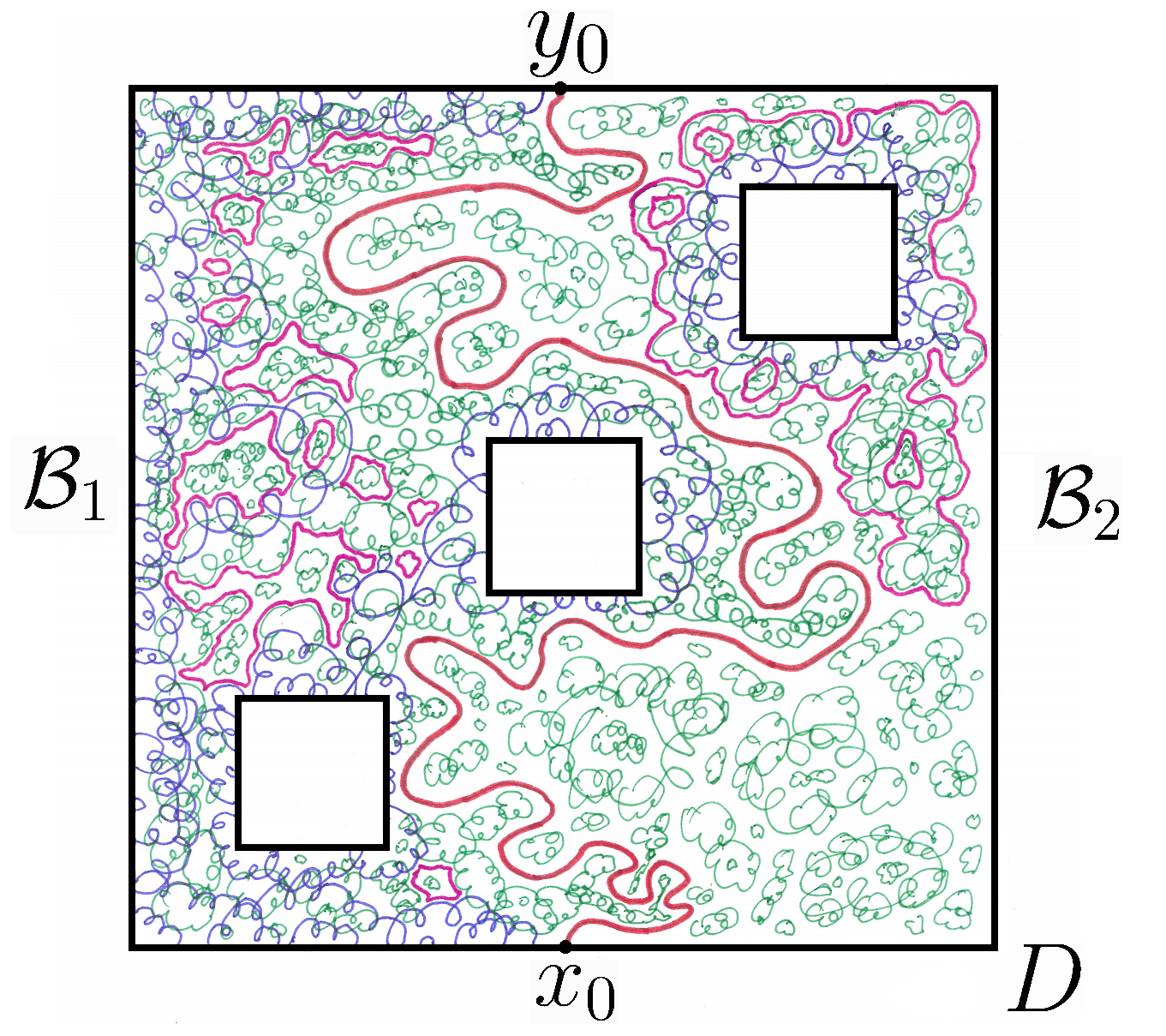}
	\caption{Artistic view of the level line (in red) as envelope of Brownian excursions (in blue) and loops (in green). Magenta contours outline some other boundary components of 
		$\A^{u}_{-\lambda}$.}
	\label{Art}
\end{figure}

\begin{cor}[Level line = envelope of Brownian excursions and loops]
	\label{CorLvlLineCluster}
	Let $D$ be finitely connected and $u$, $\eta$ and 
	$\partial_{2}\AA(\L^{D}_{1/2},\Xi^{D}_{u+\lambda})$ as above. Then the generalized level line $\eta$ has same law as 
	$\partial_{2}\AA(\L^{D}_{1/2},\Xi^{D}_{u+\lambda})$.
\end{cor}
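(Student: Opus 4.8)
The plan is to derive this corollary by combining the already-established representation of the FPS as clusters of Brownian loops and excursions (Proposition \ref{CorEqFPS}) with the characterization of the generalized level line $\eta$ as a boundary of the FPS $\A^u_{-\lambda}$. The key observation is that both sides of the claimed equality are canonically attached to the \emph{same} first passage set, just described in two different ways. First I would note that $\A^u_{-\lambda}$ has the same law as $\A^{u+\lambda}_{0}$, and since $u+\lambda$ is non-negative on $\B_2$ (it equals $0$ there) and strictly positive on $\B_1$ and $\geq 2\lambda$ on the inner boundary components, Proposition \ref{CorEqFPS} applies: $\A^{u+\lambda}_0$ has the same law as $\mathcal{A}(\L^D_{1/2},\Xi^D_{u+\lambda})\cup\partial D$. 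Hence one can construct a coupling in which $\A^u_{-\lambda} = \mathcal{A}(\L^D_{1/2},\Xi^D_{u+\lambda})\cup\partial D$ almost surely.

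In this coupling, I would identify the two relevant connected components. By Corollary 4.12 of \cite{ALS1}, $\A^u_{-\lambda}$ does not intersect $\B_2$ (because $u=-\lambda$ there and the technical boundary condition forces the set away from it), so $\B_2$ lies on the boundary of a unique component $O$ of $D\setminus\A^u_{-\lambda}$, and $\eta = \A^u_{-\lambda}\cap\partial O$. On the cluster side, since $u+\lambda=0$ on $\B_2\setminus\{x_0,y_0\}$, no excursion of $\Xi^D_{u+\lambda}$ touches that arc, and one checks that no loop cluster touching an excursion can reach $\B_2$ either (a cluster containing an excursion can only reach the boundary where $u+\lambda>0$, up to the endpoints $x_0,y_0$); this is exactly the content needed to see that $\B_2$ lies on the boundary of a single component $\mathscr D_2$ of $D\setminus\mathcal{A}(\L^D_{1/2},\Xi^D_{u+\lambda})$. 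Since the two random closed sets agree a.s., the complementary components containing $\B_2$ on their boundary must coincide: $O = \mathscr D_2$ a.s.; consequently $\eta = \A^u_{-\lambda}\cap\partial O = \mathcal{A}(\L^D_{1/2},\Xi^D_{u+\lambda})\cap\partial\mathscr D_2 = \partial_2\mathcal{A}(\L^D_{1/2},\Xi^D_{u+\lambda})$ in this coupling, which yields the equality in law.

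The step I expect to require the most care is the topological argument that the component of the complement adjacent to $\B_2$ is well-defined and singled out on \emph{both} sides, and that the identification $O=\mathscr D_2$ genuinely follows from the a.s. equality of the two sets together with the structure of their complements. One has to be a little careful because equality of two closed sets does not, in general, set up a canonical bijection between the connected components of their complements; here it works because $\B_2$ is a fixed boundary arc and one shows directly that on each side there is a \emph{unique} complementary component whose boundary contains $\B_2$ — this uses that the FPS (resp. the cluster union) stays away from the relative interior of $\B_2$, which in turn rests on Corollary 4.12 of \cite{ALS1} and on the excursion measure $\mu^{D,u+\lambda}_{\rm exc}$ placing no mass on excursions hitting $\B_2\setminus\{x_0,y_0\}$. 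Once both components are pinned down, the rest is immediate, and the fact that $\eta$ coincides a.s.\ with the generalized level line of $\Phi+u$ from $y_0$ to $x_0$ (again Corollary 4.12 of \cite{ALS1}) lets us phrase the conclusion in terms of level lines. A remark would point out that this simultaneously shows the generalized level line is a measurable function of the loop soup and excursion process, and that the argument extends to multiple commuting level lines, as indicated in Remark \ref{RemLvlLine}.
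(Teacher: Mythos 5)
Your proposal is correct and follows essentially the same route as the paper, which gives no separate proof of this corollary: it is treated as immediate from Proposition \ref{CorEqFPS} (applied to the non-negative boundary condition $u+\lambda$, using $\A^{u}_{-\lambda}\stackrel{(d)}{=}\A^{u+\lambda}_{0}$) together with the set-up paragraph, where $\eta$ and $\partial_{2}\AA(\L^{D}_{1/2},\Xi^{D}_{u+\lambda})$ are defined by the same deterministic recipe (boundary of the unique complementary component adjacent to $\B_{2}$) applied to two sets with the same law. Your extra care about pinning down the components $O$ and $\mathscr{D}_{2}$ and the fact that neither set meets the relative interior of $\B_{2}$ matches what the paper relies on (Corollary 4.12 of \cite{ALS1} and the vanishing of the excursion intensity on $\B_{2}$).
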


\begin{rem}
	\label{RemLvlLine}
	More generally, other level lines, or families of multiple level lines, can be obtained as boundaries of clusters of Brownian loops and excursions, as long as these level lines are boundaries of a same first passage set. For instance, in a simply connected domain, one can get in this way multiple commuting SLE$_{4}$ curves, which correspond to alternating boundary conditions 
	$0$, $2\lambda$ (Figure \ref{FigMultSLE4}). 
	In \cite{PeltolaWu2017MultComSLE}, the authors give an expression for probabilities of these different pairings.
\end{rem}

\begin{figure}[ht!]
	\centering
	\includegraphics[width=0.4\textwidth]{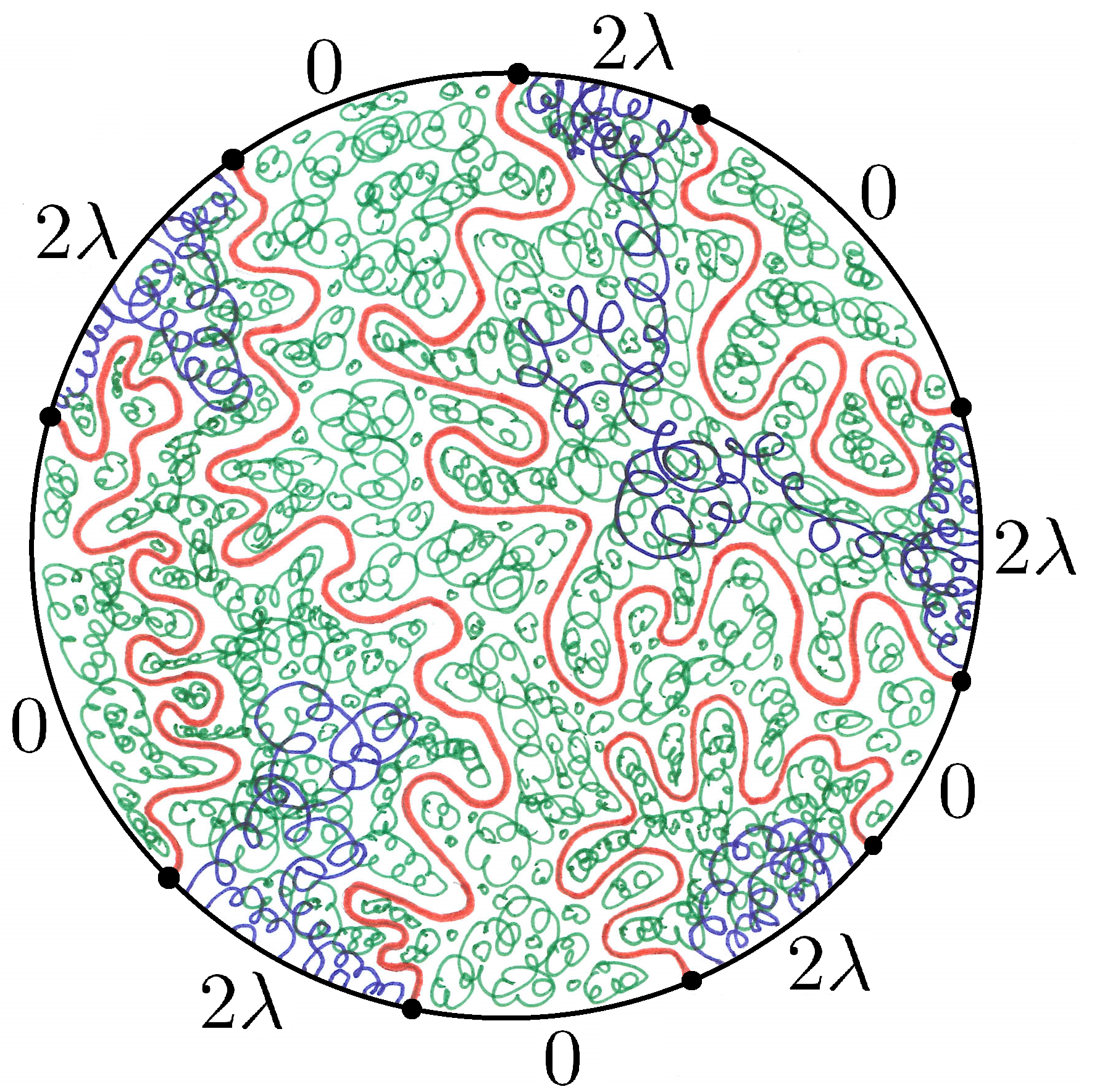}
	\caption{Multiple (here 5) commuting SLE$_{4}$ as boundaries of clusters of Brownian loops (green) and excursions (blue).}
	\label{FigMultSLE4}
\end{figure}

Next, we show that certain interfaces of the metric graph GFF converge in law to level lines of the continuum GFF.
Let $D$, $x_{0}$, $y_{0}$, $\mathcal{B}_{1}$, $\mathcal{B}_{2}$,
$u$, $\eta$ be as previously. Consider 
$\widetilde{D}_{n}$ open subset of $\widetilde\Z^{2}_{n}$ such that we have Hausdorff convergence of $\widetilde{D}_{n}\cup\partial\widetilde{D}_{n}$ to $\overline{D}$ and that of
$\widetilde\Z^{2}_{n}\backslash\widetilde{D}_{n}$ to
$\C\backslash D$. Let $\partial_{\rm ext}\widetilde{D}_{n}$ be the boundary of the only unbounded connected component of
$\widetilde\Z^{2}_{n}\backslash\widetilde{D}_{n}$. We assume that
$\mathcal{B}_{1,n}\cup\mathcal{B}_{2,n}$ is a partition of
$\partial_{\rm ext}\widetilde{D}_{n}$, such that
$\mathcal{B}_{i,n}$ converges to $\mathcal{B}_{i}$, and moreover
$\mathcal{B}_{1,n}$ and $\mathcal{B}_{2,n}$ are separated by exactly two
$2^{-n}\times 2^{-n}$ dyadic squares, of which one contains $x_{0}$ and the other $y_{0}$ (see Figure \ref{FigLvlLM}). Let $u_{n}$ be harmonic on $\widetilde{D}_{n}$ such that $u_{n}$ is constant
$-\lambda$ on $\mathcal{B}_{2,n}$, 
$\inf_{\mathcal{B}_{1,n}}>-\lambda$,
$\inf_{\partial\widetilde{D}_{n}\backslash
	\partial_{\rm ext}\widetilde{D}_{n}}\geq\lambda$ and $u_{n}$ converges to $u$ uniformly on compact subsets of $D$. We have seen that with this boundary conditions, the metric graph first passage set $\widetilde\A^{u_{n}}_{-\lambda}$ contains the boundary $\mathcal{B}_{2,n}$ only by convention, i.e. it satisfies
\begin{displaymath}
\overline{\widetilde\A^{u_{n}}_{-\lambda}\backslash\partial\widetilde{D}_{n}}=\partial\widetilde{D}_{n}\backslash\mathcal{B}_{2,n}.
\end{displaymath}
Let $\partial_{2}\widetilde\A^{u_{n}}_{-\lambda}$ be all the points in $\partial\widetilde\A^{u_{n}}_{-\lambda}$ that are connected in $\A^{u_{n}}_{-\lambda}$ to $\mathcal{B}_{1,n}$ and in $\widetilde{D}_{n}\backslash\A^{u_{n}}_{-\lambda}$ to
$\mathcal{B}_{2,n}$. A.s. 
$\partial_{2}\widetilde\A^{u_{n}}_{-\lambda}$ contains no vertices and the edges it intersects define a path from $x_{0}$ to $y_{0}$ in the dual lattice of $(2^{-n}\Z)^{2}$
(in red on Figure \ref{FigLvlLM}).

\begin{figure}[ht!]
	\centering
	\includegraphics[width=3.4in]{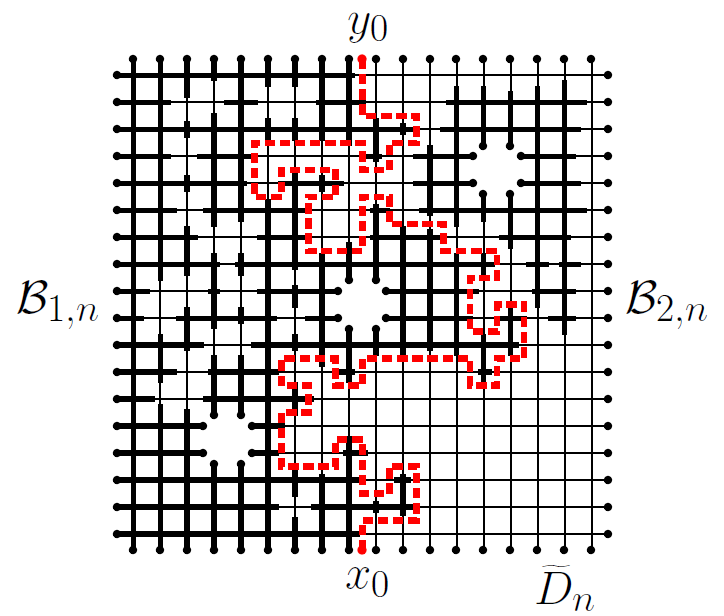}
	\caption{In thick black lines the first passage set 
		$\widetilde{\A}^{u_{n}}_{-\lambda}$ on the metric graph
		$\widetilde{D}_{n}$. Black dots represent 
		$\partial\widetilde{D}_{n}$. The red interface converges in law to a level line of the continuum GFF.}
	\label{FigLvlLM}
\end{figure}

\begin{prop}[Convergence to level lines from metric graph]\label{ConvSLE}
	With the notations above, 
	$\partial_{2}\widetilde\A^{u_{n}}_{-\lambda}$ converges in law for the Hausdorff topology to the level line $\eta$ of the continuum GFF. In particular, if the domain $D$ is simply connected and the boundary condition $u$ is constant
	equal to $b>-\lambda$ on $\mathcal{B}_{1}$,
	$\partial_{2}\widetilde\A^{u_{n}}_{-\lambda}$ converges in law to the trace of an SLE$_{4}(\rho)$ process, with $\rho=b/\lambda -1$.
\end{prop}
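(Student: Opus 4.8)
The plan is to reduce the statement to previously established convergence results, primarily Proposition~\ref{Convergence} and Corollary~\ref{CorConvWithoutBoundary}, together with the identification of the relevant FPS-boundary with a generalized level line carried out in \cite{ALS1} (Corollary 4.12) and recalled above. The key observation is that $\partial_2\widetilde\A^{u_n}_{-\lambda}$ is, up to the convention that the FPS contains $\partial\widetilde D_n$, the boundary between two explicit sets: the connected component of $\widetilde D_n\setminus\widetilde\A^{u_n}_{-\lambda}$ touching $\mathcal B_{2,n}$, and the rest of the first passage set. So the first step is to write $\partial_2\widetilde\A^{u_n}_{-\lambda}$ as a measurable function of the pair $(\widehat\phi_n^{D^z},\widetilde\A^{u_n}_{-\lambda}\cap D)$, and then to invoke Proposition~\ref{Convergence} to get joint convergence in law of this pair to $(\Phi,\A^u_{-\lambda})$ in the appropriate topology.

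\textbf{Main steps.} First I would fix the embedding conventions: the $\mathcal B_{i,n}$ converge to $\mathcal B_i$, the two separating dyadic squares contain $x_0$ and $y_0$, and $u_n\to u$ uniformly on compacts, so all hypotheses of Proposition~\ref{Convergence} are met; moreover since $u = -\lambda$ on $\mathcal B_2$ and $u > -\lambda$ on $\mathcal B_1$, the set $\mathcal B := \{x\in\partial D : u(x)\le -\lambda\}$ is exactly $\mathcal B_2$ (closure), so Corollary~\ref{CorConvWithoutBoundary} applies and tells us that the limit of $\overline{(\widetilde\A^{u_n}_{-\lambda}\setminus\partial\widetilde D_n)\cap D}$ does not touch $\mathcal B_2$. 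This is the crucial point that lets us pass from the FPS (which by convention contains all of $\partial D$) to the honest geometric object, namely the connected component $O$ of $D\setminus\A^u_{-\lambda}$ with $\mathcal B_2\subseteq\partial O$, and to identify $\partial_2\widetilde\A^{u_n}_{-\lambda}$ in the limit with $\eta = \A^u_{-\lambda}\cap\partial O$. Second, I would verify that the component of $\widetilde D_n\setminus\widetilde\A^{u_n}_{-\lambda}$ adjacent to $\mathcal B_{2,n}$ converges (as a set, in the Hausdorff metric on complements) to $O$: this follows from the convergence of $\widetilde\A^{u_n}_{-\lambda}$, the fact that $\mathcal B_{2,n}\to\mathcal B_2$, and Theorem~1 of \cite{Co} relating Hausdorff convergence of complements to Carathéodory convergence of components; one needs Corollary~\ref{CorConvWithoutBoundary} to rule out that part of $\widetilde\A^{u_n}_{-\lambda}\cap D$ pinches off a spurious piece of $\mathcal B_2$ in the limit. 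Third, the interface $\partial_2\widetilde\A^{u_n}_{-\lambda}$ — the set of dual edges separating the component touching $\mathcal B_{1,n}$-side of the FPS from the component touching $\mathcal B_{2,n}$ — then converges in the Hausdorff topology to $\partial O\cap\A^u_{-\lambda} = \eta$. Combining with the identification $\eta \stackrel{d}{=}$ generalized level line of $\Phi+u$ from $y_0$ to $x_0$ (Corollary 4.12 of \cite{ALS1}) gives the first assertion. For the last sentence, when $D$ is simply connected and $u\equiv b > -\lambda$ on $\mathcal B_1$, $u\equiv -\lambda$ on $\mathcal B_2$, the generalized level line is the usual GFF level line with these boundary data, which by \cite{SchSh2, WaWu} is an $\mathrm{SLE}_4(\rho)$ with $\rho = b/\lambda - 1$ (the height gap being $2\lambda$; the jump across the level line is $2\lambda$, and the asymmetry $b-(-\lambda) = b+\lambda = (\rho+2)\lambda$ gives $\rho = b/\lambda - 1$).

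\textbf{The hard part} is the second step: carefully controlling the convergence of the connected component $O_n$ of $\widetilde D_n\setminus\widetilde\A^{u_n}_{-\lambda}$ adjacent to $\mathcal B_{2,n}$ to the component $O$ of $D\setminus\A^u_{-\lambda}$ adjacent to $\mathcal B_2$, and deducing from this the Hausdorff convergence of the interfaces. A priori, Hausdorff convergence of the FPS's and convergence of the boundary arcs $\mathcal B_{2,n}$ do not immediately give convergence of a distinguished complementary component and its boundary; one must genuinely use that the limiting FPS does not touch $\mathcal B_2$ (Corollary~\ref{CorConvWithoutBoundary}) so that the component adjacent to $\mathcal B_2$ is stable, and one must check that the two separating dyadic squares at $x_0, y_0$ force the interface endpoints to converge to $x_0$ and $y_0$ rather than wandering along $\partial D$. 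A minor additional technicality is that $\partial_2\widetilde\A^{u_n}_{-\lambda}$ is defined via a connectivity condition inside $\widetilde\A^{u_n}_{-\lambda}$ (points connected to $\mathcal B_{1,n}$ within the FPS and to $\mathcal B_{2,n}$ in the complement); one should note that on the metric graph the FPS has a.s. finitely many components (by the local finiteness already used implicitly, or directly from the metric graph structure), so this connectivity condition is well-behaved and the interface is a.s. a simple dual path from $x_0$ to $y_0$, whose trace is what converges.
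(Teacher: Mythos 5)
There is a genuine gap at the step you yourself flag as the hard part, and the tools you propose (Proposition \ref{Convergence}, Corollary \ref{CorConvWithoutBoundary}, Carath\'eodory convergence of the component adjacent to $\mathcal{B}_{2,n}$ via Theorem 1 of \cite{Co}) are not sufficient to close it. The problem is on the $\mathcal{B}_{1}$ side, not the $\mathcal{B}_{2}$ side. Hausdorff convergence $\widetilde\A^{u_{n}}_{-\lambda}\to\A^{u}_{-\lambda}$ only says that each set is contained in an $\eps$-neighbourhood of the other; it does not prevent $\widetilde\A^{u_{n}}_{-\lambda}$ from containing thin ``fjords'' of $O_{n}$ (the complementary component touching $\mathcal{B}_{2,n}$) that penetrate deep into the region which in the limit becomes solid FPS. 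The boundaries of such fjords belong to $\partial_{2}\widetilde\A^{u_{n}}_{-\lambda}$, and in the limit they close up into ``bubbles'' attached to $\eta$ on its $\mathcal{B}_{1}$ side, lying in the interior of $\A^{u}_{-\lambda}$ but not on $\eta$. Carath\'eodory convergence of $(O_{n},z_{n})\to(O,z)$ does not exclude this either: it controls convergence from the inside and does not force every boundary point of $O_{n}$ to accumulate on $\partial O$ (the paper's remark after Proposition \ref{Convergence} makes exactly this point). So your step deducing Hausdorff convergence of the interfaces from convergence of the components is where the argument fails; all one gets for free is that the subsequential limits of $\partial_{2}\widetilde\A^{u_{n}}_{-\lambda}$ contain $\eta$ and avoid the $\mathcal{B}_{2}$ side.

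The missing idea is the one the paper actually uses: represent $\eta$ as $\partial_{2}\AA(\L^{D}_{1/2},\Xi^{D}_{u+\lambda})$ via Corollary \ref{CorLvlLineCluster}, and represent $\partial_{2}\widetilde\A^{u_{n}}_{-\lambda}$ in law as the corresponding boundary $\partial_{2}\widetilde\AA_{n}$ of metric graph clusters. One then introduces the diameter cut-off $\eps$ and invokes the ``well-connectedness'' of loop-soup clusters from \cite{BrugCamiaLis2014RWLoopsCLE}: Corollary 5.3 there gives $\partial_{2}\widetilde\AA_{n}^{\eps}\to\partial_{2}\AA^{\eps}$ in the Hausdorff topology for fixed $\eps$, so $\partial_{2}\widetilde\AA_{n}$ is asymptotically squeezed between $\partial_{2}\AA^{\eps}$ (on the $\mathcal{B}_{1}$ side) and $\eta$, and Theorem 4.1 there gives $\partial_{2}\AA^{\eps}\to\eta$ as $\eps\to 0$. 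This is the quantitative input that rules out the bubbles, and no amount of soft set-topology from the FPS convergence alone replaces it. Your treatment of the $\mathcal{B}_{2}$ side via Corollary \ref{CorConvWithoutBoundary}, and the identification of the limiting law as $\mathrm{SLE}_{4}(\rho)$ with $\rho=b/\lambda-1$ in the simply connected constant-boundary case, are fine.
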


Let us stress a corollary that gives a convergence result of very simple metric graph GFF interfaces towards the SLE$_4$:
\begin{cor}\label{convSLE2}
	Consider $\D$ and metric graphs $\widetilde D_n = \widetilde \Z_n \cap \D$. Let the boundary conditions $u, u_n$ be given by $-\lambda$ for all $z$ with $Re(z) < 0$ and by $\lambda$ for all $z$ with $Re(z) \geq 0$. Let $(\widetilde \phi_n)_{n \geq 0}$ be metric graph GFFs on $\widetilde D_n$, and suppose that their extensions converge to a GFF $\Phi$ in probability. Then, the left boundary of the FPS $\widetilde \A^{u_n}_{-\lambda}$ and the right boundary of the FPS $\widetilde \AB^{u_n}_{\lambda}$ both converge in probability w.r.t. the Hausdorff distance to the $-\lambda,\lambda$ level line of \cite{SchSh}, which has the law of SLE$_4$ from $-i$ to $i$ in $\D$.
\end{cor}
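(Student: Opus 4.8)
The plan is to recognise the set-up as a special case of Proposition \ref{ConvSLE}, read off the SLE$_{4}$ limit, and then promote the convergence in law to convergence in probability with Lemma \ref{Conv prob}. First I would take $\mathcal{B}_{2}$ (resp.\ $\mathcal{B}_{2,n}$) to be the left half-circle $\{z\in\partial\D:\mathrm{Re}(z)<0\}$ (resp.\ its metric-graph analogue on $\partial\widetilde{D}_{n}$), $\mathcal{B}_{1}$ (resp.\ $\mathcal{B}_{1,n}$) the right half-circle, and $x_{0},y_{0}=\pm i$ the two separating points, with $\widetilde{D}_{n}=\widetilde{\Z}_{n}\cap\D$ and $u,u_{n}$ the harmonic functions with the prescribed boundary values. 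One then checks the hypotheses of Proposition \ref{ConvSLE}: $u\equiv-\lambda$ on $\mathcal{B}_{2}$, $\inf_{\mathcal{B}_{1}}u=\lambda>-\lambda$, the condition on $\partial D\setminus\partial_{\rm ext}D$ is vacuous since $\D$ is simply connected, $u_{n}\to u$ uniformly on compacts of $\D$ by Lemma \ref{Basic convergences} (1) (the only discontinuities of $u$ on $\partial\D$ are at $\pm i$), the Hausdorff convergences $\widetilde{D}_{n}\cup\partial\widetilde{D}_{n}\to\overline{\D}$ and $\widetilde{\Z}_{n}\setminus\widetilde{D}_{n}\to\C\setminus\D$ are clear, and $\mathcal{B}_{1,n},\mathcal{B}_{2,n}$ are separated by the two dyadic squares straddling $\pm i$. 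Since here $b=\lambda$, so $\rho=b/\lambda-1=0$, Proposition \ref{ConvSLE} gives that $\partial_{2}\widetilde{\A}^{u_{n}}_{-\lambda}$, which is exactly the left boundary of $\widetilde{\A}^{u_{n}}_{-\lambda}$, converges in law for the Hausdorff topology to the generalised level line $\eta$ of $\Phi+u$, which has the law of the trace of SLE$_{4}$ from $-i$ to $i$, i.e.\ of the Schramm--Sheffield $-\lambda,\lambda$ level line.

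To upgrade to convergence in probability I would apply Lemma \ref{Conv prob} with $X_{n}=\widehat\phi_{n}^{D}$, $X_{\infty}=\Phi$, $Y_{n}=\partial_{2}\widetilde{\A}^{u_{n}}_{-\lambda}$, $Y_{\infty}=\eta$. Condition (2) there is the hypothesis of the corollary. Condition (3) holds because $\eta$ is a measurable function of $\Phi$: by Theorem \ref{Thm::FPS} the set $\A^{u}_{-\lambda}$ is a measurable function of $\Phi$, and $\eta$ is obtained from it by the measurable operation of taking the boundary of the connected component of $D\setminus\A^{u}_{-\lambda}$ having $\mathcal{B}_{2}$ on its boundary. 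Condition (1), the joint convergence $(\widehat\phi_{n}^{D},Y_{n})\Rightarrow(\Phi,\eta)$, comes out of the proof of Proposition \ref{ConvSLE}, which runs through Proposition \ref{Convergence} (joint convergence of $(\widehat\phi_{n}^{D},(\widetilde{\A}^{u_{n}}_{-\lambda}\cap D)\cup\partial D)$) and Corollary \ref{CorConvWithoutBoundary} (controlling the part of $\widetilde{\A}^{u_{n}}_{-\lambda}$ that collapses onto $\mathcal{B}_{2}$) and identifies the subsequential limit of $\partial_{2}\widetilde{\A}^{u_{n}}_{-\lambda}$ with $\eta$; as this limit is deterministic given $\Phi$, the joint convergence follows by the usual subsequence argument. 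Lemma \ref{Conv prob} then gives $\partial_{2}\widetilde{\A}^{u_{n}}_{-\lambda}\to\eta$ in probability.

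For the right boundary of $\widetilde{\AB}^{u_{n}}_{\lambda}$ I would reduce to the previous case via the symmetry $\widetilde\phi_{n}\mapsto-\widetilde\phi_{n}$ composed with the reflection $z\mapsto-z$ of $\D$. This map preserves the law of the metric-graph GFF and the hypothesis $\widehat\phi_{n}^{D}\to\Phi$ in probability, exchanges stopping from above at level $\lambda$ with stopping from below at level $-\lambda$, fixes $u_{n}$ (since $u_{n}(-z)=-u_{n}(z)$), and exchanges $\widetilde{\A}^{u_{n}}_{-\lambda}$ with the reflection of $\widetilde{\AB}^{u_{n}}_{\lambda}$ while swapping left and right boundaries. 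Hence, by the argument above applied to the reflected/negated field, the right boundary of $\widetilde{\AB}^{u_{n}}_{\lambda}$ converges in probability to the generalised level line of $\Phi+u$ explored from above; by \cite{ALS1} a GFF level line at a fixed height does not depend on the side from which it is built, so this limit coincides almost surely with $\eta$, and both interfaces converge in probability to the same SLE$_{4}$ curve from $-i$ to $i$.

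The hard part will be Condition (1) of Lemma \ref{Conv prob}: the joint convergence in law of the metric-graph interface (a discontinuous functional of the local set) together with the GFF, and, essentially equivalently, the fact that the scaling limit of the interface is measurably determined by $\Phi$ and carries no extra randomness. This is precisely what must be extracted from the proof of Proposition \ref{ConvSLE}; the remaining ingredients — the symmetry identifying the two interfaces and the side-independence of a GFF level line — need only short appeals to \cite{ALS1}.
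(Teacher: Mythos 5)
Your proposal is correct and follows exactly the route the paper intends: the paper states Corollary \ref{convSLE2} without a separate proof, as an immediate specialization of Proposition \ref{ConvSLE} (with $\mathcal{B}_2$ the left half-circle, $b=\lambda$, hence $\rho=0$), upgraded to convergence in probability via Lemma \ref{Conv prob} using the measurability of $\A^u_{-\lambda}$ (Theorem \ref{Thm::FPS}) and the joint convergence from Proposition \ref{Convergence}, with the $\widetilde{\AB}^{u_n}_{\lambda}$ interface handled by the sign-and-reflection symmetry together with the side-independence of the level line from \cite{ALS1}. You correctly flag the joint convergence $(\widehat\phi_n,\partial_2\widetilde\A^{u_n}_{-\lambda})\Rightarrow(\Phi,\eta)$ as the only step needing care; it follows since any subsequential limit of the interface must contain $\eta(\Phi)$ (by Hausdorff convergence of the full FPS) while having the law of $\eta$, which forces a.s. equality.
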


\begin{proof}[Proof of Proposition \ref{ConvSLE}]
	$\partial_{2}\widetilde\A^{u_{n}}_{-\lambda}$ is a boundary ``component'' of $\widetilde\A^{u_{n}}_{-\lambda}$ and
	$\eta$ that of $\A^{u}_{-\lambda}$. The convergence of 
	$\widetilde\A^{u_{n}}_{-\lambda}$ to $\A^{u}_{-\lambda}$ in the Hausdorff topology implies that the limit of 
	$\partial_{2}\widetilde\A^{u_{n}}_{-\lambda}$ contains $\eta$ and does not intersect $D_2$, i.e. the 
	$\mathcal{B}_2$ side of $\eta$ (right on Figures \ref{Art} and \ref{FigLvlLM}). Yet this convergence does not exclude that in the limit there are bubbles attached to 
	$\eta$ on its $\mathcal{B}_{1}$ side
	(left on Figures \ref{Art} and \ref{FigLvlLM}). To address this issue, we are going to use the representation of the level line $\eta$ as the boundary of clusters of loops and excursions, and some results from \cite{BrugCamiaLis2014RWLoopsCLE} that state that the clusters of a Brownian loop-soup are ``well connected'', that is to say that, if we remove the microscopic Brownian loops up to some scale, the outer boundaries of clusters do not change too much.
	
	From Corollary \ref{CorLvlLineCluster}, we have the representation 
	$\eta=\partial_{2}\AA(\L^{D}_{1/2},\Xi^{D}_{u+\lambda})$. Consider further metric graph loop-soup
	$\L^{\widetilde{D}_{n}}_{1/2}$, metric graph PPP of excursions
	$\Xi^{\widetilde{D}_{n}}_{u_{n}+\lambda}$ and the union of clusters containing at least one excursion $\widetilde\AA_n = \widetilde\AA_n(\L^{\widetilde{D}_{n}}_{1/2},\Xi^{\widetilde{D}_{n}}_{u_{n}+\lambda})$. Using Lemma \ref{Basic convergences} we can couple everything on the same probability space so that the metric graph PPP and unions of clusters converge to their continuum counterparts.
	
	Now, define
	$\partial_{2}\widetilde\AA_n$ to be the set of points on
	$\partial\widetilde\AA_n$
	that are connected in $\widetilde\AA_n$ to 
	$\mathcal{B}_{1,n}$ and in 
	$\widetilde{D}_{n}\backslash\widetilde\AA_n$ to
	$\mathcal{B}_{2,n}$. As before, it has the same law as 
	$\partial_{2}\widetilde\A^{u_{n}}_{-\lambda}$.
	
	As in the proof of Proposition \ref{PropConvClustExc}, we also consider clusters of loops and excursions that have diameter larger than $\eps$, denoted by $\AA^{\varepsilon} = \AA^{\varepsilon}(\L^{D}_{1/2},\Xi^{D}_{u+\lambda})$ and $\widetilde\AA_n^{\varepsilon}=\widetilde\AA_n^{\varepsilon}
	(\L^{\widetilde{D}_{n}}_{1/2},
	\Xi^{\widetilde{D}_{n}}_{u_{n}+\lambda})$ in the continuum and on the metric graph respectively. Define
	$\partial_{2}\AA^{\varepsilon}$ and
	$\partial_{2}\widetilde\AA_n^{\varepsilon}$ as above. 
	
	From Corollary 5.3 in \cite{BrugCamiaLis2014RWLoopsCLE}, it follows that for fixed $\varepsilon>0$, $\partial_{2}\widetilde\AA_n^{\varepsilon}$ converges as
	$n\to +\infty$ in Hausdorff
	topology to 
	$\partial_{2}\AA^{\varepsilon}$.
	Thus, as $\partial_{2}\AA^{\varepsilon}$ is on the $\B_1$ side of $\eta$, we obtain that
	$\partial_{2}\widetilde\AA_n$
	is asymptotically ``squeezed'' between
	$\partial_{2}\AA^{\varepsilon}$
	and
	$\eta$. 
	
	But now Theorem 4.1 in \cite{BrugCamiaLis2014RWLoopsCLE} implies that as $\varepsilon\to 0$,
	$\partial_{2}\AA^{\varepsilon}$
	converges to
	$\partial_{2}\AA=\eta$ and hence the claim follows.
	
	The result about its law just follows from the fact that level lines in simply-connected domains for piece-wise constant boundary conditions have the law of SLE$_4(\underline{\rho})$ processes \cite{WaWu}.
\end{proof}

\begin{rem}
	Using absolute continuity of level lines, one can extend the convergence result above to the case where the boundary condition is not constantly equal to $-\lambda$ on $\B_{2}$,
	but is less than or equal to $-\lambda$ on $\B'_{2}$ and equal to 
	$-\lambda$ on $\B_{2}\backslash\B'_{2}$, where
	$\B'_{2}\subset\B_{2}$ and $d(\B'_{2},\lbrace x_{0},y_{0}\rbrace)>0$.
\end{rem}

\subsubsection{A coupling with different boundary conditions and coinciding level lines}

Finally, we will discuss how the representation of level lines as boundaries provides an explicit coupling of level lines for the GFF-s with different boundary conditions. Moreover, we also give an exact formula for the conditional probability that the two level lines agree in this coupling, conditioned on one of the level lines. In fact, in the non-boundary touching case, the existance of a coupling where level lines of two GFF-s with different boundary conditions agree with positive probability follows already from Proposition 13 in \cite{ASW}. Here, we provide an explicit such coupling with exact formulas.

Let $D$, $x_{0}$, $y_{0}$, $\mathcal{B}_{1}$, $\mathcal{B}_{2}$,
$u$, $\eta$ be as previously. Moreover let $u^{\ast}$ be another harmonic function that on the boundary is piecewise constant. Suppose $u^{\ast} \geq u$ and let
\begin{displaymath}
\mathcal{B}_{3}=\lbrace x\in\partial D\vert
u^{\ast}(x)>u(x)\rbrace.
\end{displaymath} 
Let $\Phi^{\ast}$ be a GFF. Then we can define $\eta^{\ast}$, a generalized level line of $\Phi^{\ast} + u^{\ast}$
from $y_{0}$ to $x_{0}$. 

\begin{cor}[Coupling of level lines with different boundary conditions]
	\label{CorCoupling}
	Assume $d(\mathcal{B}_{3},\mathcal{B}_{2})>0$.
	Then, there is a coupling of random curves $\eta$ and $\eta^{\ast}$ such that the event $\eta=\eta^{\ast}$ has positive probability. 
	The conditional probability of this event given $\eta$ is
	$$\P(\eta^{\ast}=\eta\vert \eta)=
	\1_{\eta\cap \mathcal{B}_{3}=\emptyset}
	\exp\left(-\mathcal{M}(u,u^{\ast},\eta)\right),$$
	where
	\begin{equation*}
	\begin{split}
	\mathcal{M}(u,u^{\ast},\eta)=&
	\dfrac{1}{2}\sum_{i=1,2}
	\iiint_{\substack{\mathcal{B}_{3}\cap\partial D_{i}
			\\\times\eta\times\mathcal{B}_{3}}}
	(u^{\ast}-u)(x_{1})H_{D_{i}}(dx_{1},dx_{2})
	\mu_{\rm harm}^{D}(x_{2},dx_{3})(u^{\ast}-u)(x_{3})
	\\&\sum_{i=1,2}
	\iiint_{\substack{\mathcal{B}_{3}\cap\partial D_{i}
			\\\times\eta\times
			\partial D\backslash \mathcal{B}_{2}}}
	(u^{\ast}-u)(x_{1})H_{D_{i}}(dx_{1},dx_{2})
	\mu_{\rm harm}^{D}(x_{2},dx_{3})(u+\lambda)(x_{3}),
	\end{split}
	\end{equation*}
	where $H_{D_{i}}(dx_{1},dx_{2})$ is the boundary Poisson kernel on
	$\partial D_{i}\times\partial D_{i}$ and 
	$\mu_{\rm harm}^{D}(x_{2},dx_{3})$ is the harmonic measure on
	$\partial D$ seen from $x_{2}$.
\end{cor}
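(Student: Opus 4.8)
The plan is to transport everything to the cluster side via the identity $\eta \stackrel{(d)}{=} \partial_{2}\AA(\L^{D}_{1/2},\Xi^{D}_{u+\lambda})$ of Corollary~\ref{CorLvlLineCluster}, and to realise $\eta$ and $\eta^{\ast}$ from the \emph{same} loop soup by superposing independent excursions. Since $-\lambda\le u\le u^{\ast}$ on $\partial D$, we have $0\le u+\lambda\le u^{\ast}+\lambda$, hence $(u^{\ast}+\lambda)(x)(u^{\ast}+\lambda)(y)\ge (u+\lambda)(x)(u+\lambda)(y)$, so $\nu:=\mu^{D,u^{\ast}+\lambda}_{\rm exc}-\mu^{D,u+\lambda}_{\rm exc}$ is a nonnegative measure. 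With $w:=u^{\ast}-u\ge 0$ (supported on $\B_{3}$) and $v:=u+\lambda$,
$$\nu=\frac12\iint \bigl[v(x)w(y)+w(x)v(y)+w(x)w(y)\bigr]\,\mathbb{P}^{D,x,y}_{\rm exc}\,H_{D}(dx,dy),$$
so every excursion charged by $\nu$ has at least one endpoint on $\B_{3}$ and, because $v\equiv w\equiv 0$ on $\B_{2}$, none on $\B_{2}$. Take $\L^{D}_{1/2}$, an independent $\Xi^{D}_{u+\lambda}$ and an independent PPP $\Xi'$ of intensity $\nu$, and set $\Xi^{D}_{u^{\ast}+\lambda}:=\Xi^{D}_{u+\lambda}\cup\Xi'$. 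Since $u^{\ast}$ inherits the boundary hypotheses of the level line set-up ($u^{\ast}=-\lambda$ on $\B_{2}$, $\inf_{\B_{1}}u^{\ast}>-\lambda$, $\inf_{\partial D\setminus\partial_{\rm ext}D}u^{\ast}\ge\lambda$), Corollary~\ref{CorLvlLineCluster} gives that $\eta:=\partial_{2}\AA(\L^{D}_{1/2},\Xi^{D}_{u+\lambda})$ and $\eta^{\ast}:=\partial_{2}\AA(\L^{D}_{1/2},\Xi^{D}_{u^{\ast}+\lambda})$ have the laws of the level lines of $\Phi+u$ and $\Phi^{\ast}+u^{\ast}$.

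Next I would identify the event $\{\eta=\eta^{\ast}\}$. Let $\mathscr{D}_{2}$, resp. $\mathscr{D}_{2}^{\ast}$, be the connected component of $D\setminus\AA(\L^{D}_{1/2},\Xi^{D}_{u+\lambda})$, resp. of $D\setminus\AA(\L^{D}_{1/2},\Xi^{D}_{u^{\ast}+\lambda})$, with $\B_{2}$ on its boundary, so that $\eta=\AA(\ldots)\cap\partial\mathscr{D}_{2}$ and similarly for $\eta^{\ast}$. Adding excursions only enlarges the family of clusters containing an excursion, hence $\AA(\ldots,\Xi^{D}_{u^{\ast}+\lambda})\supseteq\AA(\ldots,\Xi^{D}_{u+\lambda})$ and $\mathscr{D}_{2}^{\ast}\subseteq\mathscr{D}_{2}$. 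Because the FPS has empty interior, $\mathscr{D}_{2}$ is exactly the connected component of $D\setminus\eta$ adjacent to $\B_{2}$; in particular it is a measurable function of $\eta$, and the constraint $h_{\A}+u\le-\lambda$ on $\mathscr{D}_{2}$ (Definition~\ref{Def ES}(1)) forces $\partial\mathscr{D}_{2}\cap\partial D\subseteq\{u\le-\lambda\}=\B_{2}$, so $\mathscr{D}_{2}$ does not touch $\B_{3}$. One then checks that, conditionally on $(\L^{D}_{1/2},\Xi^{D}_{u+\lambda})$ and up to null sets,
$$\{\eta=\eta^{\ast}\}=\{\mathscr{D}_{2}^{\ast}=\mathscr{D}_{2}\}=\{\eta\cap\B_{3}=\emptyset\}\cap\{\text{no }\gamma\in\Xi' \text{ meets }\mathscr{D}_{2}\}.$$
Indeed $\mathscr{D}_{2}^{\ast}=\mathscr{D}_{2}$ iff the extra FPS-stuff $\AA(\ldots,\Xi^{D}_{u^{\ast}+\lambda})\setminus\AA(\ldots,\Xi^{D}_{u+\lambda})$ stays out of the open set $\mathscr{D}_{2}$, which happens iff no $\gamma\in\Xi'$ enters $\mathscr{D}_{2}$; an excursion $\gamma$ (whose endpoints avoid $\B_{2}=\partial\mathscr{D}_{2}\cap\partial D$ and, a.s., the endpoints $x_{0},y_{0}$ of $\eta$) meets $\mathscr{D}_{2}$ iff it meets $\eta$, since it must cross $\eta$ to enter $\mathscr{D}_{2}$ and, once it touches $\eta$, it enters $\mathscr{D}_{2}$ a.s. by two-sidedness of level-line points; and if $\eta\cap\B_{3}\ne\emptyset$, then a.s. some $\gamma\in\Xi'$ meets $\mathscr{D}_{2}$, because at a point $z\in\eta\cap\B_{3}$ (a.s. in the relative interior of $\B_{3}$, where $w(z)>0$) the measure $\nu$ puts infinite mass on arbitrarily small excursions near $z$, and $z\in\partial\mathscr{D}_{2}$.

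The conclusion then follows from the Poisson exponential formula together with a computation of $\nu(\{\gamma\text{ meets }\eta\})$. As $\Xi'$ is independent of $(\L^{D}_{1/2},\Xi^{D}_{u+\lambda})$ and the latter determines $\mathscr{D}_{2}$ (equivalently $\eta$),
$$\P(\eta^{\ast}=\eta\mid\eta)=\1_{\eta\cap\B_{3}=\emptyset}\,\exp\bigl(-\nu(\{\gamma:\gamma\text{ meets }\eta\})\bigr).$$
To evaluate the mass I would split $\nu=\nu_{ww}+\nu_{wv}$ with $\nu_{ww}=\tfrac12\iint w(x)w(y)\mathbb{P}^{D,x,y}_{\rm exc}H_{D}(dx,dy)$ and, using time-reversal invariance to merge $vw$ and $wv$, $\nu_{wv}=\iint w(x)v(y)\mathbb{P}^{D,x,y}_{\rm exc}H_{D}(dx,dy)$, and apply the Markov property of the excursion measure (Proposition~\ref{PropExcMeas}(2)) by decomposing each excursion at its first hit of $\eta$: an excursion from $x_{1}\in\B_{3}\cap\partial D_{i}$ that meets $\eta$ is a concatenation of an excursion inside the side $D_{i}$ of $\eta$ from $x_{1}$ to a point $x_{2}\in\eta$, contributing the boundary Poisson kernel $H_{D_{i}}(dx_{1},dx_{2})$, and a Brownian motion from $x_{2}$ run to its first hit $x_{3}$ of $\partial D$, contributing $\mu^{D}_{\rm harm}(x_{2},dx_{3})$ and the remaining endpoint weight — $w(x_{3})$ with $x_{3}\in\B_{3}$ for $\nu_{ww}$, resp. $(u+\lambda)(x_{3})$ with $x_{3}\in\partial D\setminus\B_{2}$ for $\nu_{wv}$. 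Summing over the two sides $i=1,2$ of $\eta$ yields exactly the two terms of $\mathcal{M}(u,u^{\ast},\eta)$, the factor $\tfrac12$ surviving only in the symmetric term $\nu_{ww}$. On $\{\eta\cap\B_{3}=\emptyset\}$, $\eta$ is compact and at positive distance from $\B_{3}$ (using $d(\B_{3},\B_{2}\cup\{x_{0},y_{0}\})\ge d(\B_{3},\B_{2})>0$), so the kernels $H_{D_{i}}$ restricted to $\B_{3}\times\eta$ are finite and $\mathcal{M}(u,u^{\ast},\eta)<\infty$; since moreover $\P(\eta\cap\B_{3}=\emptyset)>0$ by a standard support argument for level lines/SLE$_4(\rho)$, this gives $\P(\eta^{\ast}=\eta)>0$.

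The main obstacle is making the two applications of the Markov property rigorous, since $\overline{\mathscr{D}_{2}}$ touches $\partial D$ along $\B_{2}$ and at $x_{0},y_{0}$ and hence is \emph{not} at positive distance from the supports of $w$ and of $v=u+\lambda$, which is a hypothesis of Proposition~\ref{PropExcMeas}(2); one resolves this by an exhaustion argument — decompose at the first hit of a slightly retracted compact subset of $\overline{\mathscr{D}_{2}}$ and pass to the limit, controlled by the Beurling estimate exactly as in the decompositions carried out in Section~\ref{SubSecApproxBMClusters}. A secondary point requiring care is the equivalence ``$\gamma$ meets $\mathscr{D}_{2}$ $\Leftrightarrow$ $\gamma$ meets $\eta$'' under the excursion measure, i.e. that the path continues into $\mathscr{D}_{2}$ immediately after touching $\eta$, which relies on the regularity of the points of $\eta$ established in the level-line theory of \cite{ALS1}.
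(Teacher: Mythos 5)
Your proposal is correct and follows essentially the same route as the paper: the same coupling via a common loop soup $\L^{D}_{1/2}$ with an independent superposed PPP $\delta\Xi$ of intensity $\mu^{D,u^{\ast}+\lambda}_{\rm exc}-\mu^{D,u+\lambda}_{\rm exc}$, the identification of $\{\eta^{\ast}=\eta\}$ with the event that no excursion of $\delta\Xi$ hits $\eta$, and the Poisson exponential formula combined with the first-hit Markov decomposition of the excursion measure to produce $\mathcal{M}(u,u^{\ast},\eta)$. The paper's own proof is much terser (it asserts the event identification and states that the exponential formula ``exactly gives the right expression''), so your verification of the nonnegativity of the intensity, the event identification, and the explicit $ww$/$wv$ splitting are welcome elaborations rather than deviations.
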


\begin{rem}
	A crude lower bound for above probability is given by 
	$$\1_{\eta\cap \mathcal{B}_{3}=\emptyset}
	\exp\left(
	-\dfrac{1}{2}\sum_{i=1,2}
	\sup(u^{\ast}-u)^{2}
	M(\mathcal{B}_{3}\cap\partial D_{i},\eta)
	-\sum_{i=1,2}
	\sup(u^{\ast}-u)\sup(u+\lambda)
	M(\mathcal{B}_{3}\cap\partial D_{i},\eta)\right),$$
	where the modulus 
	$M(\mathcal{B}_{3}\cap\partial D_{i},\eta)$ 
	is taken inside $D_{i}$.
\end{rem}

\begin{figure}
	\centering
	\includegraphics[width=3in]{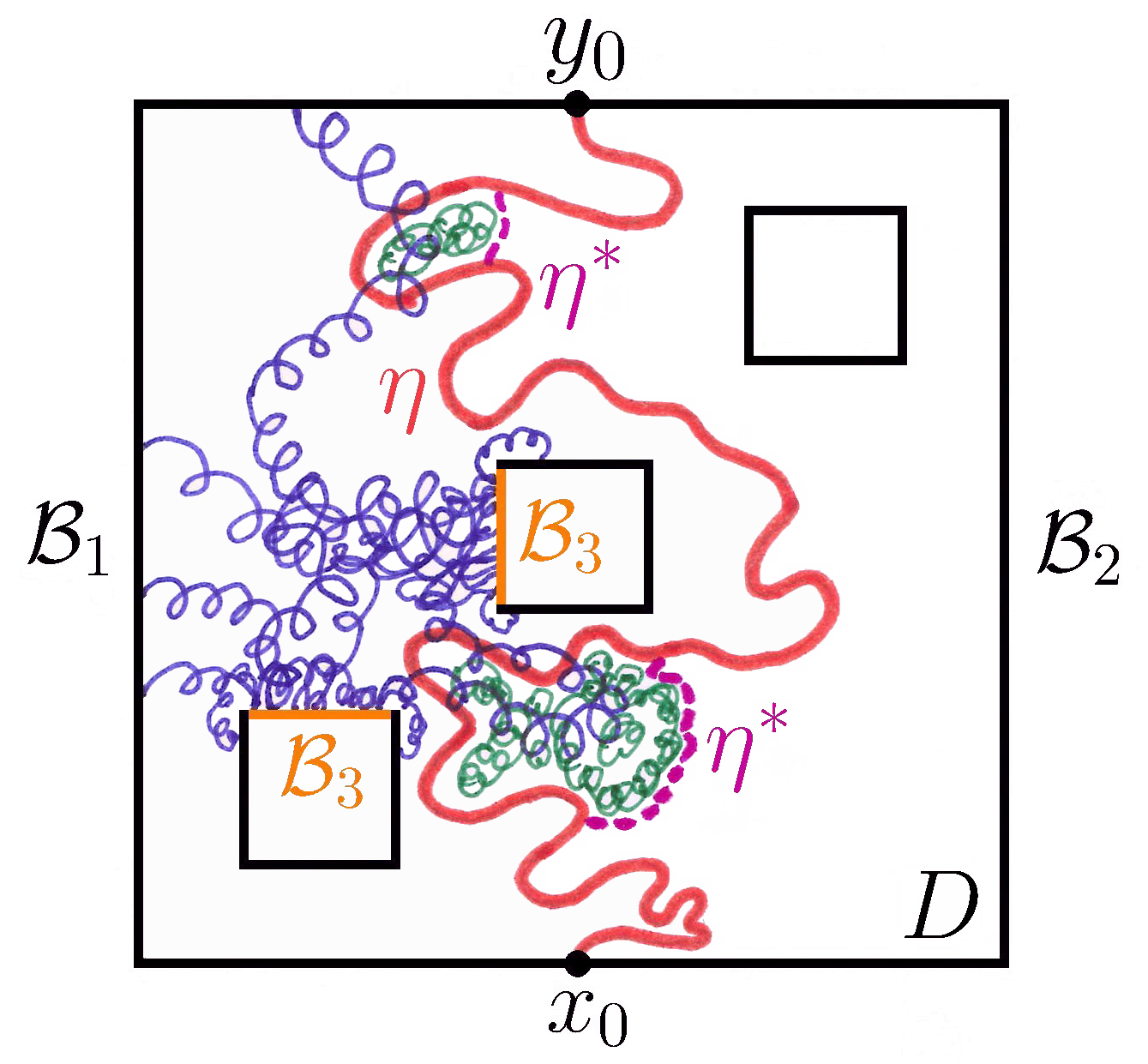}
	\caption{Coupling level lines by adding additional excursions.
		$\B_{3}$ is in orange. In blue are the excursions of
		$\delta\Xi$. Each has at least an endpoint in $\B_{3}$. In green are the clusters of $\L_{1/2}^{D}$ right to $\eta$ that are intersected by
		$\delta\Xi$.}
	\label{FigCouplingLL}
\end{figure}

\begin{proof}
	If we have used the same zero-boundary GFF $\Phi$, then the generalized 0-level lines from $x_{0}$ to $y_{0}$ of
	$\Phi+u$ and $\Phi+u^{\ast}$ would have been a.s. different (unless
	$u^{\ast}=u)$.
	To construct the coupling we rather apply Corollary
	\ref{CorLvlLineCluster} as follows. Consider an independent loop-soup $\L^{D}_{1/2}$, PPP of excursions $\Xi^{D}_{u+\lambda}$ and another PPP of excursions
	with intensity 
	$\mu_{\rm exc}^{D,u^{\ast}+\lambda}-
	\mu_{\rm exc}^{D,u+\lambda}$,
	$\delta\Xi$. Set
	$\Xi^{D}_{u^{\ast}+\lambda}=
	\Xi^{D}_{u+\lambda}\cup\delta\Xi$. 
	We now construct $\eta$ as the envelope of
	$\L^{D}_{1/2}\cup\Xi^{D}_{u+\lambda}$, and
	$\eta^{\ast}$ as the one of
	$\L^{D}_{1/2}\cup\Xi^{D}_{u^{\ast}+\lambda}$
	(Figure \ref{FigCouplingLL}). In this construction,
	\begin{displaymath}
	\P(\eta^{\ast}=\eta\vert \eta)=
	\P(\forall \gamma\in \delta\Xi,\gamma\cap\eta=\emptyset\vert \eta)=
	\1_{\eta\cap \mathcal{B}_{3}=\emptyset}
	\exp\left(-(\mu_{\rm exc}^{D,u^{\ast}+\lambda}
	-\mu_{\rm exc}^{D,u+\lambda})
	(\lbrace\gamma\vert\gamma\cap\eta\neq\emptyset\rbrace)\right),
	\end{displaymath}
	which exactly gives the right expression.
\end{proof}

\section*{Acknowledgement}
The authors wish to thank D. Chelkak for helpful discussions, F. Viklund for useful comments on an earlier version of this paper, W. Werner for many things, and B. Werness for his beautiful simulations and interesting discussions. This work was partially supported by the SNF grants SNF-155922, and SNF-175505. A. Sepúlveda was supported by the ERC grant LiKo 676999. The authors are thankful to the NCCR Swissmap and T. Lupu acknowledges the support of Dr.  Max Rössler,  the Walter Haefner Foundation and the ETH Zurich Foundation. The work of this paper was finished during a wonderful visit of J.Aru and A. Sepúlveda to Paris in May 2018, on the invitation by T. Lupu, funded by PEPS "Jeunes chercheuses et jeunes chercheurs" 2018 of INSMI.

\bibliographystyle{alpha}	
\bibliography{biblio}

\newcommand{\etalchar}[1]{$^{#1}$}
\begin{thebibliography}{EKM{\etalchar{+}}00}

\bibitem[AF03]{Adams}
Robert~A Adams and John~JF Fournier.
\newblock {\em Sobolev spaces}, volume 140.
\newblock Academic press, 2003.

\bibitem[Ahl10]{Ahlfors2010ConfInv}
Lars~Valerian Ahlfors.
\newblock {\em Conformal invariants: topics in geometric function theory},
  volume 371.
\newblock American Mathematical Society, 2010.

\bibitem[ALS17]{ALS1}
Juhan Aru, Titus Lupu, and Avelio Sep{\'u}lveda.
\newblock The first passage sets of the {2D G}aussian free field.
\newblock {\em arXiv preprint arXiv:1706.07737}, 2017.

\bibitem[ALS18]{ALS4}
Juhan Aru, Titus Lupu, and Avelio Sep{\'u}lveda.
\newblock Excursion decomposition of the 2{D} {G}aussian free field.
\newblock In preparation, 2018.

\bibitem[Aru15]{Aru}
Juhan Aru.
\newblock {\em The geometry of the Gaussian free field combined with {SLE
  processes and the KPZ} relation}.
\newblock PhD thesis, Ecole {N}ormale {S}up{\'e}rieure de {L}yon, 2015.

\bibitem[AS18a]{AS}
Juhan Aru and Avelio Sep{\'u}lveda.
\newblock Survey in preparation.
\newblock 2018.

\bibitem[AS18b]{AS2}
Juhan Aru and Avelio Sep{\'u}lveda.
\newblock Two-valued local sets of the {2D} continuum {G}aussian free field:
  connectivity, labels, and induced metrics.
\newblock {\em arXiv preprint arXiv:1801.03828}, 2018.

\bibitem[ASW17]{ASW}
Juhan Aru, Avelio Sep{\'u}lveda, and Wendelin Werner.
\newblock On bounded-type thin local sets of the two-dimensional {G}aussian
  free field.
\newblock {\em Journal of the Institute of Mathematics of Jussieu}, pages
  1--28, 2017.

\bibitem[BC84]{BaxterChacon1984DiffusionsNetworks}
Jesse~Randall Baxter and Rafael van~Severen Chacon.
\newblock The equivalence of diffusions on networks to {B}rownian motion.
\newblock In {\em Contemporary Mathematics}, volume~26, pages 33--48. American
  Mathematical Society, 1984.

\bibitem[BFS82]{BFS82Loop}
David Brydges, Jürg Fröhlich, and Thomas Spencer.
\newblock The random walk representation of classical spin systems and
  correlation inequalities.
\newblock {\em Communications in Mathematical Physics}, 83(1):123--150, 1982.

\bibitem[BL14]{BL}
Marek Biskup and Oren Louidor.
\newblock Conformal symmetries in the extremal process of two-dimensional
  discrete {G}aussian free field.
\newblock {\em arXiv preprint arXiv:1410.4676}, 2014.

\bibitem[Com13]{Co}
Mark Comerford.
\newblock The {C}arath{\'e}odory topology for multiply connected domains {I}.
\newblock {\em Open Mathematics}, 11(2):322--340, 2013.

\bibitem[CS11]{ChSm}
Dmitry Chelkak and Stanislav Smirnov.
\newblock Discrete complex analysis on isoradial graphs.
\newblock {\em Advances in Mathematics}, 228(3):1590--1630, 2011.

\bibitem[DL18]{DingLi2018Chemical}
Jian Ding and Li~Li.
\newblock Chemical distances for percolation of planar {G}aussian free fields
  and critical random walk loop soups.
\newblock {\em Communications in Mathematical Physics}, 360(2):523--553, 2018.

\bibitem[Dub07]{Dub07MultipleSLE}
Julien Dub{\'e}dat.
\newblock Commutation relations for {S}chramm{-L}oewner evolutions.
\newblock {\em Communications on Pure and Applied Mathematics},
  60(12):1792--1847, 2007.

\bibitem[Dub09]{Dub}
Julien Dub{\'e}dat.
\newblock {S}{L}{E} and the free field: partition functions and couplings.
\newblock {\em Journal of the American Mathematical Society}, 22(4):995--1054,
  2009.

\bibitem[Dyn83]{Dynkin1983MarkovFieldTh}
Eugene Dynkin.
\newblock Markov processes as a tool in field theory.
\newblock {\em Journal of Functional Analysis}, 50(2):167--187, 1983.

\bibitem[Dyn84a]{Dynkin1984Isomorphism}
Eugene Dynkin.
\newblock Gaussian and non-{G}aussian random fields associated with {M}arkov
  processes.
\newblock {\em Journal of Functional Analysis}, 55(3):344--376, 1984.

\bibitem[Dyn84b]{Dynkin1984IsomorphismPresentation}
Eugene Dynkin.
\newblock Local times and quantum fields.
\newblock In {\em Seminar on Stochastic Processes, Gainesville 1983}, volume~7
  of {\em Progress in Probability and Statistics}, pages 69--84. Birkhauser,
  1984.

\bibitem[Eis95]{Eisenbaum1995Iso}
Nathalie Eisenbaum.
\newblock Une version sans conditionnement du théorème d'isomorphisme de
  {D}ynkin.
\newblock In {\em S{\'e}minaire de Probabilit{\'e}s XXIX}, volume 1613 of {\em
  Lecture Notes in Mathematics}, pages 266--289. Springer-Verlag, Berlin, 1995.

\bibitem[EK01]{EnriquezKifer2001BMGraphs}
Nathana{\"e}l Enriquez and Yuri Kifer.
\newblock Markov chains on graphs and {B}rownian motion.
\newblock {\em Journal of Theoretical Probability}, 14(2):495--510, 2001.

\bibitem[EKM{\etalchar{+}}00]{EKMRS2000RK}
Nathalie Eisenbaum, Haya Kaspi, Michael~B. Marcus, Jay Rosen, and Zhan Shi.
\newblock A {R}ay-{K}night theorem for symmetric {M}arkov processes.
\newblock {\em The Annals of Probability}, 28(4):1781--1796, 2000.

\bibitem[FR14]{FitzsimmonsRosen2012LoopsIsomorphism}
Patrick Fitzsimmons and Jay Rosen.
\newblock Markovian loop soups: permanental processes and isomorphism theorems.
\newblock {\em Electron. J. Probab}, 19(60):1--30, 2014.

\bibitem[Gaw96]{GawedzkiCFT}
Krzysztof Gawedzki.
\newblock Lectures on conformal field theory.
\newblock {\em Nuclear Physics B}, 328:733--752, 1996.

\bibitem[Jan84]{Janson1984FKGPPP}
Svante Janson.
\newblock Bounds on the distribution of extremal values of a scanning process.
\newblock {\em Stochastic Processes and their Applications}, 18:313--328, 1984.

\bibitem[Jan97]{Janson1997GaussHilbSpaces}
Svante Janson.
\newblock {\em Gaussian Hilbert Spaces}, volume 129 of {\em Cambridge Tracts in
  Mathematics}.
\newblock Cambridge University Press, 1997.

\bibitem[Koe22]{Ko2}
Paul Koebe.
\newblock {\"U}ber die {K}onforme {A}bbildung {E}ndlich-und
  {U}nendlich-{V}ielfach {Z}usammenh{\"a}ngender {S}ymmetrischer {B}ereiche.
\newblock {\em Acta Mathematica}, 43(1):263--287, 1922.

\bibitem[Law08]{LawC}
Gregory Lawler.
\newblock {\em Conformally invariant processes in the plane}.
\newblock Number 114. American Mathematical Society, 2008.

\bibitem[LJ07]{LeJan2007Unpub}
Yves Le~Jan.
\newblock Markov loops, determinants and {G}aussian fields.
\newblock {\em arXiv preprint arXiv:1012.4797}, 2007.

\bibitem[LJ10]{LeJan2010LoopsRenorm}
Yves Le~Jan.
\newblock Markov loops and renormalization.
\newblock {\em The Annals of Probability}, 38(3):1280--1319, 2010.

\bibitem[LJ11]{LeJan2011Loops}
Yves Le~Jan.
\newblock Markov paths, loops and fields.
\newblock In {\em 2008 St-Flour summer school, Lecture Notes in Mathematics},
  volume 2026. Springer, 2011.

\bibitem[LL10]{LawlerLimic2010RW}
Gregory Lawler and Vlada Limic.
\newblock {\em Random walk: a modern introduction}, volume 123.
\newblock Cambridge University Press, 2010.

\bibitem[LRV14]{LacoinRhodesVargas2014Hyperb}
Hubert Lacoin, R{\'e}mi Rhodes, and Vincent Vargas.
\newblock Large deviations for random surfaces: the hyperbolic nature of
  {L}iouville {F}ield {T}heory.
\newblock {\em arXiv prpeprint arXiv:1401.6001}, 2014.

\bibitem[LSW03]{LawlerSchrammWerner2003ConformalRestr}
Gregory Lawler, Oded Schramm, and Wendelin Werner.
\newblock Conformal restriction: the chordal case.
\newblock {\em Journal of the American Mathematical Society}, 16(4):917--955,
  2003.

\bibitem[LTF07]{LawlerFerreras2007RWLoopSoup}
Gregory Lawler and Jos{\'e} Trujillo-Ferreras.
\newblock Random walk loop soup.
\newblock {\em Transactions of the American Mathematical Society},
  359(2):767--787, 2007.

\bibitem[Lup15]{Lupu2015ConvCLE}
Titus Lupu.
\newblock Convergence of the two-dimensional random walk loop soup clusters to
  {C}{L}{E}.
\newblock {\em arXiv preprint arXiv:1502.06827}, 2015.
\newblock To appear in \textit{Journal of European Mathematical Society}.

\bibitem[Lup16a]{Lupu2016Iso}
Titus Lupu.
\newblock From loop clusters and random interlacements to the free field.
\newblock {\em The Annals of Probability}, 44(3):2117--2146, 2016.

\bibitem[Lup16b]{Lupu2014LoopsHalfPlane}
Titus Lupu.
\newblock Loop percolation on discrete half-plane.
\newblock {\em Electronic Communications in Probability}, 21(30), 2016.

\bibitem[LW04]{LW2004BMLoopSoup}
Gregory Lawler and Wendelin Werner.
\newblock The {B}rownian loop soup.
\newblock {\em Probability Theory and Related Fields}, 128(4):565--588, 2004.

\bibitem[LW16]{LupuWerner2016Levy}
Titus Lupu and Wendelin Werner.
\newblock The random pseudo-metric on a graph defined via the zero-set of the
  {G}aussian free field on its metric graph.
\newblock {\em Probability Theory and Related Fields}, pages 1--44, 2016.

\bibitem[MR06]{MarcusRosen2006MarkovGaussianLocTime}
Michael~B. Marcus and Jay Rosen.
\newblock {\em Markov processes, {G}aussian processes and local times}, volume
  100.
\newblock Cambridge University Press, 2006.

\bibitem[MS11]{MS}
Jason Miller and Scott Sheffield.
\newblock The {GFF} and {CLE}(4), 2011.
\newblock Slides, talks and private communications.

\bibitem[MS16a]{MS1}
Jason Miller and Scott Sheffield.
\newblock Imaginary geometry {I}: interacting {SLE}s.
\newblock {\em Probability Theory and Related Fields}, 164(3-4):553--705, 2016.

\bibitem[MS16b]{MS2}
Jason Miller and Scott Sheffield.
\newblock Imaginary geometry {II}: reversibility of
  {SLE$_\kappa$}({$\rho_1$;$\rho_2$}) for {$\kappa \in (0, 4)$}.
\newblock {\em The Annals of Probability}, 44(3):1647--1722, 2016.

\bibitem[MS16c]{MS3}
Jason Miller and Scott Sheffield.
\newblock Imaginary geometry {III}: reversibility of
  {SLE$_\kappa$}({$\rho_1$;$\rho_2$}) for {$\kappa \in (4, 8)$}.
\newblock {\em Annals of Mathematics}, 184(2):455--486, 2016.

\bibitem[MS17]{MS4}
Jason Miller and Scott Sheffield.
\newblock Imaginary geometry {IV}: interior rays, whole-plane reversibility,
  and space-filling trees.
\newblock {\em Probability Theory and Related Fields}, 169:729--869, 2017.

\bibitem[Pit82]{Pitt82FKG}
Loren Pitt.
\newblock Positively correlated normal variables are associated.
\newblock {\em The Annals of Probability}, 10(2):496--499, 1982.

\bibitem[PW17]{PeltolaWu2017MultComSLE}
Eveliina Peltola and Hao Wu.
\newblock Global multiple {SLE}s for {$\kappa\leq 4$} and connection
  probabilities for level lines of {GFF}.
\newblock {\em arXiv preprint arXiv:1703.00898}, 2017.

\bibitem[QW15]{QW2015}
Wei Qian and Wendelin Werner.
\newblock Decomposition of {B}rownian loop-soup clusters.
\newblock {\em arXiv preprint arXiv:1509.01180}, 2015.
\newblock To appear in \textit{Journal of European Mathematical Society}.

\bibitem[Sep17]{Se}
Avelio Sep{\'u}lveda.
\newblock On thin local sets of the {G}aussian free field.
\newblock {\em arXiv preprint arXiv:1702.03164}, 2017.

\bibitem[Sha16]{Sha}
Alexander Shamov.
\newblock On {G}aussian multiplicative chaos.
\newblock {\em Journal of Functional Analysis}, 270(9):3224--3261, 2016.

\bibitem[She05]{She05}
Scott Sheffield.
\newblock Local sets of the {G}aussian free field: Slides and audio, 2005.

\bibitem[She07]{SGFF}
Scott Sheffield.
\newblock Gaussian free fields for mathematicians.
\newblock {\em Probability theory and related fields}, 139(3):521--541, 2007.

\bibitem[Sim74]{Simon1974EQFT}
Barry Simon.
\newblock {\em The $P(\Phi)_{2}$ Euclidean (Quantum) Field Theory}.
\newblock Princeton Series in Physics. Princeton University Press, 1974.

\bibitem[SS09]{SchSh}
Oded Schramm and Scott Sheffield.
\newblock Contour lines of the two-dimensional discrete {G}aussian free field.
\newblock {\em Acta Mathematica}, 202(1):21, 2009.

\bibitem[SS13]{SchSh2}
Oded Schramm and Scott Sheffield.
\newblock A contour line of the continuum {G}aussian free field.
\newblock {\em Probability Theory and Related Fields}, 157(1-2):47--80, 2013.

\bibitem[SW12]{SheffieldWerner2012CLE}
Scott Sheffield and Wendelin Werner.
\newblock Conformal loop ensembles: the {M}arkovian characterization and the
  loop-soup construction.
\newblock {\em Annals of Mathematics}, 176(3):1827--1917, 2012.

\bibitem[Sym66]{Symanzik66Scalar}
Kurt Symanzik.
\newblock Euclidean quantum field theory {I}. {E}quations for a scalar model.
\newblock {\em Journal of Mathematical Physics}, 7(3):510--525, 1966.

\bibitem[Sym69]{Symanzik1969QFT}
Kurt Symanzik.
\newblock Euclidean quantum field theory.
\newblock In {\em Scuola intenazionale di Fisica ”Enrico Fermi”. XLV
  Corso.}, pages 152--223. Academic Press, 1969.

\bibitem[Szn12a]{Sznitman2012Isomorphism}
Alain-Sol Sznitman.
\newblock An isomorphism theorem for random interlacements.
\newblock {\em Electronic Commununications in Probability}, 17(9):1--9, 2012.

\bibitem[Szn12b]{Sznitman2012LectureIso}
Alain-Sol Sznitman.
\newblock {\em Topics in occupation times and {G}aussian free field}.
\newblock Zurich lectures in advanced mathematics. European Mathematical
  Society, 2012.

\bibitem[Szn13]{Sznitam2013BMI3D}
Alain-Sol Sznitman.
\newblock On scaling limits and {B}rownian interlacements.
\newblock {\em Bulletin of the Brazilian Mathematical Society}, 44(4):555--592,
  2013.

\bibitem[vdBCL16]{BrugCamiaLis2014RWLoopsCLE}
Tim van~de Brug, Federico Camia, and Marcin Lis.
\newblock Random walk loop soups and conformal loop ensembles.
\newblock {\em Probability Theory and Related Fields}, 166(1):553--584, 2016.

\bibitem[vdBK96]{Berg96BKPoisson}
Rob van~den Berg and Harry Kesten.
\newblock A note on disjoint-occurrence inequalities for marked {P}oisson point
  processes.
\newblock {\em Journal of Applied Probability}, 33(2):420--426, 1996.

\bibitem[Wer05]{Werner2005ConfRestr}
Wendelin Werner.
\newblock Conformal restriction and related questions.
\newblock {\em Probability Surveys}, 2:145--190, 2005.

\bibitem[Wer16]{WWln2}
Wendelin Werner.
\newblock Topics on the {GFF and CLE(4)}, 2016.

\bibitem[WW13]{WernerWu2013FromCLEtoSLE}
Wendelin Werner and Hao Wu.
\newblock From {C}{L}{E}$(\kappa)$ to {S}{L}{E}$(\kappa,\rho)$'s.
\newblock {\em Electronic Journal of Probabability}, 18:1--20, 2013.

\bibitem[WW16]{WaWu}
Menglu Wang and Hao Wu.
\newblock Level lines of {G}aussian free field {I}: zero-boundary {GFF}.
\newblock {\em Stochastic Processes and their Applications}, 2016.

\end{thebibliography}
\end{document}